\newcommand{\E}{\mathbf{E}} \newcommand{\ty}[1]{\text{\small\ensuremath{#1}}}
 \renewcommand{\Hh}{\ensuremath{H}}
\begin{document}

\title[Homotopy type of spaces of curves on flat surfaces]{Homotopy type of
spaces of curves with constrained curvature on flat surfaces} \author{Nicolau
C.~Saldanha} \author{Pedro Z\"{u}hlke}
\subjclass[2010]{Primary: 58D10. Secondary: 53C42.} \keywords{Curve; curvature;
	flat; 
h-principle; plane; topology of infinite-dimensional manifolds} \maketitle

\begin{abstract} Let $S$ be a complete flat surface, such as the Euclidean
	plane. We determine the homeomorphism class of the space of all curves on
	$S$ which start and end at given points in given directions and whose
	curvatures are constrained to lie in a given open interval, in terms of all
	parameters involved. Any connected component of such a space is either
	contractible or homotopy equivalent to an $n$-sphere, and every integer 
	$n\geq 1$ is realizable. Explicit homotopy equivalences between the
	components and the corresponding spheres are constructed.  
\end{abstract}



\setcounter{section}{-1} \section{Introduction}\label{S:introduction}

Let $-\infty\leq \ka_1<\ka_2\leq +\infty$ and $Q=(q,z)\in \R^2\times \Ss^1$.
Let $\sr C_{\kappa_1}^{\kappa_2}(Q)$ denote the set, furnished with the $C^r$
topology for some $r\geq 2$, of all regular curves $\ga\colon [0,1]\to \R^2 $ of
class $C^r$ such that: 
	\begin{enumerate} 
		\item [(i)] $\ga$ starts at $0\in \R^2$ in the
		direction of $1\in \Ss^1$ and ends at $q$ in the direction of $z$; \item
			[(ii)]  The curvature $\ka_\ga$ of $\ga$ satisfies
			$\ka_1<\ka_\ga(t)<\ka_2$ for all $t\in [0,1]$.  
	\end{enumerate}
	A more accurate reformulation of (i) is that $\ga(0)=0$, 
	$\ta_\ga(0)=1$, $\ga(1)=q$ and $\ta_\ga(1)=z$, where $\ta_\ga\colon [0,1]\to
	\Ss^1$ denotes the unit tangent to $\ga$. 

	There is a natural decomposition of $\sr C_{\kappa_1}^{\kappa_2}(Q)$  as the
	disjoint union of its subspaces $\sr C_{\kappa_1}^{\kappa_2}(Q;\theta_1)$,
	where the latter contains those curves which have total turning $\theta_1$,
	for $e^{i\theta_1}=z$. By Theorems 4.19 and 7.1 in \cite{SalZueh1}, each of
	these subspaces is either empty or a contractible connected component of
	$\sr C_{\kappa_1}^{\kappa_2}(Q)$, except when $\ka_1$, $\ka_2$ have opposite
	signs and
	$\abs{\theta_1}<\pi$. To study what happens in this case, it may be assumed
	without loss of generality that $\ka_1=-1$ and $\ka_2=+1$, by Theorem 2.4 in
	\cite{SalZueh1}. For a fixed $Q=(q,z)$ with $z\neq -1$, there exists exactly
	one subspace $ \sr C_{-1}^{+1}(Q;\theta_1) $ with $ \theta_1\in (-\pi,\pi)
	$; it contains the curves in $\sr C_{-1}^{+1}(Q)$ of minimal total turning
	in absolute value. Let it be denoted by $\sr M(Q)$.

The central result of this work states that $\sr M(Q)$ is homotopy equivalent to
$\Ss^n$ for some $n\in \se{0,1,\dots,\infty}$, and allows one to determine $n$
by means of a simple construction (recall that $\Ss^\infty$ is 
contractible). 
In particular, any of the indicated values is
possible.  

In the sequel $ \R^2 $ is identified with $ \C $ for convenience. Also, $\E$
denotes the separable Hilbert space, $C_r(a)$ denotes the circle of radius $r>0$
centered at $a\in \C$ and  $X\home Y$ (resp.~$X\iso Y$) means that $X$ is
homeomorphic (resp.~homotopy equivalent) to $Y$. 

\begin{figure} \begin{center} \includegraphics[scale=.316]{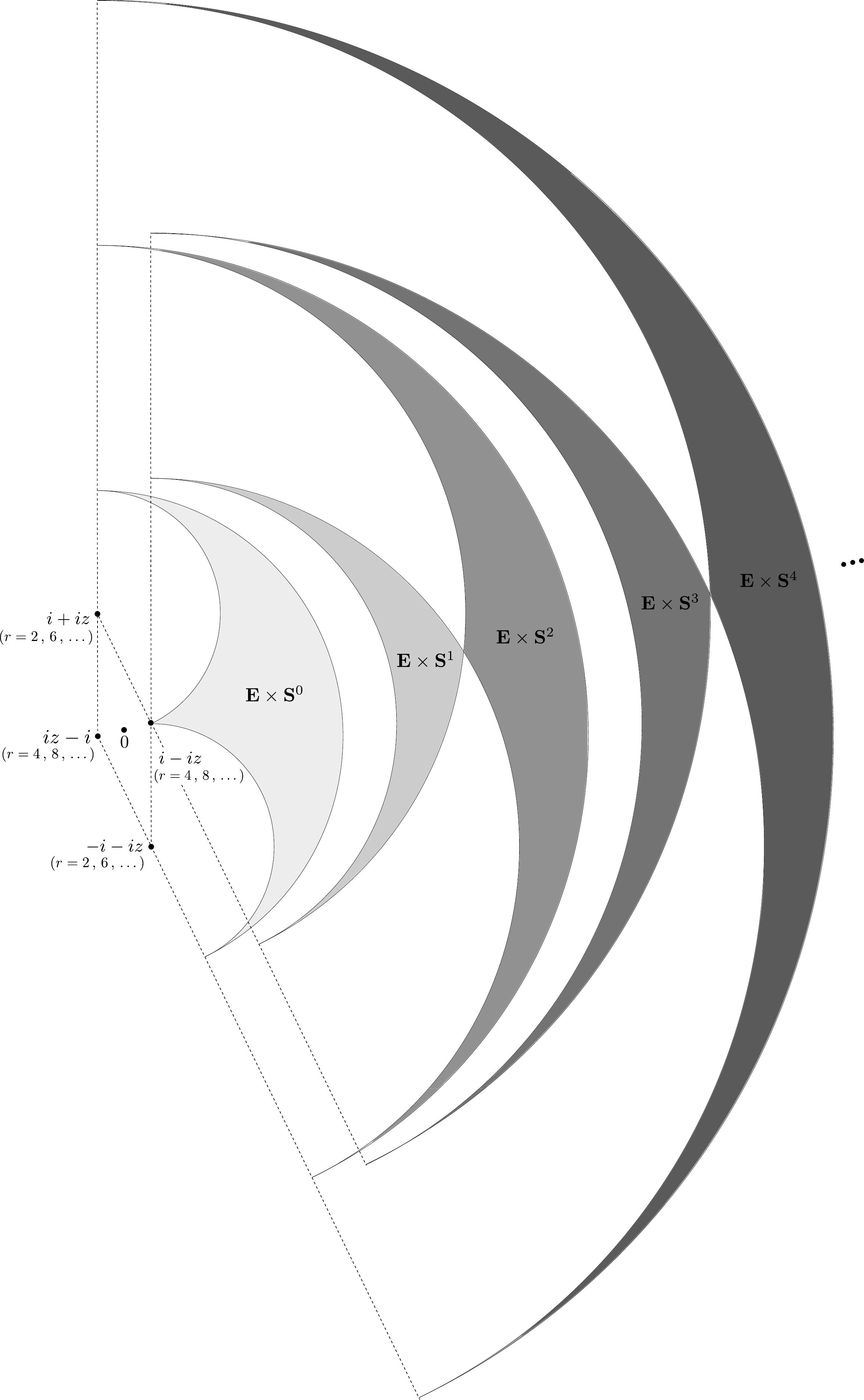}
		\caption{This drawing to scale indicates the homeomorphism class of $\sr
			M(Q)$ in terms of $q$, for $Q=(q,z)\in \C\times \Ss^1$ and a fixed
			$z\neq-1$ (here $z\approx \exp(\frac{i\pi}{7})$). If $q$ lies in the
			unshaded region, then $\sr M(Q)\home \E$, the separable Hilbert space.
			The line segments are only auxiliary elements and do not bound any
			regions. The line through 0 and $1+z$ (not drawn) contains $\pm
		(i-iz)$ and is an axis of symmetry of the figure. The radii of the
	circles are indicated inside parentheses near their centers.  }
	\label{F:shadesofgrey} \end{center} \end{figure}

\begin{uthm}
	Let $Q=(q,z)\in \C\times \Ss^1$, $z\neq -1$. Then
	$\sr M(Q)\home \E\times \Ss^{2k} \text{ or }\E\times \Ss^{2k+1}\ (k\geq 0)$
	for $q$ in the open region intersecting the ray from 0 through $1+z$ and
	bounded by the three circles \begin{equation*} \begin{cases}
			C_{4k+4}(iz-i)\text{ \,and \,}C_{4k+2}(\pm (i+iz)),\text{\ \ or}\\
			C_{4k+4}(i-iz)\text{ \,and \,}C_{4k+6}(\pm(i+iz)),\text{\ \
			respectively} \end{cases} (\text{see \fref{F:shadesofgrey}}).
	\end{equation*} If $q$ does not lie in
	the closure of any of these regions, then $\sr M(Q)\home \E$. If $q$ lies on
	the boundary of one of them, then $\sr M(Q)\home \sr
	M\big((q-\de(1+z),z)\big)$ for all sufficiently small $\de>0$.  \end{uthm}

\begin{urem}
	Let $S_n$ denote the set of all $(q,\theta_1)\in \C\times
	\R$ such that $ \sr C_{-1}^{+1}(Q;\theta_1) $ is
	homotopy equivalent to $\Ss^n$, where $Q=(q,e^{i\theta_1})$ and 
	$n\in \se{0,1,\dots,\infty}$. 
	Together with the aforementioned results of \cite{SalZueh1}, 
	the theorem implies that $S_n$ is a bounded subset of 
	$\C\times (-\pi,\pi)$, neither open nor closed but having nonempty interior,
	for any finite $n$.
	Moreover, if
	\begin{equation*}
		S_n(z)=\set{q\in \C}{\sr M(Q)\iso \Ss^n,~Q=(q,z)},
	\end{equation*}
	then $\lim_{n\to
	\infty}\tup{Area\?}(S_n(z))=+\infty=\tup{Area\?}(S_\infty(z))$ for any
	$z\in \Ss^1\ssm\se{-1}$, as suggested by Figure \ref{F:shadesofgrey}. The precise determination of $\tup{Area\?}(S_n(z))$
	in terms of $n$ and $z$ will be left as an exercise.  \end{urem}

\begin{uexm}\label{E:horizontal} Let $Q_x=(x,1)\in \R\times \Ss^1$. Then $\sr
M(Q_x)\home \E$ if $x\leq 0$ and \begin{equation*}
\sr M(Q_x)\home
\begin{cases} \E\times \Ss^{2k} & \text{\ if\ \ \ $\frac{x}{4}\in
\big(\sqrt{k^2+k}\,,\,k+1\big]$} \\ \E\times \Ss^{2k+1} & \text{\ if\ \ \
$\frac{x}{4}\in \big(k+1\,,\,\sqrt{k^2+3k+2}\?\big]$} \end{cases}\quad (k\in
\N).  \end{equation*} Note that the size of the interval where $\sr M(Q_x) \home
\E\times \Ss^n$ approaches $2$ as $n$ increases.
	\begin{figure}[ht] \begin{center} \includegraphics[scale=.61]{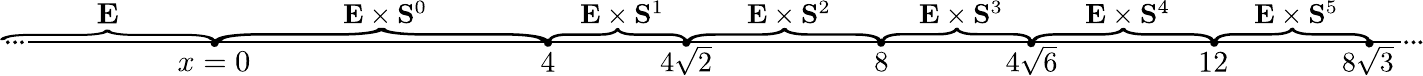}
			\caption{The homeomorphism class of $\sr M(Q_x)$ as a function of
			$x\in \R$.}
		\label{F:horizontal} \end{center} \end{figure} \end{uexm}

	The following concepts are essential to all that follows.

	\begin{defn}[condensed, critical and diffuse]
		\label{D:basic} Let $\ga\colon [0,1]\to \C$ be a  regular curve,
	$\ta_\ga\colon [0,1]\to \Ss^1$ its unit tangent vector, and
	$\theta_\ga\colon [0,1]\to \R$ a continuous \tdef{argument} function for
	$\ta_\ga$, that is, one satisfying $\ta_\ga=\exp(i\theta_\ga)$. We call $\ga$ 
	\tdef{condensed}, \tdef{critical} or \tdef{diffuse} according as its
	\tdef{amplitude} \begin{equation*}
		\om=\sup_{t\in
		[0,1]}\theta_\ga(t)-\inf_{t\in [0,1]}\theta_\ga(t) \end{equation*}
	satisfies $\om<\pi$, $\om=\pi$ or  $\om>\pi$, respectively. The open set of all
	condensed (resp.~diffuse) curves in $\sr M(Q)$ shall be denoted by $\sr U_c$
	(resp.~$\sr U_d$). A \tdef{sign string} $\sig$ is an alternating finite
	sequence of signs, such as \ty{+-+} or \ty{-+-+}. Its \tdef{length}
	$\abs{\sig}$, the number of terms in the string, is required to satisfy
	$\abs{\sig}\geq 2$, and $\sig(k)$ denotes its $k$-th term ($1\leq k\leq
	\abs{\sig}$). Its \tdef{opposite} $-\sig$ is the sign string satisfying
	$\abs{-\sig}=\abs{\sig}$ and $(-\sig)(k)=-\sig(k)$.  A critical curve $\ga$
	is \tdef{of type $\sig$} if there exist $0\leq
	t_1<t_2<\dots<t_{\abs{\sig}}\leq 1$ with $\theta_\ga(t_k)=\sup \theta_\ga$
	or $\inf \theta_\ga$ according as $\sig(k)=\ty{+}$ or $\ty{-}$, but it is
	impossible to find $0\leq s_1<\dots<s_{\abs{\sig}+1}\leq 1$ such that
	$\ta_\ga(s_{k+1})=-\ta_\ga(s_{k})$ for each $k=1,\dots,\abs{\sig}$.
\end{defn}

\begin{uexm} Suppose that $\sr M(Q)\iso \Ss^1$. Then a generator of $\pi_1\sr
M(Q)$ is represented by any family of curves $\ga_s\in \sr M(Q)$ $\tup(s\in
[0,1]$\tup) such that: \begin{enumerate} \item [(i)] $\ga_s$ is condensed for
			$s\in [0,\tfrac{1}{4})\cup (\tfrac{3}{4},1]$ and $\ga_0=\ga_1$;
		\item [(ii)]  $\ga_s$ is diffuse for $s\in (\tfrac{1}{4},\tfrac{3}{4})$;
		\item [(iii)] $\ga_s$ is critical of type $\ty{+-}$ when
			$s=\tfrac{1}{4}$ and critical of type $\ty{-+}$ when
			$s=\tfrac{3}{4}$.  \end{enumerate} As $\pi_k\sr M(Q)=0$ for $k>1$,
	the resulting map $\Ss^1\to \sr M(Q)$ is actually a weak homotopy
	equivalence, and hence a homotopy equivalence, since $\sr M(Q)$ is a Banach
	manifold (cf.~Theorem 15 of \cite{Palais}).
	
	In particular, suppose that $4<x\leq 4\sqrt{2}$ and let $Q_x=(x,1)\in
	\C\times \Ss^1$, as in the preceding example. A generator of $\pi_1\sr
	M(Q_x)$ can be visualized by completing Figure \ref{F:generator} to obtain a
	family $\ga_s\in \sr M(Q_x)$ as above. For $s=\frac{1}{2}$ one may take the
	concatenation of a figure eight curve (that is, a curve of total turning 0, 
	not drawn in the figure) with $\ga_0=\ga_1$, where the latter
	denotes the straight segment from $0$ to $x$. Of course, it needs to be
	checked that this homotopy can actually be carried out within $ \sr M(Q_x) $.
	This is the case if and only if  $x>4$; this was originally proved as
	Theorem 5.3 in \cite{Dubins1} and then generalized in Theorem 6.1 of 
	\cite{SalZueh1}. 
	
	A generator of $\pi_n\sr M(Q)$
	when $\sr M(Q)\iso \Ss^n$ is constructed in \S\ref{S:generator}, and an
	informal description is given at the end of this introduction.

\end{uexm}

\begin{figure}[ht] \begin{center} \includegraphics[scale=.84]{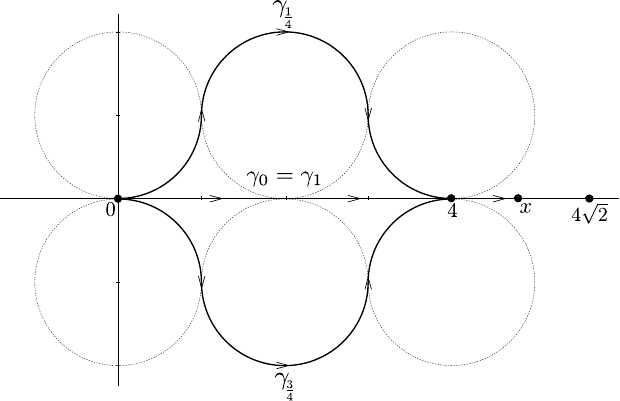}
		\caption{Constructing a generator of $\pi_1\sr M(Q)$ when $Q=(x,1)\in
			\R\times \Ss^1$ and $4<x\leq 4\sqrt{2}$.}
		\label{F:generator} \end{center} \end{figure}

Let $S$ be a complete flat surface, $\ka_1<\ka_2$ and $u,v\in UTS$, the unit
tangent bundle of $S$; throughout the article, $UT\C$ is identified with
$\C\times \Ss^1$. Let $\sr CS_{\ka_1}^{\ka_2}(u,v)$ denote the space 
of all curves on $S$ whose lift to $UTS$ joins $u$ to $v$ and whose geodesic 
curvature takes values in $(\ka_1,\ka_2)$, with the implicit convention that 
$\ka_2=-\ka_1>0$ if $S$ is nonorientable 
(for the formal definition, see \S8 of \cite{SalZueh1}).

When $\ka_1$ and $\ka_2 $ have opposite signs, the homeomorphism class of $\sr
CS_{\ka_1}^{\ka_2}(u,v)$ can be determined from the theorem as follows, provided
only that a description of $S$ as a
quotient of $\C$ by a group of isometries is known. 
If the coordinates of $\C$ are chosen (as they may be) so that the vector $1\in
\Ss^1$ at $0\in \C$ projects to $u$ under the induced map $\pr \colon UT\C\to
UTS$, then by Proposition 8.3 in \cite{SalZueh1}
\begin{equation}\label{E:decomp} \sr
	CS_{\kappa_1}^{\kappa_2}(u,v) \home \coprod_{Q\in \pr^{-1}(v)} \sr
	C_{\kappa_1}^{\kappa_2}(Q),  
\end{equation} 
where the inverted product denotes disjoint union. Moreover, there is a homeomorphism $h\colon \C\times
\Ss^1\to \C\times \Ss^1$ (depending on $\ka_1,\ka_2$) such that $\sr
C_{\ka_1}^{\ka_2}(Q)\home \sr C_{-1}^{+1}(h(Q))$ for all $Q\in UT\C$. A 
simple expression for $ h $ is 
available from Theorem 2.4 in \cite{SalZueh1}; hence, the homeomorphism 
class of $\sr CS_{\ka_1}^{\ka_2}(u,v)$ can actually be computed 
explicitly.

The homeomorphism in \eqref{E:decomp} always holds, but for $ \ka_1\ka_2\geq 0 $,
each of the spaces $\sr C_{\kappa_1}^{\kappa_2}(Q)$ appearing on the
right side decomposes as a union of infinitely many components homeomorphic to
$\E$, as shown in Theorem 7.1 of \cite{SalZueh1}. This case is thus not as
interesting as the one where $ \ka_1\ka_2< 0 $. (To determine the sign
of $ \ka_1\ka_2 $, we set $ 0\?(\pm\infty)
=0$ by convention.)

\begin{ucor}\label{C:complete} Let $S$ be a complete flat surface, $\ka_1<\ka_2$
	and $u,v\in UTS$. Then each component of $\sr CS_{\ka_1}^{\ka_2}(u,v)$ is
	homeomorphic to $\E\times \Ss^n$, for some $n\in \se{1,\dots,\infty}$
	depending upon the component. The number of components homeomorphic to
	$\E\times \Ss^n$ is finite for $n<\infty$, and infinite for $n=\infty$.
\end{ucor} 
\begin{proof} Only the last assertion for the case $\ka_1\ka_2<0$ still needs 
	to be justified. 
	For this, consider the decomposition \eqref{E:decomp}. The existence of the
	homeomorphism $h\colon UT\C\to UT\C$ such that $\sr
	C_{\ka_1}^{\ka_2}(Q)\home \sr C_{-1}^{+1}(h(Q))$ for all $Q\in UT\C$ shows 
	that it may be assumed that $\ka_1=-1$, $\ka_2=+1$.
	
	Now for any $Q\in \pr^{-1}(v)$, each of the infinitely many components of 
	$\sr C_{-1}^{+1}(Q)$ is contractible except possibly one, namely, $\sr M(Q)$.
	Write $S=\C/\Ga$,
	for some group $\Ga$ of isometries of $\C$. By proper discontinuity of the
	action of $\Ga$, for each $n<\infty$, the intersection of $\pr^{-1}(v)\subs
	UT\C$ with \begin{equation*}
	\set{Q\in UT\C}{
			\sr M(Q)\home \E\times \Ss^n} \end{equation*} is finite,
		since the latter is a bounded subset of $UT\C$ for all finite $n$.
	\end{proof}

\begin{uexm} For $a\in \C$, denote by $T_a\colon \C\to \C$ the translation
	$x\mapsto x+a$. Let $S=\C/\Ga$ be a flat torus, where $\Ga$ is the group
	$\gen{T_1,T_i}$, and let $u\in UTS$ be arbitrary. Then \tsl{for every $n\in
		\se{1,2,\dots,\infty}$}, there exists a connected component of $\sr
	CS_{-1}^{+1}(u,u)$ homeomorphic to $\E\times \Ss^n$. Since $S$ is
	isotropic, we may assume that $u=\pr(O)$, where 
	$O=(0,1)\in \C\times \Ss^1$. Then according to
	\eqref{E:decomp}, $\sr CS_{-1}^{+1}(u,u)$ contains homeomorphic copies of
	$\sr M(Q_k)$ for every $k\in \Z$, where $Q_k=(k,1)\in \C\times \Ss^1$ is as
	in the first example. The same conclusion holds even if $T_1$ and $T_i$ are
	replaced by  $T_{2}$ and $T_{2i}$, because the lattice $2(\Z\times \Z)$
	intersects the interior of each of the shaded regions in the analogue of
	Figure \ref{F:shadesofgrey} for $z=1$. In contrast, for
	$S=\C/\gen{T_{4},T_{4i}}$, no connected component of $\sr CS_{-1}^{+1}(u,u)$
	is homotopy equivalent to $\Ss^1$. This illustrates the general fact that
	the topology of $\sr CS_{\ka_1}^{\ka_2}(u,v)$ is closely linked to the
	global geometry of $S$.  \end{uexm}

\subsection*{Outline of the proof} Although the proof of the main theorem is somewhat
technical, the underlying idea is quite simple.  
For each sign string $\sig$, we define the concept of ``quasicritical
curves of type $\sig$''. These form an open set $\sr U_\sig \subs \sr M(Q)$
containing all critical curves of type $\sig$ in $\sr M(Q)$, with $\sr
U_\sig=\emptyset$  if there exists no curve of the latter type. The naive plan
is to prove that $\sr U_c$, $\sr U_d$ and the $\sr U_\sig$ (for $\sig$ ranging
over all possible sign strings) form a good cover of $\sr M(Q)$, meaning that
their $k$-fold intersections are either empty or contractible for any $k\geq 1$.
Since $\sr M(Q)$ is a Banach manifold, its homeomorphism class is completely
determined by the incidence data of this cover, which is equivalent either to
that of the good cover of $\Ss^{n-1}$ by the hemispheres
\begin{equation}\label{E:half-spaces} 
	U_{\pm k}=\set{(x_1,\dots,x_n)\in
	\Ss^{n-1}}{\pm x_k>0}	\qquad (k=1,\dots,n) \end{equation} 
or else to the cover of $\Ss^{n-1}\ssm \se{(0,\dots,0,-1)}$ obtained by omitting
$U_{-n}$.

More precisely, let $\tau$ be a \tdef{top} sign string for $\sr M(Q)$, i.e., one
having maximum length $\abs{\tau}$ among those strings $\sig$ such that $\sr
M(Q)$ contains critical curves of type $\sig$. The fact that $\sr U_{\tau}\neq
\emptyset$ will immediately imply that $\sr U_{\sig}\neq \emptyset$ whenever
$\abs{\sig}<\abs{\tau}$. The  integer $n$ appearing in \eqref{E:half-spaces}
equals $\abs{\tau}$, and the combinatorial equivalence between the cover of $\sr
M(Q)$ and that in \eqref{E:half-spaces} is given by
\begin{equation}\label{E:combinatorics} \sr U_c\dar U_{+1},\ \ \sr U_d\dar
	U_{-1}\ \ \text{and}\ \ \sr U_{\sig}\dar U_{\sig(1)\abs{\sig}}\quad 
	(\text{for }\sr
	U_{\sig}\neq \emptyset).  \end{equation} Thus, $\sr M(Q)$ is contractible if
$\sr U_{-\tau}=\emptyset$, and it has the homotopy type of $\Ss^{n-1}$ if $\sr
U_{-\tau}\neq \emptyset$. Note that $n=\abs{\tau}\geq 2$ by the definition of
sign string. If $\sr M(Q)$ does not admit a top sign string (or, equivalently,
if it contains no critical curves at all), then it has the homotopy type of a
point or of $\Ss^0$ according as $\sr U_c$ is empty or not; this situation was
already considered in Theorem 6.1 of \cite{SalZueh1}. 

Briefly stated, denoting by $\sr T$  the subset of $\sr M(Q)$ consisting of all
critical curves: \begin{alignat*}{99}
\sr U_c=\emptyset &\Rar \sr
	M(Q)\home \E; \\ \sr U_c\neq \emptyset \text{ and }\sr T=\emptyset &\Rar \sr
	M(Q)\home \E\times \Ss^0; \\ \sr U_c\neq \emptyset \text{ and }\sr T\neq
	\emptyset &\Rar \sr M(Q)\home \E \times \Ss^{n-1} \text{ or } \E\quad
	(n=\abs{\tau},~\tau \text{ a top sign string}), \end{alignat*} depending on
whether $\sr M(Q)$ contains critical curves of type $-\tau$ or not,
respectively. (It is shown in \cite{SalZueh1} that $\sr U_d$ is never empty, and
that $\sr U_c=\emptyset$ implies $\sr T=\emptyset$.) The determination of
whether $\sr M(Q)$ contains condensed or critical curves of any given type in
terms of $Q$ was already carried out in Propositions 3.17 and  5.3 of
\cite{SalZueh1}, and this is essentially what is depicted in Figure
\ref{F:shadesofgrey}.

Informally, $\ga\colon [0,1]\to \C$ is quasicritical of type $\sig$ if it is
possible to find $\vphi\in \R$ and $t_1<\dots<t_{\abs{\sig}}$ such that the unit
tangent vector $\ta_\ga$ to $\ga$ satisfies $\ta_\ga(t_k)\approx
\sig(k)ie^{i\vphi}$ for each $k=1,\dots,\abs{\sig}$ and
$\gen{\ta_\ga,e^{i\vphi}}>0$ away from these points. In words, $\ga$ is nearly
vertical with respect to the ``axis'' $e^{i\vphi}$ near the points $\ga(t_k)$,
with orientation prescribed by $\sig$, but elsewhere its image is the graph of a
function.  

Unfortunately, the set of all $\vphi\in \R$ with respect to which a curve is
quasicritical of type $\sig$ need not be an interval. Given a continuous family
$K\to \sr U_\sig$, $p\mapsto \ga^p$, this makes it difficult to choose $\vphi^p$
continuously so that each $\ga^p$ is quasicritical with respect to $\vphi^p$. To
circumvent this, we work instead with a certain space $\sr N(Q)\subs \sr
M(Q)\times \R$. The strategy to understand the topology of $\sr N(Q)$ is exactly
as described above: First an open cover $\fr V$ of $\sr N(Q)$ by subsets $\sr
V_c,\,\sr V_d$ and $\sr V_\sig$ is defined, where roughly $\sr V_c$ and $\sr
V_d$ are products of $\sr U_c$ and $\sr U_d$ with $\R$, and for each sign string
$\sig$, $\sr V_\sig$ consists of pairs $(\ga,\vphi)$ such that $\ga$ is
quasicritical of type $\sig$ with respect to $\vphi$. It is then proved that
these sets form a good cover of $\sr N(Q)$, whose combinatorics is determined by
\eqref{E:combinatorics} when $\sr U$ is replaced by $\sr V$. Finally, it is
established that the restriction to $\sr N(Q)$ of the natural projection $\sr M(Q)\times
\R\to \sr M(Q)$ is a homotopy equivalence.

\subsection*{Outline of the sections} Given a sign string $\sig_2$ and a
substring $\sig_1$ of $\sig_2$, there are in general many ways to embed
$\sig_1$ into $\sig_2$. For instance, if $\sig_1=\ty{-+}$ and
$\sig_2=\ty{-+-+}$, then there are three substrings of $\sig_2$ isomorphic to
$\sig_1$, namely, those determined by the pairs of coordinates $(1,2)$, $(1,4)$
and $(3,4)$.  In \S\ref{S:decomposition} we consider certain subspaces of $\R^n$ determined by
inequalities involving a set of strings $\sig_1,\dots,\sig_m$, each a substring
of the next, which encode the purely combinatorial difficulties that arise in
the study of the topology of $\sr V_{\sig_1}\cap \dots \cap \sr V_{\sig_m}$. The
main result of the section states that the former subspaces are in fact all
weakly contractible. In the case of two strings, we construct homeomorphisms
from the resulting spaces onto Euclidean spaces, and for larger sets of strings
we use induction and certain collapsing maps which are quasifibrations.

One of the tools in the proof that the cover $\fr V$ of $\sr N(Q)$ is good is a
procedure for ``stretching'' curves, illustrated in Figures
\ref{F:curvestretching} and \ref{F:surgery},  which generalizes the grafting
construction of \cite{SalZueh1}. This procedure is explained in
\S\ref{S:stretching}, along with some of its properties that are needed later.

The formal definitions of quasicritical curve, the space $\sr N(Q)$ and its
cover $\fr V$ are contained in \S 3. Most of the results in this section concern
basic properties of quasicritical curves, and how to continuously choose
``stretchable'' subarcs for a given family of such curves so that when the
stretching construction is actually carried out, the resulting homotopy will
preserve important properties of the original family, such as being condensed or
simultaneously quasicritical of several types. It is also shown there that the
projection $\sr N(Q)\to \sr M(Q)$ induces surjections on homotopy groups.

The combinatorics of the cover $\fr V$ of $\sr N(Q)$ is determined in \S4. It is
very easy to see that $\sr V_c\cap \sr V_d=\emptyset$ and $\sr V_\sig\cap \sr
V_{-\sig}=\emptyset$ for any sign string $\sig$. On the other hand, given sign
strings $\sig_1,\dots,\sig_m$ with $\abs{\sig_1}<\dots<\abs{\sig_m}$, with some
care one can deform a critical curve of type $\sig_m$ to make it simultaneously
quasicritical of type $\sig_j$ for each $j$. Thus an intersection of 
elements of $\fr V$ is empty if and only if it involves some ``opposite'' pair, just
as for the cover in \eqref{E:half-spaces}.

The objective of \S 5 is to prove that $\fr V$ is a \tsl{good} cover. Given a
continuous family $(\ga^p,\vphi^p)\in \sr V_{\sig_1}\cap \dots \cap \sr
V_{\sig_m}$, with $p$ ranging over a compact space, each $\ga^p$ can be
stretched to become nearly critical (as in Figure \ref{F:surgery}), and then
deformed to a concatenation of circles and line segments (as
in Figure \ref{F:pulleys}) which is essentially determined by the slopes of the
segments. The results of \S\ref{S:decomposition} can then be used to conclude
that the resulting family is nullhomotopic. 

The proof that $\sr N(Q)$ and $\sr M(Q)$ are homeomorphic is completed in \S6.
Moreover, when $\sr M(Q)\iso \Ss^{n-1}$, where $n=\abs{\tau}\geq 2$ is as above,
explicit homotopy inverses $f\colon \Ss^{n-1}\to \sr M(Q)$ and $g\colon \sr
M(Q)\to \sr \Ss^{n-1}$ are constructed. Let $\sr C_\tau$ denote the set of all
critical curves of type $\tau$ in $\sr M(Q)$. Intuitively, the map $g$ measures
the failure of curves in $\sr M(Q)$ to belong to $\sr C_\tau$. If $\al$ is a
generator of $H^*(\Ss^{n-1})$, then $g^{\ast}(\al)$ is the ``Poincar\'e dual''
of $\sr C_\tau$, except that the latter is not really a submanifold of $\sr
M(Q)$. The map $f$ represents a generator of $\pi_{n-1}\sr M(Q)$ and admits the following
description: Regard $\Ss^{n-1}$ as a CW complex with two $k$-cells $e^k_{\pm}$
for every $k=0,\dots,n-1$. Then \begin{equation*}
	f\big(e^{n-1}_+\big)\subs \sr U_d,\ \  f\big(e^{n-1}_-\big)\subs \sr U_c,
\end{equation*} and for each $k=0,\dots,n-2$, $f$ maps $e^{k}_{\pm}$ into the
set of critical curves of type $\pm \sig^{n-k}$ in $\sr M(Q)$, where
$\sig^{n-k}$ denotes any of the two sign strings of length $n-k$. The actual
construction of $f$ is a bit different, but more precise; in particular, it
shows that these inclusions can indeed be satisfied.

\subsection*{Related work}
As far as we know, the first person to systematically study planar curves with
constrained curvature was L.\,E.~Dubins. In the much-cited paper \cite{Dubins},
he investigated curves of minimal length in $ \sr C_{-\ka_0}^{+\ka_0}(P,Q)
$,\footnote{Actually, for the purposes of \cite{Dubins} it is more natural to
work with curves whose curvatures are allowed to be discontinuous and to take
values in the \tsl{closed} interval $ [-\ka_0,+\ka_0] $, otherwise the minimal
length may not be attained.} and in \cite{Dubins1} he attempted to determine the
connected components of this space, obtaining some partial results and
formulating several conjectures. Much later, in \cite{SalZueh1}, the components of $ \sr
C_{\ka_1}^{\ka_2}(P,Q) $ were characterized, and most of his conjectures were
proved. 

Of course, the definition of $ \sr CS_{\ka_1}^{\ka_2}(u,v) $ makes sense for any
Riemannian surface $ S $. One can even consider analogous spaces $\sr
CM_{\ka_1}^{\ka_2}(u,v)$ of curves on a Riemannian manifold $M$ of dimension
$n\geq 2$ by replacing the geodesic curvature of a curve by its $(n-1)$-th
curvature (also called its torsion, cf.~\cite{Kuehnel}, p.~18). The important
special case where $\ka_1=-\infty$ and $\ka_2=+\infty$ (that is, where the
curves are regular but no conditions are imposed on their torsion) was a
precursor to the Hirsch-Smale theory of immersions. Smale showed in
\cite{Smale}, Theorem C, that for any $ u\in UTM $, 
$ \sr CM_{-\infty}^{+\infty}(u,u) $ is weakly
homotopy equivalent to the loop space $ \Om UTM $. In one direction, the
homotopy equivalence comes simply from lifting a regular curve on $ M $ to $ UTM
$. The special case where $M=\R^2$ yields the classical Whitney-Graustein
theorem (\cite{WhiGra}, Theorem 1).

Later work on the subject was mostly concerned with characterizing the connected
components of spaces of closed \tit{nondegenerate} curves, i.e., those having
nonvanishing curvature (torsion). In the present notation, these correspond to
$\sr CM_{-\infty}^{0} (u,u)\du \sr CM_{0}^{+\infty}(u,u)$. Papers treating this
problem for the simplest manifolds, such as $\R^n$, $\Ss^n$ and $\RP^n$, include
\cite{Anisov}, \cite{Feldman}, \cite{Feldman1}, \cite{KheSha}, \cite{KheSha1},
\cite{Little}, \cite{Little1}, \cite{MosSad}, \cite{SalSha}, \cite{ShaSha} and
\cite{ShaTop}. In \cite{SalZueh} the connected components of $\sr
C(\Ss^2)_{\ka_1}^{\ka_2}(u,u)$ are characterized for all $\ka_1<\ka_2$, and in
\cite{Saldanha3} the homotopy type of spaces of (not necessarily closed)
nondegenerate curves on $\Ss^2$ is computed. 
The similar problem for nondegenerate curves on $\Ss^n$ for $n > 2$
appears to be harder and is addressed in
\cite{Alves-Saldanha}, \cite{Goulart-Saldanha1},
\cite{Goulart-Saldanha-cw} and \cite{Goulart-Saldanha0}.
A complete answer is obtained in \cite{Alves-Goulart-Saldanha}
for the case $n = 3$.

\smallskip

The present paper relies strongly on \cite{SalZueh1}. Some familiarity with the
contents of sections 0,\,1,\,3,\,4 and 5 therein will make a few of the proofs
here more easily understood.


\section{On certain subspaces of Euclidean space determined by sign
strings}\label{S:decomposition} \subsection*{A cell decomposition of $\R^n$}
Throughout the article, the set $\se{1,\dots,n}$ will be denoted by $[n]$. 
Let $2\leq n\in \N$, $m\in [n]$ and let $\emptyset\neq J_1,\dots,J_m\subs [n]$
satisfy $[n]=\Du_{j=1}^mJ_j$. Define \begin{equation*}
	W_{J_1,\dots,J_m}=\set{x\in \R^n}{x_k<x_{k'}\text{ if and only if }k\in
	J_j,~k'\in J_{j'} \text{ for some }j<j'\in [m]}.  \end{equation*} It is easy
to check that each \tdef{cell} $W_{J_1,\dots,J_m}$ is an $m$-dimensional convex
cone. Furthermore, $\R^n$ is the disjoint union of all such cells. There is only
one $1$-cell $W_{[n]}$, which consists of the multiples of $(1,1,\dots1)$ in
$\R^n$. At the other end, there are $n!$ cells of dimension $n$, each
$W_{J_1,\dots,J_{n}}$ being determined by the permutation $\pi\in S_{n}$ such
that $\pi(k)$ is the unique element of $J_{k}$. These $n$-cells are open in
$\R^n$, while the $k$-cells for $1<k<n$ are neither open nor closed. See Figure
\ref{F:cells}\?(a) for an illustration of the case $n=3$.

\begin{urem} The $k$-cells in this decomposition are dual to the $(n-k)$-faces
	of the $(n-1)$-dimensional permutohedron. The total number of cells (faces)
	is given by the $n$-th ordered Bell number.  \end{urem}

\begin{figure}[ht] \begin{center} \includegraphics[scale=.20]{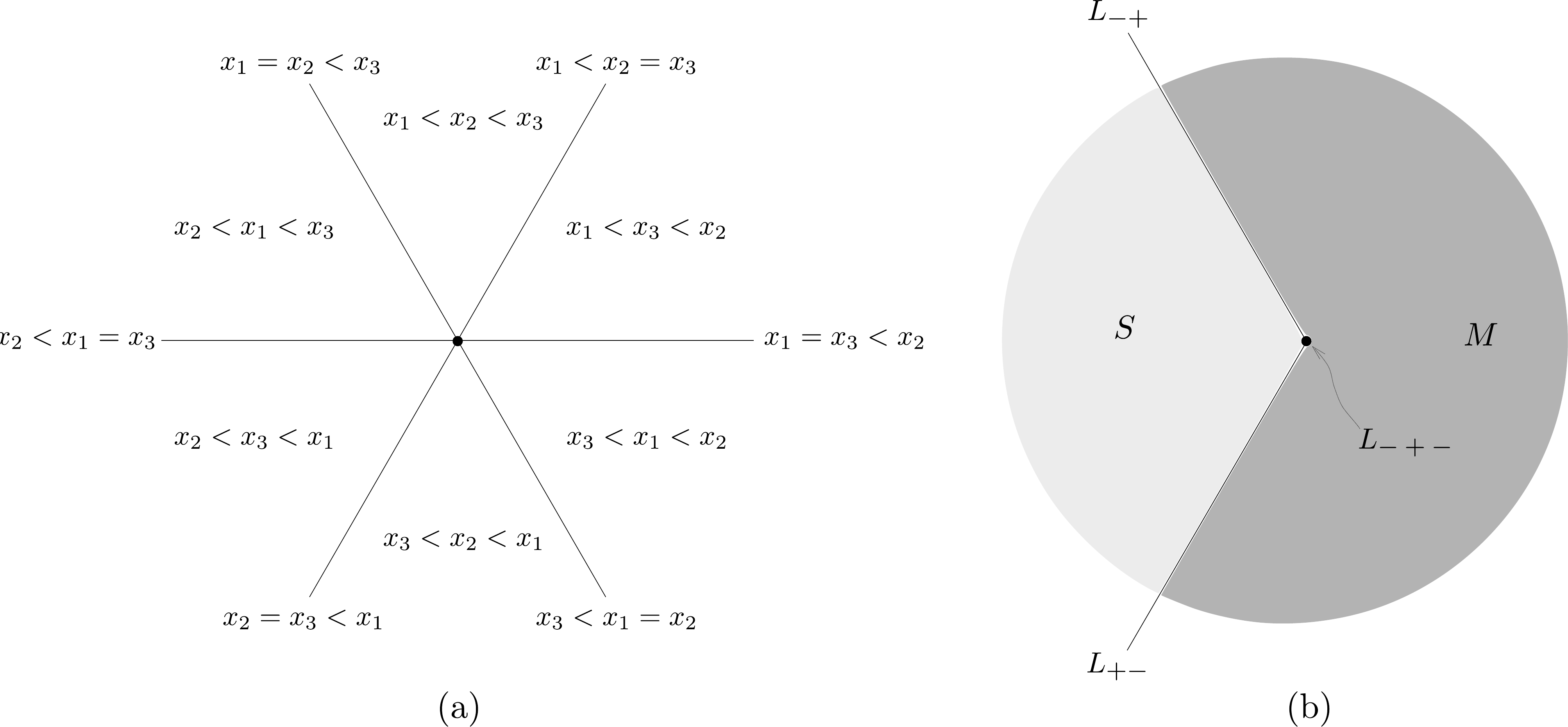}
		\caption{The decomposition of $\R^3$ into the 13 cells $W_{\ast}$ and
		into the sets $M$, $S$ and $L_\sig$, for $\abs{\sig}\geq 2$. More
		precisely, what is depicted here is the orthogonal projection of these
		sets onto the plane $\set{(x_1,x_2,x_3)\in \R^3}{x_1+x_2+x_3=0}$.}
		\label{F:cells} \end{center} \end{figure}

We are actually more interested in another decomposition of $\R^n$, obtained by
comparing even and odd cordinates. 

\begin{defn}[mixed, level, split]\label{D:mixed} Given  $x=(x_1,\dots,x_{n})\in \R^n$, let
	\begin{equation}\label{E:t(x)} t(x)=\min\set{x_k-x_{k'}}{\text{$k$ is odd
			and ${k'}$ is even, $k,k'\in [n]$}}.  \end{equation} We call $x$
		\tdef{mixed}, \tdef{level} or \tdef{split} according as $t(x)<0$,
		$t(x)=0$ or $t(x)>0$, respectively. Define \begin{equation*}
			M=\set{x\in \R^n}{x\text{ is mixed}},\ \ S=\set{x\in \R^n}{x\text{
			is split}},\ \ L=\set{x\in \R^n}{x\text{ is level}}.
		\end{equation*} \end{defn}

It is convenient to represent a point $x=(x_1,\dots,x_{n})\in \R^n$ as an
ordered set of $n$ beads, each of which is allowed to slide along a vertical
line. The height of the $k$-th bead (above a certain fixed ground height) gives
the value of $x_k$; see Figure \ref{F:splitlevelmixed}. 

\begin{figure}[ht] \begin{center} \includegraphics[scale=.18]{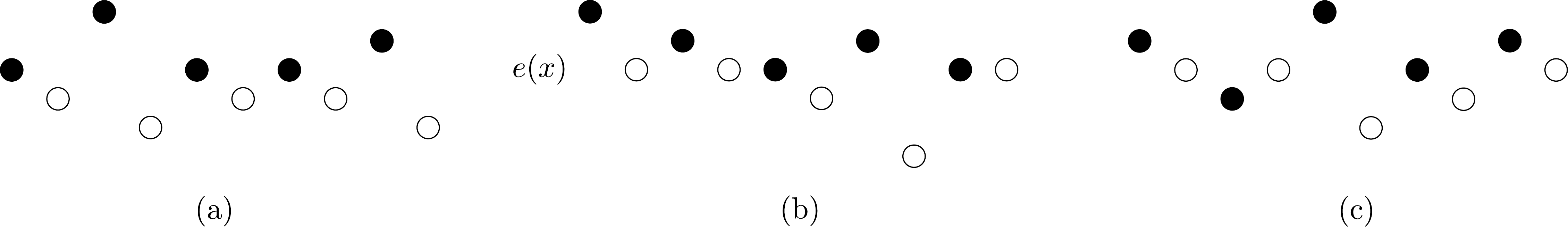}
		\caption{Split, level and mixed points in $\R^{10}$, respectively,
	represented by beads (black for odd-indexed coordinates and white for
even-indexed  coordinates).} \label{F:splitlevelmixed} \end{center} \end{figure}

An \tdef{interval} $J\subs [n]$ is a set of the form $(a,b)\cap [n]$ for some
$a<b\in \R$. Given two intervals $J_1,J_2$, we write $J_1<J_2$ if  $k_1<k_2$
whenever $k_1\in J_1,~k_2\in J_2$.

\begin{defn}[sign string, level type]\label{D:level} When $x\in \R^n$ is level, there exists a unique
	$e(x)\in \R$ satisfying $x_k=e(x)=x_{k'}$ for some odd $k$ and even ${k'}$
	(see Fig.~\ref{F:splitlevelmixed}\?(b); `e' stands for ``elevation''). For
	each integer $m\geq 2$, define \begin{equation}\label{E:standard}
		\sig^m, -\sig^m\colon [m]\to \se{\pm 1}\equiv \se{\pm},~\text{\quad
		by\quad}\sig^m(j)=(-1)^j\text{ and }-\sig^m(j)=(-1)^{j+1}.
	\end{equation} For example, $\sig^3$ is represented by $\ty{-+-}$, and
	$-\sig^4$ by $+-+-$. By definition, a sign string $\sig$ is of the form
	$\pm \sig^m$ for some $m\geq 2$, and $\abs{\sig}$ denotes its length $m$. A
	level point $x=(x_1,\dots,x_{n})\in \R^n$ is \tdef{of type} $\sig$ if we can
	find nonempty intervals $J_1,\dots,J_{\abs{\sig}}$, such that:
\begin{enumerate} \item [(i)] $J_1<J_2<\dots<J_{\abs{\sig}}$ and
		$[n]=\bcup_{j=1}^{\abs{\sig}} J_j$.  \item [(ii)]  For each $j$,
			there exists at least one $k\in J_j$ with 
			$x_k=e(x)$, and $(-1)^k=\sig(j)$ for all such $k$.  \end{enumerate} The set of all
	level points of type $\sig$ in $\R^n$ will be denoted by $L^n_\sig$ or
	simply $L_\sig$.  \end{defn} 

In other words, to determine the type of a level point $x\in \R^n$, we assign a
tag $\ty{-}$ (resp.~$\ty{+}$) to each odd (resp.~even) bead lying at height
$e(x)$, and read off the corresponding signs, omitting any repetitions;
see Figure \ref{F:level}. 

\begin{figure}[ht] \begin{center} \includegraphics[scale=.18]{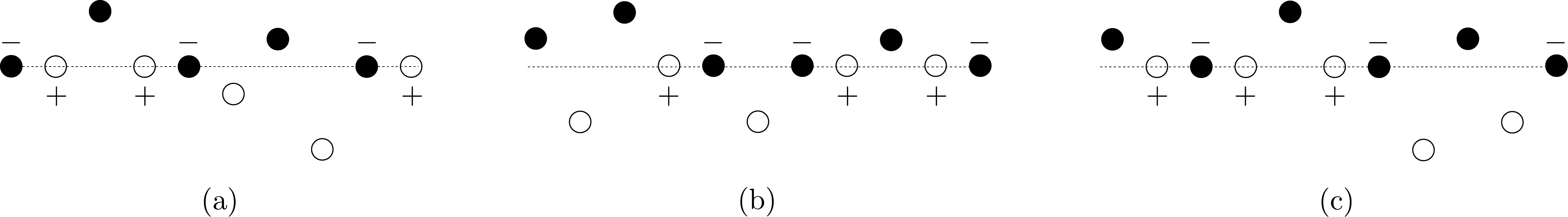} \caption{An
			element of $L^{10}_{\sig^4}$  and two elements of
			$L^{11}_{-\sig^4}$, respectively. According to
			\tref{P:decomposition}, the latter space is homeomorphic to $\R^8$.
			In particular, the two points represented in (b) and (c) can be
			joined without leaving $L^{11}_{-\sig^4}$.} \label{F:level}
		\end{center} \end{figure}

Observe that the sets $M$, $S$ and $L_\sig$ are pairwise disjoint cones.
Moreover, $L_{\sig^{n}}=W_{[n]}$ and $L_{\tau}=\emptyset$ if $\tau=-\sig^{n}$ or
$\abs{\tau}>n$. The sets $M$, $S$ are open in $\R^n$, while the $L_\sig$ are
neither open nor closed for $\abs{\sig}<n$. Each of the sets $M$, $S$ and 
$L_\sig$, for any sign string $\sig$, is a union of cells $W_{\ast}$ of $\R^n$. 
Equivalently, each cell of $\R^n$ is contained in one of these sets. The proofs
of these assertions are all straightforward. See Figure \ref{F:cells}\?(b) for the case 
$n=3$. 

\begin{defn}\label{D:boundary} For an integer $m\geq 1$, let
	\begin{equation*}
	\Hh^m=\set{(x_1,\dots,x_m)\in \R^m}{x_m\geq
		0}\text{\ \ and\ \ }-\Hh^m=\set{(x_1,\dots,x_m)\in \R^m}{x_m\leq 0}.
	\end{equation*} For a space $Y\home \Hh^m$, define $\bd Y$ to consist of all
	$y\in Y$ such that the local homology  $H_*(Y,Y\ssm \se{y})$ at $y$ is
	trivial. Note that $\bd Y$ is exactly the image of $\R^{m-1}\times \se{0}$
	under any homeomorphism $\Hh^m\to Y$.  \end{defn}

Our first goal is to prove the following result. 

\begin{prop}\label{P:decomposition} For $\sig$ a sign string with $2\leq
	\abs{\sig}\leq n-1$, let \begin{alignat*}{9}
		Y_{\pm}=&\set{(y_1,\dots,y_{n})\in \R^{n}}{\pm y_{1}>0}, \\
		Y_\sig=&\set{(y_1,\dots,y_{n})\in \R^{n}}{y_k=0\text{ for all
			$k<\abs{\sig}$ and $\sig(1)y_{\abs{\sig}}>0$}}, \\
			Y_{\sig^n}=&\set{(y_1,\dots,y_{n})\in \R^{n}}{y_k=0\text{ for all
			$k<n$}}.  \end{alignat*} Then there exists a homeomorphism $f\colon
		\R^n\to \R^n$ such that $f(M)=Y_{-}$, $f(S)=Y_{+}$ and
		$f(L_\sig)=Y_{\sig}$ for all sign strings $\sig$ with $\abs{\sig}\leq
		n$, $\sig\neq -\sig^{n}$.  \end{prop}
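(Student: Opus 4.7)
The plan is to prove Proposition \ref{P:decomposition} by induction on $n$. The base case $n = 2$ is handled by the linear isomorphism $(x_1, x_2) \mapsto (x_1 - x_2,\ x_1 + x_2)$, which sends $M$, $L_{\sigma^2}$, and $S$ to $\{y_1 < 0\}$, $\{y_1 = 0\}$, and $\{y_1 > 0\}$, respectively. For the inductive step, the natural choice for the first coordinate of $f$ is $y_1 = t(x)$ from \eqref{E:t(x)}: this is continuous on $\R^n$ and is negative precisely on $M$, positive precisely on $S$, and zero precisely on $L$. Constructing the full $f$ then reduces to two subtasks: (a) producing a stratification-preserving homeomorphism $L \to \{y_1 = 0\} \cong \R^{n-1}$, and (b) extending this to all of $\R^n$ consistently with the cone structure of the strata (they are invariant under positive scaling and under translation along the diagonal $(1, \ldots, 1)$).

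Subtask (b) is relatively routine: using $t(x)$ as a transverse parameter together with a retraction $\R^n \setminus \mathrm{diag} \to L$ built from the cone action, one can propagate the identification on $L$ to the rest of $\R^n$. The heart of the proof is subtask (a), which I would attack via the structural claim that $L$ is itself homeomorphic to $\R^{n-1}$ in a manner matching the stratification $\{L_\sigma\}_{|\sigma| \ge 2}$ with the full $M' / S' / \{L'_{\sigma'}\}$-stratification of $\R^{n-1}$ supplied by the inductive hypothesis. Under this correspondence one should have $L_{-+} \leftrightarrow M'$, $L_{+-} \leftrightarrow S'$, and $L_\sigma \leftrightarrow L'_{\sigma'}$ for $|\sigma| \ge 3$, where $\sigma'$ is obtained from $\sigma$ by deleting its last entry. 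To build the identification $L \cong \R^{n-1}$, I would parameterize a level point $x$ via its elevation $e(x)$ together with the signed bead deviations $d_k = x_k - e(x)$, and then show that $L$ splits as the product of $\R$ (the elevation) with two ``nonnegative coordinate crosses''---one for the odd deviations $(d_1, d_3, \ldots)$ constrained by $\min = 0$, and one for the evens---each of which is a topological manifold homeomorphic to a Euclidean space of the appropriate dimension (so that the total dimension is $1 + (\lceil n/2 \rceil - 1) + (\lfloor n/2 \rfloor - 1) = n - 1$).

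The main obstacle is the combinatorial verification that, under this parameterization, the stratification on $L$ induced by the $\{L_\sigma\}$ agrees cell-by-cell with the shifted stratification of $\R^{n-1}$ needed to apply the inductive hypothesis; in particular, one must check that the relative interleaving of odd and even clumps of beads at height $e(x)$ translates correctly into the sign strings $\sigma'$ of the smaller problem, and also that the coordinate cross in $\R^k_{\ge 0}$ really is homeomorphic to $\R^{k-1}$ (not merely a CW complex with matching cell counts). Once the identification on $L$ is in hand, combining it with the conical extension in subtask (b) yields the desired homeomorphism $f \colon \R^n \to \R^n$.
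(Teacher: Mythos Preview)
Your inductive scaffolding is correct and matches the paper's. Subtask (b) is precisely Lemma~\ref{L:MandS}: the map $x\mapsto\big((x_k-(-1)^{k-1}\bar t(x))_k,\,\bar t(x)\big)$ with $\bar t(x)=\tfrac12 t(x)$ is a homeomorphism $\R^n\to L\times\R$ carrying $M$ and $S$ to $L\times(-\infty,0)$ and $L\times(0,\infty)$. The stratum correspondence you propose for subtask (a) --- $L_{-+}\leftrightarrow M'$, $L_{+-}\leftrightarrow S'$, and $L_\sigma\leftrightarrow L'_{\sigma'}$ with $\sigma'$ obtained by dropping the last sign of $\sigma$ --- is also the right one at the level of incidence data and dimensions.

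The gap is in how you propose to \emph{realize} this correspondence. Your product parametrization of $L$ by elevation and deviations is valid and does yield $L\cong\R^{n-1}$ as a space. But the type $\sigma$ of a level point is determined by the \emph{interleaving} of the odd and even indices whose deviations vanish, and that interleaving is destroyed the moment you separate the odd and even deviations into independent factors: any homeomorphism from the boundary of the positive orthant in $\R^p$ to $\R^{p-1}$ treats those $p$ coordinates symmetrically and knows nothing about where they sit relative to the even ones. Consequently there is no reason the product identification should carry $L_\sigma$ onto $M'$, $S'$, or any specific $L'_{\sigma'}$; the target stratification of $\R^{n-1}$ is not a product stratification. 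The ``combinatorial verification'' you flag as the main obstacle is therefore not a bookkeeping check left over after the construction --- it \emph{is} the construction, and your parametrization gives no purchase on it.

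The paper handles subtask (a) by a mechanism that does respect the coordinate ordering: it uses the forget-last-coordinate map $\phi(x)=(x_1,\dots,x_{n-1})$ together with $\lambda(x)=|x_n-e(x)|$ to relate each $\ol{L}^n_{\pm\sigma^m}$ to $\ol{L}^{n-1}_{\pm\sigma^m}$ (same string, one dimension lower). It then builds the homeomorphism on $L$ from the inside out, starting with the diagonal $L^n_{\sigma^n}$ and adjoining the pair $L^n_{\pm\sigma^m}$ for $m=n-1,n-2,\dots,2$; at each stage the induction hypothesis identifies $\ol{L}^n_{\pm\sigma^m}$ with copies of $\Hh^{n+1-m}$, and two half-space gluing lemmas (Lemmas~\ref{L:glueing} and~\ref{L:glueing1}) assemble the pieces. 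This stepwise approach is what replaces the single stratified identification $L\to\R^{n-1}$ that your outline needs but does not supply.
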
 \begin{cor}\label{C:decomposition}
		Let $M,\,S,\,L^n_\sig\subs \R^n$. Then $M\home S\home \R^n$,
		$\ol{M}\home \ol{S}\home \Hh^n$, $L^n_\sig\home \R^{n+1-\abs{\sig}}$ and
		$\ol{L}^n_\sig\home \Hh^{n+1-\abs{\sig}}$ \tup($\abs{\sig}<n$\tup).
		Also, $L^n_{\sig^{n}}=\ol{L}^n_{\sig^{n}}\home \R$ and
		$L^n_{-\sig^{n}}=\emptyset$.\qed \end{cor}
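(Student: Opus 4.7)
The plan is to construct the homeomorphism $f\colon \R^n\to \R^n$ explicitly by induction on $n$. For the base case $n=2$, the linear isomorphism $f(x_1,x_2)=(x_1-x_2,\,x_1+x_2)$ sends $M^2=\{x_1<x_2\}$ to $\{y_1<0\}$, $S^2$ to $\{y_1>0\}$, and the diagonal $L^2=L^2_{\sig^2}$ to $\{y_1=0\}$, matching the targets $Y_\pm$ and $Y_{\sig^2}$.

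For the inductive step, I would take $f_1(x)=t(x)$ as the first coordinate, since $t$ is a PL function with $f_1^{-1}(-\infty,0)=M$, $f_1^{-1}(0,\infty)=S$ and $f_1^{-1}(0)=L$. For the remaining coordinates, the idea is to produce a PL homeomorphism $\phi\colon L\to \R^{n-1}$ that respects the stratification and then compose with the inductively obtained $f^{(n-1)}$. The required correspondence under $\phi$ is that the top-dimensional strata $L^n_{+-}$ and $L^n_{-+}$ (each of dimension $n-1$) go to $S^{n-1}$ and $M^{n-1}$, and $L^n_\sig$ for $|\sig|\geq 3$ goes to $L^{n-1}_{\sig''}$, where $\sig''$ is the unique sign string of length $|\sig|-1$ starting with the same sign as $\sig$. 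For $n=3$ the explicit formula $\phi(x_1,x_2,x_3)=(x_1,x_3)$ works: it sends $(a,b,b)\in L^3_{+-}$ (with $a>b$) to $(a,b)\in S^2$, $(a,a,c)\in L^3_{-+}$ (with $a<c$) to $(a,c)\in M^2$, and the diagonal to the diagonal.

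To glue $\phi$ into a map on all of $\R^n$ while preserving $t$ in the first coordinate, I would use the continuous retraction $\pi\colon \R^n\to L$ given by $\pi(x)=x-\tfrac{t(x)}{2}(1,-1,1,-1,\ldots)$, which is well defined because the translation $x\mapsto x+s(1,-1,1,-1,\ldots)$ changes $t$ at the constant rate $2$. The pair $(t,\pi)\colon \R^n\to \R\times L$ is then a PL homeomorphism with inverse $(s,y)\mapsto y+\tfrac{s}{2}(1,-1,1,-1,\ldots)$, so setting $f(x)=\bigl(t(x),\,f^{(n-1)}(\phi(\pi(x)))\bigr)$ yields the desired map. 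Corollary \ref{C:decomposition} then follows immediately by inspecting the dimensions and closures of the standard sets $Y_\sig$.

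The main obstacle is defining the homeomorphism $\phi\colon L^n\to \R^{n-1}$ explicitly for general $n$ and verifying the combinatorial matching of strata. The set $L^n$ is a PL hypersurface with intricate self-intersections at the loci where the minima or maxima defining $t(x)$ fail to be unique; for instance, $L^4_{-+}$ is the union of three generic three-dimensional pieces while $L^4_{+-}$ consists of just one. Consequently $\phi$ must be defined piecewise on the cells $W_{J_1,\dots,J_m}\cap L^n$, and the bulk of the proof lies in finding a uniform description and checking that the pieces glue to a global homeomorphism carrying each $L^n_\sig$ onto the prescribed target. Once this is done, the verification that $f$ is a homeomorphism reduces to confirming that on each cell the restriction is a linear bijection of the correct dimension.
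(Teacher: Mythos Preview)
Your reduction via $(t,\pi)\colon \R^n\to \R\times L$ is exactly the paper's \lref{L:MandS}, and your base case $n=2$ agrees with the paper's. The combinatorial correspondence you propose ($L^n_{\pm\mp}\mapsto S^{n-1},M^{n-1}$ and $L^n_\sig\mapsto L^{n-1}_{\sig''}$) is also correct: under the target homeomorphism one has $Y^{(n)}_\sig=\{0\}\times Y^{(n-1)}_{\sig''}$, so the dimensions and adjacencies match.

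The genuine gap is precisely what you flag: the stratification-preserving homeomorphism $\phi\colon L^n\to\R^{n-1}$ is not constructed for $n\geq 4$. Your $n=3$ trick of dropping $x_2$ works only because there is a \emph{unique} even coordinate, so $x_2=\min(x_1,x_3)$ is forced on $L^3$; for $n\geq 4$ no single coordinate can be recovered from the others on all of $L^n$, and a piecewise definition on the cells $W_{J_1,\dots,J_m}$ runs into exactly the glueing problems you anticipate. As stated, the proposal is therefore a reformulation rather than a proof: establishing the existence of $\phi$ is equivalent in difficulty to the proposition itself.

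The paper avoids this by \emph{not} seeking a global formula. It builds $f$ from the inside out: start with a homeomorphism $L^n_{\sig^n}\to Y_{\sig^n}$ and, for each $m=n-1,n-2,\dots,2$, extend over $\bigcup_{|\tau|\geq m}L^n_\tau$ using two abstract glueing lemmas (\lref{L:glueing}, \lref{L:glueing1}) together with the closure relation $\ol L^n_{\sig^m}\cap\ol L^n_{-\sig^m}=\bigcup_{|\tau|>m}L^n_\tau$ (\lref{L:boundaries}). The inductive input is supplied by the map $(x_1,\dots,x_n)\mapsto\bigl((x_1,\dots,x_{n-1}),\,|x_n-e(x)|\bigr)$, which is a homeomorphism only on \emph{individual} closures $\ol L^n_{\pm\sig^m}$ (onto $\ol L^{n-1}_{\pm\sig^m}\times[0,\infty)$ or pieces thereof), not on all of $L^n$. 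This stratum-by-stratum approach trades an explicit formula for a clean induction; your approach would give a more explicit $f$, but the missing construction of $\phi$ is where all the work lies.
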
 In particular, each of the
	sets $L_\sig$ and $\ol{L}_\sig$ is contractible. It is a good exercise to try to
	visualize a contraction using the representation by beads, as in Figure
	\ref{F:level}. 

\begin{rem}\label{R:extension} Any homeomorphism $\bd \Hh^{k}\to \bd \Hh^{k}$
	may be extended to a homeomorphism of $\Hh^{k}$ onto itself.  \end{rem}

\begin{lem}\label{L:glueing} Let $H_1\cup H_2$ be a topological space with 
	$H_1\home H_2\home \Hh^k$ and $\bd H_1=\bd H_2=H_1\cap H_2$.
	Then there exists a homeomorphism $f\colon H_1\cup H_2\to \R^k$ such that
	$f(H_1)=\Hh^k$ and $f(H_2)=-\Hh^k$.  \end{lem}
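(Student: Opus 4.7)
The plan is to construct $f$ directly by gluing given homeomorphisms $\phi_i \colon H_i \to \Hh^k$ \tup($i=1,2$\tup), after normalizing so that they agree on the common boundary $B := H_1 \cap H_2 = \bd H_1 = \bd H_2$. Each $\phi_i$ restricts to a homeomorphism $B \to \bd \Hh^k$, but there is no reason a priori for these two restrictions to coincide. The composite $\psi := \phi_1 \circ (\phi_2|_B)^{-1}$ is a self-homeomorphism of $\bd \Hh^k \home \R^{k-1}$, and this $\psi$ captures the entire obstruction to naively gluing the $\phi_i$ together.

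To remove it, I will invoke Remark \ref{R:extension} to extend $\psi$ to a self-homeomorphism $\widetilde\psi \colon \Hh^k \to \Hh^k$. Then $\widetilde\psi \circ \phi_2 \colon H_2 \to \Hh^k$ is still a homeomorphism, and its restriction to $B$ now equals $\phi_1|_B$ by construction. Let $r \colon \R^k \to \R^k$ denote the reflection $(x_1,\dots,x_{k-1},x_k) \mapsto (x_1,\dots,x_{k-1},-x_k)$, which carries $\Hh^k$ homeomorphically onto $-\Hh^k$ and fixes $\bd \Hh^k$ pointwise. Define
\begin{equation*}
 f(x) = \begin{cases} \phi_1(x), & x \in H_1, \\ r\bigl(\widetilde\psi(\phi_2(x))\bigr), & x \in H_2.\end{cases}
\end{equation*}
On $B$ both formulas return $\phi_1(x) \in \bd \Hh^k$ \tup(using that $r$ fixes $\bd \Hh^k$\tup), so $f$ is well defined.

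Assuming, as is implicit in the setup, that $H_1$ and $H_2$ are closed in $H_1 \cup H_2$, the pasting lemma yields continuity of $f$; the analogous assembly of $\phi_1^{-1}|_{\Hh^k}$ with $\phi_2^{-1} \circ \widetilde\psi^{-1} \circ r|_{-\Hh^k}$ supplies a continuous inverse. Consequently $f$ is a homeomorphism of $H_1 \cup H_2$ onto $\Hh^k \cup (-\Hh^k) = \R^k$ with $f(H_1) = \Hh^k$ and $f(H_2) = -\Hh^k$, as desired. The only substantive step is the boundary correction via Remark \ref{R:extension}; once $\widetilde\psi$ is in hand, the rest is a routine pasting argument, and I do not expect any real obstacle.
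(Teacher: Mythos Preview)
Your proof is correct and follows essentially the same approach as the paper's: both start from arbitrary homeomorphisms onto half-spaces, use \rref{R:extension} to correct the mismatch on the common boundary, and then glue. The only cosmetic differences are that the paper sends $H_2$ directly to $-\Hh^k$ rather than composing with a reflection afterward, and it adjusts the map on $H_1$ instead of the one on $H_2$.
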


\begin{proof} Let $g_1\colon H_1\to \Hh^k$ and $g_2\colon H_2\to -\Hh^k$ be
	homeomorphisms. Then the restriction of $g_2\circ (g_1)^{-1}$ to $\bd \Hh^k$
	is a homeomorphism $\bd \Hh^k\to \bd \Hh^k$. Using \rref{R:extension},
	extend this to a homeomorphism $g\colon \Hh^k\to \Hh^k$. Now glue $g\circ
	g_1$ and $g_2$ along $\bd H_1=\bd H_2$.
\end{proof}

\begin{lem}\label{L:glueing1} Let $H_1\cup H_2$ be a topological space with 
	$H_1\home H_2\home \Hh^k$ and $\bd H_i=C\cup D_i$, where $C\home
	D_i\home \Hh^{k-1}$, $C\cap D_i=\bd C=\bd D_i$ and $H_1\cap H_2=C$
	$(i=1,2)$. Then $H_1\cup H_2\home \Hh^k$.  \end{lem}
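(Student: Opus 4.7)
The plan is to exhibit a standard model of the situation inside $\Hh^k$ and transport the abstract decomposition $H_1\cup H_2$ onto this model using Lemma \ref{L:glueing} together with repeated applications of Remark \ref{R:extension}. Split $\Hh^k$ along the hyperplane $\set{x\in \R^k}{x_{k-1}=0}$ into
\[
H_i^\ast=\set{x\in \Hh^k}{(-1)^{i+1}x_{k-1}\geq 0}\quad (i=1,2),
\]
and set $C^\ast=H_1^\ast\cap H_2^\ast$ and $D_i^\ast=\set{x\in H_i^\ast}{x_k=0}$. One verifies directly that $H_i^\ast\home \Hh^k$, that $C^\ast\home D_i^\ast\home \Hh^{k-1}$, that $\bd H_i^\ast=C^\ast\cup D_i^\ast$ with $C^\ast\cap D_i^\ast=\bd C^\ast=\bd D_i^\ast$, and that $H_1^\ast\cup H_2^\ast=\Hh^k$. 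Hence it suffices to build homeomorphisms $f_i\colon H_i\to H_i^\ast$ satisfying $f_1|_C=f_2|_C$, for then gluing them along $C$ yields the required homeomorphism $H_1\cup H_2\home \Hh^k$.

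The $f_i$ are obtained from arbitrary initial homeomorphisms $h_i\colon H_i\to H_i^\ast$, which exist by hypothesis, via two successive adjustments. First, I would arrange $h_i(C)=C^\ast$. Applying Lemma \ref{L:glueing} to the decompositions $\bd H_i=C\cup D_i$ and $\bd H_i^\ast=C^\ast\cup D_i^\ast$ produces homeomorphisms $\bd H_i\to \R^{k-1}$ and $\bd H_i^\ast\to \R^{k-1}$ sending $C$ and $C^\ast$ respectively to $\Hh^{k-1}$; their composition is a homeomorphism $\psi_i\colon \bd H_i\to \bd H_i^\ast$ with $\psi_i(C)=C^\ast$. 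The self-homeomorphism $\psi_i\circ(h_i|_{\bd H_i})^{-1}$ of $\bd H_i^\ast\home \R^{k-1}=\bd\Hh^k$ extends to a self-homeomorphism of $H_i^\ast$ by Remark \ref{R:extension}, and composing it with $h_i$ yields a replacement $h_i'\colon H_i\to H_i^\ast$ with $h_i'(C)=C^\ast$.

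At this point both $h_i'$ carry $C$ homeomorphically onto $C^\ast$, but the restrictions $h_1'|_C$ and $h_2'|_C$ may still differ. To reconcile them I consider the self-homeomorphism $\delta=h_2'|_C\circ(h_1'|_C)^{-1}$ of $C^\ast\home \Hh^{k-1}$. Its restriction to $\bd C^\ast=\bd D_1^\ast$ is a self-homeomorphism of that $(k-2)$-dimensional space which, by Remark \ref{R:extension}, extends across $D_1^\ast\home \Hh^{k-1}$; together with $\delta$ on $C^\ast$ this assembles into a self-homeomorphism of $\bd H_1^\ast=C^\ast\cup D_1^\ast\home \R^{k-1}$, and one further application of Remark \ref{R:extension} extends it across $H_1^\ast\home \Hh^k$. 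Composing with $h_1'$ produces $f_1$ satisfying $f_1|_C=h_2'|_C$, and I take $f_2=h_2'$.

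The main delicacy, and the reason for the iterated extensions in the matching step, is that every adjustment of a boundary value must remain consistent with the further splitting $\bd H_i^\ast=C^\ast\cup D_i^\ast$ of the boundary; apart from keeping track of this refinement the argument is a direct reduction to the standard model.
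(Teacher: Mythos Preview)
Your proof is correct and uses the same target model as the paper (the two quadrants $H_i^\ast=Q_i$ partition $\Hh^k$ exactly as in the paper's $Q_1\cup Q_2$), but the way you manufacture the compatible homeomorphisms $f_i\colon H_i\to H_i^\ast$ is genuinely different. The paper builds \emph{outward} from a fixed $f_0\colon C\to\Hh^{k-1}$: it extends $f_0$ over $\bd H_i$ via \lref{L:glueing}, then over $H_i$ via \rref{R:extension}, and finally composes with the complex square root $(x_{k-1},x_k)\mapsto\sqrt{x_{k-1}+ix_k}$, which folds the flat boundary $\R^{k-1}$ of $\Hh^k$ onto the two-faced boundary $C^\ast\cup D_i^\ast$ of the quadrant. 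Because both $h_1$ and $h_2$ are built from the \emph{same} $f_0$, they automatically agree on $C$. You instead start from arbitrary $h_i$ and perform two rounds of boundary adjustment: first forcing $h_i(C)=C^\ast$, then matching $h_1'|_C$ to $h_2'|_C$ by propagating the discrepancy $\delta$ through $\bd C^\ast\subset D_1^\ast\subset \bd H_1^\ast\subset H_1^\ast$. The square-root trick buys the paper a one-shot construction with no bookkeeping; your approach avoids the trick entirely at the cost of the iterated extensions, which you handle correctly.
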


\begin{proof} Let $f_0\colon C\to \Hh^{k-1}$ be a homeomorphism. Using
	\lref{L:glueing}, $f_0$ may be extended to a homeomorphism $f_1\colon C\cup
	D_1\to \R^{k-1}$, and then since $\bd H_1=C\cup D_1$, $f_1$ has an extension
	to a homeomorphism $g_1\colon H_1\to \Hh^k$, by \rref{R:extension}. Finally,
	we  compose $g_1$ with the homeomorphism \begin{equation*}
		\Hh^k\to Q_1=\set{(x_1,\dots,x_k)\in \R^k}{x_{k-1}\geq 0\text{ and
		}x_{k}\geq 0}, \end{equation*} obtained by taking the square root (in
	$\C$) of the last two coordinates $(x_{k-1},x_{k})$ of points $x\in \Hh^k$.
	The result is a homeomorphism $h_1\colon H_1\to Q_1$ such that
	$h_1|_{C}=f_0$. 
	
	Repeating the argument for $H_2$, starting from $f_0$ again, we obtain a
	homeomorphism \begin{equation*}
	h_2\colon H_2\to
		Q_2=\set{(x_1,\dots,x_k)\in \R^k}{x_{k-1}\geq 0\text{ and }x_{k}\leq 0},
	\end{equation*} with $h_2|_{C}=f_0$. Glueing $h_1$ and $h_2$ along $C$, we
	finally obtain the desired homeomorphism \begin{equation*}
		h\colon H_1\cup H_2\to Q_1\cup Q_2=\set{(x_1,\dots,x_k)\in
		\R^k}{x_{k-1}\geq 0}\home \Hh^k.\qedhere \end{equation*} \end{proof}

\begin{lem}\label{L:MandS} Let $M,\,S,\,L\subs \R^n$ be as in \dref{D:mixed}.
	Then there exists a homeomorphism $g\colon \R^n\to  L\times \R$ with
	$g(M)=L\times (-\infty,0)$ and $g(S)=L\times (0,+\infty)$. In particular,
	$\ol{M}\cap \ol{S}=L$.  \end{lem}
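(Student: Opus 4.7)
The plan is to write down an explicit homeomorphism by ``shearing'' the odd and even coordinates in opposite directions by an amount controlled by the continuous function $t\colon \R^n\to \R$ defined in \eqref{E:t(x)}.

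First I would observe that $t$ is continuous, being the minimum of finitely many affine functions $x\mapsto x_k-x_{k'}$ (with $k$ odd, $k'$ even). Hence $M=\{t<0\}$ and $S=\{t>0\}$ are open and $L=\{t=0\}$ is closed, and the three sets partition $\R^n$. Next I would define $\phi\colon \R^n\to \R^n$ by
\begin{equation*}
	\phi(x)_k=\begin{cases} x_k-\tfrac{t(x)}{2} & \text{if $k$ is odd,}\\
		x_k+\tfrac{t(x)}{2} & \text{if $k$ is even.}
	\end{cases}
\end{equation*}
A one-line computation shows that for $k$ odd and $k'$ even, $\phi(x)_k-\phi(x)_{k'}=(x_k-x_{k'})-t(x)$, so $t(\phi(x))=t(x)-t(x)=0$, i.e., $\phi(x)\in L$.

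The map I propose is $g\colon \R^n\to L\times \R$ given by $g(x)=(\phi(x),t(x))$. It is continuous and takes $M$ into $L\times (-\infty,0)$, $S$ into $L\times (0,+\infty)$, and $L$ onto $L\times\{0\}$. For the inverse, given $(y,s)\in L\times \R$ define
\begin{equation*}
	h(y,s)_k=\begin{cases} y_k+\tfrac{s}{2} & \text{if $k$ is odd,}\\ y_k-\tfrac{s}{2} & \text{if $k$ is even.}\end{cases}
\end{equation*}
Since $t(y)=0$, the same calculation gives $t(h(y,s))=s$, and then $\phi(h(y,s))=y$; conversely $h(\phi(x),t(x))=x$. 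Thus $h=g^{-1}$, so $g$ is a homeomorphism with the required properties.

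For the final assertion, $\ol{M}\cap \ol{S}=L$, I would simply transport through $g$: since $g$ is a homeomorphism of $\R^n$ onto $L\times \R$,
\begin{equation*}
	g(\ol{M})=\ol{L\times(-\infty,0)}=L\times(-\infty,0],\qquad g(\ol{S})=L\times[0,+\infty),
\end{equation*}
whose intersection is $L\times\{0\}=g(L)$. The only point worth double-checking is that no obstacle to continuity lurks in the piecewise formulas for $\phi$ and $h$, but they are global and linear in each coordinate for fixed $t(x)$, with the single scalar $t(x)$ varying continuously, so I do not anticipate a genuine difficulty; the whole argument should be short.
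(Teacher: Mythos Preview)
Your proof is correct and is essentially identical to the paper's: the paper defines the same shear map $g(x)=\big((x_k+(-1)^k\tfrac{t(x)}{2})_k,\,\tfrac{t(x)}{2}\big)$ with explicit inverse $h$, differing from yours only by using $\tfrac{t(x)}{2}$ rather than $t(x)$ as the $\R$-coordinate. Your write-up is in fact slightly more detailed, since you spell out the verification that $\phi(x)\in L$ and the closure argument for $\ol M\cap\ol S=L$, both of which the paper leaves implicit.
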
 \begin{proof} Define a map $h\colon
	L\times \R\to \R^n$ by \begin{equation*}
		h(x,t)=(x_1+t,x_2-t,\dots,x_{n}+(-1)^{n-1}t)\qquad
		(x=(x_1,\dots,x_{n})\in L,~t\in \R).  \end{equation*} Given $x\in \R^n$,
	let $t(x)$ be as in eq.~\eqref{E:t(x)} and $\bar t(x)=\frac{1}{2} t(x)$. Let
	\begin{equation*}
	g\colon \R^n\to L\times \R,\quad
		g(x)=\big((x_1-\bar t(x),x_2+\bar t(x),\dots,x_n+(-1)^n\bar
		t(x))\,,\,\bar t(x)\big).  \end{equation*} Then $g$ and $h$ are inverse
	maps. Moreover, it is an immediate consequence of \dref{D:mixed} that
	$g(M)=L\times (-\infty,0)$ and $g(S)=L\times (0,+\infty)$, as claimed.
\end{proof}

\begin{lem}\label{L:boundaries} For any sign string $\sig$, the closure
	$\ol{L}_\sig$ of $L_\sig$ in $\R^n$ satisfies $\ol{L}_\sig=L_\sig\cup
	\bcup_{\abs{\tau}>\abs{\sig}}L_{\tau}$. In particular, $\ol{L}_{\sig^m}\cap
	\ol{L}_{-\sig^m}=\bcup_{\abs{\tau}>m}L_\tau$.  \end{lem}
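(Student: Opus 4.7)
The plan is to establish both set-theoretic inclusions via a combinatorial analysis of the parity sequence of at-level coordinates. I would first observe that every level point $x$ has a unique \emph{type}: list its at-level coordinates $p_1<\cdots<p_s$ (both parities occur because $t(x)=0$) and collapse the sequence of parities $(-1)^{p_1},\ldots,(-1)^{p_s}$ by merging maximal runs; the resulting alternating sign string is the type, and the sets $L_\tau$ are therefore pairwise disjoint.

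To prove $\ol{L}_\sig\subseteq L_\sig\cup\bigcup_{\abs{\tau}>\abs{\sig}}L_\tau$, I would take $y\in\ol{L}_\sig$ with $y^{(k)}\in L_\sig$ and $y^{(k)}\to y$, and pass to a subsequence along which the at-level set $A^{(k)}$ is a constant $A'$. Since $y^{(k)}_i=e(y^{(k)})$ for $i\in A'$ and $e(y^{(k)})\to e(y)$, one gets $A'\subseteq A$, the at-level set of $y$. The parity sequence of $A'$ collapses to $\sig$ and that of $A\supseteq A'$ collapses to the type $\tau$ of $y$. A routine induction on single-element deletions of a $\pm$-sequence shows that deleting one entry decreases the collapsed length by $0$, $1$, or $2$, with the zero case arising precisely when the deleted entry equals one of its neighbours, and in that case leaving the collapsed sequence unchanged. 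It follows that $\abs{\sig}\leq\abs{\tau}$, with equality forcing $\sig=\tau$.

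For the reverse inclusion, only $L_\tau\subseteq\ol{L}_\sig$ with $\abs{\tau}>\abs{\sig}$ needs argument. Given $y\in L_\tau$, I would decompose $A$ into runs $I_1<\cdots<I_{\abs{\tau}}$ of constant parity $\tau(j)$ and select indices $1\leq i_1<\cdots<i_{\abs{\sig}}\leq\abs{\tau}$ with $\tau(i_j)=\sig(j)$. Because both strings alternate, this reduces to a single parity constraint $i_j\equiv j+\de\pmod{2}$, where $\de\in\se{0,1}$ is $0$ or $1$ according as $\sig(1)=\tau(1)$ or not, and the explicit choice $i_j=j+\de$ works since $\abs{\sig}+\de\leq\abs{\tau}$. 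Picking representatives $q_j\in I_{i_j}$ and setting $A'=\se{q_1,\dots,q_{\abs{\sig}}}$, I would define $y^{(k)}$ by leaving coordinates in $A'$ and in $[n]\ssm A$ fixed, raising odd coordinates in $A\ssm A'$ by $1/k$, and lowering even ones by $1/k$. Since $\sig$ contains both signs, $A'$ still meets both parities, so $y^{(k)}$ stays level with $e(y^{(k)})=e(y)$ and at-level set exactly $A'$; hence $y^{(k)}\in L_\sig$ and $y^{(k)}\to y$.

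The main obstacle is the combinatorial lemma controlling collapsed lengths under deletion in the first inclusion; this, together with the dual parity-matching step used to construct $A'$ in the second, boils down to understanding how alternation behaves under sub-selection of an alternating sign string. The ``in particular'' identity is then immediate:
\begin{equation*}
\ol{L}_{\sig^m}\cap\ol{L}_{-\sig^m}=\Big(L_{\sig^m}\cup\bigcup_{\abs{\tau}>m}L_\tau\Big)\cap\Big(L_{-\sig^m}\cup\bigcup_{\abs{\tau}>m}L_\tau\Big)=\bigcup_{\abs{\tau}>m}L_\tau,
\end{equation*}
by pairwise disjointness of the $L_\tau$'s.
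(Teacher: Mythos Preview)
Your proof is correct and follows essentially the same approach as the paper's. For the inclusion $\ol{L}_\sig\subseteq L_\sig\cup\bigcup_{|\tau|>|\sig|}L_\tau$, the paper uses a direct neighborhood argument (showing that a small cube around $x\in L_\tau$ meets only $L_{\tau'}$ with $\tau'$ a substring of $\tau$) in place of your sequential/subsequence argument, but the reverse inclusion is handled identically via explicit perturbation of the excess at-level coordinates.
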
 \begin{proof}
	Let $\tau$ be a sign string and suppose that $ x\in L_{\tau} $. Define
	\begin{equation*}
		\mu=\frac{1}{2}\min\set{\abs{x_k-e(x)}}{x_k\neq e(x),~k\in [n]}.
	\end{equation*} Then the set \begin{equation*}
		U=\set{(y_1,\dots,y_{n})\in \R^n}{\abs{y_k-x_k}<\mu\text{ for each }k\in
	[n]} \end{equation*} is a neighborhood of $x$ with the property that
$U\cap L_{\tau'}=\emptyset$ if $\tau'$ is not a substring of $\tau$ (see
\dref{D:extended} for the formal definition of ``substring''). It follows that,
$\ol{L}_\sig\subs L_\sig\cup\bcup_{\abs{\tau}>\abs{\sig}}L_{\tau}$. 
	
	Conversely, if $\abs{\tau}>\abs{\sig}$ and $x\in L_\tau$, choose indices
$k_1<\dots<k_l$ such that: \begin{enumerate} \item [(i)] $x_{k_i}=e(x)$ for each
		$i\in [l]$; \item [(ii)] If  $k'_1<\dots<k'_r$ are all the remaining
			indices such that $x_{k'}=e(x)$, then $r=\abs{\sig}$ and
			$(-1)^{k'_j}=\sig(j)$ for each $j\in [r]$.  \end{enumerate} This is
	possible since $\sig$ is a substring of $\tau$. Points in $L_\sig$
	arbitrarily close to $x$ can be obtained by moving the coordinates $x_{k_i}$
	away from $e(x)$. More precisely, for $s\in [0,1]$, let
	$x(s)=(x_1(s),\dots,x_{n}(s))\in \R^n$ be defined by:
	\begin{equation*}
		x_k(s)=
		\begin{cases} 
			x_k+(-1)^{k-1}s & \text{ if $k\in \{k_1,\dots,k_l\}$;} \\
			x_k & \text{ otherwise}  
		\end{cases}\qquad (k\in [n]).  
	\end{equation*} Then $x(0)=x$
	and $x(s)\in L_\sig$ for all $s>0$ by construction. Hence $x\in
	\ol{L}_\sig$.  \end{proof}

\begin{proof}[Proof of \tref{P:decomposition}] By induction on $n$. If $n=2$,
	then $L=L_{-+}=\set{(x_1,x_2)\in \R^2}{x_1=x_2}$, while $M$ (resp.~$S$)
	consists of those points above (resp.~below) this line. Thus, rotation by
	$\frac{\pi}{4}$ about the origin is the desired homeomorphism. 	

	Let $n\geq 3$ and assume that the assertion has been proved for all
	dimensions smaller than $n$. (The case
	$n=3$ also follows from Figure \ref{F:cells}\?(b).)
	The homeomorphism $\R^n\to \R^n$ will be
	constructed stepwise. We start with a homeomorphism $f\colon
	L^n_{\sig^{n}}\to Y_{\sig^{n}}$, which exists since both of these sets are
	lines in $\R^n$. Suppose that $f$ has already been extended to a
	homeomorphism $f\colon \bcup_{\abs{\sig}\geq m+1}L^n_\sig\to
	\bcup_{\abs{\sig}\geq m+1}Y_\sig$ for some $m$ satisfying $2\leq m\leq n-1$. 
	
	Let $\phi\colon L\to \R^{n-1}$ and $\la\colon L\to [0,+\infty)$ be the
		maps which forget and recover the last coordinate:
		\begin{equation*}
		\phi(x)=( x_1,\dots,x_{n-1}),\quad
			\la(x)=\abs{x_{n}-e(x)}\qquad (x=(x_1,\dots,x_{n})\in L),
		\end{equation*} where $e(x)$ is as in \dref{D:level}. Let us suppose for
		concreteness that $m\equiv n\pmod 2$; the only difference in the other
		case is that the roles of $L^n_{\sig^m}$ and $L^n_{-\sig^m}$ are
		switched. A straightforward verification shows that
		\begin{equation*}
		\phi\times \la\colon \ol{L}^n_{-\sig^m}\to
			\ol{L}^{n-1}_{-\sig^m}\times [0,+\infty) \end{equation*} is a
			homeomorphism, hence $\ol{L}_{-\sig^m}^n\home \Hh^{n-m}\times
			[0,+\infty) \home \Hh^{n+1-m}$ by the induction hypothesis on $n$. 
	
	To understand $\ol{L}^n_{\sig^m}$, we consider its decomposition into
	$H_1\cup H_2$, where 
	\begin{alignat*}{10}
		H_1:&=\set{x\in \ol{L}^n_{\sig^m}}{\la(x)=0,~\phi(x)\in
			\ol{L}_{\sig^{m-1}}^{n-1}}\home \Hh^{n-m+1}\text{ via }\phi,\\
			H_2:&=\set{x\in \ol{L}^n_{\sig^m}}{\la(x)\geq 0,~\phi(x)\in
				\ol{L}_{\sig^m}^{n-1}}\home \Hh^{n-m}\times [0,+\infty)\home
					\Hh^{n-m+1}\text{ via }\phi\times \la,\\ C:=H_1\cap
					H_2&=\set{x\in \ol{L}^n_{\sig^m}}{\la(x)=0,~\phi(x)\in
						\ol{L}^{n-1}_{\sig^{m}}}\home \Hh^{n-m}\text{ via }\phi, 
	\end{alignat*} by the induction hypothesis on
						$n$. Moreover, $\bd H_1=C\cup D_1$ and $\bd H_2=C\cup
						D_2$, where \begin{alignat*}{10}
							D_1&=\set{x\in
								\ol{L}^n_{\sig^m}}{\la(x)=0,~\phi(x)\in
									\ol{L}^{n-1}_{-\sig^m}}\home \Hh^{n-m}\text{
									via }\phi,\\ D_2&=\set{x\in
										\ol{L}^n_{\sig^m}}{\la(x)\geq
											0,~\phi(x)\in
											\ol{L}^{n-1}_{-\sig^{m+1}}\cup
											\ol{L}^{n-1}_{\sig^{m+1}}} \home
											\R^{n-m-1}\times [0,+\infty)\home
												\Hh^{n-m}\text{ via }\phi\times
												\la, \end{alignat*} again by the
											induction hypothesis on $n$. Thus we
											are in the setting of
											\lref{L:glueing1}, and the
											conclusion is that
											$\ol{L}_{\sig^m}^n=H_1\cup H_2\home
											\Hh^{n-m+1}$. 
	
	Now by \lref{L:boundaries}, \begin{equation*}
		\ol{L}^n_{\sig^m}\cap \ol{L}^n_{-\sig^m}=\bcup_{\abs{\tau}\geq
	m+1}L_{\tau}^{n}.  \end{equation*} Since by assumption we already have
a homeomorphism from the latter set to $\bcup_{\abs{\tau}\geq
m+1}Y_{\tau}\home \R^{n-m}$, \lref{L:glueing} guarantees the existence of a
homeomorphism \begin{equation*}
f\colon \bcup_{\abs{\tau}\geq
m}L_{\tau}^{n}\to \bcup_{\abs{\tau}\geq m}Y_{\tau}\home \R^{n+1-m}.
\end{equation*} Continuing this down to $m=2$, a homeomorphism $f\colon L\to
\bcup_{\abs{\tau}\geq 2}Y_{\tau}\home \R^{n-1}$ taking each $L_{\sig}$ onto
$Y_{\sig}$ is obtained. Finally, an application of \lref{L:glueing} using
\lref{L:MandS} shows that this can be extended to a homeomorphism $f\colon
\R^n\to \R^n$ having the required properties.  \end{proof}

\subsection*{Subspaces determined by nested strings}

	Let $E,\,Y$ be topological spaces, $q\colon E\to Y$ be a (continuous)
	surjective map and for each $y\in Y$, let $F_y=q^{-1}(y)$ denote the fiber
	of $y$. Then $q$ is a \tdef{quasifibration} if for any $k\geq 0$, $y\in Y$
	and $e\in F_y$, the induced map $q_*\colon \pi_k(E,F_y,e)\to \pi_k(Y,y)$ on
	homotopy groups is an isomorphism.\footnote{See \cite{DolTho}, Bemerkung 1.2
for the definition of $\pi_0(E,F_y,e)$. For $k=0,\,1$, when the set on the left
side has no natural group structure, it should be understood that $q_*\colon
\pi_k(E,F_y,e)\to \pi_k(Y,y)$ is a bijection.}

Thus, if $q\colon E\to Y$ is a quasifibration, then for any $y\in Y$ and $e\in
F_y$, there is a long exact sequence \begin{equation}\label{E:longexact} \dots
	\to{} \pi_k(F_y,e) \lto{j_*}  \pi_k(E,e) \lto{q_*} \pi_k(Y,y) \lto{\bd}
	\pi_{k-1}(F_y,e)\to{}\dots   	\to{}\pi_0(E,e)\to{} 0 \end{equation} which
is obtained from the long exact sequence of the triple $(E,F_y,e)$ by
identifying $\pi_k(E,F_y,e)$ with $\pi_k(Y,y)$; here $j$ is the inclusion
$F_y\inc E$. Just as for a Serre fibration, it can be shown that if $Y$ is
path-connected, then all fibers $F_y$ have the same weak homotopy type.	

\begin{prop}[\cite{DolTho}, Satz 2.2]\label{P:DolTho} Let $q\colon E\to Y$ be a
surjective map and suppose that $\fr U=(U_\nu)_{\nu\in I}$ is an open cover of
$Y$ satisfying: \begin{enumerate} \item [(i)] For each $\nu\in I$,
			$q|_{q^{-1}(U_\nu)}\colon q^{-1}(U_\nu)\to U_\nu$ is a
			quasifibration; \item [(ii)]  If $y\in U_{\nu_1}\cap U_{\nu_2}$,
				then there exists $\nu$ such that $y\in U_\nu\subs U_{\nu_1}\cap
				U_{\nu_2}$ (for $\nu_1,\,\nu_2,\,\nu\in I$).  \end{enumerate}
		Then $q$ is a quasifibration.\qed \end{prop}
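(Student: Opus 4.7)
My plan is to prove the statement by reducing it, via a compactness argument, to a two-set Mayer--Vietoris style gluing lemma and then iterating. Throughout I would test the quasifibration property on its defining data: that $q_*\colon \pi_k(E,F_y,e)\to \pi_k(Y,y)$ is a bijection for every $k\geq 0$, $y\in Y$, $e\in F_y$. Since every element of these homotopy sets is represented by a map out of a finite CW complex, and every such representative and every null-homotopy has compact image in $Y$, it suffices to produce, for each compact $K\subs Y$, a finite subfamily $\mathfrak{U}_0\subs\mathfrak{U}$ with $K\subs \bigcup \mathfrak{U}_0$ and such that $q$ is a quasifibration over $\bigcup \mathfrak{U}_0$. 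In other words, the whole argument reduces to showing that the class of open $U\subs Y$ over which $q$ is a quasifibration is closed under finite unions of sets in $\mathfrak{U}$.

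The heart of the matter is therefore the following \emph{two-set lemma}: if $V_1,V_2\subs Y$ are open and $q$ restricts to a quasifibration over each of $V_1$, $V_2$, and $V_1\cap V_2$, then $q$ is a quasifibration over $V_1\cup V_2$. I would prove this in the standard way, by checking bijectivity of $q_*\colon \pi_k(q^{-1}(V_1\cup V_2),F_y,e)\to \pi_k(V_1\cup V_2,y)$ directly on representatives. Given $\beta\colon (I^k,\partial I^k,J^{k-1})\to (V_1\cup V_2,y,y)$, I would subdivide the cube $I^k$ finely enough via the Lebesgue number lemma so that each subcube maps entirely into $V_1$ or entirely into $V_2$, with the two collections meeting in subcubes that map into $V_1\cap V_2$. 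Using the quasifibration hypothesis on each piece and on the intersection, I would then lift cell by cell up the skeleta, using the quasifibration property over $V_1\cap V_2$ to make the lifts agree where the two halves meet. Injectivity (respectively, showing that elements of $\pi_k(V_1\cup V_2,y)$ lift from $\pi_k(E,F_y,e)$) is handled by the same cube-subdivision technique applied to homotopies (respectively, to the given representative).

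Given the two-set lemma, condition (ii) is exactly the leverage needed to iterate. It guarantees that for any $\nu_1,\nu_2\in I$, the intersection $U_{\nu_1}\cap U_{\nu_2}$ is itself a union of elements of $\mathfrak{U}$. Thus by induction on the number of sets, once $q$ is a quasifibration over any union of $m$ elements of $\mathfrak{U}$, the same holds over any union of $m+1$: applying the two-set lemma with $V_1=U_{\nu_1}\cup\cdots\cup U_{\nu_m}$ and $V_2=U_{\nu_{m+1}}$, one just needs $V_1\cap V_2=\bigcup_{j\leq m}(U_{\nu_j}\cap U_{\nu_{m+1}})$ to be a quasifibration, and by (ii) this intersection is a union of at most $m$ basic sets, covered by the inductive hypothesis (after a secondary induction on the size of the cover decomposition).

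Combining these steps, any compact $K\subs Y$ lies in a finite union $U_{\nu_1}\cup\cdots\cup U_{\nu_m}$ over which $q$ is a quasifibration. Since relative homotopy classes and their null-homotopies are detected on compacta, $q_*\colon \pi_k(E,F_y,e)\to \pi_k(Y,y)$ is a bijection for all $k,y,e$, which is the definition of a quasifibration. The main obstacle I anticipate is bookkeeping in the two-set lemma: the cube-by-cube lifting and homotopy-gluing requires carefully chosen subdivisions and a delicate verification that the lifts can be made to match on the overlap, which is the one place where the hypothesis on $V_1\cap V_2$ is used in an essential way.
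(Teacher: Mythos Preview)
The paper does not give its own proof of this proposition; it is quoted from \cite{DolTho} and closed with \qed. Your outline is essentially the strategy of Dold and Thom's original argument: reduce via compacta to finite unions of members of $\mathfrak U$, prove a two-set gluing lemma, and iterate using condition (ii).

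One point in your sketch deserves more care. In the two-set lemma you propose to ``lift cell by cell up the skeleta.'' For a Serre fibration this works verbatim, but a quasifibration provides no actual lifts: surjectivity of $q_*$ only says that a given class in $\pi_k(Y,y)$ is the image of \emph{some} class in $\pi_k(E,F_y,e)$, i.e., that a representative lifts \emph{up to homotopy rel the base data}. So the pieces you obtain over subcubes landing in $V_1$ and in $V_2$ need not agree on the nose over the overlap, and ``making the lifts agree'' is not a matter of choosing them carefully but of adjusting by further homotopies. Dold and Thom handle this by an argument closer to a Mayer--Vietoris/five-lemma comparison of the long exact sequences of the pairs $(q^{-1}(V_i),F_y)$ and $(V_i,y)$ rather than by geometric cube-lifting; you may find that cleaner. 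A smaller issue: in your induction step, $V_1\cap V_2$ is a union of members of $\mathfrak U$ by (ii), but not of at most $m$ of them, so the ``secondary induction'' must itself invoke the compactness reduction rather than a bound on the number of sets.
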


\begin{defn}\label{D:extended} An \tdef{extended string} $\tau$
	is a function $\tau\colon [l]\to \se{\pm}$ ($l\geq 2$). Thus, in contrast to sign
	strings, in an extended string some signs may be repeated. Given 
	extended strings $\tau_i\colon [l_i]\to \se{\pm}$ $(i=1,2)$,
	$\tau_1$ is a \tdef{substring} of $\tau_2$, denoted $\tau_1\po
	\tau_2$ (or $\tau_1\prec \tau_2$ if in addition $\tau_1\neq \tau_2$), if
	there is a strictly increasing $f\colon [l_1]\to [l_2]$ such that
	$\tau_1=\tau_2\circ f$. For $\tau$ an extended string, its \tdef{reduced
	string} is the unique sign string $\vrho$ of maximal length such that
	$\vrho\po \tau$. It is obtained by omitting all repetitions in $\tau$; e.g.,  the
	reduced sign string of $\ty{+--++}$ is  $\ty{+-+}$.  \end{defn}

\begin{defn}\label{D:nested} Let $\sig_1\po \dots \po
	\sig_{m}$ $(m\geq 1)$, where $\sig_m$ is an extended string and the remaining $\sig_j$
	are sign strings. Let $n=\abs{\sig_m}$. Define
	$X_{(\sig_1,\dots,\sig_m)}\subs \R^{n}$ to be the subspace consisting of all
	$x=(x_1,\dots,x_n)$ satisfying the following conditions: \begin{enumerate}
		\item [(i)] $\sig_m(k)x_k\leq 0$ for all $k\in [n]$; \item [(ii)]
			$\abs{x_k}\leq m$ for all $k\in [n]$ and for each $j\in [m-1]$, if
			$k_1<\dots<k_l$ are all the indices in $[n]$ such that
			$\abs{x_k}\leq j$, then $\sig_j$ is the reduced string of
			$\tau\colon [l]\to \se{\pm}$, $\tau(i)=\sig_m(k_i)$.
	\end{enumerate} \end{defn} Representing a point in $\R^n$ by beads, to
	determine whether it satisfies (ii) we assign a tag $\sig_m(k)$ to its
	$k$-th bead for each $k\in [n]$ and read off the tags of those coordinates
	that lie at or below height $j$ and at or above height $-j$; the
	corresponding reduced string should coincide with $\sig_j$ for each $j\in
	[m-1]$, and $\abs{x_k}\leq m$ should hold for all $k\in [n]$. See Figure
	\ref{F:nested}. 
	\begin{rem}\label{R:cube1}
		Note that if $m=1$, then the resulting space is just an
		$n$-dimensional cube.
	\end{rem}
\begin{figure}[ht] \begin{center} \includegraphics[scale=.24]{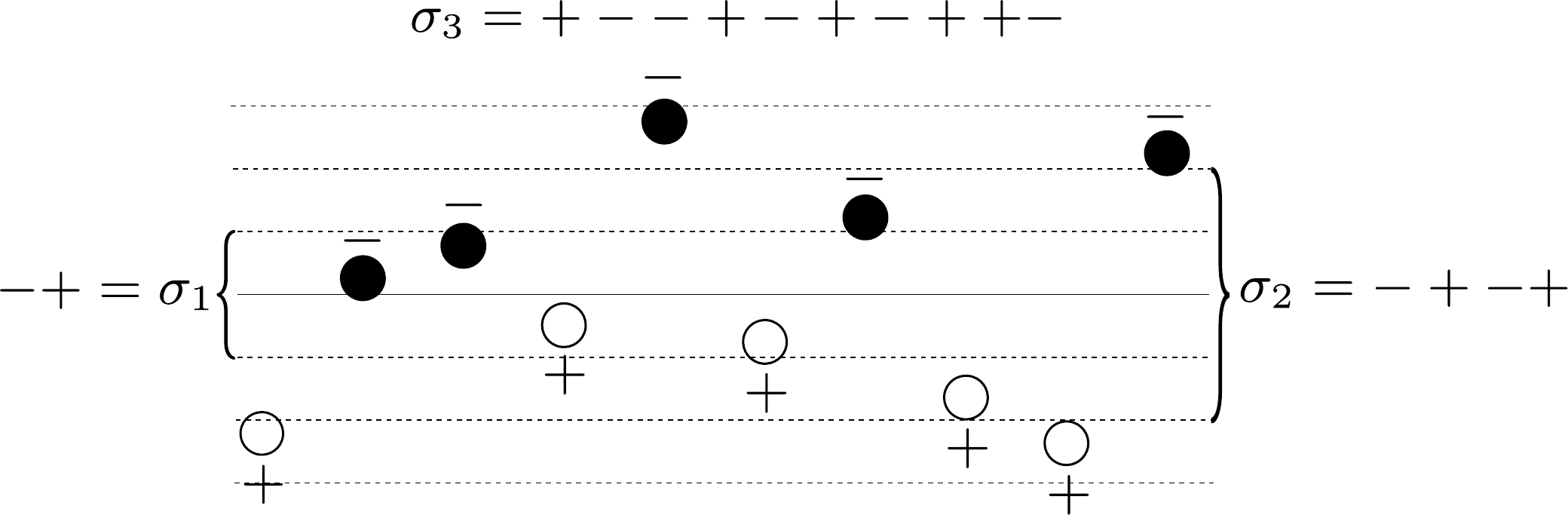}
		\caption{An element of $X_{(\sig_1,\sig_2,\sig_3)}$ for $\sig_j$ as
	indicated in the figure.}
\label{F:nested} \end{center}
\end{figure}

\begin{prop}\label{P:nested} Let $\sig_1\po \dots \po
	\sig_{m}$ \tup($m\geq 1$\tup), where $\sig_m$ is an extended string and the remaining $\sig_j$
	are sign strings. Then $X_{(\sig_1,\dots,\sig_{m})}$ is weakly contractible.
\end{prop}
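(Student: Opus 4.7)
I would proceed by induction on $m\ge 1$. The base case $m=1$ is immediate from Remark~\ref{R:cube1}: the space $X_{(\sig_1)}$ is an $n$-dimensional cube, hence contractible. For the inductive step, I assume the result for families of length less than $m$.

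My plan is to construct a surjective ``collapsing'' map
\[
    p\colon X_{(\sig_1,\dots,\sig_m)}\longrightarrow B
\]
where $B$ is weakly contractible and $p$ is a quasifibration with weakly contractible fibers; the long exact sequence~\eqref{E:longexact} then yields the desired conclusion. Concretely, I would define $f\colon[-m,m]\to[-(m-1),m-1]$ by collapsing the innermost slab $[-1,1]$ to $0$ and translating each outer piece $\pm[1,m]$ onto $\pm[0,m-1]$, and set $p(x)_k=f(x_k)$. A direct check using Definition~\ref{D:nested} shows that $p$ lands in $X_{(\sig_2,\dots,\sig_m)}$, viewed as a family of length $m-1$ with ambient string $\sig_m$, subject to the extra condition that the zero set $\{k:y_k=0\}$ has reduced type $\sig_1$ under $\sig_m$. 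This image $B$ is weakly contractible either by a parallel inductive argument, or by exhibiting a deformation retraction onto a subspace of standard nested form covered by the inductive hypothesis. The fiber of $p$ over any $y\in B$ is the product of closed intervals indexed by those coordinates with $y_k=0$, hence a cube and therefore contractible.

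The key obstacle is verifying that $p$ is a quasifibration, for which I would apply Proposition~\ref{P:DolTho}. The cover of $B$ must be chosen so that on each open piece the combinatorial type of the fiber is ``one-sidedly stable''---coordinates may leave the zero set $\{y_k=0\}$ inside a piece, but no new zeros can appear---so that $p$ restricts to a locally trivial fibration with cube fibers on that piece. The nested condition $\sig_1\prec\cdots\prec\sig_m$ guarantees that the combinatorial types of overlapping pieces are mutually compatible, yielding hypothesis~(ii) of Proposition~\ref{P:DolTho}. The delicate behaviour occurs at stratum boundaries, where fiber dimensions drop abruptly; this is precisely the flexibility afforded by quasifibrations as opposed to Serre fibrations, and is what Proposition~\ref{P:DolTho} is designed to handle.
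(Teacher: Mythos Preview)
Your overall plan---induction on $m$, a collapsing map, and Proposition~\ref{P:DolTho}---is the same as the paper's, but you collapse the \emph{innermost} slab $[-1,1]$ whereas the paper collapses the \emph{outermost} one, sending $x$ to $y$ with $y_k=-\sig_m(k)\max\{\abs{x_k}-(m-1),0\}$. This choice of direction is not cosmetic; it is what makes the induction close cleanly.

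With the paper's outer collapse, the base is $Y=L_{\sig_{m-1}}^{\sig_m}$, whose contractibility is handled once and for all by Lemma~\ref{L:basestep} (ultimately via Corollary~\ref{C:decomposition}), while the fiber over $y\in Y$ is $X_{(\sig_1,\dots,\sig_{m-2},\tau)}$, with $\tau$ the substring of $\sig_m$ picked out by the zero set of~$y$. This fiber falls directly under the inductive hypothesis---and this is precisely why the proposition is stated with $\sig_m$ allowed to be an \emph{extended} string. With your inner collapse the fiber is a cube, which is pleasant, but the base $B$ is $X_{(\sig_2,\dots,\sig_m)}$ intersected with the level-type constraint that the zero set of $y$ have reduced string $\sig_1$. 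This hybrid is not of the form $X_{(\cdots)}$, so the inductive hypothesis does not apply to it. For $m=2$ your $B$ is exactly $L_{\sig_1}^{\sig_2}$, so you need Lemma~\ref{L:basestep} anyway; for $m\ge 3$ you would have to prove weak contractibility of a space mixing nested and level conditions, and your proposed ``parallel inductive argument'' or ``deformation retraction onto a subspace of standard nested form'' is not supplied. Carrying this out appears to require essentially the paper's outer-collapse step applied to $B$ itself, so the cube-fiber simplification is illusory: the real content has merely been displaced into the base.

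A second, smaller issue: $p$ does \emph{not} restrict to a locally trivial fibration on your cover pieces, since the fiber dimension drops whenever a coordinate of $y$ leaves zero (which you yourself allow). The paper makes no such claim either; instead it shows, for its cover $(U_{y,\de})$ with $\de<\min\{\abs{y_k}:y_k\neq 0\}$, that $q^{-1}(U_{y,\de})$ deformation retracts onto $F_y$ via an explicit straight-line homotopy in the outer coordinates, so that all the relevant relative homotopy groups vanish trivially. You would need to supply the analogous explicit retraction for your~$p$.
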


We are only interested in the case where $\sig_m$ is a sign string, but for
the proof given below to work, this more general version is needed, as well as
another definition: Let $\sig_2$ be an extended string and $\sig_1\po \sig_2$
be a sign string, $\abs{\sig_2}=n$. Define $L_{\sig_1}^{\sig_2}\subs
\R^n$ by declaring that $x=(x_1,\dots,x_n)\in L_{\sig_1}^{\sig_2}$ if and only
if it satisfies condition (i) above (with $m=2$) together with:
\begin{enumerate} \item [(iii)] $\abs{x_k}\leq 1$ for all $k\in [n]$ and if
			$k_1<\dots<k_l$ are all the indices in $[n]$ such that $x_k=0$, then
			$\sig_1$ is the reduced string of $\tau\colon [l]\to \se{\pm }$,
			$\tau(i)=\sig_2(k_i)$.  \end{enumerate}

\begin{lem}\label{L:basestep} Let $\sig_1\po \sig_2$ be a sign and an extended
	string, respectively. Then $L_{\sig_1}^{\sig_2}$ is contractible.  \end{lem}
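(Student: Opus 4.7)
I would proceed by induction on $n = |\sigma_2|$. In the base case $n = |\sigma_1|$, the substring condition $\sigma_1 \po \sigma_2$ forces $\sigma_2 = \sigma_1$; condition (iii) then forces every coordinate of $x \in L_{\sigma_1}^{\sigma_1}$ to vanish (any proper subset of $[n]$, read through $\sigma_1$, is already reduced and has length $< |\sigma_1|$, so cannot equal $\sigma_1$), and $L_{\sigma_1}^{\sigma_1} = \{0\}$ is a single point.

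For the inductive step $n > |\sigma_1|$, I would first dispatch the easy case, in which the last letter $\epsilon_r := \sigma_2(n)$ differs from $\sigma_1(|\sigma_1|)$. Since the reduction of $\sigma_2|_Z$ preserves the last letter, no valid zero set $Z$ can meet the last run $R_r$ of $\sigma_2$, and hence every $x \in L_{\sigma_1}^{\sigma_2}$ has $x_k \neq 0$ for $k \in R_r$. The straight-line homotopy that pulls each such $x_k$ toward $-\epsilon_r$ stays in $L_{\sigma_1}^{\sigma_2}$ (the signs and the bound $|x_k| \leq 1$ are preserved, and the path never crosses $0$), and deformation retracts $L_{\sigma_1}^{\sigma_2}$ onto the slice $\{x : x_k = -\epsilon_r \text{ for } k \in R_r\}$. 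Deleting these coordinates identifies this slice with $L_{\sigma_1}^{\sigma_2'}$ for $\sigma_2' = \sigma_2|_{[n]\ssm R_r}$, which is contractible by the inductive hypothesis.

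The subtler case is $\epsilon_r = \sigma_1(|\sigma_1|)$, when zeros are permitted in $R_r$. Here I would pick a position $k_0 \in [n]$ avoided by some embedding $\sigma_1 \po \sigma_2$ (such $k_0$ exists because $|\sigma_2| > |\sigma_1|$) and decompose $L_{\sigma_1}^{\sigma_2} = U \cup B$ with $U = \{x : x_{k_0} \neq 0\}$ and $B = \{x : x_{k_0} = 0\}$. The same straight-line idea shows that $U$ deformation retracts onto $\{x : x_{k_0} = -\sigma_2(k_0)\} \cong L_{\sigma_1}^{\sigma_2|_{[n]\ssm\{k_0\}}}$, contractible by induction. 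The description of $B$ is where the real work lies: because the forced zero at $k_0$ can play different roles in the reduction of $\sigma_2|_Z$ depending on which other positions are zero, $B$ is naturally stratified, and each stratum should be identifiable with an $L$-space of strictly smaller parameters so that induction applies. One would then assemble the strata using the gluing Lemmas~\ref{L:glueing}--\ref{L:glueing1} (noting that $\overline U \cap B$ is itself contractible by a parallel stratum analysis) to conclude that $B$, and therefore $L_{\sigma_1}^{\sigma_2}$, is contractible.

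The main obstacle is precisely this analysis of $B$: stratifying it cleanly, matching each stratum with a smaller $L$-space, and verifying the combinatorics of the gluings. The key combinatorial observation that should make this go through is that whenever $Z_1, Z_2 \ni k_0$ are two valid zero sets, their union $Z_1 \cup Z_2$ is again valid as soon as the inserted positions are absorbed by same-sign neighbors in the reduction, providing a discrete path in $B$ between configurations with zero sets $Z_1$ and $Z_2$. Upgrading this discrete connectedness into a continuous family of homotopies parameterized over $B$ is the heart of the technical content, and is what one has to make precise in order to complete the inductive step.
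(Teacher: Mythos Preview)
Your proposal has a genuine gap in Case~2, and it is not merely a matter of filling in details. Even if you succeeded in stratifying $B=\{x_{k_0}=0\}$ and showing each stratum contractible, Lemmas~\ref{L:glueing}--\ref{L:glueing1} require homeomorphisms to half-spaces with precisely matching boundaries, none of which you have established. More seriously, knowing that $U$, $B$, and $\overline{U}\cap B$ are each contractible does \emph{not} by itself force $L_{\sig_1}^{\sig_2}=U\cup B$ to be contractible; and the obvious attempt to deformation retract onto $B$ by pushing $x_{k_0}\to 0$ fails, since adding $k_0$ to the zero set can change the reduced string. For instance, with $\sig_2=\ty{+-+}$, $\sig_1=\ty{+-}$, $k_0=3$, the point $(0,0,-1)$ lies in $L_{\sig_1}^{\sig_2}$ but its limit $(0,0,0)$ does not. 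So there is no evident retraction of the whole space onto either piece, and your inductive scheme does not close.

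The paper avoids this entirely with a two-line argument. Let $J_1<\dots<J_r$ be the maximal constant-sign runs of $\sig_2$, with reduced sign string $\vrho$. The affine homotopy $f_k(s,x)=(1-s)x_k+s\mu_i(x)$ for $k\in J_i$, where $\mu_i(x)$ is $\min_{J_i}x_j$ or $\max_{J_i}x_j$ according to the sign of the run, stays inside $L_{\sig_1}^{\sig_2}$: for $s\in[0,1)$ the zero set is literally unchanged (both $x_k$ and $\mu_i(x)$ have the same sign, so their convex combination vanishes only if $x_k$ does), and at $s=1$ the zero set is a union of full runs, which does not alter the reduced string. This retracts onto the subspace $L_0$ where coordinates are constant on runs; but $L_0$ is visibly homeomorphic to $\{y\in L_{\sig_1}^r:\abs{y_i}\leq 1\}$, a deformation retract of $L_{\sig_1}^r$, which is contractible by Corollary~\ref{C:decomposition}. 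One global retraction plus an appeal to an already-proved result replaces your entire induction.
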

\begin{proof} Let $n=\abs{\sig_2}$ and \begin{equation*}
		L_0=\set{(x_1,\dots,x_n)\in L_{\sig_1}^{\sig_2}}{x_k=x_{k+1} \text{ if
		}\sig_2(k)=\sig_2(k+1),\text{ for each }k\in [n-1]}.  \end{equation*}
	Let $\vrho$ be the reduced string of $\sig_2$, $r=\abs{\vrho}$ and
	$J_1<\dots<J_{r}$ be the maximal intervals in $[n]$ such that
	$\sig_2(J_i)=\se{\vrho(i)}$. Define a deformation retraction $f\colon
	[0,1]\times L_{\sig_1}^{\sig_2}\to L_{\sig_1}^{\sig_2}$ onto $L_0$ by:
	\begin{alignat*}{9}
	f_k(s,x)&=(1-s)x_k+s\mu_i(x)\text{ if $k\in
		J_i$\ $(k\in [n])$,\ where}\\ \mu_i(x)&=\begin{cases} \min\set{x_j}{j\in
			J_i} & \text{\ \ if $\vrho(i)=\ty{-}$}; \\ \max\set{x_j}{j\in J_i} &
			\text{\ \ if $\vrho(i)=\ty{+}$}. \\ \end{cases} \end{alignat*} No
	generality is lost in assuming that $\vrho=\sig^{r}$ instead of
	$-\sig^{r}$ (as defined in \eqref{E:standard}). Then $L_0$ is homeomorphic
	to the subspace of $L_{\sig_1}^{r}$ consisting of those $y$ for which
	$\abs{y_i}\leq 1$ for all $i\in [r]$. But clearly, this subspace is a
	deformation retract of $L_{\sig_1}^{r}$, hence $L_{\sig_1}^{\sig_2}\iso
	L_{\sig_1}^{r}$ is contractible by \pref{C:decomposition}.  \end{proof}

\begin{proof}[Proof of \pref{P:nested}] By induction on $m$. The case $m=1$
	follows from \rref{R:cube1}. 
	Suppose that $m\geq 2$ and that the assertion has been established for
	$m-1$.  Set \begin{equation*}
		E=X_{(\sig_1,\dots,\sig_{m})},\ \  Y=L_{\sig_{m-1}}^{\sig_m}\text{\ \
		and \ \ }n=\abs{\sig_m}.  \end{equation*} Let $q\colon E\to Y$ be the
	map which collapses everything at height between $-(m-1)$ and $(m-1)$. To be
	precise, if $x=(x_1,\dots,x_n)\in E$, then its image $y=q(x)$ has
	coordinates \begin{equation*}
		y_k=-\sig_m(k)\max\{\abs{x_k}-(m-1),0\}\quad (k\in [n]).
	\end{equation*} Although $q$ is generally not a Serre nor a Dold fibration,
	we claim that it is a quasifibration.
	
	Given $y\in Y$, let $F_y=q^{-1}(y)$ and let $\tau$ be the substring of
	$\sig_m$ determined by all indices $k$ such that $y_k=0$. Note that
	$\sig_{m-1}$ is the reduced string of $\tau$. The map $F_y\to
	X_{(\sig_1,\dots,\sig_{m-2},\tau)}$ which sends $x\in F_y$ to the point in
	$\R^{\abs{\tau}}$ obtained by deleting its coordinates $x_k$ such that
	$\abs{x_k}>m-1$ is a homeomorphism. Hence $F_y$ is weakly contractible by
	the induction hypothesis. 
	
	For $y\in Y$, let $\de(y)=\min\set{\abs{y_k}}{y_k\neq 0,~k\in [n]}$. Then
	the sets \begin{equation*}
	U_{y,\de}=\set{(z_1,\dots,z_n)\in
		Y}{\abs{ z_k-y_k}<\de\text{ for each $k\in [n]$}}\qquad \big(y\in
		Y,~0<\de<\de(y)\big) \end{equation*} form an open cover $\fr U$ of $Y$.
	Condition (ii) in \pref{P:DolTho} is obviously satisfied by $\fr U$.
	Moreover, each $U_{y,\de}\in \fr U$ is star-shaped with respect to $y$, 
	hence contractible.
	A deformation retraction 
	\begin{equation*}
		g\colon [0,1]\times q^{-1}(U_{y,\de})\to q^{-1}(U_{y,\de}) 
	\end{equation*}
	onto $F_y$ can be defined through 
	\begin{equation*}
	g_k(s,x)=\begin{cases} x_k & \text{ if\ \
				$\abs{x_k}\leq m-1$} \\ (1-s)x_k+s\big[y_k-\sig_m(k)(m-1)\big] &
				\text{ if\ \  $\abs{x_k}\geq (m-1)$} \end{cases}\quad (k\in
			[n]).  \end{equation*} Therefore, condition (i) in \pref{P:DolTho}
		is trivially satisfied: From the long exact sequence of homotopy groups
		of the pair $\big(q^{-1}(U_{y,\de})\,,\,F_y\big)$, it follows that
		$\pi_i(q^{-1}(U_{y,\de})\?,\?F_y\?,\?e)$ is trivial for all $i\geq 0$
		and $e\in F_y$, and so is $\pi_i(U_{y,\de}\?,\?y)$. Hence $q$ is a
		quasifibration. By \lref{L:basestep}, $Y$ is weakly contractible. Using
		exactness of \eqref{E:longexact} we conclude that $E$ is weakly
		contractible.  \end{proof}

\begin{defn}\label{D:dnested} Define  $X_{(d,\sig_1,\dots,\sig_m)}\subs \R^{n}$
as in \dref{D:nested}, but replacing (i) by: \begin{enumerate} \item [(i$_d$)]
			There exist $k_1,k_2\in [n]$ with $\sig_m(k_2)=-\sig_m(k_1)$ and
			$\sig_m(k_i)x_{k_i}> 0$.  \end{enumerate} \end{defn}

The `d' here refers to the relation of this condition to diffuse curves, as will
become clear later.
\begin{prop}\label{P:dnested} The space $X_{(d,\sig_1,\dots,\sig_{m})}$ is
	weakly contractible.  \end{prop}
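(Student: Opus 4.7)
The plan is to proceed by induction on $m$, closely paralleling the proof of \pref{P:nested}. For the base case $m=1$, the space $X_{(d,\sig_1)}$ is cut out from $[-1,1]^n$ by (i$_d$) alone, and I would verify that it is star-shaped with respect to $p=(\sig_1(1),\dots,\sig_1(n))$, which trivially satisfies (i$_d$) since $\sig_1(k)p_k=\sig_1(k)^2=1>0$ for every $k\in[n]$. Along the segment $x(t)=(1-t)x+tp$ one has $\abs{x_k(t)}\leq 1$ by convexity of $[-1,1]$, and any pair $(k_1,k_2)$ witnessing (i$_d$) for $x$ continues to witness it for $x(t)$, as $\sig_1(k_i)x_{k_i}(t)=(1-t)\sig_1(k_i)x_{k_i}+t>0$ for all $t\in[0,1]$. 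Hence $X_{(d,\sig_1)}$ is contractible.

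For the inductive step, assume the proposition for all chains of length less than $m\geq 2$ and set $E=X_{(d,\sig_1,\dots,\sig_m)}$. I would reuse the collapsing map
\[
q(x)_k=-\sig_m(k)\max\se{\abs{x_k}-(m-1),0}\qquad(k\in[n])
\]
from the proof of \pref{P:nested}, viewed as a map $E\to L^{\sig_m}_{\sig_{m-1}}$. Surjectivity: given $y\in L^{\sig_m}_{\sig_{m-1}}$, take the outer coordinates in canonical form $x_k=y_k-\sig_m(k)(m-1)$ (forcing each such $x_k$ to have sign opposite $\sig_m(k)$) and choose the inner block inside the nonempty space $X_{(d,\sig_1,\dots,\sig_{m-2},\tau)}$ furnished by the inductive hypothesis, where $\tau$ is the substring of $\sig_m$ at the zero-indices of $y$; this places the witnesses of (i$_d$) inside the inner block. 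The fiber $F_y$ is then identified, by deletion of the canonically chosen outer coordinates, with $X_{(d,\sig_1,\dots,\sig_{m-2},\tau)}$, weakly contractible by the inductive hypothesis. Applying \pref{P:DolTho} to the cover $\se{U_{y,\de}}$ of \pref{P:nested} together with the same explicit deformation retraction—which fixes the inner coordinates and hence preserves the witnesses of (i$_d$)—shows $q$ is a quasifibration. Combining this with \lref{L:basestep} and the long exact sequence \eqref{E:longexact} yields that $E$ is weakly contractible.

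The main obstacle is the interdependence built into the induction: both the surjectivity of $q$ and the weak contractibility of each fiber rely on the nonemptiness and contractibility, respectively, of the shorter "d"-space $X_{(d,\sig_1,\dots,\sig_{m-2},\tau)}$. Ensuring that (i$_d$) can always be realized inside the inner block—so that the deformation retraction of \pref{P:nested}, which only alters outer coordinates, leaves $E$ invariant—is the essential step; once it is in place, the rest of the argument transports verbatim from the proof of \pref{P:nested}.
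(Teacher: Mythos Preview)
Your overall strategy matches the paper's: induct on $m$ via the collapsing map $q$ from the proof of \pref{P:nested}, and handle $m=1$ separately. Your base case is correct and in fact more elementary than the paper's, which instead exhibits a quasifibration $p\colon X_{(d,\sig_1)}\to L$ with convex fibers over the contractible base $L\home\R^{n-1}$; your star-shapedness argument bypasses that machinery entirely.

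The inductive step, however, has a genuine gap, and it is precisely the one you flag in your last paragraph without resolving. Once condition (i) is replaced by (i$_d$), the outer coordinates (those with $\abs{x_k}>m-1$) are no longer forced to satisfy $\sig_m(k)x_k\le 0$; each may take either sign. Consequently the deletion map $F_y\to\R^{\abs{\tau}}$ is not injective, and $F_y$ is not homeomorphic to $X_{(d,\sig_1,\dots,\sig_{m-2},\tau)}$. Your proposed remedy---that witnesses for (i$_d$) can always be found among the inner coordinates---is false: with $m=2$, $\sig_1=\ty{-+}$, $\sig_2=\ty{-+-}$, the point $x=\big(\tfrac12,\tfrac12,-\tfrac32\big)$ lies in $X_{(d,\sig_1,\sig_2)}$, yet its only witness with $\sig_2(k)=\ty{-}$ is the outer coordinate $x_3$. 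Worse, the deformation $g$ from \pref{P:nested} can leave $E$: if an outer coordinate $x_k$ carries the sign $\sig_m(k)$ rather than $-\sig_m(k)$, then $g_k(s,x)=(1-s)x_k+s\big[y_k-\sig_m(k)(m-1)\big]$ passes through $0$, so at some $s$ one has $\abs{g_k(s,x)}\le m-1$, potentially destroying condition (ii) at level $j=m-1$. Thus the argument does not transport verbatim; the sign freedom in the outer block must be dealt with before the quasifibration machinery can be invoked.
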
 \begin{proof} Analogous to the proof of
	\pref{P:nested}: Use induction on $m$ and the same collapsing map $q$ as
	before to reduce to the case where $m=1$. Then consider the map
	\begin{equation*}
	p\colon X_{(d,\sig_1)}\to L=\set{x\in
		\R^n}{x\text{ is level}},\ \ p_k(x)=\begin{cases} x_k & \text{ if
			$\sig_m(k)x_k\leq 0$;} \\ 0& \text{ if $\sig_m(k)x_k\geq 0$.}
		\end{cases}\quad (k\in [n]).  \end{equation*} This is a quasifibration
	with convex fibers, and $L\home \R^{n-1}$ by
	\pref{P:decomposition}.  \end{proof}

\begin{rem}\label{R:strict} For the sake of simplicity, in condition (ii) of
	\dref{D:nested} the ``heights'' appearing in the inequalities were
	chosen to be elements of $ [m] $. However, we clearly could have replaced
	$j$ by $ \eps_j $ without affecting the subsequent results, for any
	$0<\eps_1<\dots<\eps_m$. Furthermore, 
	since only \tsl{weak} contractibility is asserted, it follows from this more
	general version of \pref{P:nested} and \pref{P:dnested} that if some of the
	inequalities in (i) and (ii) are replaced by strict inequalities, then the
	resulting space is again weakly contractible.  \end{rem}


\section{Stretching}\label{S:stretching}

\subsection*{Stretching of functions}In this section we shall describe a
procedure for ``stretching'' curves (as illustrated in Figure
\ref{F:curvestretching}), generalizing the 
grafting construction of \cite{SalZueh1}. We rely
heavily on the results of \S3 of \cite{SalZueh1} and retain the notation
introduced there. The procedure is more clearly formulated in terms of real
functions. For a (Lebesgue integrable) function $g\colon J\to \R$, $\int g$
denotes $\int_Jg$. 
\begin{notn}\label{N:stretchable}
	Let $b>0$, $\ka_0\in (0,1)$, $r_0,\,r_b,\,A\in \R$ be fixed but
	otherwise arbitrary and $f\colon [0,b]\to \R$ be an absolutely continuous
	function whose derivative $f'$ lies in $L^2[0,b]$. Assume that:
	\begin{enumerate}
		\item [(i)] $\abs{f'(x)}\leq \ka_0\big[1+f(x)^2\big]^{\frac{3}{2}}$ for almost every
			$x$ in the domain of $f$; 
		\item [(ii)] $f(0)=r_0$, $f(b)=r_b$ and $\int
				f=A$.	
	\end{enumerate} 
\end{notn}
At this point the reader is referred to \dref{D:stretchablecurve} and 
\rref{R:tangent} for the motivation for these conditions and the
following results, which will otherwise be lacking.

Let $g_{\pm}\colon \R\to \bar{\R}$ and $h_{\pm}^b=h_{\pm}\colon \R\to \bar{\R}$
be as in (24) and (25) of \cite{SalZueh1}, where $\bar{\R}=\R\cup
\se{\pm \infty}$. The
functions $g_\pm$ are the solutions of the differential equations $g'=\pm
\ka_0(1+g^2)^{\frac{3}{2}}$ with $g(0)=r_0$. Similarly, $h_{\pm}^b$ are the
solutions of $h'=\mp \ka_0(1+h^2)^{\frac{3}{2}}$
satisfying $h(b)=r_b$. Since $g_+$ is strictly increasing and $h_+^b$ is strictly
decreasing, the graphs of these functions either do not intersect, or do so at a
single point. In the latter case let $\la_+(b)$ denote their common value at
this point, and in the former set $\la_+(b)=+\infty$. Let $\la_-(b)$ be defined
analogously.\footnote{Notice that $g_{\pm}$ depend upon the values of $\ka_0,r_0$ and
$h_{\pm}^b$ depend upon the values of $\ka_0,r_b$, even though this is not
indicated explicitly in the notation. The same comment applies to the numbers
$\la_{\pm}(b)$.}

\begin{rem}\label{R:shift} Let $c>b>0$. Then $h_{\pm}^c$ is obtained from
	$h_{\pm}^b$ by a shift of the parameter through $c-b$, that is,
	$h^c_{\pm}(x)=h^b_{\pm}\big(x-(c-b)\big)$ for all $x\in \R$. The
	monotonicity of $h^b_{\pm}$ implies that $h_{+}^b\leq h_{+}^c$ and
	$h_{-}^c\leq h_{-}^b$ throughout $\R$, whence 
	\begin{equation}\label{E:sandwich}
		\la_-(c)\leq \la_-(b)\leq \la_+(b)\leq \la_+(c).
	\end{equation}
\end{rem}

\begin{lem}\label{L:1andahalf} Let $f\colon [0,b]\to \R$ be as in
	\nref{N:stretchable}. Then
	\begin{equation}\label{E:1andahalf} \la_-(b)\leq
		\max\se{g_-(x),h_-^b(x)}\leq f(x)\leq \min\se{g_+(x),h_+^b(x)}\leq
		\la_+(b)\quad \text{for all $x\in [0,b]$}.  
	\end{equation} 
	\end{lem}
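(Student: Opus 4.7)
The plan is to reduce the nonlinear bound on $|f'|$ to a straightforward Lipschitz condition via a standard change of variables, then apply elementary comparison arguments. Define
\begin{equation*}
	\Phi(y) = \int_0^y \frac{dt}{(1+t^2)^{3/2}},
\end{equation*}
which is an increasing real-analytic bijection from $\R$ onto $(-1,1)$. Since $f$ is absolutely continuous and $(1+f^2)^{-3/2}$ is bounded, $\Phi\circ f$ is absolutely continuous with $(\Phi\circ f)'(x) = f'(x)/(1+f(x)^2)^{3/2}$ a.e.; thus hypothesis (i) of \nref{N:stretchable} is equivalent to saying that $\Phi\circ f$ is $\ka_0$-Lipschitz. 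The same computation applied to $g_\pm$ (which are $C^1$ solutions of $g' = \pm \ka_0(1+g^2)^{3/2}$ with $g(0)=r_0$) gives the explicit formulas $\Phi(g_\pm(x)) = \Phi(r_0) \pm \ka_0 x$, and analogously $\Phi(h_\pm^b(x)) = \Phi(r_b) \mp \ka_0(x-b)$.

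From the Lipschitz bound on $\Phi\circ f$ and the initial condition $f(0)=r_0$, one gets $|\Phi(f(x)) - \Phi(r_0)| \le \ka_0 x$, so
\begin{equation*}
	\Phi(g_-(x)) \;\le\; \Phi(f(x)) \;\le\; \Phi(g_+(x)),
\end{equation*}
and applying the increasing map $\Phi^{-1}$ yields $g_-(x) \le f(x) \le g_+(x)$. Running exactly the same argument from the terminal condition $f(b)=r_b$ gives $h_-^b(x) \le f(x) \le h_+^b(x)$ on $[0,b]$. Combining, we get the inner pair of inequalities
\begin{equation*}
	\max\{g_-(x),h_-^b(x)\} \le f(x) \le \min\{g_+(x),h_+^b(x)\}.
\end{equation*}

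For the outer inequalities, use that $g_+$ is strictly increasing while $h_+^b$ is strictly decreasing. If the graphs meet at some $x_0$, the common value is, by definition, $\la_+(b)$, and then for $x\le x_0$ we have $g_+(x)\le g_+(x_0)=\la_+(b)$, while for $x\ge x_0$ we have $h_+^b(x)\le h_+^b(x_0)=\la_+(b)$; in either case $\min\{g_+(x),h_+^b(x)\}\le \la_+(b)$. If the graphs do not meet, $\la_+(b)=+\infty$ and the bound is vacuous. The argument for $\la_-(b)\le \max\{g_-(x),h_-^b(x)\}$ is entirely symmetric, with inequalities reversed.

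There is no real obstacle: the only point of care is that $f$ is merely absolutely continuous, not $C^1$, but this is exactly the hypothesis one needs for $\Phi\circ f$ to remain absolutely continuous and for the a.e.\ derivative bound to integrate up to the Lipschitz estimate. All other steps use only monotonicity of the building-block functions $g_\pm$ and $h_\pm^b$ and the definition of $\la_\pm(b)$.
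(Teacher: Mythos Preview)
Your proof is correct. The paper's own proof simply cites eq.~(26) of \cite{SalZueh1} for the inner inequalities and then invokes monotonicity of $g_\pm,h_\pm^b$ together with the definition of $\la_\pm(b)$ for the outer ones; your argument supplies a self-contained proof of the inner inequalities via the linearizing substitution $\Phi(y)=\int_0^y(1+t^2)^{-3/2}\,dt$ (which is exactly the standard device behind the cited result), and your treatment of the outer inequalities is identical to the paper's.
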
 
	\begin{proof} The
	innermost inequalities were already established in (26) of \cite{SalZueh1}.
	The other two are immediate from the definition of  $\la_{\pm}(b)$ and the
	monotonicity of $g_{\pm}$, $h_{\pm}^b$.  \end{proof}

%

\begin{defn}[$ \ze_{(\mu,b)} $] \label{D:ze} For $b>0$ and $\mu\in [\la_-(b),\la_+(b)]\cap \R$,
	define $\ze_{(\mu,b)}\colon [0,b]\to \R$ by \begin{equation}\label{E:ze}
		\ze_{(\mu,b)}(x)=\midd\big(h^b_-(x)\,,\,g_-(x)\,,\,\mu\,,\,g_+(x)\,,\,h^b_+(x)\big)\quad
		(x\in [0,b]).  \end{equation} \end{defn}

Notice that by monotonicity of $g_{\pm}$, $h_{\pm}^b$,
\begin{equation}\label{E:infsup} \inf_{x\in [0,b]}
	\ze_{(\mu,b)}(x)=\min\se{r_0,\,r_b,\,\mu}\text{\ \ and\ \ }\sup_{x\in [0,b]}
	\ze_{(\mu,b)}(x)=\max\se{r_0,\,r_b,\,\mu}.  \end{equation}

\begin{figure}[ht] \begin{center} \includegraphics[scale=.11]{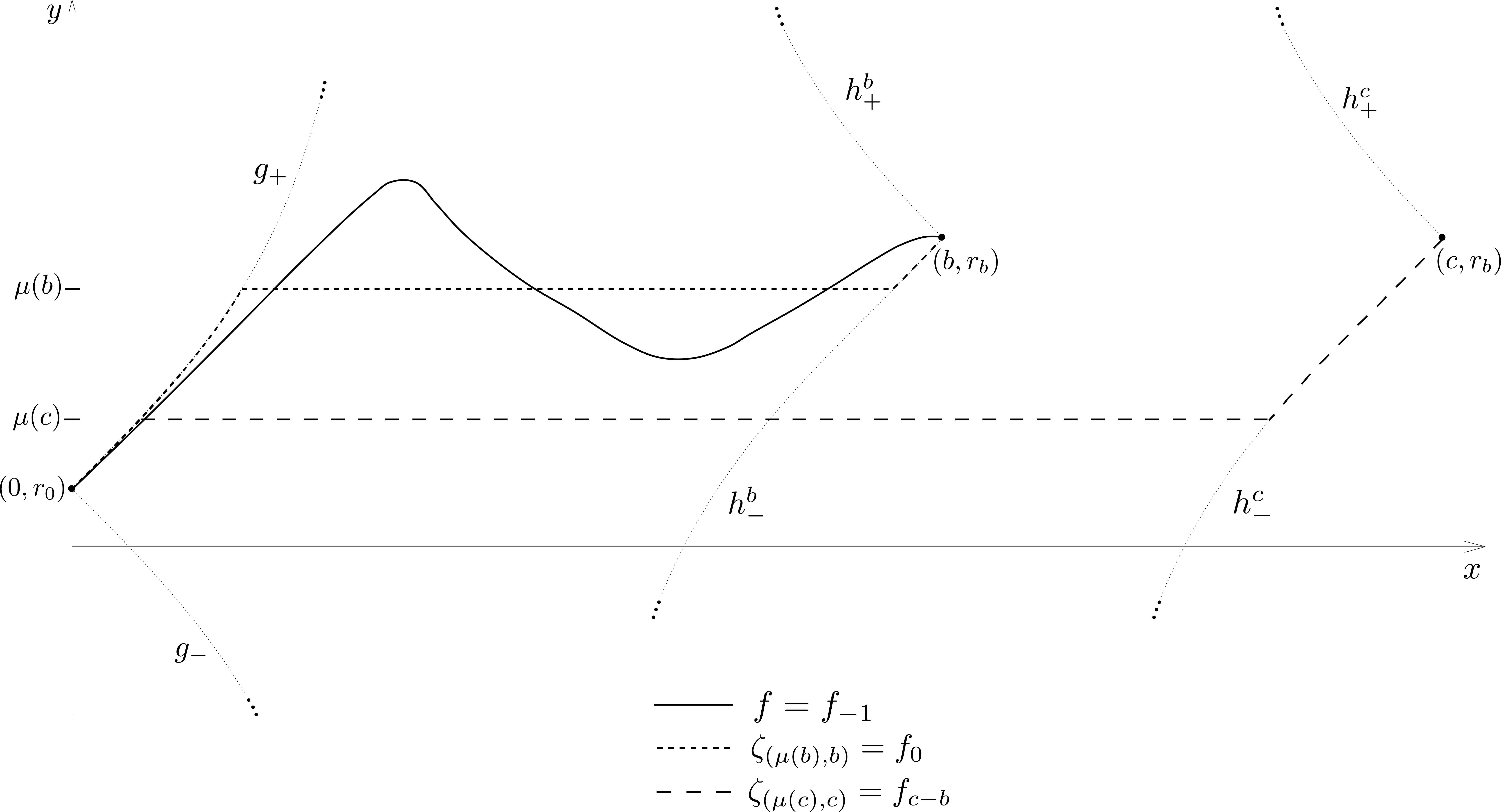} \caption{} \label{F:stretching} \end{center} \end{figure}

\begin{lem}\label{L:six} Let $\mu_1<\mu_2\in
	[\la_-(b),\la_+(b)]\cap \R$. Then $\ze_{(\mu_1,b)}(x)\leq
	\ze_{(\mu_2,b)}(x)$ for all $x\in [0,b]$ and strict inequality holds for at
	least one $x$. In particular, $\int \ze_{(\mu,b)}$ is a strictly increasing
	function of $\mu\in [\la_-(b),\la_+(b)]$.  
\end{lem}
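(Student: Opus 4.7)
Proof plan:

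First, the pointwise inequality $\ze_{(\mu_1,b)}(x)\leq \ze_{(\mu_2,b)}(x)$ is immediate, because the five-argument median $\midd$ is non-decreasing in each entry; fixing the other four arguments in \eqref{E:ze} and replacing $\mu_1$ by $\mu_2$ can only raise the median. The rest of my plan hinges on the more useful closed form
\[
\ze_{(\mu,b)}(x)=\min\bigl(\max(\mu,L(x)),\,U(x)\bigr),
\]
obtained by a short case analysis, where $L(x)=\max(g_-(x),h_-^b(x))$ and $U(x)=\min(g_+(x),h_+^b(x))$. By \lref{L:1andahalf}, $\la_-(b)\leq L\leq U\leq \la_+(b)$ on $[0,b]$, with $L$ attaining $\la_-(b)$ at the meeting point of $g_-,h_-^b$ and $U$ attaining $\la_+(b)$ at the meeting point of $g_+,h_+^b$.

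For the strict inequality at some $x^{*}\in[0,b]$, I would pick $\mu^{*}\in(\mu_1,\mu_2)\subseteq(\la_-(b),\la_+(b))$ and then produce $x^{*}\in[0,b]$ satisfying $L(x^{*})<\mu^{*}<U(x^{*})$. Granting such an $x^{*}$, the closed form immediately gives
\[
\ze_{(\mu_1,b)}(x^{*})\leq\max(\mu_1,L(x^{*}))<\mu^{*}<\min(\mu_2,U(x^{*}))\leq\ze_{(\mu_2,b)}(x^{*}),
\]
which is the required strict inequality.

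To produce $x^{*}$, I would consider the open subsets $A=\{L<\mu^{*}\}$ and $B=\{U>\mu^{*}\}$ of $[0,b]$, both non-empty since $\min L=\la_-(b)<\mu^{*}<\la_+(b)=\max U$. Any point in $[0,b]\ssm(A\cup B)$ satisfies $L(x)\ge \mu^{*}\ge U(x)$ and so, using $L\leq U$, forces $L(x)=U(x)=\mu^{*}$. By uniqueness of solutions of the ODEs defining $g_\pm,h_\pm^b$, the two functions in each of the pairs $(g_-,g_+)$, $(h_-^b,h_+^b)$, $(g_+,h_-^b)$, $(g_-,h_+^b)$ can coincide only at the endpoint where their initial data agree, or else must coincide on their common domain; in the nondegenerate case this confines $\{L=U\}$ to $\{0,b\}$, where the values are $r_0$ and $r_b$. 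Thus, choosing $\mu^{*}\in(\mu_1,\mu_2)\ssm\{r_0,r_b\}$ guarantees $A\cup B=[0,b]$, whereupon connectedness of $[0,b]$ forces $A\cap B\neq\emptyset$ and yields $x^{*}$.

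The ``in particular'' statement then follows by continuity: $\ze_{(\mu,b)}$ is continuous in $x$ (a median of continuous functions), so the strict inequality at $x^{*}$ persists on a neighbourhood and integrates to $\int\ze_{(\mu_1,b)}<\int\ze_{(\mu_2,b)}$, giving strict monotonicity of $\mu\mapsto\int\ze_{(\mu,b)}$ on $[\la_-(b),\la_+(b)]$. The main obstacle will be the connectedness argument producing $x^{*}$; in particular the degenerate case $h_-^b\equiv g_+$ (or $g_-\equiv h_+^b$), in which $L\equiv U$ and $\ze_{(\mu,b)}$ is independent of $\mu$, must be excluded, and I expect this to follow implicitly from the framework of \nref{N:stretchable}.
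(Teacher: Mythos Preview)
The paper's own proof is simply ``Left to the reader,'' so there is no argument to compare against; your proposal supplies exactly the kind of routine verification the authors evidently had in mind. The monotonicity of the median gives the weak inequality, your clamp reformulation $\ze_{(\mu,b)}=\min(\max(\mu,L),U)$ is valid on $[0,b]$ because \lref{L:1andahalf} (applied to the $f$ of \nref{N:stretchable}) forces $L\leq U$ there, and your connectedness argument producing $x^{*}$ is clean: one checks, as you do, that $L(0)=U(0)=r_0$, $L(b)=U(b)=r_b$, and that $L(x)=U(x)$ at an interior point would force one of $\{g_-,h_-^b\}$ to agree with one of $\{g_+,h_+^b\}$ there, which by ODE uniqueness happens only at an endpoint or in the degenerate coincidences $g_+\equiv h_-^b$ or $g_-\equiv h_+^b$.

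Your caveat about that degenerate case is accurate rather than paranoid: if, say, $g_+\equiv h_-^b$, then $L\equiv U\equiv g_+$ on $[0,b]$, $[\la_-(b),\la_+(b)]=[r_0,r_b]$, and $\ze_{(\mu,b)}$ is literally independent of $\mu$, so the strict conclusion fails as stated. This edge case is not excluded by \nref{N:stretchable} alone (the unique admissible $f$ is then $g_+$ itself, with equality in (i)). However, in every application in the paper $\ka_0$ is chosen with slack above the actual curvature bound (see e.g.~the proofs of \lref{L:stretching} and \lref{L:compressing}), so the strict inequality $|f'|<\ka_0(1+f^2)^{3/2}$ holds and the degenerate coincidence cannot occur. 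Your expectation that the framework rules it out is therefore correct in practice, even if the lemma, read in isolation, has this borderline exception.
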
 
\begin{proof} Left to the reader.
\end{proof}

\begin{lem}\label{L:seven} Let $c\geq b$ and $\mu\in [\la_-(b),\la_+(b)]\cap \R$.
	Then $\int \ze_{(\mu,c)}-\int \ze_{(\mu,b)}=\mu(c-b)$ and
	$\ze_{(\mu,c)}^{-1}(\se{\mu})$ is a closed interval of length at least
	$c-b$.  
\end{lem}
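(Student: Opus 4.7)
The plan is to describe $\ze_{(\mu,c)}$ explicitly in terms of $\ze_{(\mu,b)}$ by exploiting the shift relation $h_{\pm}^c(x)=h_{\pm}^b(x-(c-b))$ from \rref{R:shift}. Geometrically, $\ze_{(\mu,c)}$ should be obtained from $\ze_{(\mu,b)}$ by inserting a horizontal plateau of length $c-b$ at height $\mu$ and translating the descending tail rightward by $c-b$; both assertions of the lemma will then follow directly from such a description.

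First I would pinpoint the plateau of $\ze_{(\mu,b)}$. One checks that $\ze_{(\mu,b)}(x)=\mu$ precisely when $\mu$ is the middle of the five values, equivalently
\begin{equation*}
\max\{g_-(x),h_-^b(x)\}\leq\mu\leq\min\{g_+(x),h_+^b(x)\}.
\end{equation*}
Since $g_-,h_+^b$ are decreasing while $g_+,h_-^b$ are increasing, this carves out a closed interval $I_b=[p,q]\subs[0,b]$, with $p$ determined solely by the two $g$-thresholds and $q$ by the two $h^b$-thresholds; the hypothesis $\mu\in[\la_-(b),\la_+(b)]$ guarantees $I_b\neq\emptyset$ (with $p=q$ possible when $\mu=\la_{\pm}(b)$). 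Under \rref{R:shift} the two $h$-thresholds move to the right by $c-b$ while the $g$-thresholds stay fixed, so the analogous set for $\ze_{(\mu,c)}$ is $I_c=[p,q+(c-b)]$, a closed interval of length $(q-p)+(c-b)\geq c-b$. This already settles the second claim.

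Next I would verify the piecewise identity
\begin{equation*}
\ze_{(\mu,c)}(x)=\begin{cases}\ze_{(\mu,b)}(x) & \text{if } x\in[0,p],\\ \mu & \text{if } x\in[p,q+(c-b)],\\ \ze_{(\mu,b)}(x-(c-b)) & \text{if } x\in[q+(c-b),c].\end{cases}
\end{equation*}
The middle line is just the description of $I_c$. On $[0,p]$ the perturbation $h_\pm^b\mapsto h_\pm^c$ satisfies $h_-^c\leq h_-^b$ and $h_+^c\geq h_+^b$; since $x\notin I_b$ here, the values $h_\pm^b(x)$ already sit in the bottom or top pair of the sorted five-tuple, so pushing them further outward leaves the third order statistic (i.e.\ the median) unchanged. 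The case $x\in[q+(c-b),c]$ is symmetric after the substitution $y=x-(c-b)\in[q,b]$: now the $h$'s coincide while the $g$'s have moved outward, and the same rank argument applies. Integrating the three pieces and using $\int_p^q\ze_{(\mu,b)}=\mu(q-p)$ then yields
\begin{equation*}
\int\ze_{(\mu,c)}=\int_0^p\ze_{(\mu,b)}+\mu\bigl((q-p)+(c-b)\bigr)+\int_q^b\ze_{(\mu,b)}=\int\ze_{(\mu,b)}+\mu(c-b),
\end{equation*}
which is the required identity.

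The main obstacle is the rank-preservation step: confirming that widening the outer gap in the five-tuple (between $h_\pm^b$ on $[0,p]$, and between $g_\pm$ on the shifted right tail) leaves the median fixed. I expect this to reduce to a short case analysis based on the signs of $r_0-\mu$ and $r_b-\mu$ and on which of the four inequalities defining $I_b$ actually fails at $x$. The degenerate situations $p=q$ or $r_0=\mu$ (or $r_b=\mu$) will need to be checked separately, but in each of them the piecewise decomposition merely collapses one of its pieces and the integral formula and the length bound remain valid by inspection.
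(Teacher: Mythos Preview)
Your approach is correct and coincides with the paper's: the paper also derives the piecewise formula
\[
\ze_{(\mu,c)}(x)=\begin{cases}\ze_{(\mu,b)}(x) & x\in[0,x_0],\\ \mu & x\in[x_0,x_1+(c-b)],\\ \ze_{(\mu,b)}(x-(c-b)) & x\in[x_1+(c-b),c],\end{cases}
\]
directly from \rref{R:shift} (with $[x_0,x_1]$ your $[p,q]$) and reads off both conclusions. The paper simply asserts this formula without justification, whereas you spell out why it holds; your rank-preservation worry can be settled cleanly by noting that $g_-,h_+^b$ solve the \emph{same} first-order ODE, hence never cross, and likewise for $g_+,h_-^b$ (equivalently, invoke \lref{L:1andahalf} to get $\max\{g_-,h_-^b\}\leq\min\{g_+,h_+^b\}$), which forces the median on $[0,p]$ to be one of $g_\pm$ and on the shifted tail to be one of $h_\pm^b$.
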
 
\begin{proof} Notice that $\ze_{(\mu,c)}$ is defined (that is, $\mu \in
	[\la_-(c),\la_+(c)]$) by \eqref{E:sandwich}. The inverse image of $\mu$ under
	$\ze_{(\mu,b)}$ is a (possibly degenerate) closed interval
	$[x_0,x_1]$. By (\ref{R:shift}), \begin{equation}\label{E:explicit}
		\ze_{(\mu,c)}(x)=\begin{cases} \ze_{(\mu,b)}(x) & \text{ if $x\in
			[0,x_0]$}; \\ \mu & \text{ if $x\in [x_0,x_1+(c-b)]$};\\
			\ze_{(\mu,b)}(x-(c-b)) & \text{ if $x\in [x_1+(c-b),c]$}. \\
		\end{cases} \end{equation} The assertions of the lemma are consequences
	of this expression.  \end{proof}

\begin{cor}\label{C:eight} Let $b>0$, $\mu(b)\in [\la_-(b),\la_+(b)]\cap \R$ be
	fixed and $A=\int \ze_{(\mu(b),b)}$. Suppose that $0\in
	[\la_-(b),\la_+(b)]$. Then for each $c\geq b$, there exists a unique
	$\mu(c)\in \R$ such that $\int \ze_{(\mu(c),c)}=A$. The resulting function
	$\mu\colon [b,+\infty)\to \R$, $c\mapsto \mu(c)$, is continuous and
		$\abs{\mu(c)}\decr 0$ as $c\to +\infty$. Moreover, $ \abs{\mu(c)} $ is
		strictly decreasing if $ \mu(b)\neq 0 $. 
\end{cor}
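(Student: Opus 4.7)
\smallskip

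\noindent\textbf{Proof plan.} The plan is to use \lref{L:seven} as the main bookkeeping tool, in combination with the strict monotonicity of \lref{L:six}, to reduce everything to elementary properties of continuous, strictly increasing functions. Assume for definiteness $\mu(b)>0$; the cases $\mu(b)<0$ and $\mu(b)=0$ are symmetric or trivial. By \eqref{E:sandwich} the interval $[\la_-(b),\la_+(b)]$ is contained in $[\la_-(c),\la_+(c)]$ for every $c\geq b$, so the value $\int\ze_{(\mu,c)}$ is well defined for every $\mu\in[0,\mu(b)]$.

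\smallskip

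\noindent For existence and uniqueness, fix $c\geq b$ and consider the map $\Phi_c\colon [\la_-(c),\la_+(c)]\cap\R\to\R$ given by $\Phi_c(\mu)=\int\ze_{(\mu,c)}$. It is continuous (from \eqref{E:explicit}, $\ze_{(\mu,c)}$ depends continuously on $(\mu,c)$ uniformly on $[0,c]$) and strictly increasing by \lref{L:six}. Applying \lref{L:seven} at $\mu=0$ and $\mu=\mu(b)$,
\begin{equation*}
\Phi_c(0)=\int\ze_{(0,b)}\quad\text{and}\quad \Phi_c(\mu(b))=A+\mu(b)(c-b).
\end{equation*}
Since $0<\mu(b)$, \lref{L:six} gives $\int\ze_{(0,b)}<\int\ze_{(\mu(b),b)}=A$, while $A+\mu(b)(c-b)\geq A$. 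By the intermediate value theorem there is a unique $\mu(c)\in[0,\mu(b)]$ with $\Phi_c(\mu(c))=A$; strict monotonicity rules out any other solution in $[\la_-(c),\la_+(c)]\cap\R$. Continuity of $c\mapsto\mu(c)$ follows from a standard subsequence argument: any accumulation point $\mu^{*}$ of $\mu(c_n)$ as $c_n\to c$ satisfies $\Phi_c(\mu^{*})=A$ by joint continuity, hence $\mu^{*}=\mu(c)$ by uniqueness, and $(\mu(c_n))$ is bounded in $[0,\mu(b)]$.

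\smallskip

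\noindent For the limit $\abs{\mu(c)}\to 0$, apply \lref{L:seven} with the chosen $\mu(c)\in[0,\mu(b)]\subs[\la_-(b),\la_+(b)]\cap\R$:
\begin{equation*}
\mu(c)(c-b)\;=\;\int\ze_{(\mu(c),c)}-\int\ze_{(\mu(c),b)}\;=\;A-\int\ze_{(\mu(c),b)}.
\end{equation*}
By \eqref{E:infsup} the integrand on the right is bounded in absolute value by $M:=\max\{\abs{r_0},\abs{r_b},\mu(b)\}$, so $\abs{\mu(c)(c-b)}\leq\abs{A}+Mb$, and dividing by $c-b$ forces $\mu(c)\to 0$. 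Finally, for strict monotonicity suppose $b\leq c_1<c_2$; the argument above applied with $b$ replaced by $c_1$ shows $\mu(c_1)>0$ (otherwise $A=\int\ze_{(0,c_1)}=\int\ze_{(0,b)}<A$, a contradiction). Applying \lref{L:seven} with $(b,c)$ replaced by $(c_1,c_2)$ and $\mu=\mu(c_1)$ yields
\begin{equation*}
\Phi_{c_2}(\mu(c_1))=\int\ze_{(\mu(c_1),c_1)}+\mu(c_1)(c_2-c_1)=A+\mu(c_1)(c_2-c_1)>A=\Phi_{c_2}(\mu(c_2)),
\end{equation*}
so $\mu(c_2)<\mu(c_1)$ by strict monotonicity of $\Phi_{c_2}$, and $\mu(c_2)\geq 0$ as before. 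Hence $\abs{\mu(c)}$ is strictly decreasing. The only non-routine point is verifying that $\mu(c)$ remains in the interval $[\la_-(b),\la_+(b)]$ so that \lref{L:seven} applies, but this was built into the existence step above.
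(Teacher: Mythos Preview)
Your proof is correct and follows essentially the same route as the paper's: both arguments reduce everything to \lref{L:six} and \lref{L:seven}, establish existence and uniqueness of $\mu(c)\in[0,\mu(b)]$ via the intermediate value theorem applied to the strictly increasing map $\mu\mapsto\int\ze_{(\mu,c)}$, and then bootstrap the strict monotonicity by replacing $b$ with $c_1$. The only notable difference is in the limit step: the paper first uses monotonicity to produce $\la=\lim_{c\to\infty}\mu(c)$ and then derives a contradiction from $A\geq \la(c-b)+\int\ze_{(\la,b)}$, whereas you bound $\mu(c)(c-b)=A-\int\ze_{(\mu(c),b)}$ directly via \eqref{E:infsup}; both are equally elementary and equally short.
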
 
\begin{proof} 
	From $0\in [\la_-(b),\la_+(b)]$ and  (\ref{R:shift}), it follows that $0\in
	[\la_-(c),\la_+(c)]$ for all $c\geq b$. No generality is lost in assuming
	that $\mu(b)\geq 0$. In this case, (\ref{L:six}) and (\ref{L:seven}) yield:
	\begin{equation*}\label{E:intintint} \int \ze_{(0,c)}=\int \ze_{(0,b)}\leq
		A=\int \ze_{(\mu(b),b)} \leq \int \ze_{(\mu(b),b)}+(c-b)\mu(b)=\int
		\ze_{(\mu(b),c)}.  
	\end{equation*} 
	Hence, by (\ref{L:six}), there exists
	a unique $\mu(c)\in [0,\mu(b)]$ such that $\int \ze_{(\mu(c),c)}=A$.
	Moreover, $\mu(c)\in (0,\mu(b))$ in case $\mu(b)>0$, because then the first
	two inequalities above are strict. The same argument also
	shows that $\mu(d)\in [0,\mu(c)]$ whenever $d\geq c$ (and $ \mu(d)\in
	(0,\mu(c)) $ in case $ \mu(b)>0 $).  Thus, $\mu(c)$ is a decreasing function
	of $c$ (strictly decreasing if $ \mu(b)>0$), $\la=\lim_{c\to +\infty}\mu(c)$
	exists and is nonnegative. The continuity of $c\mapsto \mu(c)$ follows from
	the fact that $ \int \ze_{(\mu(c),c)} = A$ is constant as a function of $ c
	$. Finally, $ \la = 0 $ because 
	\begin{equation*}
		A=\int \ze_{(\mu(c),c)}\geq \int
			\ze_{(\la,c)}=\la(c-b)+\int \ze_{(\la,b)}\text{\quad for all $c\geq
			b$}.  \qedhere\end{equation*} 
\end{proof}

\begin{defn}[flattening and stretching functions]\label{D:functionstretching}
	Let $f=f_{-1}\colon [0,b]\to \R$ be as in \nref{N:stretchable}. The
	\tdef{flattening} of $f$ is the family $f_s\colon [0,b]\to \R$ ($s\in
	[-1,0]$) obtained by
	applying Construction 3.8 in \cite{SalZueh1} to $f$ (note that there $s$
	goes from 1 to 0, instead of from $-1$  to 0 as here). 
	
	We say that $f$ is \tdef{$\ka_0$-stretchable} if $0\in [\la_-(b),\la_+(b)]$.
	In this case, the \tdef{stretching} of $f$ is the extension of the above
	family to $s\in [-1,+\infty)$ obtained by setting
	$f_s=\ze_{(\mu(b+s),b+s)}$ for $ s\geq 0 $, where $\mu\colon [0,+\infty)\to
	\R$ is as in \cref{C:eight}. See \fref{F:stretching}.
\end{defn}

\begin{lem}\label{L:regularity} Let $f_{-1}=f\colon [0,b]\to \R$ be a 
$\ka_0$-stretchable function and $(f_s)_{(s\in [-1,+\infty))}$ be the stretching
of $f$. Then: 
\begin{enumerate} 
	\item [(a)] If $f$ is piecewise smoooth, then so is $f_s$ for all $s\in
	[-1,+\infty)$.  
	\item [(b)] $\sup \abs{f_s}$ is a decreasing function of $s$.  
	\item [(c)] If $f$ does not
			change sign inside its domain, then none of the $f_s$ do.  
	\item [(d)] For $s\in [-1,0]$, $\sup_{[0,b]} f_s$ \tup(resp.~$\inf_{[0,b]}
		f_s$\tup) is a decreasing \tup(resp.~increasing\tup) function of $s$.
	\item [(e)] Let $f_s=\ze_{(\mu(b+s),b+s)}$ $(s\geq 0)$ and let $L_s$ denote
		the length of the interval \begin{equation*}
			\set{x\in [0,b+s]}{f_s(x)=\mu(b+s)}.  \end{equation*} Then $L_s\sim
		s$ \tup(that is, $\lim_{s\to +\infty}\frac{L_s}{s}=1$\tup).  
	\item [(f)] There exists $\vka_2>0$ such that 
		\begin{equation*}
			\abs{\mu(b+s)}\leq \frac{\vka_2}{s+1}\quad \text{for all $s\geq 0$}.
		\end{equation*} 
		Moreover, if $f>0$ over $[0,b]$, then there also exists
		$\vka_1>0$ such that 
		\begin{equation*}
			\frac{\vka_1}{s+1}\leq \mu(b+s)\quad \text{for all $s\geq 0$}.  
	\end{equation*} 
	\item [(g)] $f_s$ is
		$\ka_0$-stretchable for all $s\in [-1,+\infty)$.  \end{enumerate}
\end{lem}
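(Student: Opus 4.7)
The plan is to treat each of (a)--(g) by dissecting the family into the flattening regime $s\in[-1,0]$ and the stretching regime $s\geq 0$. In the first regime the assertions (a), (b), (c), (d) are direct consequences of the corresponding properties of the flattening in Construction 3.8 of \cite{SalZueh1}, which I simply invoke. In the second, $f_s=\ze_{(\mu(b+s),b+s)}$ is given by an explicit formula and everything reduces to manipulation of \eqref{E:ze}, \eqref{E:infsup}, \eqref{E:explicit} and \cref{C:eight}. More precisely: (a) holds because $\ze_{(\mu,c)}$ is the middle value of five real-analytic functions, hence piecewise smooth; (b) because \eqref{E:infsup} yields $\sup\abs{f_s}=\max\se{\abs{r_0},\abs{r_b},\abs{\mu(b+s)}}$ and $\abs{\mu(b+s)}$ is decreasing by \cref{C:eight}; and (g) because \eqref{E:sandwich} gives $[\la_-(b),\la_+(b)]\subseteq[\la_-(b+s),\la_+(b+s)]$, so $0$ remains in the latter interval for all $s\geq 0$.

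For (c), assume $f>0$ (the case $f<0$ is symmetric). Part (d) applied to the flattening already gives $\inf_{[0,b]}f_0\geq \inf_{[0,b]}f>0$, and by \eqref{E:infsup} this forces $\mu(b)>0$; \cref{C:eight} then yields $\mu(b+s)>0$ for every $s\geq 0$, whence $\inf f_s>0$ throughout the stretching regime as well. For (e), a direct reading of \eqref{E:explicit} identifies the plateau of $f_s$ with the interval $[x_0,x_1+s]$, where $[x_0,x_1]=\ze_{(\mu(b+s),b)}^{-1}(\se{\mu(b+s)})\subseteq[0,b]$; thus $s\leq L_s\leq s+b$, giving $L_s/s\to 1$. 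For the upper bound in (f), the starting point is the identity
\[
\mu(b+s)\cdot s=A-\int \ze_{(\mu(b+s),b)}
\]
furnished by \lref{L:seven}. Its right-hand side is bounded in absolute value by $\abs{A}+b\cdot\max\se{\abs{r_0},\abs{r_b},\abs{\mu(b)}}$ (using \eqref{E:infsup} and monotonicity of $\abs{\mu(b+s)}$), which combined with the trivial bound $\abs{\mu(b+s)}\leq\abs{\mu(b)}$ on $s\in[0,1]$ yields the upper estimate $\abs{\mu(b+s)}\leq\vka_2/(s+1)$.

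The lower bound in (f) is the main obstacle. Under the hypothesis $f>0$ the positivity of $\mu(b)$ shown in the proof of (c) together with the strict monotonicity of \lref{L:six} gives $A-\int \ze_{(0,b)}>0$. The elementary pointwise estimate $0\leq \ze_{(\mu,b)}-\ze_{(0,b)}\leq \mu$ for $\mu\geq 0$, a property of the median of five real numbers, integrates to $\int \ze_{(\mu(b+s),b)}-\int \ze_{(0,b)}\leq b\cdot\mu(b+s)$. Substituting into the above identity yields
\[
\mu(b+s)(s+b)\geq A-\int \ze_{(0,b)}>0,
\]
from which $\mu(b+s)\geq \vka_1/(s+1)$ follows for a suitable $\vka_1>0$. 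The delicate ingredient is thus the interplay between the (cited) positivity-preservation of the flattening, which secures $\mu(b)>0$, and the elementary pointwise comparison between $\ze_{(\mu,b)}$ and $\ze_{(0,b)}$ that turns this strict positivity into a quantitative $1/s$-decay; everything else amounts to routine bookkeeping with the explicit formula.
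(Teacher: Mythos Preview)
Your proof is correct and follows the same overall architecture as the paper: cite \cite{SalZueh1} for the flattening regime and compute directly with $\ze_{(\mu,c)}$ for the stretching regime. The parts (a), (b), (d), (g) match the paper essentially verbatim, and your (c) is the same argument (the paper works with $f\geq 0$ rather than $f>0$, but your proof goes through unchanged with non-strict inequalities).

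The only noteworthy difference is in (e) and (f). For (e), the paper argues that $g_\pm,h_\pm^b$ blow up in finite time, so the complement of the plateau in $[0,b+s]$ has uniformly bounded length; you instead read off from \eqref{E:explicit} that the plateau has length in $[s,s+b]$. For (f), the paper splits $A=\int f_s$ into the plateau contribution $L_s\mu(b+s)$ and an off-plateau remainder, bounding the latter via the finiteness of the improper integrals of $g_\pm,h_\pm^b$; you instead invoke the identity $\mu(b+s)\cdot s=A-\int\ze_{(\mu(b+s),b)}$ from \lref{L:seven} together with the 1-Lipschitz dependence of the median on each entry. Your route is a bit more self-contained, avoiding any appeal to the explicit form of $g_\pm,h_\pm^b$ beyond what is already packaged in \lref{L:seven} and \eqref{E:infsup}; the paper's route has the advantage of making the geometric picture (bounded off-plateau arc length) more transparent. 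Both are short and yield the same constants up to harmless factors.
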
 \begin{proof}The proof will be split into the corresponding parts. 

	(a): By definition, $f_s$ is the median of a finite collection of piecewise 
	smooth functions for all $s$.
	
	(b): For $s\geq 0$, this follows from \cref{C:eight}. For $s\in [-1,0]$,
	this follows from Corollary 3.12 of \cite{SalZueh1}.
	

	(c):  No generality is lost in assuming that $f=f_{-1}\geq 0$ over $[0,b]$.
	Then, by Corollary 3.12 of \cite{SalZueh1}, \begin{equation*}
	0\leq
	\inf_{x\in [0,b]}f_{-1}(x)\leq \inf_{x\in [0,b]}f_s(x)\text{\ \ for all
	$s\in [-1,0]$}.  \end{equation*} Let $r_0=f(0)$, $r_b=f(b)$; both are
	nonnegative by hypothesis.  By \eqref{E:infsup}, \begin{equation*}
	\inf_{x\in [0,b+s]}f_s(x)=\inf_{x\in [0,b+s]}
	\ze_{(\mu(b+s),b+s)}(x)=\min\se{r_0,\,r_b,\,\mu(b+s)} \text{\ \ for all
	$s\geq 0$}.  \end{equation*} By \cref{C:eight}, 
	$\mu(b+s)\geq 0$ for all $s\geq 0$, hence $ f_s\geq 0 $ for all $ s\geq 0 $. 

	(d): This was proved in Corollary 3.12 of \cite{SalZueh1}.

	(e): The functions $g_{\pm}$, $h_{\pm}^b$ all blow up to $\pm \infty$ in finite
	time (compare eqs.~(24) and (25) of \cite{SalZueh1}). The length of $[0,b+s]$ is
	asymptotically equal to $s$, hence  $L_s\sim s$ as well. 
	
	(f): Let $L_s$ be as in part (e). We can write 
	\begin{equation*}
		A=\int f_s=\int_{\se{f_s=\mu(b+s)}}f_s + \int_{\se{f_s\neq
		\mu(b+s)}}f_s = L_s\?\mu(b+s) + \int_{\se{f_s\neq \mu(b+s)}}f_s.
	\end{equation*}
	A straightforward calculation shows that the improper integrals of 
	$g_{\pm}$ and $ h_{\pm}^{b} $ over the respective intervals where these
	functions
	assume real (finite) values are all finite. Hence the last term in the
	preceding equation admits a bound independent of $s$. The first assertion
	thus follows from (f). 

	The proof of the second assertion is similar. If $f>0$, then $\mu(b)>0$ and
	\begin{equation*}
		\int \ze_{(0,b)}<A=\int f_0=\int f_s\leq L_s\?\mu(b+s) +
		\int \ze_{(0,b)},
	\end{equation*}
	so again the assertion follows from (f).
	

(g): This is immediate from \eqref{E:sandwich}.
\end{proof}

\subsection*{Stretching of curves} We shall now reinterpret the preceding
definitions and results in terms of planar curves. Let $P=(p,w),\,Q=(q,z)\in \C\times \Ss^1$ and
	$\ga\in \sr L_{-1}^{+1}(P,Q)$ (see \S1 of \cite{SalZueh1} for the definition of
	this space). Recall that $\ta_\ga\colon [0,1]\to \Ss^1$ denotes the unit
	tangent to $\ga$.
\begin{defn}[stretchable curve]\label{D:stretchablecurve}
	Suppose that $\lan\ta_\ga,e^{i\psi}\ran>0$ throughout $[0,1]$ ($
	\psi\in \R $). After translating $ p $ to the origin, rotating $ \C $
	about the latter through $ \psi $, and relabeling the $x$- and $y$-axes
	accordingly, $\ga$ may be reparametrized as $\ga(x)=(x,y(x))$ for $x$ in
	some interval $[0,b]$. Let
	$f=y'\colon [0,b]\to \R$. We call $\ga$ \tdef{$\ka_0$-stretchable \tup(with
	respect to $e^{i\psi}$\tup)} if $f$ is $\ka_0$-stretchable in the sense
	of \dref{D:functionstretching}. Also, $\ga$ will be called
	\tdef{stretchable \tup(with respect to $e^{i\psi}$\tup)} if it is
	$\ka_0$-stretchable for some $\ka_0\in (0,1)$. 
\end{defn}

\begin{rem}\label{R:tangent} In this context, $f(x)=\tan(\theta_\ga(x))$
	for all $x\in [0,b]$, where $ \theta_\ga $ measures the angle from $
	e^{i\psi}$ to $ \ta_\ga $. Condition (i) in \nref{N:stretchable} means that the
	curvature $\ka_\ga$ of $\ga$ satisfies $\abs{\ka_\ga}\leq \ka_0$ almost everywhere.
	The numbers $r_0$, $r_b$ in (ii) represent the slopes of $w$, $z$,
	respectively, $A=\Im(q-p)$ and $b=\Re(q-p)$ (all of these with respect to
	the new coordinate axes determined by $e^{i\psi}$ and $ie^{i\psi}$). The reader
	may have noticed that the condition of being stretchable does not really
	concern $\ga$, but rather the pair $(P,Q)$. The geometric interpretation is
	that curves in $ \sr L_{-1}^{+1}(P,Q) $ are $ \ka_0 $-stretchable if and 
	only if there exists a curve $ \eta $ in this space such that $
	\abs{\ka_{\eta}}\leq \ka_0 $ a.e., $ \lan \ta_\eta, e^{i\psi}\ran>0 $
	everywhere and $ \ta_\eta(t_0)=e^{i\psi} $  for some
	$ t_0\in [0,1] $. 
	\end{rem}

\begin{defn}[flattening and stretching curves]\label{D:stretching} 
	Let $\ga$ be a $\ka_0$-stretchable curve as in \dref{D:stretchablecurve}
	and let $(f_s)_{s\in [-1,+\infty)}$ be 
	the corresponding stretching of $f=f_{-1}$, as in \dref{D:functionstretching}.
	Let $\ga_s\colon J_s\to \C$ be defined
	by 
	\begin{equation}\label{E:dictionary}
		\ga_s(x)=\Big(x\,,\,y(0)+\int_{J_s}f_s(u)du\Big),~\text{where }J_s=\begin{cases} [0,b] & \text{ if $s\in [-1,0]$;}
		\\ [0,b+s] & \text{ if $s\geq 0$.} \end{cases} 
	\end{equation} 
	The family $(\ga_s)_{(s\in [-1,+\infty))}$ will be called the 
	\tdef{stretching} of $\ga$ with respect to $e^{i\psi}$, and the 
	family $(\ga_s)_{(s\in [-1,0])}$ the \tdef{flattening} of $\ga$ with 
	respect to $e^{i\psi}$. The \tdef{stretching of $ \ga $ by $M$} is the family $(\ga_s)_{(s\in [-1,M])}$, 
	$M>0$; see Figure \ref{F:curvestretching}.
\end{defn} 

\begin{figure}[ht] \begin{center} \includegraphics[scale=.13]{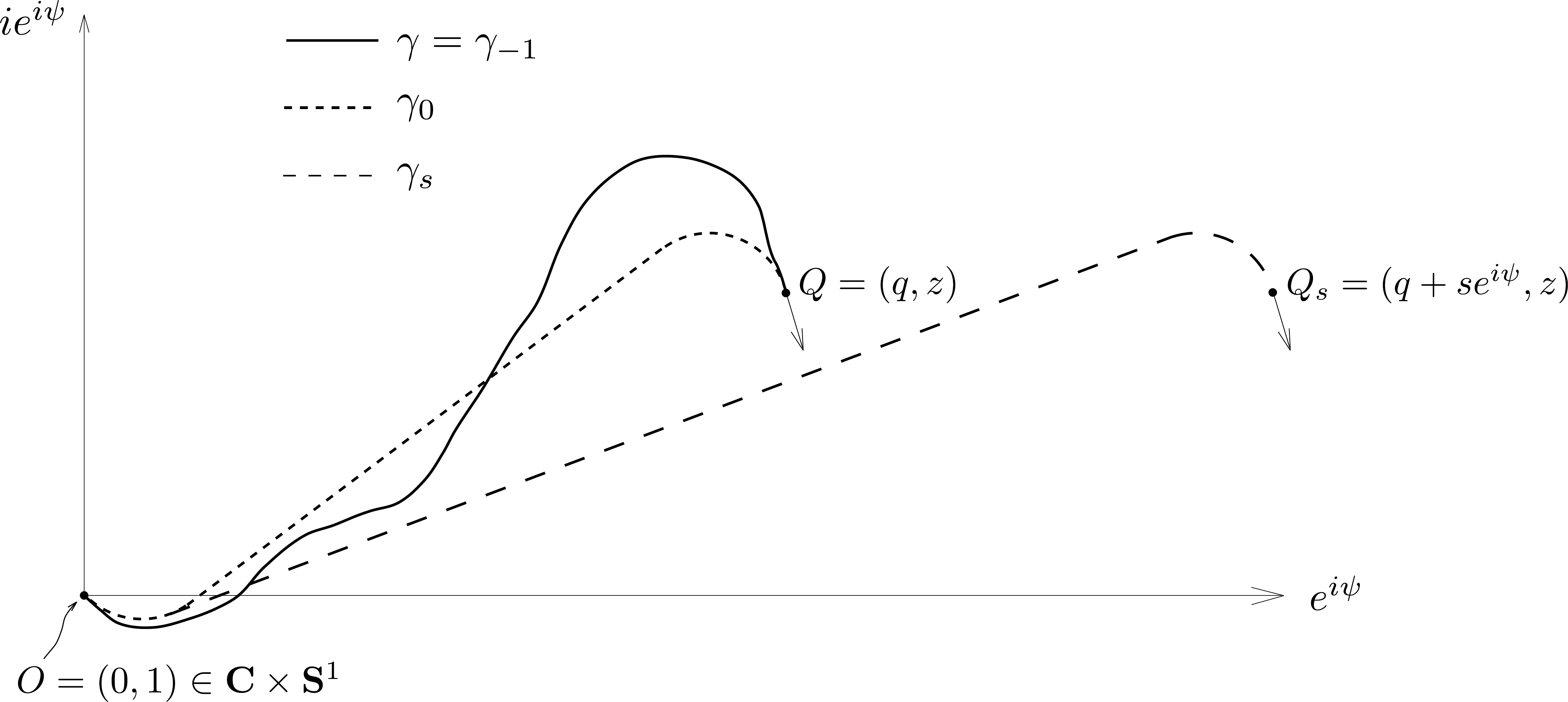}
		\caption{Flattening and stretching a curve $\ga$ in the direction of
			$e^{i\psi}$.}
	\label{F:curvestretching} \end{center} \end{figure}

Notice that $\ga_s\in \sr L_{-1}^{+1}(P,Q_s)$ for $Q_s=(q_s,z)\in \C\times \Ss^1$,
where $q_s=q$ for all $s\in [-1,0]$ and $q_s=q+se^{i\psi}$ for $s\geq 0$. The
curves $\ga_s$ are independent of the starting curve $\ga=\ga_{-1}$ for $ s\geq
0 $ (and fixed $\psi$ and $\ka_0$); they are each a concatenation of an arc of
circle of curvature $\pm \ka_0$, a straight line segment, and another such arc,
where both arcs have amplitude at most $\pi$ (cf.~Figure
\ref{F:curvestretching}). The functions $ g_\pm $, $ h^b_\pm $ appearing above
correspond to the arcs of circles of curvature $ \pm \ka_0 $ 
starting (resp.~ending) at $ P $ (resp.~$ Q
$).  In vague but suggestive language, the family $(\ga_s)$ is obtained from
$\ga$ by ``stretching'' it in the direction of $e^{i\psi}$. Clearly, the
stretching and flattening of a curve $\ga$ depend upon the chosen axis $\psi$.
Nonetheless, the curve $\ga_0$ is independent of both $ \ga $ and $ \psi $; see
Remark 3.9 of \cite{SalZueh1}.

\begin{uexr}
	Translate the assertions of \lref{L:regularity} into
	statements about the curves $\ga_s$, using that $
	f_s(x)=\tan(\theta_{\ga_s(x)}) $. (For instance, part (b) states that 
	$\sup_{x}\abs{\theta_{\ga_s}(x)}$ is a decreasing function of $s$.)
\end{uexr}

\begin{lem}\label{L:lowering}
	Let $\ga\in \sr L_{-1}^{+1}(P,Q)$. Suppose that $\abs{\ka_\ga}\leq
	\ka_0$ a.e.~and $\lan\ta_\ga,e^{i\psi}\ran>0$ over $ [0,1] $.
	\begin{enumerate}
		\item [(a)] If $\ta_\ga(t_0)=e^{i\psi}$ for some $t_0\in [0,1]$, then
			$\ga$ is $\ka_0$-stretchable with respect to $e^{i\psi}$.
		\item[(b)] Suppose that $
			e^{i\psi}\nin \ta_\ga([0,1]) $ and that $ \ga $ is $ \ka_0 $-stretchable with
			respect to $ e^{i\psi} $. Then $ \ga $ is $ \ka_{0} $-stretchable
			with respect to any  $ z $ lying in the shortest arc \tup(in $ \Ss^1 $\tup) 
			joining $ e^{i\psi}
			$ to $ \ta_\ga([0,1])$. 
		\item [(c)] If $ \lan q-p,e^{i\psi}\ran $ is sufficiently large, then
			any $ \ga $ as above is $ \ka_0 $-stretchable with respect to $
			e^{i\psi} $. 
		\item [(d)] If $I\subs [0,1]$ is an interval and $\ga|_I$ is
			$\ka_0$-stretchable with respect to $ e^{i\psi} $, then so is $\ga$.
		\item [(e)]	If $ \ga_s $ is the flattening of $ \ga $ and 
			$\ga|_I$ is a line segment of length $L>2\pi$, then there exists a
			subinterval $I'$ such that $\ga_s|_{I'}$ is a line segment of length
			$>L-2\pi$ for all $s\in [-1,0]$. 
		\item [(f)] If $\ga$ is a line segment of length greater than
			$\frac{4}{\ka_0}$, then $\ga$ is $\ka_0$-stretchable with respect to
			$ e^{i\psi} $.
	\end{enumerate}
\end{lem}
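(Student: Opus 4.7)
The plan is to handle the six parts in order, using (a) as the engine: (d) splices a witness curve into $\ga$ and invokes (a); (b) leverages a witness curve and the intermediate value theorem; (c) and (f) reduce to asymptotic estimates on $\la_\pm(b)$; and (e) is a localization statement about where the flattening modifies the slope function. Throughout, \rref{R:tangent} will be used to pass freely between the analytic formulation (``$0\in[\la_-(b),\la_+(b)]$'') and the geometric one (``there exists a witness curve $\eta$'').

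For (a), rotate coordinates so that $e^{i\psi}=1$, parametrize $\ga$ as $x\mapsto(x,y(x))$ on $[0,b]$, and set $f=y'$. The hypothesis $\ta_\ga(t_0)=e^{i\psi}$ translates to $f(x_0)=\tan 0=0$ for the $x_0\in[0,b]$ corresponding to $t_0$; \lref{L:1andahalf} at $x_0$ then yields $\la_-(b)\leq 0\leq \la_+(b)$, i.e., $\ka_0$-stretchability. Part (d) is handled similarly: apply \rref{R:tangent} to $\ga|_I$ to obtain a witness $\eta_I$ on the restricted endpoint pair, splice it into $\ga$ in place of $\ga|_I$ to produce $\eta\in \sr L_{-1}^{+1}(P,Q)$ with $|\ka_\eta|\leq\ka_0$ a.e., $\langle\ta_\eta,e^{i\psi}\rangle>0$ throughout, and $\ta_\eta$ attaining $e^{i\psi}$, and then invoke (a) on $\eta$ (using that stretchability depends only on the pair $(P,Q)$).

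For (b), the hypotheses $e^{i\psi}\notin\ta_\ga([0,1])$, $\langle\ta_\ga,e^{i\psi}\rangle>0$, and continuity force $\ta_\ga([0,1])$ entirely into one of the two open arcs adjacent to $e^{i\psi}$; assume WLOG that $\theta_\ga>0$ on $[0,1]$. A witness $\eta$ for $\psi$-stretchability of $\ga$ has $\ta_\eta(t_0)=e^{i\psi}$ and endpoint tangents $\ta_\eta(0)=\ta_\ga(0)$, $\ta_\eta(1)=\ta_\ga(1)$ on the positive side, so by the intermediate value theorem $\ta_\eta([0,1])$ contains every $e^{i\psi'}$ in the shortest arc from $e^{i\psi}$ to $\ta_\ga([0,1])$. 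To upgrade this into a genuine $\psi'$-witness (for which one additionally needs $\langle\ta,e^{i\psi'}\rangle>0$ everywhere), perform a short graft near the first time $\ta_\eta$ hits $e^{i\psi'}$, producing $\eta'\in\sr L_{-1}^{+1}(P,Q)$ with curvature bounded by $\ka_0$, tangent attaining $e^{i\psi'}$, and all tangents strictly on the positive side of $e^{i\psi'}$; the last property uses that $\psi'$ is strictly separated from $\inf\theta_\ga$, providing the margin to absorb the graft. Then (a) applied to $\eta'$ in the $\psi'$-frame concludes. For (c), the explicit ODE solutions for $g_\pm,h_\pm^b$ show that they blow up to $\pm\infty$ within $x$-ranges bounded in terms of $\ka_0$ and the endpoint slopes, so $\la_\pm(b)\to\pm\infty$ as $b=\langle q-p,e^{i\psi}\rangle\to+\infty$, and the condition $0\in[\la_-(b),\la_+(b)]$ holds once $b$ is sufficiently large.

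For (e), inspection of the flattening construction from \S3 of \cite{SalZueh1} shows that on the constant-slope subarc $\ga|_I$, where $f\equiv c$, the median defining $f_s$ equals $c$ on any subinterval $I'\subset I$ lying outside the ``active zones'' of $g_\pm,h_\pm^b$ near the endpoints of $[0,b]$; these zones have combined $x$-length bounded by $2\pi$, so $I'$ may be chosen of length $>L-2\pi$ with $\ga_s|_{I'}$ equal to the original line segment for every $s\in[-1,0]$. For (f), a line segment of length $L>4/\ka_0$ has constant tangent $w=z$ and slope function $f\equiv\tan\theta_0$ with $\theta_0=\angle(w,e^{i\psi})<\pi/2$; either a direct evaluation of $\la_\pm(b)$ at the constant slope $\tan\theta_0$ using $b=L\cos\theta_0$, or an application of (c), gives $0\in[\la_-(b),\la_+(b)]$. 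The principal obstacle will be part (b), where the construction of the modified witness $\eta'$ must simultaneously preserve the curvature bound, achieve the new target tangent, and maintain strict positivity relative to $e^{i\psi'}$; the strict separation between $\psi'$ and the range of $\theta_\ga$ is the key quantitative input that makes the graft possible.
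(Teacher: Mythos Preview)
Parts (a), (c), and (e) are handled as in the paper. For (d) you take a different but equally valid route: the paper compares the ODE solutions $\bar g_\pm,\bar h_\pm$ anchored at the endpoints of $I$ against $g_\pm,h_\pm^b$, using that the graph of $f$ is trapped between them to deduce $[\bar\la_-,\bar\la_+]\subset[\la_-(b),\la_+(b)]$; your splicing argument is a clean geometric shortcut that bypasses the ODE comparison.

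The substantive gap is in (b). You propose to take the $\psi$-witness $\eta$, note via the intermediate value theorem that $\ta_\eta$ hits $e^{i\psi'}$, and then ``perform a short graft'' to push all tangents to the positive side of $e^{i\psi'}$. But a graft inserts a line segment: it moves the endpoint (so $\eta'\notin\sr L_{-1}^{+1}(P,Q)$ without further compensation), and more importantly it \emph{adds} to $\eta$ rather than removing the portion where $\theta_\eta$ may lie in $(\psi-\tfrac{\pi}{2},\,\psi'-\tfrac{\pi}{2}]$. No local insertion fixes that. The paper's argument is different and avoids this obstruction: since the flattenings $\ga_0$ and $\eta_0$ with respect to $e^{i\psi}$ coincide (Remark~3.9 of \cite{SalZueh1}), there is a homotopy $s\mapsto\al_s$ in $\sr L_{-1}^{+1}(P,Q)$ from $\al_0=\ga$ to $\al_1=\eta$ with $|\ka_{\al_s}|\leq\ka_0$ and $\langle\ta_{\al_s},e^{i\psi}\rangle>0$ throughout. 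Taking $s_0$ minimal with $e^{i\psi'}\in\ta_{\al_{s_0}}([0,1])$, one checks that for $s<s_0$ the image of $\ta_{\al_s}$ is a connected arc in $(\psi-\tfrac{\pi}{2},\psi+\tfrac{\pi}{2})$ avoiding $e^{i\psi'}$ and containing $\ta_\ga(0)$, hence lies in $(\psi',\psi+\tfrac{\pi}{2})$; by continuity $\theta_{\al_{s_0}}([0,1])\subset[\psi',\psi+\tfrac{\pi}{2})$, so $\al_{s_0}$ is a genuine $\psi'$-witness and (a) applies.

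A smaller issue is (f). Invoking (c) does not close the argument because $b=L\cos\theta_0$ may be small when $\theta_0$ is near $\tfrac{\pi}{2}$, and the direct evaluation yields $\la_-(b)\leq 0$ only when $\tan\theta_0\leq\tfrac{1}{2}\ka_0 L$, which $L>4/\ka_0$ alone does not force. The paper instead exhibits an explicit witness curve (via Figure~\ref{F:generator}) for $\ka_0=1$ and then dilates. In the later applications of (f) the angle $\theta_0$ is in any case bounded by some $\de_j<\tfrac{\pi}{4}$, where your direct evaluation does suffice.
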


\begin{proof}The proof of each part is given separately; in all of them, $ f=y'$
	is as in \dref{D:stretchablecurve}.

	(a): This is clear from the geometric interpretation described in
	\rref{R:tangent}. Alternatively, in terms of $ f $, the
	hypothesis means that there exists some $ x_0\in [0,b] $ satisfying $
	f(x_0)=0 $. Hence $ 0\in [\la_-(b),\la_+(b)] $ by \lref{L:1andahalf}, so
	that $ f $ is $ \ka_0 $-stretchable.

	(b): In terms of \rref{R:tangent}, the hypothesis means that there exists some $ \eta\in \sr
	L_{-1}^{+1}(P,Q) $ such that $ \abs{\ka_\eta}\leq \ka_0 $ a.e., $ \lan
	\ta_\eta,e^{i\psi} \ran>0 $ throughout, and the image of $ \ta_\eta $
	includes $ e^{i\psi} $. As proved in Remark 3.9 of
	\cite{SalZueh1}, the flattenings $\ga_0$ and $\eta_0$ of $ \ga $, $ \eta $ 
	with respect to $e^{i\psi}$ are the same curve. Therefore, there exists a
	homotopy $ s\mapsto \al_s\in \sr L_{-1}^{+1}(P,Q) $ such that $ \al_0=\ga
	$, $ \al_1=\eta $, $ \abs{\ka_{\al_s}}\leq \ka_0 $ and $ \lan
	\ta_{\al_s},e^{i\psi}\ran>0 $ for each $ s\in [0,1] $. Let $ s_0 $ be the
	smallest $ s\in [0,1] $ for which $ z\in \ta_{\al_s}([0,1]) $. Then $ \lan
	\ta_{\al_{s_0}}(t)	,z\ran>0$ for all $ t\in [0,1] $, as is readily
	verified, hence any curve in $ \sr L_{-1}^{+1}(P,Q) $ is $ \ka_{0}
	$-stretchable with respect to $ z $.
	
	(c): Since the functions $ g_{\pm},h_{\pm}^b $ go to $ \pm\infty $ in finite
	time which is independent of $ b =\lan q-p,e^{i\psi}\ran $, if the latter is 
	large enough, then we shall have $ \la_-(b)=-\infty $ and $ \la_+(b)=+\infty
	$, so that certainly $ 0\in [\la_-(b),\la_+(b)] $.

	(d): Let $ I=[c,d] $ and denote by $ \bar{g}_\pm $ and $ \bar{h}_\pm $ the
	solutions to the differential equations  $g'=\pm
	\ka_0(1+g^2)^{\frac{3}{2}}$ and $ h' =\mp
	\ka_0(1+h^2)^{\frac{3}{2}}$ respectively, with $g_\pm(c)=f(c)$ and 
	$h_\pm(d)=f(d)$. If $ \bar\la_{\pm} $ denote the common values of 
	$ \bar{g}_+,\bar{h}_+ $ and $ \bar{g}_-,\bar{h}_- $, respectively, at the
	points where their graphs intersect, then the
	hypothesis means that $ 0\in [\bar\la_-,\bar\la_+] $. This implies that $ 0\in
	[\la_-(b),\la_+(b)] $ because $ \bar{g}_+\leq g_+ $, $ \bar{g}_-\geq g_- $,
	$ \bar{h}_+\leq h_+ $ and $ \bar{h}_-\geq h_- $. These inequalities 
	follow from the fact that the graph of $ f $ stays within the region bounded
	by the graphs of $ g_{\pm} $ and $ h_\pm $, since $\abs{f'(x)}\leq
	\ka_0\big[1+f(x)^2\big]^{\frac{3}{2}}$ for almost every $x\in [0,b]$.

	(e): This is immediate from \cite{SalZueh1}, Construction 3.8.  

	(f): If $ \ka_0=1 $ this follows from  \fref{F:generator}, which shows that
	there exists $ \eta \in \sr L_{-1}^{+1}(P,Q)$ such that 
	$ \abs{\ka_\eta}\leq \ka_0$ a.e.~, $\lan \ta_\eta,e^{i\psi}\ran>0$
	over $ [0,1] $ and  $e^{i\psi}\in \ta_\eta([0,1]) $, provided
	that $ \abs{q-p}>4$; the latter inequality holds by hypothesis. 
	  For other values of $ \ka_0 $, just apply a dilation.
\end{proof}

The details of the
construction of the family $(\ga_s)$ may be now be safely forgotten. Only the properties
listed in \lref{L:regularity} and \lref{L:lowering} will be used.

\section{Quasicritical curves}\label{S:quasicritical} 
\begin{unotn} 
	Throughout
	the rest of the paper, $Q=(q,z)$ denotes a fixed element of
	$\C\times \Ss^1\equiv UT\C$ with $z\neq -1$. For our purposes, it is more convenient to work with
	the space $\sr L_{-1}^{+1}(Q)$ (see \S1 of \cite{SalZueh1})  instead of the
	space $\sr C_{-1}^{+1}(Q)$ defined in the introduction; these are
	homeomorphic by Lemma 1.12 in \cite{SalZueh1}. Accordingly,  $\sr M(Q)$
	shall denote the subspace $\sr L_{-1}^{+1}(Q;\theta_1)\subs \sr
	L_{-1}^{+1}(Q)$ with $\abs{\theta_1}<\pi$. 
	 
	 Let $\ga\in \sr M(Q)$. We denote by $\theta_\ga\colon [0,1]\to
	 \R$ the unique continuous function satisfying
 $\exp(i\theta_\ga)=\ta_\ga$ and $\theta_\ga(0)=0$. Also, 
	 \begin{equation}\label{E:average}
		 \bar{\vphi}^\ga:=\frac{1}{2}\Big(\max_{t\in
		 [0,1]}\theta_\ga(t)+\min_{t\in [0,1]}\theta_\ga(t)\Big).
	 \end{equation} Finally, given $\vphi\in \R$, it will be very convenient to
	 use the abbreviations $\vphi_{\pm}:=\vphi\pm
	 \frac{\pi}{2}$.  \end{unotn}

\subsection*{Quasicritical curves} The central definition of this paper is the
following generalization of the concept of critical curves.

\begin{defn}[quasicritical curve]\label{D:quasicritical} Let $\sig$ be a sign string, $n=\abs{\sig}$,
	$\ga\in \sr M(Q)$, $\vphi\in \R$ and $\eps\in (0,\frac{\pi}{4})$. Then $\ga$
	is $(\vphi,\eps)$-\tdef{quasicritical of type} $\sig$ if there exist closed
	intervals $J_1<\dots<J_n$ such that for each $k\in [n]$: \begin{enumerate}
		\item [(i)] $\theta_\ga(J_k)\subs (\vphi_-+2\eps,\vphi_++\eps)$ if
		$\sig(k)=\ty{+}$ and $\theta_\ga(J_k)\subs (\vphi_--\eps,\vphi_+-2\eps)$
	if $\sig(k)=\ty{-}$; \item [(ii)]
		$\abs{\theta_\ga(t)-\vphi}<\frac{\pi}{2}-2\eps$ for all $t\nin
		\Int\big(\bcup_{k=1}^nJ_k\big)$; \item [(iii)]  $J_k$ contains at least
			one closed subinterval $I_k$ such that
			$\vert\theta_\ga(t)-\vphi_{\sig(k)}\vert<\eps$ for all $t\in I_k$
			and $\ga|_{I_k}$ is stretchable with respect to $\vphi_{\sig(k)}$.
	\end{enumerate} \end{defn} Condition (i) means that $\ta_\ga$ is far from
	$\mp ie^{i\vphi}$ throughout $J_k$ if $\sig(k)=\pm$, while (iii) states
	roughly that there should exist a subinterval of $J_k$ where $\ta_\ga$ is
	vertical enough with respect to the axis $e^{i\vphi}$ to allow $\ga$ to be
	stretched in the direction of $\sig(k)ie^{i\vphi}$. Outside of $\bcup J_k$,
	$\ta_\ga$ is far from both $ie^{i\vphi}$ and $-ie^{i\vphi}$. 

\begin{rem}\label{R:automaticineq} The combination of (i) and (ii) in
	\dref{D:quasicritical} implies that $\theta_\ga([0,1])\subs
	(\vphi_--\eps,\vphi_++\eps)$.  \end{rem}

\begin{rem}\label{R:opencond} Being quasicritical of type $\sig$ is an open
	condition on $(\ga,\vphi,\eps)$. In fact, the same intervals $J_k$ satisfy
	(i)--(iii) for the triple $(\eta,\psi,\de)$ if the latter is close enough to
	$(\ga,\vphi,\eps)$.  \end{rem} \begin{lem}\label{L:criticalisquasi} Let
	$\ga\in \sr M(Q)$ be a critical curve of type $\sig$. Then $\ga$ is
	$(\bar\vphi^\ga,\eps)$-quasicritical of type $\sig$ for all sufficiently
	small $\eps>0$.  \end{lem}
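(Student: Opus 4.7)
Set $n=\abs{\sig}$ and $\vphi=\bar\vphi^\ga$. Since $\ga$ is critical, its amplitude equals $\pi$, so $\max\theta_\ga=\vphi_+$ and $\min\theta_\ga=\vphi_-$; the points $t_1<\dots<t_n$ furnished by \dref{D:basic} therefore satisfy $\theta_\ga(t_k)=\vphi_{\sig(k)}$, and in particular $\ta_\ga(t_k)=e^{i\vphi_{\sig(k)}}$. Since $\ka_\ga$ is continuous on $[0,1]$ with values in $(-1,+1)$, the constant $\ka_0:=\sup_{[0,1]}\abs{\ka_\ga}$ satisfies $\ka_0<1$.

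I would take care of (iii) first. Given $\eps>0$, continuity of $\theta_\ga$ provides a closed neighborhood $I_k$ of $t_k$ on which $\abs{\theta_\ga-\vphi_{\sig(k)}}<\eps$; in particular $\lan\ta_\ga,e^{i\vphi_{\sig(k)}}\ran>0$ throughout $I_k$. Combined with $\ta_\ga(t_k)=e^{i\vphi_{\sig(k)}}$, \lref{L:lowering}(a) yields that $\ga|_{I_k}$ is $\ka_0$-stretchable, hence stretchable, with respect to $e^{i\vphi_{\sig(k)}}$, giving (iii).

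For (i) and (ii) the essential input is the maximality clause in \dref{D:basic}: the absence of $n+1$ points on which $\ta_\ga$ alternates in sign forces, between consecutive $t_k$ and $t_{k+1}$ (of opposite signs), every additional occurrence of the value $\vphi_{\sig(k)}$ to precede every additional occurrence of $\vphi_{\sig(k+1)}$---otherwise inserting those two extras into $t_1,\dots,t_n$ would produce an alternating sequence of length $n+2$, a contradiction. By the intermediate value theorem I would then choose watersheds $s_k\in(t_k,t_{k+1})$ with $\theta_\ga(s_k)=\vphi$ lying in the gap between these two clusters of extremes; set $s_0=0$, $s_n=1$, and define $J_k$ to be $[s_{k-1},s_k]$ shrunk by a small amount $\eta>0$ from each interior endpoint, so that $J_1<\dots<J_n$ are pairwise disjoint closed intervals with $I_k\subs J_k$ (after further shrinking $I_k$ if necessary).

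By construction, $\theta_\ga$ never attains the value $\vphi_{-\sig(k)}$ anywhere on the compact set $J_k$, whence $\al_k:=\inf_{J_k}\abs{\theta_\ga-\vphi_{-\sig(k)}}>0$; taking $\eps<\tfrac{1}{2}\min_k\al_k$ then gives (i). Every point outside $\Int(\bcup_k J_k)$ lies within distance $\eta$ of some watershed $s_k$, so by continuity $\abs{\theta_\ga-\vphi}$ is small there and satisfies $\abs{\theta_\ga-\vphi}<\pi/2-2\eps$ once $\eta$ is chosen sufficiently small relative to $\eps$, yielding (ii). The hard part will be verifying the structural cluster claim itself from the maximality clause; once this combinatorial step is in place, the remaining verifications are routine continuity and compactness arguments.
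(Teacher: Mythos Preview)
Your argument is correct and is precisely what the paper's one-line proof (``Immediate from \lref{L:lowering}(a) and the definition of critical curves'') leaves implicit. One small fix: your choice of $\eps$ depends on $\al_k=\inf_{J_k}\abs{\theta_\ga-\vphi_{-\sig(k)}}$, which depends on $\eta$, which you then want to choose small relative to $\eps$---break the circularity by computing $\al_k$ over the full interval $[s_{k-1},s_k]$ instead of $J_k$; since $J_k\subs[s_{k-1},s_k]$, condition (i) still follows. Also, the ``hard part'' you flag is no harder than your own sketch already shows: inserting two out-of-order extrema between $t_k$ and $t_{k+1}$ yields an alternating $(n+2)$-tuple, and on the boundary pieces $[0,t_1)$ and $(t_n,1]$ a single stray extremum of the wrong sign already gives an $(n+1)$-tuple, both forbidden by the maximality clause in \dref{D:basic}.
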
 \begin{proof} Immediate from \rref{L:lowering}\?(a)
	and the definition of critical curves, given in \dref{D:basic}
\end{proof}

We will sometimes abuse the terminology by saying that $I$ is a stretchable
interval for $\ga$ if $\ga|_I$ is stretchable (with respect to $\vphi_{\pm}$).
Notice that there is a lot of freedom in the choice of the intervals $J_k$ and
their stretchable subintervals. The next two results compensate for this
ambiguity.

\begin{lem}\label{L:Js} Let $\ga\in \sr M(Q)$ be $(\vphi,\eps)$-quasicritical of
type $\sig$, $n=\abs{\sig}$.  \begin{enumerate} \item [(a)] Let $0<\de\leq
			2\eps$ and $W_\al\subs [0,1]$ $(\al \in A)$ be all the connected
			components of \begin{equation*}
				W=\set{t\in
				[0,1]}{\abs{\theta_\ga(t)-\vphi}>\tfrac{\pi}{2}-\de}.
			\end{equation*} Then there exists a decomposition $A=A_1\du\dots\du
			A_n$ such that for any choice of $J_1<\dots<J_n$ as in
			\dref{D:quasicritical}, $W_\al\subs J_k$ if and only if $\al\in A_k$
			$(k\in [n])$.  \item [(b)]  Let $J_1^{j}<\dots<J_n^{j}$,
				$J_k^{j}=[a_k^{j},b_k^{j}]$, be as in \dref{D:quasicritical}
				$(j\in [m])$. For each $k\in [n]$, set $a_k'=\max_j a_k^j$ and
				$b_k'=\min_j b_k^j$. Then the intervals $J'_k=[a_k',b_k']$ also
				satisfy (i)--(iii).  \item [(c)] Let $J_1'<\dots<J_n'$ be as in
					\dref{D:quasicritical} and $J_1<\dots<J_n$ be such that
					$J_k\sups J_k'$ for each $k\in [n]$. Then the $J_k$ also
					satisfy (i)--(iii).  \end{enumerate} \end{lem}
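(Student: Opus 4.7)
The plan is to first establish a canonical, $J$-independent description of the partition in~(a), and then to use this description as the main tool for~(b) and~(c). For~(a), set $W':=\set{t\in [0,1]}{\abs{\theta_\ga(t)-\vphi}>\tfrac{\pi}{2}-2\eps}$; since $\de\leq 2\eps$ we have $W\subs W'$. The contrapositive of~(ii) gives $W'\subs \Int(\bcup_k J_k)$, so every connected component of $W'$ lies inside a single $J_k$. Condition~(i) excludes from $\theta_\ga(J_k)$ a neighborhood of $\vphi-\tfrac{\pi}{2}$ (if $\sig(k)=\ty{+}$) or of $\vphi+\tfrac{\pi}{2}$ (if $\sig(k)=\ty{-}$), so inside $J_k$ every point of $W'$ lies on the $\sig(k)$-side of $\vphi$. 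Each $W'$-component thereby carries an intrinsic sign $\pm$ depending only on $(\ga,\vphi,\eps)$. Because $\sig$ is alternating and~(iii) guarantees at least one $W'$-component inside each $J_k$ (the one enclosing $I_k$), the signs of the $W'$-components read in position order decompose into exactly $n$ maximal same-sign blocks $B_1,\dots,B_n$. Defining $A_k$ to consist of those $\al$ for which $W_\al$ lies in some $W'$-component in $B_k$ yields an intrinsic decomposition, which must therefore coincide with the one induced by any admissible $J_1<\dots<J_n$.

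For~(b), apply~(a) with $\de=2\eps$ to conclude that each $J_k^j$ contains precisely the $W'$-components in $B_k$. These open intervals sit inside the closed $J_k^j$, hence inside $\Int J_k^j$; intersecting over $j$ places them in $\bigcap_j\Int J_k^j=\Int J_k'$. This immediately gives $J_k'\neq\emptyset$, and the ordering $J_k'<J_{k+1}'$ follows from $b_k'\leq b_k^{1}<a_{k+1}^{1}\leq a_{k+1}'$. Condition~(i) for $J_k'$ is inherited from $J_k'\subs J_k^j$, and condition~(ii) follows because any $t$ with $\abs{\theta_\ga(t)-\vphi}>\tfrac{\pi}{2}-2\eps$ lies in a $W'$-component contained in $\Int J_{k_0}'$ for its intrinsic block $k_0$ (the boundary case $\abs{\theta_\ga(t)-\vphi}=\tfrac{\pi}{2}-2\eps$ is handled by a routine continuity argument combined with~(ii) for some $J_k^j$). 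For~(iii), let $V_k:=\set{t}{\abs{\theta_\ga(t)-\vphi_{\sig(k)}}<\eps}$ and note $V_k\subs W'$; fixing $j=1$ and letting $V_k^{0}$ be the $V_k$-component containing $I_k^{1}$, we see that $V_k^{0}$ sits in a $W'$-component in $B_k$ by~(a), hence $V_k^{0}\subs J_k'$, so $I_k^{1}\subs J_k'$ and $I_k^{1}$ itself can be taken as $I_k'$.

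For~(c), the inclusion $J_k'\subs J_k$ makes two conditions immediate: the same $I_k:=I_k'$ verifies~(iii), and $\Int(\bcup_k J_k')\subs \Int(\bcup_k J_k)$ shrinks the complement appearing in~(ii). The nontrivial point is~(i) on the extra region $J_k\setminus J_k'$. Here, the combined ordering $J_{k-1}<J_k<J_{k+1}$ with $J_{k\pm 1}\sups J_{k\pm 1}'$ forces $b_{k-1}'<t<a_{k+1}'$ for any $t\in J_k\setminus J_k'$, hence $t\nin\bcup_{k'}J_{k'}'$. Condition~(ii) for the primed family then places $\theta_\ga(t)$ in the narrow strip $(\vphi-\tfrac{\pi}{2}+2\eps,\vphi+\tfrac{\pi}{2}-2\eps)$, which sits inside both of the half-planes permitted by~(i) regardless of the sign of $\sig(k)$. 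Combined with~(i) on $J_k'$, this yields~(i) on all of $J_k$.

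The main obstacle is the intrinsic sign argument of~(a): one must simultaneously show that each $W'$-component carries a well-defined sign and that the resulting same-sign blocks match $\sig$ in both count and pattern. The alternation of $\sig$ and the stretchable witness in~(iii) are both crucial, the latter ensuring at least one $W'$-component per $J_k$ so that the block count is exactly $n$. Once~(a) is in hand, parts~(b) and~(c) reduce to careful set-theoretic bookkeeping about the $J_k$'s, with the only real subtlety in~(b) being the passage from $I_k^j$ to a subinterval of $J_k'$, which again leverages the intrinsic block argument, this time applied to the smaller open set $V_k$.
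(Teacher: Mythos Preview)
Your argument is correct and follows essentially the same route as the paper. For (a) you package the three key observations (each $A_k$ nonempty via (iii), the ordering, and the sign forced by (i)) into an \emph{intrinsic} description of the blocks $B_1,\dots,B_n$, whereas the paper compares two given $J$-systems directly and shows their induced decompositions coincide by a short induction; these are two phrasings of the same idea. In (b) your verification of (ii) via $W'\subs \Int(\bigcup_k J_k')$ is a clean alternative to the paper's explicit analysis of the gap $[b_k',a_{k+1}']$, though your ``routine continuity'' step for the boundary case $\abs{\theta_\ga(t)-\vphi}=\tfrac{\pi}{2}-2\eps$ deserves one more sentence: for each $j$ condition (ii) places $t$ in some $\Int J_{k(j)}^j$, and the sign restriction from (i) forces $k(j)$ to be the same $k_0$ for all $j$, whence $t\in\bigcap_j\Int J_{k_0}^j=\Int J_{k_0}'$. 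Part (c) is handled identically to the paper.
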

	\begin{proof}The proof of each part will be given separately.

	(a): Let $J_1<\dots<J_n$, $J_1'<\dots<J_n'$ be intervals as in
	\dref{D:quasicritical}. Set $A_k=\set{\al\in A}{W_\al\subs J_k}$. Then
	$A=A_1\du\dots\du A_n$ since (ii) of \dref{D:quasicritical} implies that any
	$W_\al$ must be completely contained in some $J$. We claim that $A_k'=A_k$
	for each $k\in [n]$, where $A_k'=\set{\al\in A}{W_\al\subs J'_k}$. This
	follows from the following simple observations (which also hold with $A'$ in
	place of $A$): \begin{enumerate} \item [\sbu] Each $A_k$ is nonempty, by
			(iii) of \dref{D:quasicritical}.  \item [\sbu] If $\al\in A_k$,
			$\al'\in A_{k'}$ with $k<k'$, then $W_\al<W_{\al'}$; indeed,
		$J_k<J_{k'}$.  \item [\sbu]  If $\al\in A_k$, then
			$\sign(\theta_\ga(t)-\vphi)=\sig(k)$ for all $t\in W_\al$, by (i) of
			\dref{D:quasicritical}.  \end{enumerate} Suppose that $\al\in
	A_1\cap A_k'$ for some $k>1$. Then the third observation implies that $k\geq
	3$. Choose $\be\in A_2'$. By the second observation, $W_\be<W_\al$. Hence
	$\be\in A_1\cap A'_2$, contradicting the third observation. It follows that
	$A_1=A_1'$. An entirely similar argument shows that if $A'_j=A_j$ for all
	$j\in [k]$, then $A'_{k+1}=A_{k+1}$ as well.
	
	(b): Let $j_0,j_1\in [m]$ be such that $a_k'=a_k^{j_0}$ and
	$b_k'=b_{k}^{j_1}$. By part (a), if $\al\in A_k$, then $W_\al\subs
	J_k^{j_0}\cap J_k^{j_1}$. In particular, $a_k'<b_k'$ and
	\begin{equation*}\label{E:intersections0} [a_k',b_k']= J_k^{j_0}\cap
		J_k^{j_1}.  \end{equation*} Since the latter two intervals satisfy
	condition (i) by hypothesis, so does $[a_k',b_k']$. Set $\de=2\eps$ in the
	definition of $W$. If $I$ is a stretchable subinterval of $J_k^{j_0}$ as in
	(iii), then $I\subs W_\al$ for some $\al\in A_k$. By (a), $W_\al\subs
	J_k^{j_0}\cap J_k^{j_1}=[a_k',b_k']$, hence the latter satisfies (iii).  To
	establish (ii), let $j_2\in [m]$ be such that $a_{k+1}'=a_{k+1}^{j_2}$. As
	above, part (a) implies that $a_{k}^{j_2}<b_k'$ and
	$a_{k+1}'<b_{k+1}^{j_1}$. Hence \begin{equation}\label{E:intersections}
		[b_{k}',a_{k+1}']= \big[b_k',b_{k+1}^{j_1}\big]\cap
		\big[a_k^{j_2},a_{k+1}'\big].  \end{equation} Moreover,
	\begin{equation*}
	\big[b_k',b_{k+1}^{j_1}\big]=
		\big[b_k',a_{k+1}^{j_1}\big]\cup J_{k+1}^{j_1}\quad \text{and}\quad
		\big[a_k^{j_2},a_{k+1}'\big]= J_k^{j_2}\cup [b_{k}^{j_2},a_{k+1}'\big].
	\end{equation*} By (i) and (ii) of \dref{D:quasicritical}, any $t\in
	\big[b_k',b_{k+1}^{j_1}\big]$ thus satisfies
	$\vert\theta_\ga(t)-\vphi_{-\sig(k+1)}\vert>2\eps$ and any $t\in
	\big[a_k^{j_2},a_{k+1}'\big]$ satisfies
	$\vert\theta_\ga(t)-\vphi_{-\sig(k)}\vert>2\eps$. Together with
	\eqref{E:intersections}, this implies that (ii) holds for the $J_k'$.

(c): Conditions (ii) and (iii) of \dref{D:quasicritical} are obviously satisfied
by the $J_k$. Suppose that $ t\in J_k $ but $
\abs{\theta_\ga(t)-\vphi_{-\sig(k)}}< 2\eps $. Then $ t\in J_{k'}' $ with $
\sig(k')=-\sig(k) $, contradicting the fact that $ J_k $ and $J_{k'}\sups J'_{k'}$ are
disjoint.
\end{proof}

\begin{unotn} In all that follows, $K$ denotes (the geometric realization of) 
	a finite simplicial complex;
	actually, most of the time all that is required is that $K$ be a compact
	Hausdorff topological space. \end{unotn} \begin{lem}\label{L:continuousJs}
	Let $\sig$ be a sign string of length $n$ and \begin{equation*}
		p\mapsto \ga^p\in \sr M(Q),\ \ p\mapsto \vphi^p\in \R,\ \ p\mapsto
		\eps^p\in \R^+\quad (p\in K) \end{equation*} be continuous maps such
	that $\ga^p$ is $(\vphi^p,\eps^p)$-quasicritical of type $\sig$ for all
	$p\in K$.  Then: \begin{enumerate} \item [(a)] There exist continuous
				functions $a_k,\?b_k\colon K\to [0,1]$ such that for all $p\in
				K$, the intervals $J_k(p)=[a_k(p),b_k(p)]$ \tup($k\in [n]$\tup)
				satisfy \dref{D:quasicritical} when
				$(\ga,\vphi,\eps)=(\ga^p,\vphi^p,\eps^p)$\tup.  \item [(b)]
					There exist an open cover $(U_i)_{i\in [l]}$ of $K$ and real
					numbers $c_{i,k}<d_{i,k}$ $(i\in [l],~k\in [n])$ such that
					for each $p\in \ol{U}_i$ and $k\in [n]$,
					$I_{i,k}:=[c_{i,k},d_{i,k}]\subs J_k(p)$, $\ga^p|_{I_{i,k}}$
					is stretchable with respect to $\vphi^p_{\sig(k)}$ and
					\begin{equation}\label{E:verti}
						\theta_{\ga^p}\big(I_{i,k}\big)\subs
						\big(\vphi^p_{\sig(k)}-\eps^p,\vphi^p_{\sig(k)}+\eps^p).
				\end{equation} \end{enumerate} \end{lem}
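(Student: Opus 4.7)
For part (a), I would combine Remark~\rref{R:opencond}, compactness of $K$, a partition-of-unity convex combination, and the glueing properties in parts (b) and (c) of \lref{L:Js}. By \rref{R:opencond}, each $p_0\in K$ admits an open neighborhood $V_{p_0}$ together with intervals $J_1^{p_0}<\dots<J_n^{p_0}$ realizing \dref{D:quasicritical} for every triple $(\ga^p,\vphi^p,\eps^p)$ with $p\in V_{p_0}$. Extract a finite subcover $V_1,\dots,V_N$ with associated data $J_k^i=[a_k^i,b_k^i]$, pick a partition of unity $(\rho_i)_{i=1}^N$ subordinate to it, and define
\begin{equation*}
 a_k(p)=\sum_{i=1}^N\rho_i(p)\?a_k^i, \qquad b_k(p)=\sum_{i=1}^N\rho_i(p)\?b_k^i,
\end{equation*}
both continuous. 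Writing $S(p)=\set{i\in [N]}{\rho_i(p)>0}$, part (b) of \lref{L:Js} implies that $\bigcap_{i\in S(p)}J_k^i=[\max_{i\in S(p)}a_k^i,\min_{i\in S(p)}b_k^i]$ satisfies \dref{D:quasicritical} at $p$; the convex-combination bounds $a_k(p)\le\max_{i\in S(p)}a_k^i$ and $b_k(p)\ge\min_{i\in S(p)}b_k^i$ yield $J_k(p):=[a_k(p),b_k(p)]\supseteq \bigcap_{i\in S(p)}J_k^i$, and $b_k^i<a_{k+1}^i$ for every $i$ forces $b_k(p)<a_{k+1}(p)$; part (c) of \lref{L:Js} completes the argument.

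The key observation for part (b) is that any subinterval $I_k^{p_0}\subs J_k(p_0)$ witnessing (iii) of \dref{D:quasicritical} must lie in the \emph{strict} interior of $J_k(p_0)$. Indeed, every endpoint $t\in \bd J_k(p_0)$ satisfies $t\nin \Int\big(\bcup_{j=1}^n J_j(p_0)\big)$, so condition (ii) gives $|\theta_{\ga^{p_0}}(t)-\vphi^{p_0}|<\tfrac{\pi}{2}-2\eps^{p_0}$, whence
\begin{equation*}
 \big|\theta_{\ga^{p_0}}(t)-\vphi^{p_0}_{\sig(k)}\big|>2\eps^{p_0}>\eps^{p_0},
\end{equation*}
contradicting the angle requirement on $I_k^{p_0}$. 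With this in hand, (b) reduces to a standard openness-and-compactness argument: at each $p_0\in K$, pick $I_k^{p_0}$ via (iii) applied to $J_k(p_0)$; continuity of $a_k,b_k$ from (a) ensures $I_k^{p_0}\subs J_k(p)$ for $p$ in some open $W_{p_0}\ni p_0$, continuity of $p\mapsto (\ga^p,\vphi^p,\eps^p)$ yields \eqref{E:verti} on $W_{p_0}$, and openness of the stretchability condition (which one justifies by slightly relaxing the bound $\ka_0<1$, permissible since every curve in $\sr M(Q)$ has curvature strictly less than $1$ in absolute value) supplies the remaining condition on a possibly smaller neighborhood. A finite subcover together with the standard shrinking lemma for compact Hausdorff spaces yields open sets $U_i$ with $\ol{U_i}\subs W_{p_i}$ still covering $K$, and one sets $I_{i,k}=I_k^{p_i}$.

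The main obstacle is the boundary-interior observation above: without it, making $I_k^{p_0}\subs J_k(p)$ hold for $p$ near $p_0$ would require shrinking $I_k^{p_0}$, which could destroy stretchability, since \lref{L:lowering}(d) only guarantees that stretchability passes to \emph{enlarged} subintervals.
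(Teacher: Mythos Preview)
Your proof of part (a) is essentially identical to the paper's: locally constant intervals from \rref{R:opencond}, a finite subcover, a partition-of-unity convex combination, and then \lref{L:Js}(b),(c) to verify the conditions.

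For part (b) your route genuinely differs. The paper does not restart with a second covering argument; instead it chooses the stretchable subintervals $I_{i,k}=[c_{i,k},d_{i,k}]$ simultaneously with the $[a_{i,k},b_{i,k}]$ in the very first step, so the same cover $(U_i)$ serves both (a) and (b). The inclusion $I_{i,k}\subs J_k(p)$ for $p\in \ol{U}_i$ is then deduced from \lref{L:Js}(a): since $I_{i,k}$ lies in a component $W_\al$ of $\{t:|\theta_{\ga^p}(t)-\vphi^p|>\tfrac{\pi}{2}-\eps^p\}$ and both $[a_{i,k},b_{i,k}]$ and $J_k(p)$ are valid interval systems for the same triple, $W_\al$ (hence $I_{i,k}$) sits inside $J_k(p)$ regardless of how the latter was constructed. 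This buys the paper the stronger statement noted in the remark following the lemma, namely that $I_{i,k}\subs J_k(p)$ holds for \emph{any} valid choice of $J_k(p)$, not only the particular continuous one from (a). Your interior-of-$J_k$ observation is correct and yields the inclusion for the specific continuous $J_k(p)$, which is all the lemma asks; it trades the appeal to \lref{L:Js}(a) for a second compactness pass and a shrinking of the cover. Both arguments ultimately rely on stretchability being open in $(\ga,\vphi)$, which the paper leaves implicit and you sketch via relaxing $\ka_0$.
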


\begin{urem}
	The inclusion $I_{i,k}\subs J_k(p)$ in (b) is asserted
	to hold only when $p\in \ol{U}_i$. Nonetheless, it will hold for such $p$
	independently of the choice of the $J_k(p)$ in (a). 
	
	It is generally impossible to obtain globally (and continuously) defined
	intervals $[c_k(p),d_k(p)]$ restricted to which $\ga^p$ is stretchable. The
	problem is similar to that of choosing points $t(p)\in [0,1]$ where a family
	$f^p\colon [0,1]\to \R$ of continuous functions attain their maxima.
\end{urem}

\begin{proof}[Proof of \lref{L:continuousJs}] Let $p\in K$. Choose intervals
	$[a_1,b_1]<\dots<[a_n,b_n]$ satisfying (i) and (ii) of
	\dref{D:quasicritical} for $(\ga,\vphi,\eps)=(\ga^p,\vphi^p,\eps^p)$ and
	subintervals $[c_k,d_k]\subs [a_k,b_k]$ as in (iii). Since these conditions
	are open, they actually hold for the same choice of intervals for all $q$ in
	the closure of some neighborhood  $U_p$ of $p$. Let $(U_i)_{i\in
	[l]}$ be a finite subcover of the cover $(U_p)_{p\in K}$ so obtained, and
	let $a_{i,k},\,b_{i,k},\,c_{i,k},\,d_{i,k}\in [0,1]$ $(i\in [l], k\in [n])$
	be the endpoints of the corresponding intervals.
	
	Let $\rho_i\colon K\to [0,1]$ ($i\in [l]$) form a partition of unity
	subordinate to the cover $(U_i)$, \begin{equation*}
		a_k(p):=\sum_{i=1}^l\rho_i(p)a_{i,k}(p),\ \
		b_k(p):=\sum_{i=1}^l\rho_i(p)b_{i,k}(p)\text{\ \ and\ \
		}J_k(p):=[a_k(p),b_k(p)]\quad (k\in [n]).  \end{equation*} Because
	$a_{i,k}<b_{i,k}<a_{i,k+1}$ for each $i$ and $k$ by hypothesis, the
	definition of $J_k(p)$ makes sense and $J_1(p)<\dots<J_n(p)$ holds for all
	$p\in K$. Now fix $p$ and let $i_1,\dots,i_m\in [l]$ be all the indices $i$
	such that $\rho_i(p)>0$. Set \begin{equation*}
	a_k':=\max_{j\in
		[m]} a_{i_j,k}(p),\ \  b_k':=\min_{j\in [m]} b_{i_j,k}(p)\quad (j\in
		[m]).  \end{equation*} Then $[a_k',b_k']\subs J_k(p)$, hence the
	combination of (b) and (c) of \lref{L:Js} shows that $J_k(p)$ satisfies
	(i)--(iii) for each $k\in [n]$. This proves (a).
	
	Fix $i\in [l]$. By the choice of the intervals $I_{i,k}:=[c_{i,k},d_{i,k}]$,
	the restriction of $\ga^p$ to $I_{i,k}$ is stretchable with respect to $\vphi^p_{\sig(k)}$ and
	\eqref{E:verti} holds whenever $p\in \ol{U}_i$. Again by choice,
	$I_{i,k}\subs [a_{i,k},b_{i,k}]$. Since the $[a_{i,k},b_{i,k}]$ and the
	$J_k(p)$ satisfy (i)--(iii) provided that $p\in \ol{U}_i$, \lref{L:Js}\?(a)
	implies that $I_{i,k}\subs J_k(p)$ for such $p$ and each $k\in [n]$. This
	proves (b).  \end{proof}
	
	\begin{lem}\label{L:disjoint} In the situation of \lref{L:continuousJs}, 
		$(U_i)_{i\in [l]}$ and $I_{i,k}=[c_k^i,d_k^i]$ can be chosen so that: 
	\begin{enumerate} \item [(a)] If $i<i'$ and
			$\ol{U}_{i}\cap \ol{U}_{i'}\neq \emptyset$, then for each $k\in
			[n]$, either $I_{i,k}\subs I_{i',k}$ or $ I_{i,k}\cap
			I_{i',k}=\emptyset $. 
		\item [(b)] For all $k\in [n]$, $i\in [l]$ and $p\in
				\ol{U}_i$, either
				$\big\vert{\theta_{\ga^p}(c_{i,k})-\vphi^p_{\sig(k)}}\big\vert>\frac{1}{2}\eps^p$
				or $c_{i,k}=0$,  and either
				$\big\vert{\theta_{\ga^p}(d_{i,k})-\vphi^p_{\sig(k)}}\big\vert>\frac{1}{2}\eps^p$
				or $d_{i,k}=1$.  
			\end{enumerate} \end{lem}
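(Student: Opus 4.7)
Starting from the cover $(V_j)_{j\in[m]}$ and intervals $I^0_{j,k}=[c^0_{j,k},d^0_{j,k}]$ provided by \lref{L:continuousJs}, I would proceed in two stages: first extend endpoints to achieve the quantitative endpoint condition (b), then refine the cover and intervals so that the nesting/disjointness condition (a) also holds. The basic geometric object throughout is the \emph{angle well} $W^p_k$, defined as the connected component of $\set{t\in[0,1]}{\abs{\theta_{\ga^p}(t)-\vphi^p_{\sig(k)}}<\eps^p}$ containing the chosen stretchable subinterval $I^0_{j,k}$. The wells depend continuously on $p$, and two intervals lying in different wells of the same $\ga^p$ are automatically disjoint in $[0,1]$.

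\textbf{Stage 1 (achieving (b)).} For each $p_0\in K$ and $k\in[n]$, pick any $j$ with $p_0\in\ol{V}_j$. If $\inf W^{p_0}_k=0$ set $c^{p_0}_k=0$; otherwise $\abs{\theta_{\ga^{p_0}}(\cdot)-\vphi^{p_0}_{\sig(k)}}$ equals $\eps^{p_0}$ at $\inf W^{p_0}_k$ and is $<\eps^{p_0}$ on $[c^0_{j,k},d^0_{j,k}]$, so the intermediate value theorem supplies $c^{p_0}_k\leq c^0_{j,k}$ with $\abs{\theta_{\ga^{p_0}}(c^{p_0}_k)-\vphi^{p_0}_{\sig(k)}}=\tfrac{3}{4}\eps^{p_0}$ (or we set $c^{p_0}_k=c^0_{j,k}$ if the value there already exceeds $\tfrac{3}{4}\eps^{p_0}$). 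Define $d^{p_0}_k$ symmetrically. The interval $I^{p_0}_k=[c^{p_0}_k,d^{p_0}_k]$ contains $I^0_{j,k}$, so it is stretchable with respect to $\vphi^{p_0}_{\sig(k)}$ by \lref{L:lowering}(d). By continuity of $\theta_{\ga^p}$, $\vphi^p$ and $\eps^p$ in $p$, all conditions of \lref{L:continuousJs}(b) together with the strict endpoint bound $\abs{\theta_{\ga^p}(c^{p_0}_k)-\vphi^p_{\sig(k)}}>\tfrac{1}{2}\eps^p$ (or $c^{p_0}_k=0$) persist on an open neighborhood $U_{p_0}$ of $p_0$, and a finite subcover supplies the desired data.

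\textbf{Stage 2 (achieving (a)).} Refine the resulting cover further so that, for every pair $i<i'$ with $\ol{U}_i\cap\ol{U}_{i'}\neq\emptyset$ and every $k$, the property ``$I_{i,k}$ and $I_{i',k}$ lie in the same well of $\ga^p$'' is constant on the overlap (possible by continuity of wells in $p$ and the finiteness of the cover). For pairs in different wells there is nothing to do, since distinct wells of the same $\ga^p$ are disjoint open subintervals of $[0,1]$, so $I_{i,k}\cap I_{i',k}=\emptyset$. For same-well pairs, iteratively replace $I_{i',k}$ (the larger-indexed interval) by the convex hull of $I_{i,k}\cup I_{i',k}$: since the common well is itself an interval, this hull lies inside it, it contains the original stretchable subinterval so stretchability is preserved by \lref{L:lowering}(d), and the angle condition of \lref{L:continuousJs}(b) persists because the hull is contained in the well.

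\textbf{Main obstacle.} The chief difficulty is maintaining condition (b) through the Stage 2 enlargements, because a new endpoint of $I_{i',k}$ is an original endpoint $c^0_{i,k}$ (or $d^0_{i,k}$) that was only guaranteed to satisfy the $\tfrac{1}{2}\eps^p$ bound for $p\in\ol{U}_i$, not for $p\in\ol{U}_{i'}$. I would handle this by performing Stage 1 with the stricter margin $\tfrac{7}{8}\eps^{p_0}$ in place of $\tfrac{3}{4}\eps^{p_0}$, which leaves a safety buffer, and then shrinking each $\ol{U}_{i'}$ to a smaller open neighborhood on which the perturbed endpoints continue to satisfy (b); since there are only finitely many pairs $(i,i')$ and each enlargement draws its new endpoint from the finite set of original $c^0_{j,k}$, $d^0_{j,k}$, the procedure terminates in finitely many steps.
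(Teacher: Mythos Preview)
Your Stage~2 has a genuine gap. When you replace $I_{i',k}$ by the convex hull $H$ of $I_{i,k}\cup I_{i',k}$, the new interval $H$ must satisfy condition \eqref{E:verti} (i.e., $\theta_{\ga^p}(H)\subs(\vphi^p_{\sig(k)}-\eps^p,\vphi^p_{\sig(k)}+\eps^p)$) for \emph{every} $p\in\ol{U}_{i'}$, not only for $p$ in the overlap $\ol{U}_i\cap\ol{U}_{i'}$. You justify this by saying $H$ lies in ``the common well'', but the well depends on $p$: the portion of $H$ coming from $I_{i,k}$ is only known to sit in a well of $\ga^p$ when $p\in\ol{U}_i$, and there is no reason this should persist for $p\in\ol{U}_{i'}\ssm\ol{U}_i$. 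The same problem afflicts your check of (b), as you yourself note. Your proposed fix of shrinking $\ol{U}_{i'}$ does not work: shrinking open sets in a finite cover can destroy the covering property, and you give no argument that a cover survives the (possibly iterated) shrinkings. Finally, the iterative enlargement has no clear termination argument, since enlarging $I_{i',k}$ can create new overlaps with intervals $I_{i'',k}$ for $i''>i'$ that were previously disjoint.

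The paper's proof avoids these issues by a genuinely different mechanism. Rather than keeping the $U_i$ and enlarging intervals, it works top-down on the multiplicity $m$ of intersections $\ol{U}_{i_1}\cap\dots\cap\ol{U}_{i_m}$: for each such nonempty intersection it \emph{excises} the intersection from the $U_{i_j}$ and introduces a \emph{new} small open set $V_T$ containing it, with $\ol{V}_T$ contained in $\bigcap_j U_{i_j}$ (and disjoint from all other $\ol{U}_i$ and previously constructed $\ol{V}_{T'}$). The interval $I_k(V_T)$ is then taken to be a connected component of $\bigcup_j I_k(U_{i_j})$. The crucial point is that since $\ol{V}_T\subs\bigcap_j\ol{U}_{i_j}$, every $I_k(U_{i_j})$ already satisfies \eqref{E:verti} and (b) for all $p\in\ol{V}_T$; hence so does any component of their union. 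After processing all multiplicities down to $2$, the $V_T$ are relabelled in order of increasing $\abs{T}$, which gives the nesting direction required by (a).
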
  
	
\begin{urem} The purpose of part (a) is to guarantee that when
	$\ga^p|_{I_{i,k}}$ is stretched for $p\in U_i\cap U_{i'}$, the
	``stretchability'' of $\ga^p|_{I_{i',k}}$ will not be affected. By
	\rref{L:lowering}\?(d) and \lref{L:regularity}\?(g), this can be arranged simply by stretching these arcs
	successively for each $i=1,\dots,l$. Part (b) will be used to
	ensure that stretching $\ga^p$ will not affect its property of being
	quasicritical of type $\tau$ for $ \tau\neq \sig $.
\end{urem}	

\begin{proof} Let $U_i$ be open sets as in \lref{L:continuousJs}, with
	associated stretchable intervals $I_k(U_i):=I_{i,k}\subs J_k(p)$, for $k\in
	[n]$ and $p\in \ol{U}_i$. We shall write $U_i\po U_{i'}$ if  $\ol{U}_i\cap
	\ol{U}_{i'}=\emptyset$ or if $\ol{U}_i\cap \ol{U}_{i'}\neq \emptyset$ and
	for every $k \in [n]$, either $I_k(U_i)\subs I_k(U_{i'})$ or $I_k(U_i)\cap
	I_k(U_{i'})=\emptyset$; it is not required that the same option hold for
	every $k$. (This is generally not a transitive relation.) The complement of
	a set $W$ in $K$ will be denoted by $W^c$. The rough idea behind the proof
	is to repeatedly apply the following procedure: If $\ol{U}_{i_1}\cap\dots
	\cap \ol{U}_{i_\mu}$ is nonempty, then we excise it from each of the open
	sets $U_{i_j}$ and add a new open set $V$ to the cover which contains the
	intersection but is still sufficiently small. If $I_k(V)$ is taken to be a
	component of $\bcup_j I_k(U_{i_j})$ for each $k$, then  $U_i\po V$ for every
	$i=i_1,\dots,i_\mu$.
	
	 Let $m$ be the largest integer for which there exist distinct
	 $i_1,\dots,i_{m}\in [l]$ with $\ol{U}_{i_1}\cap \dots\cap
	 \ol{U}_{i_{m}}\neq \emptyset$. Note that there are only finitely many such
	 $m$-tuples. Choose one of them, say $T_m=\se{i_1,\dots,i_{m}}$, and let
	 $V_{T_m}$ be an open set such that \begin{equation*}
		 \bcap_{\mu=1}^{m} \ol{U}_{i_\mu}\subs V_{T_m}\subs \ol{V}_{T_m}\subs
		 \bcap_{i\neq i_j}\ol{U}_i^c.  \end{equation*} Such a set exists because
	 $\ol{U}_{i_1}\cap \dots\cap \ol{U}_{i_{m}}\subs \ol{U}_i^c$ for every
	 $i\neq i_j$, by maximality of $m$.  Set \begin{equation*}
		 \text{(new)}\,U_{i_j}:=\text{(old)}\,U_{i_j}\ssm \bcap_{\mu=1}^{m}
		 \ol{U}_{i_\mu}\quad (j\in [m]).  \end{equation*} For each $k\in [n]$,
	 take $I_k\big(U_{i_j}\big)$ to be the same intervals as for the original
	 sets $U_{i_j}$ and $I_k\big(V_{T_m}\big)$  to be any connected component of
	 $\bcup_{j=1}^{m}I_k\big(U_{i_j}\big)$. Fix $k\in [n]$; if $p\in
	 \bcap_{\mu=1}^{m} \ol{U}_{i_\mu}$, then every interval $I_k(U_{i_j})$
	 $(j\in [m]$) satisfies the conditions stated in \lref{L:continuousJs}\?(b).
	 Therefore, by \rref{L:lowering}\?(d), if $V_{T_m}$ is sufficiently small,
	 then $I_k\big(V_{T_m}\big)$ satisfies these conditions for all $p\in
	 \ol{V}_{T_m}$. Further, by construction $I_k\big(V_{T_m}\big)$ either
	 contains or is disjoint from $I_k(U_{i_j})$ for each $j,\,k$. Thus:
 \begin{enumerate} \item [\sbu] The open sets $U_i$ $(i\in [l])$ and $V_{T_m}$
		 cover $K$.  \item [\sbu] If $\ol{U}_i\cap \ol{V}_{T_m}\neq \emptyset$
			 then ${i}={i_j}$  for some $j$. Hence, $U_i\po V_{T_m}$ for every
			 $i\in [l]$.  \item [\sbu]  No new $m$-fold intersection has been
				 created among the $\ol{U}_i$.  \end{enumerate}
	
	 If there still exists an $m$-tuple $T'_m=\se{i_1',\dots,i_{m}'}$ such that
	 $\ol{U}_{i_1'}\cap \dots\cap \ol{U}_{i_{m}'}\neq \emptyset$, the
	 construction is repeated to excise the latter from each $U_{i_j'}$ and
	 create an open set $V_{T'_m}$ such that \begin{equation*}
		 \bcap_{\mu=1}^{m} \ol{U}_{i'_\mu}\subs V_{T'_m}\subs \ol{V}_{T'_m}\subs
		 \bcap_{i\neq i_j'}\ol{U}_{i'}^c\cap \big(\ol{V}_{T_m}\big)^c.
	 \end{equation*} Such a set exists because there are no $(m+1)$-fold
	 intersections among the $\ol{U}_i$ and  $i'_j\nin \se{i_1,\dots,i_m}$ for
	 at least one $j\in [m]$. 
	 By definition,
	 $\ol{V}_{T_m}\cap \ol{V}_{T'_m}=\emptyset$, and $\ol{V}_{T'_m}\cap
	 \ol{U}_i=\emptyset$ unless $i=i_j'$ for some $j\in [m]$. Again, let
	 $I_k\big(V_{T'_m}\big)$ be a connected component of
	 $\bcup_{j=1}^{m}I_k\big(U_{i_j'}\big)$ for each $k$, so that $U_i\po
	 V_{T'_m}$ for all $i\in [l]$. If $V_{T'_m}$ is sufficiently small, then all
	 of the conditions in (b) are satisfied by the $I_k\big(V_{T'_m}\big)$
	 whenever $p\in \ol{V}_{T'_m}$. After finitely many iterations, there will
	 be no more $m$-tuples of indices in $[l]$ for which the corresponding
	 $\ol{U}_{i}$ intersect. Notice that by construction: \begin{enumerate}
		 \item [\sbu] $V_{T_m}\po V_{T'_m}$ for any $T_m\neq T'_m$, since their
		 closures are disjoint.  \item [\sbu] $U_i\po V_{T_m}$ for any $T_m$ and
		 $i\in [l]$.  \item [\sbu] Every $m$-fold intersection among  the
			 $\ol{U}_i$ is empty.  \end{enumerate}
	
	Now the same procedure is carried out for $(m-1)$-fold intersections among
	the $\ol{U}_i$. Assume that $\ol{V}_{T_{m-1}^{\nu}}$ has been defined for
	all $\nu=1,\dots,\nu_0-1$, where each $T_{m-1}^{\nu}\subs [l]$ has
	cardinality $m-1$, with $\ol{U}_i\cap \ol{V}_{T_{m-1}^{\nu}}\neq \emptyset$
	only if $i\in T_{m-1}^{\nu}$. If $T_{m-1}^{\nu_0}=\se{i_1,\dots,i_{m-1}}$ is
	such that $\ol{U}_{i_1}\cap \dots\cap \ol{U}_{i_{m-1}}\neq \emptyset$,
	choose a sufficiently small open set $V_{T_{m-1}^{\nu_0}}$ satisfying
	\begin{equation*}
	\bcap_{\mu=1}^{m-1} \ol{U}_{i_\mu}\subs
		V_{T_{m-1}^{\nu_0}}\subs \ol{V}_{T_{m-1}^{\nu_0}}\subs \bcap_{i\neq
		i_j}\ol{U}_{i}^c\cap
		\bcap_{\nu=1}^{\nu_0-1}\big(\ol{V}_{T_{m-1}^{\nu}}\big)^c,
	\end{equation*} excise $\bcap_{\mu=1}^{m-1} \ol{U}_{i_\mu}$ from each
	$U_{i_j}$  and let $I_k\big(V_{T_{m-1}^{\nu_0}}\big)$ be a connected
	component of $\bcup_{j=1}^{m-1}I_k\big(U_{i_j}\big)$. The choice of
	$V_{T_{m-1}^{\nu_0}}$ is possible because by hypothesis there are no
	$m$-fold intersections among the $\ol{U}_i$ and for each $\nu\leq \nu_0-1$,
	we have $i_j\nin T^\nu_{m-1}$ for at least one $j\in [m-1]$. At the end of
	this step we have sets $U_i$ and $V_{T}$ (with $\abs{T}=m-1$ or $m$)
	covering $K$ such that: \begin{enumerate} \item [\sbu] $V_{T}\po V_{T'}$
			whenever $\abs{T} \leq \abs{T'}$.  \item [\sbu] $U_i\po V_{T}$ for
			every $i\in [l]$ and every set $V_{T}$.  \item [\sbu] There exists
				no nonempty $(m-1)$-fold intersection among  the $\ol{U}_i$.
		\end{enumerate} 

Continuing this down to twofold intersections, we obtain open sets $V_{T}$ and
$U_i$ with $\abs{T}=2$ and $\ol{U}_i\cap \ol{U}_{i'}=\emptyset$ whenever $i\neq
i'$. Finally, for each $i\in [l]$, set $V_{\se{i}}=U_i$. Then the sets $V_T$
form an open cover of $K$ and $V_T \po V_{T'}$ whenever $\abs{T}\leq \abs{T'}$.
To establish (a) we simply relabel the $V_T$ in order of nondecreasing
$\abs{T}$, for $\abs{T}=1,\dots,m$.

By \rref{L:lowering}\?(d), the original intervals $I_{i,k}$ given by
\lref{L:continuousJs} can always be enlarged so as to satisfy the condition on
the enpoints stated in (b). Furthermore, if some intervals $I_1,\dots,I_m$ satisfy
(b), then so does any component of $\bcup_{j=1}^mI_j$. Hence the proof of (a)
preserves this property.
\end{proof}

\begin{lem}\label{L:multiquasi} Let $\sig_1\prec\dots\prec\sig_m$ be sign
	strings and $\ga\in \sr M(Q)$ be $(\vphi,\eps_j)$-quasicritical of type
	$\sig_j$ for each $j\in [m]$. Then $\eps_{j+1}>2\eps_j$ for each $j\in
	[m-1]$.  \end{lem}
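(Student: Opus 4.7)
The plan is to proceed by contradiction. Suppose $\eps_{j+1}\leq 2\eps_j$ for some $j\in [m-1]$. Write $\sig=\sig_j$, $\sig'=\sig_{j+1}$, $n=\abs{\sig}$ and $n'=\abs{\sig'}$; note $n<n'$ since $\sig\prec\sig'$. Fix intervals $J_1<\dots<J_n$ with stretchable subintervals $I_k\subs J_k$ witnessing that $\ga$ is $(\vphi,\eps_j)$-quasicritical of type $\sig$, and analogously $J_1'<\dots<J_{n'}'$ with $I_{k'}'\subs J_{k'}'$ for type $\sig'$ and parameter $\eps_{j+1}$.

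The target is to produce a strictly increasing map $\ka\colon [n']\to [n]$ satisfying $\sig(\ka(k'))=\sig'(k')$; its existence would give $n'\leq n$, contradicting $\sig\prec\sig'$. Fix $k'\in [n']$ and assume $\sig'(k')=\ty{+}$, the opposite case being symmetric. Condition (iii) of \dref{D:quasicritical} applied to $\sig'$ yields $\theta_\ga>\vphi_+-\eps_{j+1}\geq \vphi_+-2\eps_j$ on $I_{k'}'$, where the second inequality is the standing assumption. Condition (ii) applied to $\sig$ forces $\theta_\ga<\vphi_+-2\eps_j$ at every point outside $\Int\big(\bcup_k J_k\big)$. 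Hence $I_{k'}'\subs \Int\big(\bcup_k J_k\big)$, and by connectedness $I_{k'}'$ lies inside a single $\Int(J_k)$, which I take to be $J_{\ka(k')}$. Finally, condition (i) for $\sig$ rules out $\sig(\ka(k'))=\ty{-}$ (which would again force $\theta_\ga<\vphi_+-2\eps_j$ on $J_{\ka(k')}$), so $\sig(\ka(k'))=\ty{+}=\sig'(k')$ as required.

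Because both $I_{1}'<\dots<I_{n'}'$ and $J_1<\dots<J_n$ are linearly ordered along $[0,1]$, $\ka$ is automatically non-decreasing. Since $\sig(\ka(k'))=\sig'(k')$ alternates in $k'$, consecutive values of $\ka$ cannot coincide, and $\ka$ must therefore be strictly increasing, yielding the required contradiction. No substantial obstacle appears in this argument; the only delicate point is that the strictness of the inequalities in (ii) and (iii) of \dref{D:quasicritical} is exactly what is needed to exclude $I_{k'}'$ from the boundaries of the $J_k$ and to push the chain of comparisons through.
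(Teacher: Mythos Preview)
Your proof is correct and follows essentially the same approach as the paper's. Both arguments reduce to a pair of consecutive strings, use condition (iii) for the longer string together with condition (ii) for the shorter one (under the assumption $\eps_{j+1}\leq 2\eps_j$) to force each stretchable interval of the longer string into a $J$-interval of the shorter one with matching sign, and then derive a contradiction from the length mismatch; the paper phrases the last step via pigeonhole (two consecutive $I_k$'s in the same $J_i'$, violating (i)), while you phrase it as the existence of a strictly increasing $\ka\colon [n']\to [n]$, which is the same thing.
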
 \begin{proof} Clearly, the lemma can be deduced from the
	special case where $m=2$. Let $n=\abs{\sig_2}$, $l=\abs{\sig_1}$ and let
	$J_1<\dots<J_n$, $J_1'<\dots<J_l'$ be intervals as in \dref{D:quasicritical}
	for $(\sig,\eps)=(\sig_2,\eps_2)$ and $(\sig_1,\eps_1)$, respectively. For
	each $k\in [n]$, let $I_k\subs J_k$ be a subinterval where
	$\abs{\theta_\ga-\vphi}>\frac{\pi}{2}-\eps_2$ throughout, as guaranteed by
	(iii). By (ii), if $t\nin \bcup_{i\in [l]}J'_i$, then
	$\abs{\theta_\ga(t)-\vphi}< \frac{\pi}{2}-2\eps_1$. Therefore, if
	$\eps_2\leq 2\eps_1$, then each $I_k$ must be contained in a $J_i'$.
	Further, because $n>l$, there must exist $k\in [n-1]$, $i\in [l]$ such that
	$I_{k}\cup I_{k+1}\subs J_i'$. From $\sig_2(k)=-\sig_2(k+1)$ it follows that
	\begin{equation*}
	\theta_\ga(J_i')\cap
		\big(\vphi_+-\eps_2,\vphi_++\eps_2\big)\neq \emptyset \quad
		\text{and}\quad \theta_\ga(J_i')\cap
		\big(\vphi_--\eps_2,\vphi_-+\eps_2\big)\neq \emptyset.  \end{equation*}
	But this contradicts (i) of \dref{D:quasicritical} (for $\sig=\sig_1$).
	Hence, $\eps_2>2\eps_1$.  \end{proof}

\begin{lem}\label{L:convexity} Let $\sig$ be a sign string, $0<\eps<\eps'$ and
	suppose that $\ga\in \sr M(Q)$ is simultaneously $(\vphi,\eps)$- and
	$(\vphi,\eps')$-quasicritical of type $\sig$. Then $\ga$ is
	$(\vphi,\de)$-quasicritical of type $\sig$ for any $\de\in [\eps,\eps']$.
\end{lem}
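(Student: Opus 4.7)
The endpoint cases $\de=\eps$ and $\de=\eps'$ are immediate, so I restrict to $\de\in(\eps,\eps')$ and introduce an auxiliary buffer $\tilde\de\in(\de,\eps']$. After fixing intervals $J_k^\eps$ and $J_k^{\eps'}$ witnessing the $(\vphi,\eps)$- and $(\vphi,\eps')$-quasicritical structure respectively, together with stretchable subintervals $I_k^\eps\subs J_k^\eps$, the plan is to define each $J_k^\de$ as a suitable closed subinterval of the coarser $J_k^{\eps'}$ and to reuse $I_k^\eps$ as the stretchable witness required by \dref{D:quasicritical}\,(iii).

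First I would prove the preliminary nesting $I_k^\eps\subs J_k^{\eps'}$. For $\sig(k)=\ty{+}$, on $I_k^\eps$ one has $\theta_\ga>\vphi_+-\eps>\vphi_+-2\eps'$ (since $\eps<2\eps'$), so the contrapositive of condition (ii) for $(\vphi,\eps')$ forces $I_k^\eps\subs\bcup_j J_j^{\eps'}$; connectedness places $I_k^\eps$ in a single $J_{k_0}^{\eps'}$, (i) for $\eps'$ excludes $\sig(k_0)=\ty{-}$, and the left-to-right order of the $\ty{+}$-peaks forces $k_0=k$. Then I define $J_k^\de\subs J_k^{\eps'}$ to be the smallest closed interval containing the set $\set{t\in J_k^{\eps'}}{\sig(k)(\theta_\ga(t)-\vphi)\geq \pi/2-2\tilde\de}$, which is nonempty since it contains $I_k^\eps$ (because $\eps<2\tilde\de$). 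Pairwise disjointness of the $J_k^\de$ is automatic from disjointness of the $J_k^{\eps'}$.

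Conditions (i) and (iii) for $(\vphi,\de)$ should then fall out easily: for (i), the inclusion $J_k^\de\subs J_k^{\eps'}$ gives $\theta_\ga>\vphi_-+2\eps'\geq\vphi_-+2\de$ (for $\sig(k)=\ty{+}$), while the global upper bound $\theta_\ga<\vphi_++\eps\leq\vphi_++\de$ of \rref{R:automaticineq} (applied at $\eps$) supplies the other side; and (iii) is witnessed by $I_k^\eps$ itself, on which $\abs{\theta_\ga-\vphi_{\sig(k)}}<\eps<\de$. The hard part will be (ii). Given $t\nin\Int(\bcup_k J_k^\de)$ with $\theta_\ga(t)\geq\vphi_+-2\de>\vphi_+-2\tilde\de$, condition (ii) for $\eps'$ will place $t\in J_{k_0}^{\eps'}$, (i) for $\eps'$ will force $\sig(k_0)=\ty{+}$, and $t$ will belong to the generating set of $J_{k_0}^\de$, hence to $J_{k_0}^\de$ itself. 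A short case analysis---separating whether $t$ is an interior $\inf/\sup$ point of that generating set (at which $\theta_\ga(t)=\vphi_+-2\tilde\de<\vphi_+-2\de$) or coincides with an endpoint of $J_{k_0}^{\eps'}$ (at which (ii) for $\eps'$ yields $\theta_\ga(t)<\vphi_+-2\eps'\leq\vphi_+-2\tilde\de$)---will produce a contradiction. The symmetric bound from below is identical, and the buffer $\tilde\de>\de$ is precisely what supplies the strict inequality demanded by condition (ii).
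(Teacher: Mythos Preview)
Your argument is correct, but the paper takes a considerably shorter route: it reuses the intervals $J_k'$ from the $(\vphi,\eps')$-structure \emph{unchanged} as the witnesses for $(\vphi,\de)$. Conditions (i) and (ii) for $\de$ then follow immediately, since (ii) for $\eps'$ already gives $\abs{\theta_\ga-\vphi}<\tfrac{\pi}{2}-2\eps'\leq\tfrac{\pi}{2}-2\de$ off $\bcup_k J_k'$, and (i) for $\eps'$ combined with the global bound $\theta_\ga([0,1])\subs(\vphi_--\eps,\vphi_++\eps)$ of \rref{R:automaticineq} (applied at level $\eps$) squeezes $\theta_\ga(J_k')$ into the required $\de$-window. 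Only condition (iii) needs any work, namely showing that the stretchable $I_k^\eps$ actually lie inside $J_k'$; this is exactly the nesting $I_k^\eps\subs J_k^{\eps'}$ that you establish as your ``preliminary'' step, and the paper dispatches it by the same \lref{L:Js}\?(a)-style argument you sketch.

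So your construction of new intervals $J_k^\de\subsetneq J_k^{\eps'}$, the buffer $\tilde\de$, and the endpoint case analysis for (ii) are all avoidable: once you notice that the coarser $J_k'$ already satisfy (ii) for every $\de\leq\eps'$, the hard part of your proof evaporates. Your approach is not wrong, just longer; the only step the two proofs genuinely share is the nesting $I_k^\eps\subs J_k^{\eps'}$.
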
 \begin{proof} Let $n=\abs{\sig}$ and $J_1<\dots<J_n$,
	$J_1'<\dots<J_n'$ be as in \dref{D:quasicritical}, corresponding to
	$\eps,\eps'$, respectively. The inequalities $\eps\leq \de\leq \eps'$ and
	\rref{R:automaticineq} imply that the intervals $J_k'$ still satisfy (i) and
	(ii) of \dref{D:quasicritical} if $\eps'$ is replaced by $\de$. An argument
	similar to the proof of \lref{L:Js}\?(a) shows that if $I_k\subs J_k$ is any
	subinterval where $\abs{\theta_\ga-\vphi}>\frac{\pi}{2}-\eps\geq
	\frac{\pi}{2}-\de$ throughout, then $I_k\subs J_k'$. By (iii), for each
	$k\in [n]$, there exists such an $I_k$ which, additionally, is stretchable.
	Hence the $J_k'$ also satisfy (iii) if $\eps'$ is replaced by $\de$.
\end{proof}

\begin{rem}\label{R:smallerandsmaller} Let $0<\de\leq \eps$, $\ga$ be
	$(\vphi,\eps)$-quasicritical of type $\sig$, and $J_k$ ($k\in [n]$) be
	intervals as in \dref{D:quasicritical} for the pair $(\vphi,\eps)$. Suppose
	that $\theta_\ga([0,1])\subs (\vphi_--\de,\vphi_++\de)$ and  that each
	$J_k$ contains a stretchable subinterval $I_k$ where $\abs{\theta
		_\ga-\vphi_{\sig(k)}}<\de$ throughout. Then the $J_k$ also satisfy
		(i)--(iii) of \dref{D:quasicritical} for the pair $(\vphi,\de)$, hence
		$\ga$ is $(\vphi,\de)$-quasicritical of type $\sig$. \end{rem}

\begin{lem}\label{L:fiberisinterval} Let $\ga\in \sr M(Q)$ be a critical curve
	of type $\sig$. Let \begin{equation*}
	S=\set{\vphi\in
		\R}{\text{there exists\ } \eps>0\text{ for which }\ga\text{ is
		$(\vphi,\eps)$-quasicritical of type $\sig$}}.  \end{equation*} Then $S$
	is an open interval containing $\bar\vphi^\ga$.  \end{lem}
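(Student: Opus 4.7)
The lemma has three assertions: $\bar\vphi^\ga\in S$, the set $S$ is open, and $S$ is connected (hence an open interval, since an open connected subset of $\R$ is an open interval). The first is immediate from \lref{L:criticalisquasi}, and the second from \rref{R:opencond}: if $\vphi_0\in S$ is witnessed by $(\eps_0,J_1^{(0)},\dots,J_n^{(0)})$, then the same intervals witness the quasicritical condition for every triple $(\eta,\vphi,\eps)$ sufficiently close to $(\ga,\vphi_0,\eps_0)$, so in particular every nearby $\vphi$ lies in $S$.

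For connectedness, my plan is to show that $S$ is star-shaped with respect to $\bar\vphi^\ga$. Fix $\vphi_0\in S$; by symmetry we may assume $\vphi_0\geq \bar\vphi^\ga$ and show that every $\vphi'\in [\bar\vphi^\ga,\vphi_0]$ lies in $S$. I would reuse the original intervals $J_k':=J_k^{(0)}$ together with $\eps':=\eps_0-\tfrac{1}{2}(\vphi_0-\vphi')$. A short computation, using the basic inequality $\vphi_0-\bar\vphi^\ga<\eps_0$ (forced on any witness at $\vphi_0$ by the fact that each $J_k^{(0)}$ contains a critical point $t_k$ with $\theta_\ga(t_k)=\bar\vphi^\ga+\sig(k)\tfrac{\pi}{2}$), shows that $\eps'>0$ and $\eps'>\vphi'-\bar\vphi^\ga$. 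Conditions (i) and (ii) of \dref{D:quasicritical} for the triple $(\vphi',\eps')$ with intervals $J_k'$ then reduce to direct algebraic checks: one combines the bounds on $\theta_\ga(J_k^{(0)})$ and on $\theta_\ga$ outside $\bcup_k J_k^{(0)}$ that hold at $\vphi_0$ with the critical-amplitude identities $\sup\theta_\ga=\bar\vphi^\ga+\tfrac{\pi}{2}$ and $\inf\theta_\ga=\bar\vphi^\ga-\tfrac{\pi}{2}$, the gain of $\tfrac{1}{2}(\vphi_0-\vphi')$ on $\eps'$ being exactly what is needed to absorb the shift from $\vphi_0$ to $\vphi'$.

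For condition (iii), rather than reusing $I_k^{(0)}$ I would take $I_k'$ to be a sufficiently small closed neighborhood of $t_k\in J_k^{(0)}$. Since $\theta_\ga(t_k)=\bar\vphi^\ga+\sig(k)\tfrac{\pi}{2}$ and $\eps'>|\vphi'-\bar\vphi^\ga|$, continuity of $\theta_\ga$ makes the containment $|\theta_\ga-\vphi'_{\sig(k)}|<\eps'$ on $I_k'$ automatic once $I_k'$ is small enough. For the stretchability of $\ga|_{I_k'}$ with respect to $e^{i\vphi'_{\sig(k)}}$, \lref{L:lowering}\?(a) supplies stretchability with respect to $e^{i(\bar\vphi^\ga+\sig(k)\pi/2)}$ (which lies in $\ta_\ga(I_k')$), and I would transfer it to the nearby target $e^{i\vphi'_{\sig(k)}}$ via \lref{L:lowering}\?(b), arranging $I_k'$ so that $e^{i\vphi'_{\sig(k)}}$ falls on the shortest arc from a suitably chosen auxiliary direction outside $\ta_\ga(I_k')$ back to the image. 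The main obstacle is this last step: the choice of $I_k'$ has to be compatible simultaneously with the bound in (iii) and the applicability of \lref{L:lowering}\?(b), especially in the ``opposite'' case where $\vphi'_{\sig(k)}$ is not already in $\ta_\ga(I_k')$.
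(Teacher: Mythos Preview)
Your overall strategy coincides with the paper's: openness from \rref{R:opencond}, membership of $\bar\vphi^\ga$ from \lref{L:criticalisquasi}, and connectedness by showing $S$ is star-shaped about $\bar\vphi^\ga$, reusing the intervals $J_k^{(0)}$ with a reduced $\eps$. Your checks of (i) and (ii) go through (the paper uses $\de=\eps_0-(\vphi_0-\vphi')$ rather than your $\eps'=\eps_0-\tfrac12(\vphi_0-\vphi')$, but either choice suffices for these two conditions).

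The gap is precisely where you flag it, in condition (iii) for the sign $\sig(k)=\ty{+}$ (still assuming $\vphi'\geq\bar\vphi^\ga$). Your plan is to take $I_k'$ a small neighborhood of $t_k$ and to transfer stretchability from $\bar\vphi^\ga_+$ to $\vphi'_+$ via \lref{L:lowering}\?(b). But \lref{L:lowering}\?(b) requires the \emph{known} stretchable direction to lie \emph{outside} the tangent image of the arc, whereas $e^{i\bar\vphi^\ga_+}=\ta_\ga(t_k)\in \ta_\ga(I_k')$. There is no auxiliary direction to appeal to here: any $\psi>\bar\vphi^\ga_+$ lies above $\max\theta_\ga$, so you have no a priori stretchability there; any $\psi\leq\bar\vphi^\ga_+$ near $t_k$ lies inside the image, so (b) does not apply. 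Thus for $\sig(k)=\ty{+}$ your uniform recipe cannot be made to work.

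The paper resolves this by treating the two signs asymmetrically. For $\sig(k)=\ty{-}$ one has $\vphi'_->\bar\vphi^\ga_-=\min\theta_\ga$, so by the intermediate value theorem $J_k^{(0)}$ contains a point where $\theta_\ga=\vphi'_-$, and \lref{L:lowering}\?(a) gives stretchability with respect to $\vphi'_-$ directly. For $\sig(k)=\ty{+}$ the paper instead \emph{reuses the original} stretchable subinterval $I_k^{(0)}\subs J_k^{(0)}$ from the witness at $(\vphi_0,\eps_0)$: since $(\vphi_0)_+>\bar\vphi^\ga_+=\max\theta_\ga$, the direction $e^{i(\vphi_0)_+}$ is genuinely outside $\ta_\ga(I_k^{(0)})$, so \lref{L:lowering}\?(b) applies and yields stretchability with respect to every $\psi_+$ with $\psi\in[\bar\vphi^\ga,\vphi_0]$; the containment $\theta_\ga(I_k^{(0)})\subs(\psi_+-\de,\psi_++\de)$ then follows from $\psi_+-\de=(\vphi_0)_+-\eps_0$ together with $\theta_\ga\leq\bar\vphi^\ga_+<\psi_++\de$. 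This asymmetric split is the missing idea.

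One minor point: you use ``each $J_k^{(0)}$ contains a critical point $t_k$'' as input, but this itself requires an argument. The paper derives the key inequality $\eps_0>\vphi_0-\bar\vphi^\ga$ first (directly from \rref{R:automaticineq} and $\inf\theta_\ga=\bar\vphi^\ga_-$), and only then shows $t_k\in J_k^{(0)}$ by noting that (i) forces each $t_k$ into some $J_{k'}^{(0)}$ with $\sig(k')=\sig(k)$, and that no two $t_k$'s can share a $J^{(0)}$.
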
 \begin{proof} Let
	$\bar{\vphi}=\bar\vphi^\ga$ be as in \eqref{E:average}. By
	\rref{R:opencond}, $S$ is open  and by \rref{L:criticalisquasi},
	$\bar\vphi\in S$. Suppose that $\ga$ is $(\vphi,\eps)$-quasicritical of type
	$\sig$; no generality is lost in assuming that $\bar\vphi\leq \vphi$. Since
	$\ga$ is critical, $\inf_{t\in [0,1]}\theta_\ga(t)=\bar\vphi_-$. Hence, by
	\rref{R:automaticineq}, \begin{equation}\label{E:pos} \eps>\vphi-\bar\vphi.
	\end{equation} Let $\psi\in (\bar\vphi,\vphi)$,  $\de=\eps-(\vphi-\psi)$ and
	let $J_1<\dots<J_n$ be as in \dref{D:quasicritical} for the pair
	$(\vphi,\eps)$. We claim that these intervals also  satisfy (i)--(iii) for
	the pair $(\psi,\de)$.
	
	Notice that $\theta_\ga([0,1])= [\bar\vphi_-,\bar\vphi_+]\subs
	(\psi_--\de,\psi_++\de)$, as a consequence of \eqref{E:pos}. It is also easy to
	check that \begin{equation*}
	\psi_+-2\de>\vphi_+-2\eps\text{\ \
		and\ \ }\psi_-+2\de<\vphi_-+2\eps.  \end{equation*}
	Consequently, the $J_k$ satisfy (i), (ii) of \dref{D:quasicritical} for the
	pair $(\psi,\de)$. 
	
	Let $t_1<\dots<t_n$ be such that $\theta_\ga(t_k)=\bar\vphi_{\sig(k)}$. 
	Using \eqref{E:pos}, one deduces that each $t_k$ must be contained in an interval $J_{k'}$ with
	$\sig(k')=\sig(k)$. Therefore, no two of the $t_k$ can be contained in the
	same $J$, so that $t_k\in J_k$ for all $k\in [n]$. Since $\bar{\vphi}_-<
	\psi_-$, if $\sig(k)=\ty{-}$ then $J_k$ must contain some $t$ such that
	$\theta_\ga(t)=\psi_-$. In particular, by \rref{L:lowering}\?(a), condition
	(iii) of \dref{D:quasicritical} is satisfied by $J_k$ for the pair
	$(\psi,\de)$ whenever $\sig(k)=\ty{-}$. If $\sig(k)=\ty{+}$, let $I\subs
	J_k$ be an interval as in (iii) for the pair $(\vphi,\eps)$. By
	\rref{L:lowering}\?(b), this interval is also stretchable with respect to
	$\psi_+$. Moreover, \begin{equation*}
		\psi_+-\de=\vphi_+-\eps<\theta_{\ga}(t)\leq
		\bar{\vphi}_+<\psi_++\de\text{\ \ for all $t\in I$}; \end{equation*}
	hence $J_k$ also satisfies (iii) for the pair $(\psi,\de)$ in case
	$\sig(k)=\ty{+}$.  \end{proof}

\begin{defn}[$\sr N(Q)$, $\sr V_\ast$]\label{D:Vs} Let $Q=(q,z)\in \C\times \Ss^1$, $z\neq -1$. Let $R(Q)$
	denote the open interval of size $\pi-\abs{\theta_1}$ centered at
	$\frac{\theta_1}{2}$, where $e^{i\theta_1}=z$ and $\abs{\theta_1}<\pi$.  Let
	$\sr U_c,~\sr U_d$ be the open subsets of $\sr M(Q)$ consisting of all all
	condensed (resp.~diffuse) curves. Define \begin{alignat*}{9}
	\sr V_d&:=\sr U_d\times R(Q);\\ 
	\sr V_c&:=\set{(\ga,\vphi)\in \sr M(Q)\times
	R(Q)}{\theta_\ga([0,1])\subs (\vphi_-,\vphi_+)}.
	\end{alignat*} If $\sr M(Q)$ does not
	contain critical curves of type $\sig$, set $\sr V_\sig:=\emptyset$.
	Otherwise, define \begin{equation*}
	\sr
		V_\sig:=\set{(\ga,\vphi)\in \sr M(Q)\times R(Q)}{\text{$\ga$ is
			$(\vphi,\eps)$-quasicritical of type $\sig$ for some $\eps\in
			(0,\tfrac{\pi}{4})$}}.  \end{equation*} The union of $\sr V_c$, $\sr
		V_d$ and all the $\sr V_\sig$ will be denoted by $\sr N(Q)$, and the
		cover of $\sr N(Q)$ by these sets will be denoted by $\fr V$. Note that
		each $\sr V_\ast$ is an open subset of $\sr M(Q)\times \R$, hence so is
		$\sr N(Q)$. For sign strings $\sig_1\prec\dots\prec\sig_m$, the
		intersection $\sr V_{\sig_1}\cap\dots\cap\sr V_{\sig_m}$ will be denoted
		by $\sr V_{(\sig_1,\dots,\sig_m)}$. Similarly,  $\sr
		V_{(c,\sig_1,\dots,\sig_m)}:=\sr V_c\cap \sr V_{(\sig_1,\dots,\sig_m)}$
		and $\sr V_{(d,\sig_1,\dots,\sig_m)}:=\sr V_d\cap \sr
		V_{(\sig_1,\dots,\sig_m)}$.  \end{defn}

\begin{urem}\label{R:center} Observe that
	$R(Q)=\big(\theta_1-\frac{\pi}{2},\frac{\pi}{2}\big)$ if $\theta_1\geq 0$
	and $R(Q)=\big(-\frac{\pi}{2},\theta_1+\frac{\pi}{2}\big)$ if $\theta_1\leq
	0$. In either case, it consists of all $\vphi\in \R$ such that
	$\vphi_-<0\,,\theta_1<\vphi_+$.  \end{urem}

\begin{lem}\label{L:N=M} Let $\pr \colon \sr N(Q)\to \sr M(Q)$ be the
	restriction of the canonical projection $\sr M(Q)\times \R\to \sr M(Q)$. Let $K$ be
	any compact space and $g\colon K\to \sr M(Q)$ a continuous map. Then there
	exists $\te{g}\colon K\to \sr N(Q)$ such that $\pr\circ \te{g}=g$.
\end{lem}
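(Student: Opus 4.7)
My plan is to lift $g$ via a partition-of-unity construction on $K$, after first showing that for every $\ga\in \sr M(Q)$ the fiber
$F_\ga:=\set{\vphi\in R(Q)}{(\ga,\vphi)\in \sr N(Q)}$
is a nonempty open interval.

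Nonemptiness of $F_\ga$ I would prove in three cases. If $\ga$ is diffuse then $F_\ga=R(Q)$. If $\ga$ is condensed, a short computation using $\theta_\ga(0)=0$, $\theta_\ga(1)=\theta_1$ and amplitude $\om<\pi$ shows that $\bar\vphi^\ga\in R(Q)$ and $\theta_\ga([0,1])\subs \big(\bar\vphi^\ga-\tfrac{\pi}{2},\bar\vphi^\ga+\tfrac{\pi}{2}\big)$, hence $(\ga,\bar\vphi^\ga)\in \sr V_c$. If $\ga$ is critical of type $\sig$, \lref{L:criticalisquasi} says that $\ga$ is $(\bar\vphi^\ga,\eps)$-quasicritical of type $\sig$ for all small $\eps>0$, and \lref{L:fiberisinterval} produces an open interval $S$ of valid axes around $\bar\vphi^\ga$; since $\bar\vphi^\ga\in\ol{R(Q)}$, the intersection $S\cap R(Q)$ is nonempty.

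The main technical step is to show $F_\ga$ is an interval. The $\sr V_c$-contribution is itself the interval $\big(\sup\theta_\ga-\tfrac{\pi}{2},\inf\theta_\ga+\tfrac{\pi}{2}\big)\cap R(Q)$. Unpacking condition (iii) of \dref{D:quasicritical}, any $\vphi$ for which $\ga$ is $(\vphi,\eps)$-quasicritical of some type necessarily lies in $\big(\inf\theta_\ga+\tfrac{\pi}{2}-\eps,\sup\theta_\ga-\tfrac{\pi}{2}+\eps\big)$, because the alternating $\sig$ forces $\theta_\ga$ to come within $\eps$ of both $\vphi+\tfrac{\pi}{2}$ and $\vphi-\tfrac{\pi}{2}$. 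Combining this with \lref{L:fiberisinterval} applied type by type and with \lref{L:convexity}, the various contributions are forced to overlap and glue into a single open interval of $R(Q)$ containing $\bar\vphi^\ga$.

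Granted the claim, the construction is routine. Since $\sr N(Q)$ is open in $\sr M(Q)\times\R$, for each $p\in K$ I choose $\vphi_p\in F_{g(p)}$, an open neighborhood $U_p$ of $p$ in $K$, and an open interval $V_p\ni\vphi_p$ with $\se{g(p')}\times V_p\subs \sr N(Q)$ for every $p'\in U_p$. Compactness of $K$ gives a finite subcover $U_1,\dots,U_N$ with axes $\vphi_i\in V_i$ and a subordinate partition of unity $\rho_i$; setting $\te{g}(p):=\big(g(p),\,\sum_i\rho_i(p)\vphi_i\big)$ produces a convex combination of elements of $F_{g(p)}$, and convexity of $F_{g(p)}$ then forces $\te{g}(p)\in \sr N(Q)$. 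The main obstacle is the interval property of $F_\ga$, particularly when $\ga$ is critical and quasicritical of several distinct types simultaneously.
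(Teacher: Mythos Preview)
Your plan hinges on the claim that each fiber $F_\ga$ is an interval, but this is precisely the difficulty that the whole setup of $\sr N(Q)$ is designed to circumvent. The paper remarks explicitly (in the outline of the proof) that ``the set of all $\vphi\in\R$ with respect to which a curve is quasicritical of type $\sig$ need not be an interval.'' Your appeal to \lref{L:fiberisinterval} ``type by type'' does not repair this: that lemma is stated and proved only for \emph{critical} curves, and its proof uses $\inf_t\theta_\ga(t)=\bar\vphi^\ga_-$ in an essential way (see \eqref{E:pos}). For a non-critical $\ga$ you have no such control. Likewise, \lref{L:convexity} concerns varying $\eps$ at a fixed $\vphi$, not varying $\vphi$, so it does not help glue disconnected pieces of $F_\ga$. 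Without the interval property your partition-of-unity step can fail: the convex combination $\sum_i\rho_i(p)\vphi_i$ may land in a gap of $F_{g(p)}$.

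The paper's proof avoids this issue entirely. It does not attempt to describe $F_\ga$, but instead writes down a single explicit continuous axis $p\mapsto\vphi^p$, namely an amplitude-dependent interpolation between $\bar\vphi^{\ga^p}$ and $\tfrac{\theta_1}{2}$. The point is that $\bar\vphi^{\ga^p}$ works when $\ga^p$ is condensed, $\tfrac{\theta_1}{2}\in R(Q)$ works when $\ga^p$ is diffuse, and for $\ga^p$ near critical one invokes \lref{L:fiberisinterval} (legitimately, at the critical curves themselves) together with openness and compactness of $K$ to find a uniform $s_0>0$ such that $(1-s_0)\bar\vphi^{\ga^p}+s_0\tfrac{\theta_1}{2}$ lies both in $R(Q)$ and in the relevant $\sr V_\sig$-fiber whenever $|\om(\ga^p)-\pi|$ is small. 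A cutoff function in the amplitude then stitches the three regimes together continuously. No structural claim about $F_\ga$ is ever needed.
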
 \begin{proof}Let $g\colon p\mapsto \ga^p\in \sr M(Q)$ and
	$\bar\vphi^p:=\bar\vphi^{\ga^p}$, as in \eqref{E:average}. Let $\om(p)$
	denote the amplitude of $\ga^p$. 
	Since $\frac{\theta_1}{2}$ always lies in $R(Q)$, if $\ga^p$ is diffuse then
	$\big(\ga^p,\frac{\theta_1}{2}\big)\in \sr V_d$. If $\ga^p$ is
	condensed, then $\bar\vphi^p$ also lies in $R(Q)$ and $(\ga^p,\bar\vphi^p)\in \sr V_c$.
	Finally, if $\ga^p$ is critical, then $\bar\vphi^p\in \ol{R(Q)}$.
	
	Using \lref{L:fiberisinterval} and compactness of $K$, choose $s_0\in (0,1]$
and $\de>0$ so small that: \begin{enumerate} \item [\sbu] $\ga^p$ is
			$(\psi,\eps)$-quasicritical of type $\sig$ (for some $\sig$ and
			$\eps>0$, whose values are irrelevant) for
			$\psi=(1-s_0)\bar\vphi^p+s_0\frac{\theta_1}{2}$ whenever
		$s\in [0,s_0]$ and $\abs{\om(p)-\pi}\leq 2\de$.
	\end{enumerate}
	Further reducing $ \de>0 $ if necessary, it can be achieved that 
\begin{enumerate}
	\item [\sbu]  $(\ga^p,\psi)\in \sr V_d$ for
		$\psi=(1-s)\bar\vphi^p+s\frac{\theta_1}{2}$, whenever $s\in [s_0,1]$ and
		$\pi \leq \om(p)\leq \pi+2\de$.
\end{enumerate}
Let $s\colon
	\R\to [0,1]$ be an increasing continuous function satisfying:
	\begin{equation*}
	s(u)=\begin{cases} 
		0 & \text{ if\ \ $u\leq \pi-2\de$;} \\
		s_0 & \text{ if\ \ $\abs{u-\pi}\leq \de$;} \\ 
		1 & \text{ if\ \ $u\geq \pi+2\de$;}  
	\end{cases}
	\end{equation*} and set
		$\vphi^p:=[1-s(\om(p))]\bar\vphi^p+s(\om(p))\frac{\theta_1}{2}$. Then
		$\te{g}(p)=(\ga^p,\vphi^p)\in \sr N(Q)$ for all $p\in K$.  \end{proof}

\begin{cor}\label{C:contractible} If $\sr N(Q)$ is contractible, then so is $\sr
	M(Q)$.  \end{cor}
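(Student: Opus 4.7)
The plan is to deduce weak contractibility of $\sr M(Q)$ directly from \lref{L:N=M} and then upgrade it to contractibility using the Banach manifold structure. First, I would apply \lref{L:N=M} with $K$ a single point to obtain surjectivity of $\pr\colon \sr N(Q)\to \sr M(Q)$; since $\sr N(Q)$ is assumed contractible, it is in particular path-connected, and therefore so is its image $\sr M(Q)$.

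For the higher homotopy groups, I would fix $n\geq 1$ and an arbitrary continuous map $g\colon \Ss^n\to \sr M(Q)$. Applying \lref{L:N=M} with $K=\Ss^n$ produces a lift $\te g\colon \Ss^n\to \sr N(Q)$ with $\pr\circ \te g=g$. Contractibility of $\sr N(Q)$ furnishes a nullhomotopy $H\colon \Ss^n\times [0,1]\to \sr N(Q)$ of $\te g$, and then $\pr\circ H$ is a nullhomotopy of $g$ in $\sr M(Q)$. Since $\sr M(Q)$ is path-connected and every map $\Ss^n\to \sr M(Q)$ is freely nullhomotopic for each $n\geq 1$, all homotopy groups of $\sr M(Q)$ vanish (independently of the choice of basepoint). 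Thus $\sr M(Q)$ is weakly contractible.

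Finally, I would invoke the fact — already used in the introduction via Theorem 15 of \cite{Palais} — that $\sr M(Q)$, being a Banach manifold, has the homotopy type of a CW complex. Whitehead's theorem then promotes weak contractibility to genuine contractibility, completing the argument. No real obstacle arises: the content of the corollary lies entirely in \lref{L:N=M}, which replaces the need for a homotopy inverse of $\pr$ by the weaker (and easier) existence of strict lifts on compact parameter spaces.
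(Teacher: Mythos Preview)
Your argument is correct and is essentially an expanded version of the paper's two-line proof, which simply observes that $\pr$ induces surjections on all homotopy groups (by \lref{L:N=M}) and that a weakly contractible Hilbert manifold is contractible. Your extra care with path-connectedness and free versus based nullhomotopies just unpacks what the paper leaves implicit.
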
 \begin{proof} Indeed, $\pr\colon \sr N(Q)\to \sr M(Q)$
	induces surjections on homotopy groups and a weakly contractible Hilbert
	manifold is contractible.  \end{proof}

\begin{lem}\label{L:Moore} Let $p\colon X\to Y$ be a continuous map between
	topological spaces. Suppose that $X\iso \Ss^n$ for some $n\in \N$ and that
	given any compact space $K$ and any map $g\colon K\to Y$, there exists
	$\te{g}\colon K\to X$ such that $p\te{g}=g$. Then $Y$ is either weakly
	contractible or a homology $n$-sphere.  \end{lem}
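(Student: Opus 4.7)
The plan is to exploit the lifting hypothesis in two complementary ways: on maps from spheres (to get surjectivity on $\pi_*$) and on inclusions of compact subspaces (to get surjectivity on $H_*$). Replacing $X$ by $\Ss^n$ via a homotopy equivalence $f\colon \Ss^n\to X$, the hypothesis is preserved, so I may regard $p\colon \Ss^n\to Y$ directly and write $j:=p$.

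I would first establish both surjectivities. For homotopy: any $g\colon \Ss^k\to Y$ admits a lift $\te g\colon \Ss^k\to \Ss^n$, and the equality $g=j\circ\te g$ shows that $j_*\colon \pi_k(\Ss^n)\to \pi_k(Y)$ is surjective for every $k$; in particular $\pi_k(Y)=0$ for $0\leq k<n$, so $Y$ is $(n-1)$-connected. For homology: a class $\alpha\in H_k(Y)$ is represented by a singular cycle $c$ whose support lies in some compact $K\subset Y$, and applying the hypothesis to the inclusion $\iota\colon K\hookrightarrow Y$ yields a continuous section $s\colon K\to \Ss^n$ with $p\circ s=\iota$. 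Hence $\alpha=\iota_*[c]=p_*s_*[c]\in \tup{im}(p_*)$, and $p_*\colon H_k(\Ss^n)\to H_k(Y)$ is surjective for every $k$. This forces $H_k(Y)=0$ for $0<k\neq n$ and exhibits $H_n(Y)$ as a quotient of $\Z$, while $H_0(Y)=\Z$ by path-connectedness.

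Now I would dichotomize on whether $[j]\in \pi_n(Y)$ vanishes. If $[j]=0$, then every $g\colon \Ss^k\to Y$ factors up to homotopy as $j\circ\te g$ and is hence nullhomotopic, so $\pi_k(Y)=0$ for all $k$ and $Y$ is weakly contractible. If $[j]\neq 0$, I claim $H_n(Y)=\Z$, which combined with the vanishing established above shows that $Y$ is a homology $n$-sphere. Suppose for contradiction that $\pi_n(Y)=\Z/m$ for some $m\geq 2$; then $mj$ is nullhomotopic via some $H\colon D^{n+1}\to Y$. Form the finite CW complex $K=\Ss^n\cup_{m\iota}D^{n+1}$, where $D^{n+1}$ is attached along the degree-$m$ map $m\iota\colon \Ss^n\to \Ss^n$, and define $g\colon K\to Y$ by $j$ on $\Ss^n$ and $H$ on $D^{n+1}$; this is well-defined because $H|_{\partial D^{n+1}}=mj=j\circ m\iota$. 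By compactness of $K$, $g$ admits a lift $\te g\colon K\to \Ss^n$. Let $d=\deg(\te g|_{\Ss^n})$; the identity $p\circ\te g|_{\Ss^n}=j$ yields $d[j]=[j]$ in $\Z/m$, so $d\equiv 1\pmod m$. On the other hand $\te g|_{D^{n+1}}$ is an extension of $\te g|_{\Ss^n}\circ m\iota$ (of degree $dm$) over the disk, which forces $dm=0$ and hence $d=0$. The conflict $d\equiv 1\not\equiv 0\pmod m$ is the sought contradiction. I expect this torsion-exclusion step, with its explicit construction of $K$ turning the lifting hypothesis into incompatible degree constraints, to be the main technical hurdle.
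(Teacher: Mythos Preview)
Your argument is correct and follows essentially the same route as the paper: both first deduce that $Y$ is a Moore space $M(\Z/(k),n)$, then build the model $K=\Ss^n\cup_{k}e^{n+1}$, lift a map $g\colon K\to Y$ through $X$, and use the factorization to exclude $k\geq 2$. The paper's endgame is a shade cleaner: it chooses $g$ so that $g_*\colon H_n(K)\to H_n(Y)$ is an isomorphism, and then simply observes that $g_*$ factors through $H_n(X)\cong\Z$, which is impossible for $k\geq 2$ since every homomorphism $\Z/k\to\Z$ vanishes. Your degree computation ($d\equiv 1\pmod m$ from $p\circ\te g|_{\Ss^n}=j$, versus $dm=0$ from the extension over $D^{n+1}$) is the $\pi_n$ version of the same obstruction. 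One small imprecision: replacing $X$ by $\Ss^n$ via a homotopy equivalence only preserves the lifting property \emph{up to homotopy}, not on the nose; since every subsequent step of yours uses only homotopy or homology classes this is harmless, but it deserves a word (or else work directly with $X$ and use $\pi_n(X)\cong H_n(X)\cong\Z$).
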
 \begin{proof} The
	hypothesis immediately implies that $Y$ is a Moore space $M(\Z/(k),n)$ for
	some $k\in \N$. Let $K$ be a CW complex obtained by attaching an
	$(n+1)$-cell to $\Ss^n$ via a map of degree $k$. Let $g\colon K\to Y$ be
	such that $g_\ast\colon H_\ast(K)\to H_{\ast}(Y)$ is an isomorphism. By
	hypothesis, $g$ factors through $X$. Since $H_n(X)\iso \Z$, this implies
	that either $k=0$ or $k=1$.  \end{proof}

The homotopy type of $\sr M(Q)$ will be determined as follows. If $\sr M(Q)$
contains no critical curves, then $\sr M(Q)\home \E$ or $\E\times \Ss^0$
depending on whether $\sr U_c=\emptyset$ or not; see Theorem 6.1 in
\cite{SalZueh1}. Otherwise, let $n$ denote the greatest length $\abs{\sig}$
among those sign strings $\sig$ for which $\sr V_\sig\neq \emptyset$. In
\S\ref{S:combinatorics} the cover $\fr V$ will be shown  to have the same
combinatorics as that in \eqref{E:half-spaces}, and in \S\ref{S:topology} it
will be shown that $\fr V$ is a good cover of $\sr N(Q)$. Then \lref{L:Moore},
together with an easy topological lemma, will imply that either 
$\sr M(Q)$ is contractible or it has the homotopy type of
$\Ss^{n-1}$. Finally, if $\sr N(Q)\iso \Ss^{n-1}$, then $\sr M(Q)\iso \Ss^{n-1}$
as well, because in this case a non-nullhomotopic map $\Ss^{n-1}\to \sr M(Q)$
can be constructed explicitly; this is done in \S \ref{S:generator}.

\begin{lem}\label{L:continuouseps} Let $\sig_1\prec\dots\prec\sig_m$ be sign
	strings and $f\colon K\to \sr V_{(\sig_1,\dots,\sig_m)}$, $p\mapsto
	(\ga^p,\vphi^p)$, be a continuous map. Then there exist continuous
	$\eps_j\colon K\to \R^+$, $p\mapsto \eps_j^p$, such that for each $p\in K$,
	$\ga^p$ is $(\vphi^p,\eps_j^p)$-quasicritical of type $\sig_j$. Moreover,
	$\eps_{j+1}>2\eps_j$ for each $j\in [m-1]$ throughout $K$.  \end{lem}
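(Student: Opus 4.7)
The plan is to produce the functions $\eps_j$ locally using the openness of the quasicriticality condition, and then patch them globally via a partition of unity, with \lref{L:convexity} ensuring that the patched values still witness quasicriticality.

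First, I would fix $p\in K$. Since $(\ga^p,\vphi^p)\in \sr V_{\sig_j}$ for every $j\in [m]$, there exists some $\eps_j(p)\in \big(0,\tfrac{\pi}{4}\big)$ making $\ga^p$ be $(\vphi^p,\eps_j(p))$-quasicritical of type $\sig_j$. Because these properties hold simultaneously at the common axis $\vphi^p$, \lref{L:multiquasi} automatically delivers $\eps_{j+1}(p)>2\eps_j(p)$ for each $j\in [m-1]$. By \rref{R:opencond}, being $(\vphi,\eps)$-quasicritical of type $\sig$ is an open condition on $(\ga,\vphi)$, so there exists a neighborhood $U_p$ of $p$ in $K$ over which $\ga^q$ is $(\vphi^q,\eps_j(p))$-quasicritical of type $\sig_j$ for every $q\in U_p$ and every $j\in [m]$. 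Compactness of $K$ then extracts a finite subcover $U_1,\dots,U_l$ with associated constants $\eps_j^i\in \big(0,\tfrac{\pi}{4}\big)$ satisfying $\eps_{j+1}^i>2\eps_j^i$.

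Next, I would choose a partition of unity $(\rho_i)_{i\in [l]}$ subordinate to this cover and set
\begin{equation*}
	\eps_j^p:=\sum_{i=1}^l \rho_i(p)\eps_j^i\qquad (p\in K,~j\in [m]).
\end{equation*}
These functions are continuous and take values in $\big(0,\tfrac{\pi}{4}\big)$, and the required strict inequality $\eps_{j+1}^p>2\eps_j^p$ is an immediate consequence of the pointwise inequalities $\eps_{j+1}^i>2\eps_j^i$ together with the linearity of the convex combination.

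The one step requiring care is verifying that $\ga^p$ really is $(\vphi^p,\eps_j^p)$-quasicritical of type $\sig_j$, and this is where \lref{L:convexity} enters as the main tool. Letting $i_{\min}$ and $i_{\max}$ denote indices in $\set{i\in [l]}{\rho_i(p)>0}$ that minimize and maximize $\eps_j^i$ respectively, we have $p\in U_{i_{\min}}\cap U_{i_{\max}}$, so $\ga^p$ is both $(\vphi^p,\eps_j^{i_{\min}})$- and $(\vphi^p,\eps_j^{i_{\max}})$-quasicritical of type $\sig_j$. Since $\eps_j^p$ lies in the closed interval $[\eps_j^{i_{\min}},\eps_j^{i_{\max}}]$, \lref{L:convexity} transfers the quasicriticality to the interpolated value $\eps_j^p$. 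This convexity step is the only nonroutine point, and the previously established lemma handles it cleanly.
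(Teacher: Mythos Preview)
Your proof is correct and follows essentially the same approach as the paper's: local choices via \rref{R:opencond}, globalization by a partition of unity, with \lref{L:convexity} justifying that the convex combination still witnesses quasicriticality. The only cosmetic difference is that you carry the inequality $\eps_{j+1}>2\eps_j$ through the convex combination from the local level, whereas the paper simply invokes \lref{L:multiquasi} once more after the global $\eps_j^p$ have been constructed; either route works.
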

\begin{proof} By \rref{R:opencond}, such functions can be defined on a
	neighborhood of every $p\in K$. Globally defined $\eps_j\colon K\to \R^+$
	($j\in [m]$) are obtained through convex combinations using partitions of unity; this works in view 
	of \lref{L:convexity}. The last assertion is just a restatement of 
	\lref{L:multiquasi}.  \end{proof}

\begin{defn}\label{D:h} Let $(\ga,\vphi)\in \sr V_\sig$, $n=\abs{\sig}$, and let
	$J_k$ ($k\in [n]$) be intervals satisfying the conditions in
	\dref{D:quasicritical} for some $\eps\in (0,\frac{\pi}{4})$. Define $h\colon
	\sr V_\sig\to \R^n$ by: \begin{equation}\label{E:h}
		h_k(\ga,\vphi)=\begin{cases} \sup_{t\in J_k}\se{\theta_\ga(t)-\vphi_+} &
			\text{ if\ \  $\sig(k)=\ty{+}$;} \\ \inf_{t\in
			J_k}\se{\theta_\ga(t)-\vphi_-} & \text{ if\ \ $\sig(k)=\ty{-}$;} \\
		\end{cases}\quad (k\in [n]).  \end{equation} \end{defn}

\begin{rem}\label{R:welldefined} Even though $\eps$ and the $J_k$ are not
	uniquely determined,  \lref{L:Js}\?(a) implies that $h$ is well-defined.
	Furthermore, it is continuous. Indeed, by \rref{R:opencond}, for
	$(\eta,\psi)$ sufficiently close to $(\ga,\vphi)$, we may choose the same
	intervals $J_k$ in \dref{D:quasicritical} for $(\eta,\psi)$ as for
	$(\ga,\vphi)$; but for fixed $J_k\subs [0,1]$, it is clear that \eqref{E:h}
	depends continuously upon $(\ga,\vphi)$. 
\end{rem}

\begin{unotn}
Given intervals $I_1,\dots,I_n$, let $I_1\ast \dots\ast I_n$ denote the smallest
closed interval containing $I_1\cup \dots\cup I_n$.
\end{unotn}

\begin{lem}\label{L:convenient} Let $\sig_1\prec \sig_2$ be sign strings and
	suppose that $\ga\in \sr M(Q)$ is $(\vphi,\eps_j)$-quasicritical of type
	$\sig_j$, $j=1,2$. Let $\abs{\sig_1}=l$, $\abs{\sig_2}=n$ and
	$J_1<\dots<J_n$ be intervals as in \dref{D:quasicritical} for the pair
	$(\sig_2,\eps_2)$. Then there exist intervals $J_1'<\dots<J_l'$ satisfying
	\dref{D:quasicritical} for $(\sig_1,\eps_1)$ such that: \begin{enumerate}
		\item [(a)] Each $J_i'$ has the form $J_{k}\ast J_{k'}$, for some $k\leq
		k'\in [n]$ depending on $i\in [l]$.  \item [(b)] If $k\in [n]$ is such
			that $\abs{h_k(\ga,\vphi)}\leq 2\eps_1$, then $J_k\subs J_i'$ for
			some $i\in [l]$.  \item [(c)]  For each $i\in [l]$, there exists
				$k\in [n]$ such that $\abs{h_k(\ga,\vphi)}<\eps_1$ and $J_k\subs
				J_i'$.  \end{enumerate} \end{lem}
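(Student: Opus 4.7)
The plan is to cluster the intervals $J_k$ into $l$ consecutive blocks, guided by any auxiliary decomposition $J_1''<\dots<J_l''$ of $[0,1]$ witnessing the $(\vphi,\eps_1)$-quasicriticality of type $\sig_1$, with stretchable subintervals $I_i''\subs J_i''$ coming from (iii) of \dref{D:quasicritical}. By \lref{L:multiquasi} we have $\eps_2>2\eps_1$, so the choice $\de=2\eps_1$ satisfies $\de\leq 2\eps_j$ for both $j=1,2$; with $W=\set{t\in [0,1]}{\abs{\theta_\ga(t)-\vphi}>\pi/2-2\eps_1}$ and connected components $(W_\al)_{\al\in A}$, \lref{L:Js}\?(a) applied separately to both sets of data produces two canonical partitions $A=A_1\du\dots\du A_n$ (from the $J_k$'s) and $A=A_1'\du\dots\du A_l'$ (from the $J_i''$'s) of the same index set $A$.

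Next I would establish a sign-matching observation: if $W_\al\subs J_k$ then the sign of $\theta_\ga-\vphi$ on $W_\al$ equals $\sig_2(k)$, because (i) of \dref{D:quasicritical} for $\sig_2$ keeps $\theta_\ga$ on $J_k$ strictly away from $\vphi_{-\sig_2(k)}$ by more than $2\eps_1$, which is incompatible with $\abs{\theta_\ga-\vphi}>\pi/2-2\eps_1$ unless $\theta_\ga$ lies on the side of $\vphi_{\sig_2(k)}$. Analogously, $W_\al\subs J_i''$ forces the sign $\sig_1(i)$, and each $A_i'$ is nonempty since $I_i''\subs W$. Using the alternation of $\sig_1$, whenever $A_k\neq\emptyset$ all its components lie in a common $A_i'$ with $\sig_2(k)=\sig_1(i)$: otherwise a single $J_k$ would span two $A_i'$'s of alternating signs, contradicting sign constancy on $A_k$. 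This yields a nondecreasing surjection $\phi\colon S\to [l]$ on $S:=\set{k\in [n]}{A_k\neq\emptyset}$, which I would extend nondecreasingly to $[n]$ by assigning each $k\nin S$ the value of the nearest preceding $k'\in S$ (or $1$ if none exists). Set $B_i:=\phi^{-1}(i)$, a nonempty block of consecutive integers, and define $J_i':=J_{\min B_i}\ast J_{\max B_i}$.

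With this setup, property (a) is built in. Conditions (ii)--(iii) of \dref{D:quasicritical} for $(\sig_1,\eps_1)$ follow easily: (iii) transfers from the $I_i''$'s, each of which lies in a unique $J_{k(i)}\subs J_i'$ by (ii) for $\sig_2$ and disjointness of the $J_k$; while (ii) uses $\bcup_i J_i'\supseteq \bcup_k J_k$ together with (ii) for $\sig_2$. For (i), I would split on each $J_k\subs J_i'$ according to whether $\sig_2(k)=\sig_1(i)$: the matching case follows from (i) for $\sig_2$ with room to spare since $\eps_2>\eps_1$; the non-matching case forces $A_k=\emptyset$, and then (i) for $\sig_2$ combined with the bound $\abs{\theta_\ga-\vphi}\leq \pi/2-2\eps_1$ on $J_k$ and the global bound $\theta_\ga\in (\vphi_--\eps_1,\vphi_++\eps_1)$ from \rref{R:automaticineq} places $\theta_\ga$ in the required open interval. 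Property (b) is then automatic, since $\phi$ is defined on all of $[n]$ so every $J_k$ lies in $J_{\phi(k)}'$. For (c), I take $k=k(i)$: $J_{k(i)}\subs J_i'$, $\sig_2(k(i))=\sig_1(i)$, and $\abs{h_{k(i)}}<\eps_1$ follows by bounding $h_{k(i)}$ from one side by the global bound and from the other by the values on $I_i''$.

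The main technical obstacle will be ensuring that the strict open inclusions in (i) of \dref{D:quasicritical} hold on the exceptional $J_k\subs J_i'$ with $\sig_2(k)\neq \sig_1(i)$ and $A_k=\emptyset$, where $J_k\cap W=\emptyset$ yields directly only the non-strict bound $\abs{\theta_\ga-\vphi}\leq \pi/2-2\eps_1$. I plan to close this gap by a small perturbation: openness of quasicriticality (\rref{R:opencond}) ensures that $\ga$ is also $(\vphi,\eps_1+\eta)$-quasicritical of type $\sig_1$ for some small $\eta>0$, and redoing the construction with cutoff $\de=2\eps_1+\eta$ converts the offending non-strict inequality into a strict one while the stronger lower bound propagates to give the required strict inequalities for the original parameter $(\sig_1,\eps_1)$.
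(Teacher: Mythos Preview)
Your argument is essentially correct, and in spirit very close to the paper's, but the paper makes one cleaner choice that lets it avoid the perturbation patch at the end. Instead of clustering via the open set $W=\{|\theta_\ga-\vphi|>\pi/2-2\eps_1\}$, the paper selects the indices $k_1<\dots<k_m$ with $|h_k(\ga,\vphi)|\le 2\eps_1$ directly, shows (using an auxiliary $(\vphi,\eps_1)$-witness $J_1''<\dots<J_l''$, exactly as you do) that the reduced string of $k\mapsto\sig_2(k_j)$ equals $\sig_1$, and then sets $J_i'=\Ast_{k\in S_i}J_k$ where $S_1<\dots<S_l$ are the maximal sign-runs among the $k_j$. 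Crucially, the paper does \emph{not} force every $J_k$ into some $J_i'$: the ``leftover'' $J_k$'s are precisely those with $|h_k|>2\eps_1$ \emph{strictly}, and this strict inequality is exactly what is needed to verify (i) and (ii) of \dref{D:quasicritical} without any $\eps_1\mapsto\eps_1+\eta$ trick.

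Your route---extending $\phi$ to all of $[n]$ so that $\bigcup_iJ_i'\supseteq\bigcup_kJ_k$---is what creates the borderline case $A_k=\emptyset$ giving only $|\theta_\ga-\vphi|\le\pi/2-2\eps_1$ non-strictly. Your fix via \rref{R:opencond} does work (one can keep the same $J_i''$ for $\eps_1$ and $\eps_1+\eta$, use $\de=2\eps_1+\eta$ to define $W$, and still read off stretchable $I_i''$ with $|\theta_\ga-\vphi_{\sig_1(i)}|<\eps_1$ from the original data), but it is an extra layer. The paper's formulation also makes (b) and (c) fall out more transparently: (b) because every $k$ with $|h_k|\le 2\eps_1$ is in some $S_i$ by construction, and (c) because each $J_i''$ is shown to contain some $t_j$ with $|h_{k_j}|<\eps_1$. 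Your verification of (c) via $I_i''\subset J_{k(i)}$ and the two-sided bound on $h_{k(i)}$ is the same computation.
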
 
\begin{proof} Let
		$k_1<\dots<k_m$ be all the indices $k\in [n]$ such that
		$\abs{h_k(\ga,\vphi)}\leq 2\eps_1$. Define $\tau\colon [m]\to \se{\pm}$
		by $\tau(j)=\sig_2(k_j)$. For each $j\in [m]$, choose $t_{j}\in J_{k_j}$
		such that $\theta_\ga(t_{j})=\vphi_{\sig_2(k_j)}+h_{k_j}(\ga,\vphi)$.
		Let $J''_1<\dots<J''_{l}$ be any intervals as in \dref{D:quasicritical}
		for the pair $(\sig_1,\eps_1)$. Then: \begin{enumerate} \item [\sbu]
					Each $t_{j}$ must be contained in some $J''_i$ with
					$\sig_2(k_j)=\sig_1(i)$. This follows immediately from
					condition (ii) of \dref{D:quasicritical} for the pair
					$(\sig_1,\eps_1)$.  \item [\sbu] For each $i\in [l]$,
						$J_i''$ must contain one of the $t_{j}$. Indeed, by
						(iii) of \dref{D:quasicritical}, for any $i$ there
						exists $s_i\in J_i''$ such that
						$\vert\theta_\ga(s_i)-\vphi_{\sig_1(i)}\vert<\eps_1$. By
						\lref{L:multiquasi}, $2\eps_1<\eps_2$, hence $s_i\in
						J_k$ for some $k$, which forces
						$\abs{h_k(\ga,\vphi)}<\eps_1$. Therefore $k=k_j$ for
						some $j$, and it follows that $t_{j}$ must be
						contained in $J_i''$.  \end{enumerate}  Let $\vrho$ be
				the reduced string of $\tau$. The first assertion implies that
				$\vrho$ is a substring of $\sig_1$, while the second one implies
				that it cannot be a proper substring. Consequently
				$\vrho=\sig_1$.
	
	Thus, there exists a decomposition of $\se{k_1,\dots,k_m}$ as the disjoint
	union of nonempty sets $S_1<\dots<S_l$ with $\sig_2(k)=\sig_1(i)$ whenever
	$k\in S_i$. Set $J_i'=\Ast_{k\in S_i}J_k$. Then $J_1'<\dots<J_l'$, and parts
	(a) and (b) hold by construction. Moreover,
	$\abs{\theta_\ga(t)-\vphi}<\frac{\pi}{2}-2\eps_1$ if $t\nin
	\Int\big(\bcup_iJ_i'\big)$: If $t\nin \bcup_kJ_k$, then this is obvious from
	(ii) of \dref{D:quasicritical}, since $\eps_2>2\eps_1$ by
	\lref{L:multiquasi}; if $t\in J_k$ for some $k$, then necessarily
	$\abs{h_k(\ga,\vphi)}>2\eps_1$, hence again the inequality holds. This
	proves that condition (ii) of \dref{D:quasicritical} is satisfied by the
	$J'_i$. Condition (i) is also easily verified using that $\eps_2>2\eps_1$.
	
	Since $\ga$ is $(\vphi,\eps_1)$-quasicritical, there exist intervals
	$I_1<\dots<I_l$ such that $I_i$ is stretchable and
	\begin{equation}\label{E:vert}
		\abs{\theta_\ga(t)-\vphi_{\sig_1(i)}}<\eps_1\quad\text{for all $t\in
		I_i$ and $i\in [l]$.} \end{equation} The inequality implies that each of
	these intervals must be contained in some $J'$, and no two subsequent
	intervals may be contained in the same $J'$. Hence $I_i\subs J_i'$ for each
	$i\in [l]$. This proves that condition (iii) of \dref{D:quasicritical} is
	satisfied by the $J'$. Since $\eps_1<\eps_2$, \eqref{E:vert} also implies
	that each $I_i$ must be contained in some $J_{k}$ with
	$\abs{h_k(\ga,\vphi)}<\eps_1$, so that $J_k\subs J_i'$ by the definition of
	the $J'$. This proves part (c).  \end{proof}


\section{Incidence data of the cover of $\sr N(Q)$}\label{S:combinatorics}

\subsection*{Good covers of Hilbert manifolds} An open cover $\fr
U=(U_{\nu})_{\nu\in I}$ of a space is \tdef{good} if for any finite $J\subs I$,
the intersection $\bigcap_{\nu\in J}U_\al$ is either empty or contractible. Let
$\fr V=(V_{\nu})_{\nu \in I}$ be a good cover of another space, indexed by the
same set $I$. Then $\fr U$ and $\fr V$ will be called  (combinatorially)
\tdef{equivalent} when for any finite $J\subs I$, $\bigcap_{\nu\in
J}U_\nu=\emptyset$ if and only if $\bigcap_{\nu\in J}V_\nu=\emptyset$. Recall
that the \tdef{nerve} $K_{\fr U}$ of an open cover $\fr U$ of a space is a
simplicial complex whose $n$-simplices correspond bijectively to the nonempty
$(n+1)$-fold intersections of distinct elements of $\fr U$, for each $n\in \N$.

\begin{lem}\label{L:nerve} If $\fr U$ is a good cover of a paracompact space
	$X$, then $X$ is homotopy equivalent to the nerve $K_{\fr U}$.  \end{lem}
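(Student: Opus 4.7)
This is a version of the classical Nerve Theorem (in the lineage of Leray, Borsuk, Weil, and Segal), so my plan is essentially to invoke the standard argument, with one option being to simply cite a standard reference (e.g.~tom Dieck, \emph{Algebraic Topology}, Theorem 13.4.1, or Hatcher, \emph{Algebraic Topology}, Corollary 4G.3). If we want to give the proof, I would proceed as follows.

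First I would use paracompactness of $X$ to choose a partition of unity $(\rho_\nu)_{\nu\in I}$ subordinate to $\fr U$, and use it to build the canonical map
\begin{equation*}
	f\colon X\to \abs{K_{\fr U}},\qquad f(x)=\sum_{\nu\in I}\rho_\nu(x)\?v_\nu,
\end{equation*}
where $v_\nu$ denotes the vertex of $K_{\fr U}$ corresponding to $U_\nu$. For each $x$ the set $\set{\nu}{\rho_\nu(x)>0}$ is finite, and its elements span a simplex of $K_{\fr U}$ because $x$ lies in the intersection of the corresponding $U_\nu$'s, which is therefore nonempty; hence $f$ is well-defined and continuous.

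Next I would construct a homotopy inverse $g\colon \abs{K_{\fr U}}\to X$ by induction on the skeleta of $K_{\fr U}$. On vertices, pick $g(v_\nu)\in U_\nu$ arbitrarily. Suppose $g$ has been defined on the $(n-1)$-skeleton with the property that for every simplex $\sig=[v_{\nu_0},\dots,v_{\nu_k}]$ of dimension $<n$, $g$ maps the closed star of $\sig$ into $U_{\nu_0}\cap \dots \cap U_{\nu_k}$. Given an $n$-simplex $\sig=[v_{\nu_0},\dots,v_{\nu_n}]$, the boundary $g(\bd \sig)$ already lies in the contractible intersection $U_\sig=U_{\nu_0}\cap\dots\cap U_{\nu_n}$ by the good cover hypothesis, so $g|_{\bd \sig}$ extends to a map of $\sig$ into $U_\sig$. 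This both defines $g$ on $\sig$ and preserves the inductive hypothesis on closed stars.

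Finally one checks that $f\circ g$ and $g\circ f$ are homotopic to the identity. Both homotopies are again built by the same kind of inductive extension over skeleta (for $f\circ g$) and over a skeletal filtration of $X$ obtained by refining the cover (for $g\circ f$), in each case using contractibility of the relevant intersections $U_\sig$ to carry out the extension. The main technical obstacle is precisely this bookkeeping: at every step one must verify that the homotopies produced at the previous stage still land in the smallest relevant $U_\sig$, so that the good-cover hypothesis can be applied at the next stage. Paracompactness is used twice—to obtain the partition of unity and to guarantee that the inductive constructions can be glued into a genuine continuous map, yielding an actual homotopy equivalence rather than merely a weak one.
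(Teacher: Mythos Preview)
Your proposal is correct and aligns with the paper's own treatment: the paper simply cites Hatcher, Corollary~4G.3 (and Weil), which is exactly the reference you suggest invoking. Your additional sketch of the standard partition-of-unity/inductive-extension argument is sound and goes beyond what the paper provides.
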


\begin{proof} See \cite{Hatcher}, Corollary 4G.3 or \cite{Weil}, p.~141.
\end{proof}

Because the spaces $\sr L_{\kappa_1}^{\kappa_2}(P,Q)$ are closed submanifolds of
the separable Hilbert space $\E$ (see Definition 1.6 of \cite{SalZueh1}), they
are second-countable and metrizable. It follows that they are also paracompact. It will be tacitly assumed below that
all Hilbert manifolds are separable and metrizable. 

\begin{cor}\label{C:goodcovers} If two Hilbert manifolds $\sr M$ and $\sr N$
	admit equivalent good covers, then $\sr M\home \sr N$.  \end{cor}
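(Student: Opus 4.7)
The plan is to convert the combinatorial hypothesis into a homotopy equivalence via nerves, and then upgrade from homotopy equivalence to homeomorphism using a classical theorem on infinite-dimensional Hilbert manifolds.

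First, since the covers $\fr U=(U_\nu)_{\nu\in I}$ of $\sr M$ and $\fr V=(V_\nu)_{\nu\in I}$ of $\sr N$ are equivalent in the sense defined just before \lref{L:nerve}, the nerves $K_{\fr U}$ and $K_{\fr V}$ are \emph{identical} as simplicial complexes: a finite subset $J\subs I$ spans a simplex of $K_{\fr U}$ iff $\bigcap_{\nu\in J}U_\nu\neq\emptyset$ iff $\bigcap_{\nu\in J}V_\nu\neq\emptyset$ iff $J$ spans a simplex of $K_{\fr V}$. In particular, the two nerves have canonically homeomorphic geometric realizations.

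Next, both $\sr M$ and $\sr N$ are paracompact (being separable, metrizable Hilbert manifolds, as recalled right before the corollary), so \lref{L:nerve} applies to each of their covers. Hence
\begin{equation*}
    \sr M\iso K_{\fr U}\home K_{\fr V}\iso \sr N,
\end{equation*}
which shows that $\sr M$ and $\sr N$ are homotopy equivalent.

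Finally, to promote homotopy equivalence to homeomorphism, I would invoke the classification theorem of D.\,W.~Henderson for Hilbert manifolds: any two separable metrizable manifolds modeled on the infinite-dimensional Hilbert space $\E$ that are homotopy equivalent are actually homeomorphic (see e.g.~Henderson, \emph{Infinite-dimensional manifolds are open subsets of Hilbert space}). Applied to $\sr M$ and $\sr N$, this yields $\sr M\home \sr N$, as desired. The only potentially nontrivial point is the appeal to Henderson's theorem; everything else is a direct bookkeeping consequence of the equivalence of covers and of \lref{L:nerve}.
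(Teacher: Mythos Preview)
Your proof is correct and follows essentially the same route as the paper: identify the nerves via the combinatorial equivalence, apply \lref{L:nerve} to get a homotopy equivalence, and then upgrade to a homeomorphism using the classification of Hilbert manifolds. The only cosmetic difference is that the paper cites this last step as Lemma~1.7(b) of \cite{SalZueh1} (a homotopy equivalence between Hilbert manifolds is homotopic to a homeomorphism), while you cite Henderson directly; the content is the same.
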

\begin{proof} Let $\fr U$ and $\fr V$ be equivalent good covers of $\sr M$ and
	$\sr N$, respectively. Let $K$ be the nerve of $\fr U$, which is
	homeomorphic to the nerve of $\fr V$ by hypothesis. By \lref{L:nerve}, there
	exist homotopy equivalences $\sr M\to K$ and $K\to \sr N$. The corollary
	thus follows from the fact that a homotopy equivalence between two Hilbert
	manifolds is homotopic to a homeomorphism, see \cite{SalZueh1}, Lemma
	1.7\?(b).  \end{proof}

\begin{cor}\label{C:goodcovers1} If a Hilbert manifold $\sr M$ and a
	finite-dimensional manifold $N$ admit equivalent good covers, then $\sr
	M\home \E \times N$.\qed \end{cor}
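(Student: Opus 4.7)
The plan is to reduce this directly to the preceding Corollary \ref{C:goodcovers} by producing a Hilbert manifold homeomorphic to $\E \times N$ that admits a good cover combinatorially equivalent to the given one.

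First I would observe that $\E \times N$ is itself a separable metrizable Hilbert manifold. Local charts on $N$ identify open subsets of $\E \times N$ with open subsets of $\E \times \R^n$ (where $n = \dim N$), and the classical fact that $\E \times \R^n \home \E$ lets one view these as $\E$-valued charts. Separability and metrizability are inherited from the corresponding properties of $\E$ and $N$.

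Next, let $\fr U = (U_\nu)_{\nu \in I}$ and $\fr V = (V_\nu)_{\nu \in I}$ be the equivalent good covers of $\sr M$ and $N$ respectively. Define
\begin{equation*}
\fr W = (V_\nu \times \E)_{\nu \in I},
\end{equation*}
which is an open cover of $N \times \E \home \E \times N$. For any finite $J \subs I$,
\begin{equation*}
\bcap_{\nu \in J}(V_\nu \times \E) = \Big(\bcap_{\nu \in J} V_\nu\Big) \times \E,
\end{equation*}
which is empty exactly when $\bcap_{\nu \in J} V_\nu$ is empty, and contractible otherwise (since $\E$ is contractible and a product of contractible spaces is contractible). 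Hence $\fr W$ is a good cover of $\E \times N$ that is combinatorially equivalent to $\fr V$, and therefore also to $\fr U$.

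Finally I would apply \cref{C:goodcovers} to the two Hilbert manifolds $\sr M$ and $\E \times N$, which now admit equivalent good covers, concluding $\sr M \home \E \times N$. There is no real obstacle here; the only nontrivial input is the well-known homeomorphism $\E \times \R^n \home \E$, needed to ensure $\E \times N$ qualifies as a Hilbert manifold in the sense used by \cref{C:goodcovers}.
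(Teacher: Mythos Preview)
Your proof is correct and is essentially what the paper has in mind: the statement is marked with a bare \qed, signalling that it follows immediately from \cref{C:goodcovers} once one observes that $\E\times N$ is itself a Hilbert manifold admitting a good cover equivalent to $\fr V$. Your construction of $\fr W=(V_\nu\times\E)_{\nu\in I}$ makes this explicit, and the only nontrivial ingredient you flag, namely $\E\times\R^n\home\E$, is indeed the standard fact needed.
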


\subsection*{Incidence data of the cover of $\sr N(Q)$} The purpose of this
subsection is to determine which of the open sets $\sr V_\ast\subs \sr N(Q)$
described in \dref{D:Vs} intersect each other.  \begin{lem}\label{L:opposite}
	Suppose that $\ga\in \sr M(Q)$ is simultaneously
	$(\vphi,\eps)$-quasicritical of type $\sig$ and
	$(\vphi,\eps')$-quasicritical of type $\sig'$, for some $\vphi\in \R$,
	$\eps,\eps'\in (0,\frac{\pi}{4})$ and sign strings $\sig,\sig'$. Then
	$\sig'\neq -\sig$.  \end{lem}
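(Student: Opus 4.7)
\smallskip

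\noindent\emph{Proof plan.} Assume for contradiction that $\sig' = -\sig$, and set $n=\abs{\sig}=\abs{\sig'}$. By symmetry in the hypothesis, the plan is to assume without loss of generality that $\eps\leq \eps'$, and then to derive a contradiction by constructing an order-preserving map between the two sequences of intervals provided by \dref{D:quasicritical} which is forced simultaneously to preserve and to reverse the alternating pattern of $\sig$.

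Concretely, I would fix intervals $J_1<\dots<J_n$ and stretchable subintervals $I_k\subs J_k$ witnessing that $\ga$ is $(\vphi,\eps)$-quasicritical of type $\sig$, together with analogous data $J'_1<\dots<J'_n$, $I'_k\subs J'_k$ for type $-\sig$ with parameter $\eps'$. Pick any $t^k\in I_k$, so that $\abs{\theta_\ga(t^k)-\vphi_{\sig(k)}}<\eps$, and hence $\abs{\theta_\ga(t^k)-\vphi}>\tfrac{\pi}{2}-\eps\geq \tfrac{\pi}{2}-\eps'>\tfrac{\pi}{2}-2\eps'$. By condition (ii) of \dref{D:quasicritical} applied to the pair $(-\sig,\eps')$, the point $t^k$ cannot lie outside $\Int\bigl(\bigcup_{k'} J'_{k'}\bigr)$, so it belongs to some $J'_{\phi(k)}$.

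The main step is then to pin down $\phi$ combinatorially. Using condition (i) for $(-\sig,\eps')$, the value $\theta_\ga(t^k)$ must lie in $(\vphi_-+2\eps',\vphi_++\eps')$ or $(\vphi_--\eps',\vphi_+-2\eps')$ according as $(-\sig)(\phi(k))$ is $\ty{+}$ or $\ty{-}$. Since $\theta_\ga(t^k)$ is within $\eps\leq \eps'$ of $\vphi_{\sig(k)}$ and $\eps<2\eps'$, the assumption $\eps\leq \eps'$ rules out one of the two alternatives in each case, forcing $(-\sig)(\phi(k))=\sig(k)$, i.e., $\sig(\phi(k))=-\sig(k)$. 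Because the $t^k$ and the $J'_{k'}$ are both strictly ordered along $[0,1]$, the index function $\phi\colon [n]\to [n]$ is non-decreasing.

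To close the argument, I would observe that $\phi$ must in fact be strictly increasing: if $\phi(k_1)=\phi(k_2)=k'$ for some $k_1<k_2$, then by monotonicity $\phi$ is constant equal to $k'$ on $[k_1,k_2]\cap [n]$, which gives $\sig(k_1)=\sig(k_1+1)=-\sig(k')$, contradicting the fact that $\sig$ alternates. The only strictly increasing map $[n]\to[n]$ is the identity, whence $\sig(k)=\sig(\phi(k))=-\sig(k)$ for every $k$, a contradiction. The only real obstacle is the asymmetry created by having two different parameters $\eps,\eps'$; this is what forces the WLOG reduction $\eps\leq\eps'$ and makes the inequality $\eps<2\eps'$ usable in both places of the sign analysis.
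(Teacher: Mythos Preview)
Your argument is correct and follows essentially the same route as the paper's proof: assume $\eps\leq\eps'$, use condition~(iii) for $(\sig,\eps)$ to produce points (the paper uses subintervals) that condition~(ii) for $(\sig',\eps')$ forces into some $J'_{\phi(k)}$, then use condition~(i) to pin down the sign, and finally observe that the induced map on indices is strictly increasing. The only cosmetic difference is that the paper phrases the conclusion as a dichotomy (either $\abs{\sig'}>\abs{\sig}$ or $\sig'=\sig$) rather than arguing by contradiction from $\sig'=-\sig$.
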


\begin{proof} No generality is lost in assuming that $\eps\leq \eps'$. Let
	$n=\abs{\sig}$, $l=\abs{\sig'}$ and $J_1<\dots<J_{n}$, $J'_1<\dots<J'_{l}$
	be intervals as in \dref{D:quasicritical}, for the pairs $(\sig,\eps)$ and
	$(\sig',\eps')$, respectively. For each $k\in [n]$, choose an interval
	$I_k\subs J_k$ such that \begin{equation*}
		\theta_\ga(I_k)\subs (\vphi_{\sig(k)}-\eps,\vphi_{\sig(k)}+\eps).
	\end{equation*} Then for each $k\in [n]$, $I_k$ must be contained in some
	$J'_i$ with $\sig(k)=\sig'(i)$. In particular, $I_k$ and $I_{k+1}$ are not
	contained in the same $J'$ for any $k$. Therefore, either $l>n$ or $l=n$ and
	$\sig'=\sig$.  \end{proof}

\begin{lem}\label{L:substrings} Let $\sig_k$ $(2\leq k\leq n)$ be sign strings
	satisfying $\abs{\sig_k}=k$. Then there exist intervals $R_2\subs\dots \subs
	R_n=[n]$, $\abs{R_k}=k$, such that for each $k=2,\dots,n$, if
	$R_k=\se{r_1<\dots<r_k}$, then $\sig_n(r_i)=\sig_k(i)$ for all $i\in [k]$.
	\qed \end{lem}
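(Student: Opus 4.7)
The plan is to construct the intervals by downward induction on $k$, starting from $R_n := [n]$ and at each step choosing one of the two subintervals obtained by removing an endpoint.

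First I would observe the basic fact: since each $\sig_k$ is alternating, it is completely determined by its first entry $\sig_k(1) \in \{+,-\}$. Similarly, because $\sig_n$ is alternating, for any interval $R = \{r_1 < r_2 < \dots < r_k\} \subs [n]$ of consecutive integers, the restriction $\sig_n|_R$ (viewed as a function on $[k]$ via the order-preserving bijection $i \mapsto r_i$) is an alternating sign string of length $k$, hence coincides with $\sig_k$ if and only if $\sig_n(r_1) = \sig_k(1)$.

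Now I carry out the induction. The base case $k = n$ is trivial: take $R_n = [n]$, in which case $r_i = i$ and the required identity $\sig_n(r_i) = \sig_n(i)$ holds tautologically. For the inductive step, suppose that $R_k = \{a+1, a+2, \dots, a+k\}$ has been chosen as an interval of $[n]$ of length $k$ satisfying the required condition. The only intervals of length $k-1$ contained in $R_k$ are
\[
R_k' := \{a+1, a+2, \dots, a+k-1\} \quad\text{and}\quad R_k'' := \{a+2, a+3, \dots, a+k\}.
\]
Their smallest elements are $a+1$ and $a+2$, which are consecutive in $[n]$, so $\sig_n(a+1) = -\sig_n(a+2)$ because $\sig_n$ is alternating. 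Hence exactly one of $\sig_n(a+1)$, $\sig_n(a+2)$ equals $\sig_{k-1}(1)$. Let $R_{k-1}$ be the corresponding choice among $R_k'$, $R_k''$; then $R_{k-1} \subs R_k$ is an interval of length $k-1$ whose smallest element has $\sig_n$-sign equal to $\sig_{k-1}(1)$. By the observation of the first paragraph, this forces $\sig_n(r_i) = \sig_{k-1}(i)$ for all $i \in [k-1]$, where $R_{k-1} = \{r_1 < \dots < r_{k-1}\}$.

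There is really no main obstacle here: the proof is a short downward induction, and the only point that needs to be noticed is that alternating strings of the same length are equal as soon as they agree at any single position, which reduces the task at each step to matching $\sig_{k-1}(1)$ with the $\sig_n$-sign of the first element of $R_{k-1}$, and this can always be arranged because the two candidates for the first element differ by exactly one in $[n]$.
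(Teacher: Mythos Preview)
Your proof is correct and is precisely the sort of easy induction the paper has in mind (the paper leaves the proof to the reader). The key observation you isolate---that an alternating string restricted to a subinterval of consecutive integers is again alternating and hence determined by its first entry---is exactly what makes the downward induction work.
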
 In words, we can find nested copies of each $\sig_k$ inside
of $\sig_n$ by an appropriate choice of the $R_k$. The proof is an easy
induction which will be left to the reader.

\begin{lem}\label{L:flattening} Let $\ka_1\in (0,1)$. Suppose that $\al\in \sr
	L_{-1}^{+1}(P,Q)$ is condensed, $\ta_\al(0)=\ta_\al(1)$ and
	$\ka_\al([0,1])\subs [-\ka_1+\ka_1]$, but $\al$ is not a line segment. Then
	for all sufficiently small $\eps>0$,  there exists a homotopy $s\mapsto
	\al_s\in \sr L_{-1}^{+1}(P,Q)$ $(s\in [0,1])$ with $\al_1=\al$ and
	$\om(\al_1)-\om(\al_0)=\eps$.\footnote{Recall that
	$\om(\ga)=\sup\theta_\ga-\inf\theta_\ga$ denotes the amplitude of $\ga$.}
\end{lem}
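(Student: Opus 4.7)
The plan is to use the flattening construction of \S\ref{S:stretching}: starting from $\al$, I will produce a continuous family $(\al_s)$ in $\sr L_{-1}^{+1}(P,Q)$ along which the amplitude strictly decreases, then reparametrize and restrict to obtain the required homotopy via the intermediate value theorem.

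First I would fix an axis and a curvature bound. Since $\al$ is condensed and not a line segment, $0<\om(\al)<\pi$, so the interval
\begin{equation*}
	V:=\big[\inf\theta_\al,\sup\theta_\al\big]\cap\big(\sup\theta_\al-\tfrac{\pi}{2}\,,\,\inf\theta_\al+\tfrac{\pi}{2}\big)
\end{equation*}
is nonempty. Any $\vphi\in V$ satisfies $\theta_\al([0,1])\subs(\vphi_-,\vphi_+)$ and $e^{i\vphi}\in\ta_\al([0,1])$. Fixing such a $\vphi$ and any $\ka_0\in(\ka_1,1)$, \lref{L:lowering}\?(a) ensures that $\al$ is $\ka_0$-stretchable with respect to $e^{i\vphi}$. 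Let $(\al_s)_{s\in[-1,0]}$ be the corresponding flattening: it is a continuous path in $\sr L_{-1}^{+1}(P,Q)$ starting at $\al_{-1}=\al$, and by \lref{L:regularity}\?(d) the map $s\mapsto\om(\al_s)$ is continuous and nonincreasing.

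The hard part will be to show strict decrease, that is, $\om(\al_0)<\om(\al)$. The flattened limit $\al_0=\ze_{(\mu(b),b)}$ is an explicit \emph{arc--line--arc} curve whose two circular subarcs have curvature $\pm\ka_0$, unless $\al_0$ degenerates to a line segment (in which case $\om(\al_0)=0<\om(\al)$ since $\al$ is not a line segment). In the non-degenerate case, because $|\ka_\al|\leq\ka_1<\ka_0$ almost everywhere, $\al$ itself contains no subarc of curvature $\pm\ka_0$; combined with the pinching of the envelope functions $g_\pm,\,h_\pm^b$ and preservation of the integral $A=\int f$ along the flattening, this forces both $\sup\theta_{\al_0}<\sup\theta_\al$ and $\inf\theta_{\al_0}>\inf\theta_\al$. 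If this direct envelope analysis proves awkward, an alternative is to first perturb $\al$ within its $\ka_1$-bound to a curve whose $\theta$-extrema are isolated and non-degenerate, and then flatten the perturbed curve; in that setting strict decrease is immediate.

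Granted the strict decrease, the intermediate value theorem yields, for every sufficiently small $\eps>0$, some $s^*\in(-1,0)$ with $\om(\al_{s^*})=\om(\al)-\eps$. The reparametrization $\al'_s:=\al_{s^*-s(1+s^*)}$ for $s\in[0,1]$ is then a homotopy in $\sr L_{-1}^{+1}(P,Q)$ satisfying $\al'_1=\al$ and $\om(\al'_1)-\om(\al'_0)=\eps$, as required.
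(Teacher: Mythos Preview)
Your approach is essentially the paper's: pick $\ka_0\in(\ka_1,1)$, apply the flattening construction to $\al$, observe that the amplitude is monotone along the flattening, establish that it strictly decreases, and conclude by the intermediate value theorem. The paper's proof does exactly this, citing Proposition~3.4 of \cite{SalZueh1} for the flattening $H$ (which is the same construction you invoke via \S\ref{S:stretching}) and then Lemma~3.16 of \cite{SalZueh1} for the strict inequality $\om(\al_0)<\om(\al_1)$.

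The one soft spot in your write-up is precisely the step you flag as ``the hard part'': your argument that $\sup\theta_{\al_0}<\sup\theta_\al$ and $\inf\theta_{\al_0}>\inf\theta_\al$ from the fact that $\abs{\ka_\al}\leq\ka_1<\ka_0$ is a sketch, not a proof, and the fallback you propose (perturb $\al$ first, then flatten) does not obviously yield a homotopy starting at $\al$ itself with the required control on $\om$. This is exactly what Lemma~3.16 of \cite{SalZueh1} delivers, using all three hypotheses $\abs{\ka_\al}\leq\ka_1$, $\ta_\al(0)=\ta_\al(1)$, and $\al$ not a line segment; you should simply cite it. With that citation in place your argument is complete and coincides with the paper's.
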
 \begin{proof} Let $\ka_0\in (\ka_1,1)$ and $H$ be as in Proposition
	3.4 of \cite{SalZueh1}. Then $u\mapsto \al_u=H(u,\al)$ $(u\in [0,1])$, the
flattening of $\al=\al_1$ with curvature $\ka_0$, is a deformation within $\sr
L_{-1}^{+1}(P,Q)$ such that $\om(\al_u)$ is an increasing function of $u$.
Moreover, $\de=\om(\al_1)-\om(\al_0)>0$ by Lemma 3.16 of \cite{SalZueh1} and the
hypotheses on $\al$. Hence, for any $\eps\in (0,\de]$,  there exists $u_0\in
[0,1]$ such that $\om(\al_1)-\om(\al_{u_0})=\eps$.  \end{proof}

\begin{lem}\label{L:multiple} Let $\sig_k$ $(2\leq k\leq n)$ be sign strings
	satisfying $\abs{\sig_k}=k$. Suppose that $\sr M(Q)$ contains critical
	curves of type $\sig_n$. Then $\sr V_{(c,\sig_2,\dots,\sig_n)}$ and $\sr
	V_{(d,\sig_2,\dots,\sig_n)}$ are nonempty.  \end{lem}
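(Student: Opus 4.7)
The plan is to perturb a critical curve of type $\sig_n$ (which exists by hypothesis) to produce explicit elements of both $\sr V_{(c,\sig_2,\dots,\sig_n)}$ and $\sr V_{(d,\sig_2,\dots,\sig_n)}$.

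First, set up the combinatorics. By \lref{L:substrings}, fix nested sets $R_2\subs\dots\subs R_n=[n]$ with $\abs{R_k}=k$, such that writing $R_k=\se{r_1<\dots<r_k}$ one has $\sig_n(r_j)=\sig_k(j)$ for every $j\in [k]$. Choose $0<\eps_2<\dots<\eps_n<\frac{\pi}{4}$ with $\eps_{k+1}>4\eps_k$, and real depths $\al_i$ ($i\in [n]$) satisfying $\abs{\al_i}<\eps_k$ whenever $i\in R_k$ and $\al_i>2\eps_k$ whenever $i\notin R_k$; concretely, take $\al_i:=\eps_2/4$ for $i\in R_2$ and $\al_i:=(\eps_k+2\eps_{k-1})/2$ for $i\in R_k\ssm R_{k-1}$, $k\geq 3$. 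These ranges are compatible because $\eps_{k+1}>4\eps_k$.

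Next, construct the curves. Let $\ga_0\in \sr M(Q)$ be critical of type $\sig_n$ with extremes at $t_1<\dots<t_n$, and set $\bar\vphi:=\bar\vphi^{\ga_0}$. Applying the flattening of Proposition 3.4 of \cite{SalZueh1} locally in disjoint neighborhoods of each $t_i$ (and absorbing the small cumulative endpoint error into a global adjustment of $\ga_0$, made possible by the nontriviality of the family of critical curves of type $\sig_n$), construct $\ga\in \sr M(Q)$ whose tangent angle satisfies $\theta_\ga(\tau_i)=\bar\vphi_{\sig_n(i)}-\sig_n(i)\al_i$ at perturbed near-extremes $\tau_1<\dots<\tau_n$, with $\theta_\ga([0,1])\subs(\bar\vphi_-,\bar\vphi_+)$ elsewhere; this $\ga$ is condensed. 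For the diffuse version, pick some $i^*\in R_2$ and repeat the construction with $\al_{i^*}$ replaced by $-\de_0$ for $\de_0\in (0,\eps_2/4)$, via a localized surgery that enlarges the $i^*$-th peak past $\bar\vphi_{\sig_n(i^*)}$ by $\de_0$ while preserving total turning and endpoints (standard techniques of \cite{SalZueh1}); then $\om(\ga)>\pi$, so $\ga$ is diffuse. In both cases choose $\vphi\in R(Q)$ so close to $\bar\vphi$ that the depth inequalities remain valid relative to $\vphi_\pm$; $\bar\vphi\in R(Q)$ can be assumed by a generic choice of $\ga_0$.

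To verify $(\ga,\vphi)\in \sr V_{\sig_k}$ for every $k\in\se{2,\dots,n}$, choose intervals $J^{(k)}_j\ni\tau_{r_j}$ small enough to exclude all $\tau_i$ with $i\notin R_k$. Condition (i) of \dref{D:quasicritical} follows from continuity, since $\theta_\ga$ stays close to $\vphi_{\sig_k(j)}$ throughout $J^{(k)}_j$. For (ii), each $\tau_i$ with $i\notin R_k$ lies in $R_{k'}\ssm R_{k'-1}$ for some $k'>k$, so $\al_i>2\eps_{k'-1}\geq 2\eps_k$, giving $\abs{\theta_\ga(\tau_i)-\vphi}<\frac{\pi}{2}-2\eps_k$; the remaining points outside $\bcup_j J^{(k)}_j$ are uniformly bounded away from $\vphi_\pm$ by continuity of $\theta_{\ga_0}$. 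For (iii), the subinterval of $J^{(k)}_j$ around $\tau_{r_j}$ where $\abs{\theta_\ga-\vphi_{\sig_k(j)}}<\eps_k$ is nonempty (because $\abs{\al_{r_j}}<\eps_k$) and stretchable with respect to $\vphi_{\sig_k(j)}$ by \lref{L:regularity}\?(g) applied to the local flattening. Hence $\ga$ is $(\vphi,\eps_k)$-quasicritical of type $\sig_k$, and by construction $(\ga,\vphi)$ lies in $\sr V_c$ (resp.~$\sr V_d$).

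The main obstacle is executing the construction of $\ga$ in the second step --- simultaneously and independently prescribing the $n$ peak heights while keeping $\ga$ inside $\sr M(Q)$. The local flattenings induce small displacements of the endpoint and, in the diffuse case, a potential change in total turning; both must be cancelled globally, which is where the flexibility of the family of critical curves of type $\sig_n$ (a positive-dimensional set) is essential, allowing an initial compensating adjustment of $\ga_0$ that makes the final curve lie in $\sr M(Q)$.
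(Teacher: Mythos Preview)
Your strategy matches the paper's: start from a critical curve of type $\sig_n$, nest the $\sig_k$ via \lref{L:substrings}, and locally adjust the extrema using \lref{L:flattening}. However, the execution has real gaps. The ``cumulative endpoint error'' is a phantom --- the deformation in \lref{L:flattening} is a homotopy inside $\sr L_{-1}^{+1}(P_i,Q_i)$ for each sub-arc, so endpoints and endpoint directions are preserved exactly and no global compensation is needed. More seriously, your verification of condition~(iii) of \dref{D:quasicritical} via \lref{L:regularity}\,(g) fails: that lemma preserves $\ka_0$-stretchability with respect to the axis used to \emph{define} the flattening (namely $\bar\vphi^{\al_i}$), not with respect to $\vphi_{\sig_k(j)}$; once the peak near $\tau_{r_j}$ has been lowered the tangent no longer attains $\exp(i\vphi_{\sig_k(j)})$, so \lref{L:lowering}\,(a) does not apply either. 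The paper avoids this by first grafting long straight segments onto $\eta$ at the points where $\theta_\eta=\bar\vphi^\eta_\pm$, after which stretchability follows from \lref{L:lowering}\,(c) regardless of the subsequent flattening. (The paper also begins with a dilation, using openness of the region $R_{\sig_n}$, to force $\ka_\eta([0,1])\subs[-\ka_1,+\ka_1]$ for some $\ka_1<1$, which is a hypothesis of \lref{L:flattening}; you do not address this.)

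Your diffuse construction is also unsupported: ``enlarging a peak'' is not among the operations provided, and here endpoint and turning constraints would be genuine. The paper handles both cases at once by a different device: it keeps the depth equal to zero on two of the peaks (those indexed by $A_1\cup A_2$), so that the modified curve $\ga$ is still \emph{critical} and hence lies in $\bd\sr U_c\cap\bd\sr U_d$ by Proposition~5.1 of \cite{SalZueh1}. An arbitrarily small perturbation, which preserves all the quasicriticality conditions by \rref{R:opencond}, then yields an element of either $\sr V_{(c,\sig_2,\dots,\sig_n)}$ or $\sr V_{(d,\sig_2,\dots,\sig_n)}$.
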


\begin{proof} Write $Q=(q,z)\in \C\times \Ss^1$. By Proposition 5.3 of
	\cite{SalZueh1}, the region  \begin{equation*}
		R_{\sig_n}=\set{p\in \C}{\sr M(P) \text{ contains critical curves of
		type $\sig_n$},~P=(p,z)} \end{equation*} is open in $\C$. Hence, there
	exists $\ka_1\in (0,1)$ such that $\ka_1 q\in R_{\sig_n}$. Let $\te
	Q=(\ka_1q,z)$. If $\te\eta\in \sr M(\te Q)$ is a critical curve of type
	$\sig_n$, then the dilated curve $\eta=\frac{1}{\ka_1}\te \eta$ is a
	critical curve of type $\sig_n$ in $\sr M(Q)$ whose curvature takes values
	in $[-\ka_1,+\ka_1]$. A quasicritical curve of the required type can be
	obtained by modifying $\eta$ in neighborhoods of the points $\eta(t)$ where
	$\theta_\eta(t)=\bar\vphi^\eta\pm\frac{\pi}{2}$.
	
	By Corollary 5.7 of \cite{SalZueh1}, the set of all $\vphi\in \R$ such that
	$\sr M(\te{Q})$ contains a critical curve $\te{\eta}$ with
	$\bar\vphi^{\te{\eta}}=\vphi$ is an open interval. Hence it may be assumed
	that $0,\theta_1\in (\bar\vphi^\eta_-,\bar\vphi^\eta_+)$, so that
	\begin{equation}\label{E:offboundary}
		\mu=\min\big\{\vert{\bar\vphi^\eta_{\pm}}\vert,~\vert{\bar\vphi^\eta_\pm-\theta_1}\vert,\tfrac{\pi}{4}\big\}>0,
	\end{equation} where $\theta_1=\theta_\eta(1)$ is the unique number in
	$(-\pi,\pi)$ such that $e^{i\theta_1}=z$. Since \begin{equation*}
		C=\set{t\in [0,1]}{\theta_\eta(t)=\bar\vphi^\eta_{\pm}} \end{equation*}
	is compact, it intersects only finitely many components $V_1<\dots<V_l$ of
	\begin{equation*}
	V=\set{t\in
		[0,1]}{\abs{\theta_\eta(t)-\vphi_\eta}> \tfrac{\pi}{2}-\mu}.
	\end{equation*} Observe that $V_i$ is an open subinterval of $(0,1)$ for
	each $i\in [l]$, and either the maximum or the minimum of
	$\theta_\eta|_{\ol{V}_i}$ is attained at both endpoints. Let
	\begin{equation*}
		\la=\frac{\pi}{2}-\sup\set{\abs{\theta_\eta(t)-\bar\vphi^\eta}}{t\nin
			{\textstyle\bcup_i V_i}}>0.  \end{equation*} By grafting $\eta$ at
		points of $C$ if necessary (see Definition 4.13 and Figure~9 of
		\cite{SalZueh1}), it may be assumed that for each $i$,
		$\eta|_{\ol{V}_i}$ contains a line segment of some large length $L$
		where $\theta_\eta=\bar\vphi^\eta_{\pm}$.
	
	Let $\al_i=\eta|_{\ol{V}_i}$. Then each $\al_i$ satisfies the hypothesis of
	\lref{L:flattening}. Hence there exists $\de$, $0<2\de<\min\se{\la,\mu}$,
	such that for each $i\in [l]$, if $\eps_i\leq \de$ then $\al_i$ can be
	deformed (keeping initial and final frames fixed) to a curve $\be_i$ such
	that $\om(\al_i)-\om(\be_i)=\eps_i$. We claim that an appropriate choice of
	the $\eps_i$ yields a curve of the required type.
	
	Since $\eta$ is a critical curve of type $\sig_n$, there is  a partition of
	$[l]$ into sets $A_1<\dots<A_n$ such that for every $k \in[n]$ and $i\in
	A_k$, there exists $t\in V_i$ for which
	$\theta_\eta(t)=\bar\vphi^\eta_{\sig_n(k)}$. In view of \lref{L:substrings},
	no generality is lost in assuming that $\sig_k=\sig_n|_{[k]}$ for each
	$k=2,\dots,n$. Set $\eps_i=0$ if $i\in A_1\cup A_2$ and $\eps_i=8^{k-n}\de$
	if $i\in A_k$ for $k>2$. Let $\ga$ be the curve which results by deforming
	each $\al_i$ to $\be_i$, as described above. Notice that
	$\bar\vphi^\ga=\bar\vphi^\eta$. Furthermore: \begin{enumerate} \item [(a)]
				$\ga$ is not condensed, because for every $i_1\in A_{1}$,
				$i_2\in A_{2}$, there exist $t_1\in V_{i_1}$, $t_2\in V_{i_2}$
				such that $\theta_\ga(t_i)=\bar\vphi^\ga_{\sig_2(i)}$ ($i=1,2$).
			\item [(b)]  $\ga$ is not diffuse, since $\eta$ is not diffuse and
			each $\be_i$ was obtained from $\al_i$ by a deformation which
		decreases amplitude.  \item [(c)] $\ga$ is
			$\big(\bar\vphi^\ga\?,\?8^{k-n}2\de\big)$-quasicritical of type
			$\sig_k$ for each $k=2,\dots,n$ by construction. Indeed, setting
			$J_k=\Ast_{i\in A_k}V_i$ (the smallest closed subinterval containing
			these $V_i$) for each $k\in [n]$, $J_1<\dots<J_k$ satisfy (i) and
			(ii) of \dref{D:quasicritical} for $\eps=8^{k-n}2\de$ $(k\geq 2)$
			since  
			\begin{equation*}
				8^{k-n}\de<8^{k-n}2\de<\frac{1}{2} 8^{k+1-n}\de .
			\end{equation*}
			Condition (iii) is a consequence of \rref{L:lowering}\?(c) and our
			assumption that each arc
			$\eta|_{V_i}$ contains a line segment of some large length $L$ where
			$\theta_\eta=\bar\vphi^\eta_{\pm}$.  \end{enumerate}
	
		Therefore, by \eqref{E:offboundary}, $(\ga,\bar\vphi^\ga)\in \sr
		V_{(\sig_1,\dots,\sig_m)}$. By Proposition 5.1 of \cite{SalZueh1}, the
		boundaries of $\sr U_c$ and $\sr U_d$ in $\sr M(Q)$ are both equal to
		the set of all critical curves in $\sr M(Q)$. Therefore, by
		\rref{R:opencond}, a slight perturbation of $\ga$ yields  a curve
		$\te\ga$ such that  $(\te{\ga},\bar\vphi^\ga)\in \sr
		V_{(c,\sig_1,\dots,\sig_m)}$ or $(\te{\ga},\bar\vphi^\ga)\in \sr
		V_{(d,\sig_1,\dots,\sig_m)}$.  \end{proof}

Let us say that $\tau$ is a \tdef{top} sign string for $\sr M(Q)$ if the latter
contains critical curves of type $\tau$, but does not contain critical curves of
type $\tau'$ for any sign string $\tau'$ with $\abs{\tau'}>\abs{\tau}$. Set
$n=\abs{\tau}$. Proposition 5.3 of \cite{SalZueh1} determines whether $\sr M(Q)$
contains critical curves of type $\sig$ in terms of $Q$, for any sign string
$\sig$. Notice in particular that $\sr M(Q)$ always admits a top sign string
$\tau$, except in case it does not contain critical curves at all. 
\begin{prop}\label{P:combinatorics} Let $\tau$ be a top sign string for $\sr
	M(Q)$, $n=\abs{\tau}$, $\fr V$ be the cover of $\sr N(Q)$ described in
	\dref{D:Vs} and $\fr U=\se{U_{\pm k}}_{k\in [n]}$, where $U_{\pm k}\subs
	\Ss^{n-1}$ are as in \eqref{E:half-spaces}.  \begin{enumerate} \item [(a)] If
				$\sr M(Q)$ contains critical curves of type $-\tau$, then
				\eqref{E:combinatorics} defines a combinatorial equivalence
				between $\fr V$ and the cover $\fr U$ of $\Ss^{n-1}$.
			\item [(b)]  If $\sr M(Q)$ does not contain critical curves of type
				$-\tau$, then \eqref{E:combinatorics} defines a combinatorial
				equivalence between $\fr V$ and the cover $\fr U\ssm
				\se{U_{-n}}$ of $ \Ss^{n-1}\ssm \se{(0,0,\dots,-1)} $.
		\end{enumerate} \end{prop}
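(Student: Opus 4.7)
The plan is to verify, under the correspondence \eqref{E:combinatorics}, that a finite intersection of members of $\fr V$ is empty precisely when the corresponding intersection of members of $\fr U$ is empty. Since for each length $k\geq 2$ there are exactly two sign strings ($\sig^k$ and $-\sig^k$), the correspondence is a bijection between the nonempty members of $\fr V$ and the relevant hemispheres of $\Ss^{n-1}$, provided that every $\sr V_\sig$ with $2\leq \abs{\sig}\leq n$ is nonempty in case (a), and every such $\sr V_\sig$ except $\sr V_{-\tau}$ is nonempty in case (b). This nonemptiness follows, at the top length $n$, from the hypothesis on $\tau$ (and on $-\tau$ in case (a)), and at shorter lengths from the fact noted in the outline (coming from \cite{SalZueh1}) that critical curves of type $\sig$ exist whenever they do of some longer type. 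A finite intersection on the $\fr U$-side is empty iff it contains an antipodal pair $\se{U_{+k},U_{-k}}$, so the task reduces to showing the same dichotomy on the $\fr V$-side.

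First I would handle disjointness of the ``opposite'' pairs. The equality $\sr V_c\cap \sr V_d=\emptyset$ is immediate from \dref{D:basic}, since condensed means amplitude $<\pi$ and diffuse means amplitude $>\pi$. The equality $\sr V_\sig\cap \sr V_{-\sig}=\emptyset$ for every sign string $\sig$ is exactly the content of \lref{L:opposite}, applied at the common $\vphi$ shared by two points of $\sr N(Q)$ lying in the same fiber over $\sr M(Q)$.

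Next I would establish that every other intersection is nonempty. Let $J$ be a subcollection of $\fr V$ containing no opposite pair; it consists of at most one of $\sr V_c,\sr V_d$ together with sets $\sr V_{\sig_{k_1}},\dots,\sr V_{\sig_{k_m}}$ indexed by strictly increasing lengths $k_1<\dots<k_m\leq n$. Complete this list to a full sequence $\sig_2,\sig_3,\dots,\sig_n$ by setting $\sig_n:=\tau$, keeping $\sig_{k_j}$ as prescribed, and picking any sign string of length $k$ for each missing $k\in\se{2,\dots,n-1}$. Since $\tau$ is a top sign string, $\sr M(Q)$ contains critical curves of type $\sig_n=\tau$, so \lref{L:multiple} ensures that both $\sr V_{(c,\sig_2,\dots,\sig_n)}$ and $\sr V_{(d,\sig_2,\dots,\sig_n)}$ are nonempty; each is contained in the intersection indexed by $J$ (on the appropriate side, according to whether $\sr V_c$, $\sr V_d$, or neither belongs to $J$), so this intersection is nonempty. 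The two cases (a) and (b) are then distinguished only by whether $\sr V_{-\tau}=\emptyset$ (equivalently by \dref{D:Vs}, whether $\sr M(Q)$ lacks critical curves of type $-\tau$), which on the $\fr U$-side translates to the presence or absence of the corresponding antipodal hemisphere (and after a harmless relabeling of coordinates on $\Ss^{n-1}$ one may take this hemisphere to be $U_{-n}$, as stated). The real content has already been packaged into \lref{L:opposite} and \lref{L:multiple}; the proposition is a bookkeeping exercise assembling them, with the only mild subtlety being the verification that all of the relevant $\sr V_\sig$ are nonempty so that the bijection between the two index sets is well-defined.
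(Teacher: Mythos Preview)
Your proof is correct and follows essentially the same approach as the paper's: disjointness of opposite pairs via \lref{L:opposite} (and the trivial $\sr V_c\cap\sr V_d=\emptyset$), and nonemptiness of all other intersections via \lref{L:multiple}. One small wording point: when you write ``setting $\sig_n:=\tau$'', this should be read as applying only when no length-$n$ string is already prescribed by $J$; in case (a), if $J$ contains $\sr V_{-\tau}$, you must take $\sig_n=-\tau$ and then invoke \lref{L:multiple} using the hypothesis that $\sr M(Q)$ contains critical curves of type $-\tau$.
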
 \begin{proof} It is clear that $\sr V_c\cap
		\sr V_d=\emptyset$, and by \cref{L:opposite}, $\sr V_\sig\cap \sr
		V_{-\sig}=\emptyset$ for any sign string $\sig$. On the other hand,
		\lref{L:multiple} implies that an intersection of nonempty sets in $\fr
		V$ is empty only if it involves one such pair. The combinatorics of $\fr
		V$ is thus the same as that of $\fr U$, as asserted.  \end{proof}

\section{Topology of the cover of $\sr N(Q)$}\label{S:topology}

\begin{prop}\label{P:good} Let $\sig_1\prec\dots\prec\sig_m$ be sign strings.
	Then the subspaces $\sr V_{(\sig_1,\dots,\sig_m)},~\sr
	V_{(c,\sig_1,\dots,\sig_m)}$ and $\sr V_{(d,\sig_1,\dots,\sig_m)}$ of $\sr
	N(Q)$ are either empty or contractible.  \end{prop}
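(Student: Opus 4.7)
All three spaces are open subsets of the separable Hilbert manifold $\sr M(Q)\times\R$, so it suffices to prove weak contractibility. The plan is to deform any continuous $f\colon K\to \sr V_{(\sig_1,\dots,\sig_m)}$ from a compact space $K$ so that it factors, up to homotopy, through the model space $X_{(\sig_1,\dots,\sig_m)}$ of \dref{D:nested}, which is weakly contractible by \pref{P:nested}.

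Writing $f(p)=(\ga^p,\vphi^p)$, apply \lref{L:continuouseps} to produce continuous $\eps^p_j>0$ with $\eps^p_{j+1}>2\eps^p_j$ such that $\ga^p$ is $(\vphi^p,\eps^p_j)$-quasicritical of type $\sig_j$ for each $j$. Applying \lref{L:continuousJs} and \lref{L:disjoint} to $\sig_m$ yields continuous intervals $J_k(p)=[a_k(p),b_k(p)]$ and, subordinate to a finite open cover $(U_i)_{i\in[l]}$ of $K$, closed stretchable subintervals $I_{i,k}\subs J_k(p)$ obeying the disjointness condition of \lref{L:disjoint}(a) and the endpoint-separation condition of \lref{L:disjoint}(b). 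Let $\rho_i$ be a partition of unity subordinate to $(U_i)$ and fix a large $M>0$. For $i=1,\dots,l$ in turn, stretch each $\ga^p|_{I_{i,k}}$ in the direction of $\vphi^p_{\sig_m(k)}$ by $M\rho_i(p)$, as in \dref{D:stretching}. The disjointness of \lref{L:disjoint}(a) combined with \lref{L:regularity}(g) makes these successive stretchings well-defined and order-independent, while the endpoint-separation of \lref{L:disjoint}(b) together with \lref{L:convenient} ensures that throughout the deformation the (adjusted) intervals $J_k(p)$ continue to witness $(\vphi^p,\eps^p_j)$-quasicriticality of type $\sig_j$ for \emph{every} $j$.

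By \lref{L:regularity}(f), the components of the map $h$ of \dref{D:h} are of order $1/M$ on each stretched arc, so for $M$ sufficiently large a suitable continuous normalization of $h$ defines a map $g\colon K\to X_{(\sig_1,\dots,\sig_m)}$: the sign condition (i) of \dref{D:nested} comes from (i) of \dref{D:quasicritical} applied to $\sig_m$, and the nested condition (ii) is precisely the content of \lref{L:convenient} applied to each consecutive pair $\sig_j\prec\sig_{j+1}$. The remaining geometric degrees of freedom of the stretched family lie in the arcs joining consecutive intervals $J_k(p)$; these form a convex parameter space thanks to \lref{L:lowering} and the boundary behavior in \lref{L:regularity}. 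Hence $f$ is homotopic to a map factoring through $X_{(\sig_1,\dots,\sig_m)}$, and is therefore nullhomotopic.

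The same scheme handles the two variants. For $\sr V_{(c,\sig_1,\dots,\sig_m)}$ the condensed condition forces the combinatorial target to be a cube-like face (cf.~\rref{R:cube1}), still contractible. For $\sr V_{(d,\sig_1,\dots,\sig_m)}$ the diffuse condition translates to condition (i$_d$) of \dref{D:dnested}, replacing $X_{(\sig_1,\dots,\sig_m)}$ by $X_{(d,\sig_1,\dots,\sig_m)}$, weakly contractible by \pref{P:dnested}. The main technical obstacle is the stretching step: it must be continuous in $p$, compatible across overlaps in $(U_i)$, and simultaneously preserve quasicriticality of each type $\sig_1,\dots,\sig_m$ even though the stretching direction is dictated only by $\sig_m$. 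This is exactly what \lref{L:disjoint}(b) and \lref{L:convenient} are engineered to provide.
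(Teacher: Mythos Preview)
Your outline follows the right overall shape (stretch, then project to a combinatorial model), but the stretching you describe is not adequate and the ``convex parameter space'' step is a genuine gap.

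First, you stretch only along the $\sig_m$-intervals, and all by the same amount $M\rho_i(p)$. This drives every coordinate $h_k(\ga^p,\vphi^p)$ to order $1/M$, so \emph{all} indices satisfy $|h_k|<\de_j$ for any fixed $\de_j$ once $M$ is large; the reduced string you read off is then $\sig_m$ itself, not $\sig_j$. To land in $X_{(\sig_1,\dots,\sig_m)}$ you need the $|h_k|$ to fall into $m$ distinct scales, one per $\sig_j$. The paper achieves this in \lref{L:stretching} by stretching along the $\sig_j$-intervals (not only $\sig_m$) by amounts $\sim C^{2(m+1-j)}$, producing the hierarchy $\de_j=\arccot(C^{2(m-j)+1})$. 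Your invocation of \lref{L:convenient} does not repair this: that lemma reads off $\sig_1$ from the set $\{k:|h_k|\le 2\eps_1\}$, but after your uniform stretching this set is all of $[n]$.

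Second, even after a correct stretching, ``the remaining geometric degrees of freedom\dots form a convex parameter space'' is not justified by \lref{L:lowering} or \lref{L:regularity}. The paper needs two further nontrivial steps: (i) \lref{L:pulleys}, which flattens piecewise to put every curve in the explicit form $c(lc)^n$; and (ii) \lref{L:Ec}/\lref{L:Ed}, which build a quasifibration from this space of $c(lc)^n$ curves onto $H_c\times S$ (resp.\ $H_d\times R(Q)$) and use \cref{C:locconvex} to contract the fibers. The section over small balls in the base (last paragraphs of those lemmas) is where the actual constraint $\ga(1)=q$ is dealt with, and in the condensed case this requires \lref{L:condensedaxes} and a smallness condition on $\de_m$. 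None of this is convexity in any obvious sense.

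Finally, your treatment of the variants is too quick. For $\sr V_{(c,\sig_1,\dots,\sig_m)}$ the target is not a ``cube-like face'' in the sense of \rref{R:cube1} (that remark concerns $m=1$); the correct target $H_c$ still carries the full nested structure and its weak contractibility uses \pref{P:nested}. For $\sr V_{(d,\sig_1,\dots,\sig_m)}$ the stretching of \lref{L:stretching} can destroy diffuseness, so the paper first runs \lref{L:diffusing} and \lref{L:compressing} and tracks an extra inequality \eqref{E:preliminary} through the deformation (\lref{L:stretchingd}); your sketch does not address this.
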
 Let $\sr V$ denote any
of these subspaces. Since $\sr V$ is a Hilbert manifold, it suffices to prove
that it is either empty or weakly contractible. Given a family
$(\ga^p,\vphi^p)\in \sr V$, for $p$ ranging over a compact space, the idea is to
stretch each $\ga^p$ in the direction of $\pm ie^{i\vphi^p}$ so that it becomes
nearly critical (see Figure \ref{F:surgery}), and then flatten it piecewise to
obtain a concatenation of circles and line segments of a special form (see
Figure \ref{F:pulleys}). The results of \S\ref{S:decomposition} are then used to
conclude that the resulting family is contractible. The proof is quite technical
since the conditions in \dref{D:quasicritical} need to be verified at each step;
it will be split into several lemmas. 

For the sake of convenience, a curve $\ga\in \sr M(Q)$ will be called \tdef{of
the form cl} if it is the concatenation of an arc of circle of amplitude $<\pi$ and
a line segment, where either of these may degenerate to a point and the circle
has radius $\frac{1}{\ka_0}$; the value of $\ka_0$ will be clear from the
context. The analogous abbreviation for a more general word on $\se{c,l}$ will
also be used.

\begin{figure}[ht] \begin{center} \includegraphics[scale=.20]{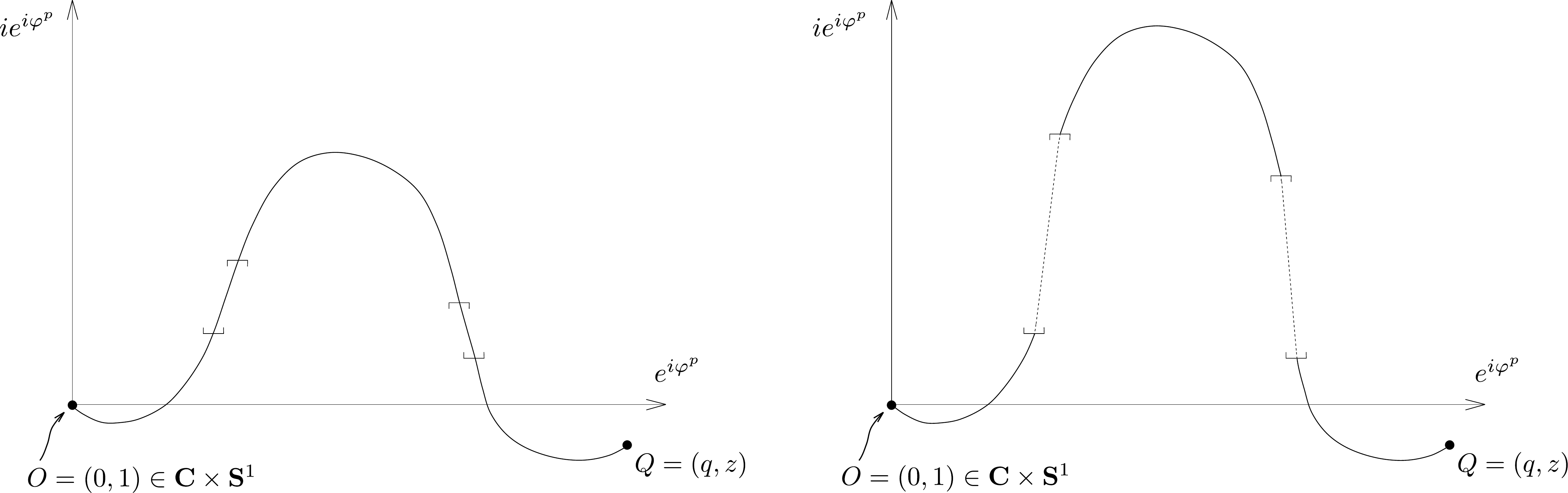}
		\caption{Streching a curve in the direction of $\pm ie^{i\vphi^p}$.}
		\label{F:surgery} \end{center} \end{figure}

\begin{lem}\label{L:stretching} Let $g_0\colon K\to \sr
	V_{(c,\sig_1,\dots,\sig_m)} $, 
	$g_0(p)\mapsto (\ga^p_0,\vphi^p)$,  be a
	continuous map. Then for all
	sufficiently large $C>0$, there exists a homotopy $g_s\colon K\to \sr
	V_{(c,\sig_1,\dots,\sig_m)} $ \tup($s\in [0,m]$\tup),
	$g_s(p)=(\ga_s^p,\vphi^p)$, such that for each $p\in K$ and $j\in [m]$\tup:
\begin{enumerate} 
	\item [(i)]  $\ga^p_m$ is ($\vphi^p,\de_j)$-quasicritical of
			type $\sig_j$, where $ \de_j=\arccot\big(C^{2(m-j)+1}\big)$; 
	\item [(ii)] If $J_{j,1}(p)<\dots<J_{j,\abs{\sig_j}}(p)$ are intervals
		satisfying \dref{D:quasicritical} for the quadruple
		$(\ga^p_m,\vphi^p,\de_j,\sig_j)$, then for each $k\in [\abs{\sig_j}]$,
		there exists an interval $I\subs J_{j,k}(p)$ such that
		$\big\vert{\theta_{\ga_m^p}(t)-\vphi^p_{\sig_j(k)}}\big\vert<\de_j$ for
		all $t\in I$ and $\ga^p_m|_{I}$ is a line segment of length greater than
		$\cot(\de_j)$.  
\end{enumerate}

\end{lem}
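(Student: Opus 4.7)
The plan is to stretch each $\ga_0^p$ in its ``near-vertical'' subarcs with respect to $\vphi^p$, producing long nearly horizontal line segments whose length and slope can be made to satisfy the bounds in (i) and (ii) simultaneously for every $j$. First I would invoke \lref{L:continuouseps} to get continuous $\eps_j^p$ (with $\eps_{j+1}>2\eps_j$), and \lref{L:continuousJs} together with \lref{L:disjoint}, applied to the top-level string $\sig_m$, to obtain continuous intervals $J_{m,k}(p)$ $(k\in [n]$, $n=\abs{\sig_m})$, a finite open cover $(U_i)_{i\in [l]}$ of $K$, and nested/disjoint stretchable closed subintervals $I_{i,k}\subs J_{m,k}(p)$ (for $p\in \ol{U}_i$) with the $\theta$-bounds from those lemmas; by replacing each $I_{i,k}$ with a maximal stretchable subinterval of $J_{m,k}(p)$ with respect to $\vphi^p_{\sig_m(k)}$ if necessary, I would arrange that $\theta_{\ga_0^p}(c_{i,k})=\theta_{\ga_0^p}(d_{i,k})=\vphi^p$.

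Next, I would choose $C$ so large that $\de_j=\arccot(C^{2(m-j)+1})<\tfrac{1}{4}\min_p\eps_j^p$ for every $j\in [m]$, and set $S:=\vka_2\,m\,C^{2m-1}$, with $\vka_2$ the constant of \lref{L:regularity}\?(f) chosen uniformly in $p$ by compactness. Using a partition of unity $(\rho_i)_{i\in [l]}$ subordinate to $(U_i)$, I would define $\ga_s^p$ for $s\in [0,m]$ by successively applying the stretching construction of \dref{D:stretching} to each $I_{i,k}$, in order of increasing $i$, interpolated continuously so that at $s=m$ the stretching amount of $I_{i,k}$ equals $\rho_i(p)\,S$ in the direction $\sig_m(k)ie^{i\vphi^p}$. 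The nesting/disjointness condition of \lref{L:disjoint}\?(a), together with \lref{L:regularity}\?(g) and \lref{L:lowering}\?(d), makes this well defined at every stage; monotonicity of stretching (\lref{L:regularity}\?(b), (d)) implies that the $\theta$-range of $\ga_s^p$ is always contained in that of $\ga_0^p$, so the homotopy remains inside $\sr V_{(c,\sig_1,\dots,\sig_m)}$ (each $\ga_s^p$ stays condensed and $(\vphi^p,\eps_j^p)$-quasicritical of type $\sig_j$).

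By \lref{L:regularity}\?(e) and (f), at $s=m$ the interior of each (extended) $I_{i,k}$ contains a straight segment of length $\geq C^{2m-1}\geq\cot(\de_j)$ whose slope $\mu$ satisfies $\abs{\mu}\leq 1/C^{2m-1}\leq \tan(\de_j)$, so $\abs{\theta_{\ga_m^p}-\vphi^p_{\sig_m(k)}}<\de_j$ on this segment for every $j$. For (i), for each $j\in [m]$ and $l\in [\abs{\sig_j}]$ I would take $J_{j,l}'(p)$ to be the smallest closed interval containing all (stretched) $I_{i,k}$ for which $\sig_m(k)=\sig_j(l)$ and $J_{m,k}(p)$ lies inside the $l$-th interval of the original $\sig_j$-quasicritical decomposition of $\ga_0^p$ (as supplied by \lref{L:convenient}), and verify the conditions of \dref{D:quasicritical} for $(\ga_m^p,\vphi^p,\de_j,\sig_j)$ with intervals $J_{j,l}'(p)$, using that $\theta_{\ga_m^p}$ agrees with $\theta_{\ga_0^p}$ outside the stretched $I_{i,k}$'s and is controlled inside them by \lref{L:regularity}\?(b). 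Assertion (ii) then follows from \lref{L:Js}\?(a): any valid choice of intervals $J_{j,k}(p)$ for $\ga_m^p$ in \dref{D:quasicritical} contains the same ``humps'' as the $J_{j,l}'(p)$, hence the long line segments just constructed.

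The main obstacle will be verifying condition (i) of \dref{D:quasicritical} with the much tighter tolerance $\de_j\ll\eps_j^p$, specifically the upper bound $\theta_{\ga_m^p}\leq\vphi^p_++\de_j$ inside each new $J_{j,l}'(p)$ of type $+$: since stretching only flattens inside the $I_{i,k}$'s while leaving the curve unchanged outside, the contribution from the stretched arcs is under control (they lie in $[\vphi^p,\vphi^p_++\arctan\abs{\mu}]$ by the choice of endpoints $\theta(c_{i,k})=\theta(d_{i,k})=\vphi^p$), but the contribution from the remaining parts of the curve inside $J_{j,l}'(p)$ must be bounded using \lref{L:convenient}, which guarantees that no deep opposite-sign hump of the $\sig_m$-decomposition can appear between same-sign stretched pieces within a single $J_{j,l}'(p)$.
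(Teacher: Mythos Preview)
There is a genuine gap in your plan: stretching \emph{every} $\sig_m$-hump by the same large amount $S$ destroys the quasicritical structure for $\sig_j$ with $j<m$, so the homotopy does not stay in $\sr V_{(c,\sig_1,\dots,\sig_m)}$ and the final curve fails~(i). Take $m=2$, $\sig_1=\ty{+-}$, $\sig_2=\ty{+-+-}$. In the original curve the second $\sig_2$-hump (type~$\ty{-}$) may lie inside the first $\sig_1$-interval $J_{1,1}'$ (type~$\ty{+}$); this is allowed precisely because that hump is \emph{shallow}, i.e.\ $h_2(\ga_0^p,\vphi^p)>2\eps_1^p$, so $\theta>\vphi^p_-+2\eps_1^p$ there and condition~(i) of \dref{D:quasicritical} holds. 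But after you stretch this hump by $S$, it acquires a long line segment with $\theta$ within $\de_1$ of $\vphi^p_-$; now $\theta<\vphi^p_-+\de_1<\vphi^p_-+2\eps_1^p$ (and also $<\vphi^p_-+2\de_1$), so condition~(i) for $\sig_1$ is violated at \emph{every} tolerance~$\leq\eps_1^p$. The same argument shows the stretched curve cannot be $(\vphi^p,\de_1)$-quasicritical of type $\sig_1$ at all: all four humps are now deep, and no placement of two intervals $J_1''<J_2''$ of types $\ty{+},\ty{-}$ can absorb humps 2 and 3 without violating either~(i) or~(ii). Your monotonicity argument from \lref{L:regularity}\?(b),(d) only controls the global range of $\theta$; it says nothing about preserving the \emph{relative depths} of the humps, which is exactly what encodes the nested structure $\sig_1\prec\dots\prec\sig_m$.

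The paper's proof avoids this by stretching at $m$ separate stages with \emph{decreasing} amounts: for $s\in[j-1,j]$ it stretches the $\sig_j$-stretchable intervals $I_{i,j,k}$ (obtained from \lref{L:continuousJs} and \lref{L:disjoint} applied to each $\sig_j$, not just $\sig_m$) by roughly $C^{2(m+1-j)}$. The key is then the \emph{lower} bound in \lref{L:regularity}\?(f): a hump stretched at stage~$j'$ acquires slope $\abs{\mu}\geq\vka_1/(mC^{2(m+1-j')})$, which after the choice of $C$ forces $\abs{\theta-\vphi^p_{\pm}}>2\de_{j'-1}\geq 2\de_j$ for $j<j'$. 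Thus later-stage stretches keep the wrong-sign humps \emph{bounded away} from $\vphi^p_\pm$ by more than $2\de_j$, so condition~(i) for $\sig_j$ survives. Your single-level stretching has no such lower bound to exploit, and none of the lemmas you cite (\lref{L:convenient} in particular concerns only the original curve, not the stretched one) can recover the hierarchy once it has been flattened.
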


\begin{proof} 
	Let $\ga^p_0$ be denoted simply by $\ga^p$. By Corollary 1.11 of
	\cite{SalZueh1}, it may be assumed that each $\ga^p$ is smooth and that all
	of its derivatives depend continuously on $p\in K$. Let $R>0$ be such that
	the image of $\ga^p$ is contained in the open disk of radius $R$ centered at
	the origin for all $p\in K$.  Take $\ka_0\in (\frac{1}{2},1)$ large enough
	so that $\ka_{\ga^p}([0,1])\subs [-\ka_0,+\ka_0]$ for every $p\in K$. For
	each $j\in [m]$, let $\eps_j\colon K\to \R^+$ ($j\in [m]$) be as in
	\lref{L:continuouseps},  $n_j=\abs{\sig_j}$ and let $J_{j,k}(p)$,
	$I_{i,j,k}$ be the intervals corresponding to $\sig_j$ as in
	\lref{L:continuousJs} and \lref{L:disjoint}, for $k\in [n_j]$ and some open
	cover $(U_{i,j})_{i\in [l_j]}$ of $K$. For each $j\in [m]$, let
	$\rho_{i,j}\colon K\to [0,1]$ $(i\in [l_j])$ form a partition of unity
	subordinate to the cover $(U_{i,j})_{i\in [l_j]}$. By choosing a larger
	$\ka_0\in (\frac{1}{2},1)$ if necessary, it may be assumed that
	$\ga^p|_{I_{i,j,k}}$ is $\ka_0$-stretchable with respect to
	$\vphi^p_{\sig_j(k)}$ for each $p\in \ol{U}_{i,j}$ and $k\in [n_j]$. Again
	by compactness, there exists $\vka_1>0$ such that the inequality in
	\lref{L:regularity}\?(f) is satisfied by the function corresponding to the
	stretching of $\ga^p|_{I_{i,j,k}}$ for all $j\in [m]$, $i\in [l_j]$, $k\in
	[n_j]$ and $p\in \ol{U}_{i,j}$. Take $C>0$ to be so large that \begin{equation}\label{E:C}
		C>8\max\big\{R+\pi\ka_0^{-1}\,,\,m\vka_1^{-1}\,,\,\sup_{p\in
		K}\cot(\eps_1^p)\big\}.
		\end{equation}

For the sake of simplicity, it will be assumed that $I_{i,j,k}\cap
I_{i',j,k}=\emptyset$ whenever $i\neq i'$ and ${U}_{i,j}\cap {U}_{i',j}\neq
\emptyset$. The only difference if this did not occur is that it would be
necessary to stretch the restriction of $\ga^p$ to these intervals one $i$ at a
time; see \lref{L:disjoint} and the remark following it. 

Define a homotopy $(s,p)\mapsto \ga^p_s$ ($s\in [0,m]$) inductively as follows
(compare Figure \ref{F:surgery}). For $s\in [j-1,j]$, let $\ga_s^p$ be obtained
from $\ga_{j-1}^p$ by stretching its restriction to $I_{i,j,k}$ linearly with
$s$ in the direction of $\exp\big(i\vphi^p_{\sig_j(k)}\big)$ by:\footnote{When
appearing inside or multiplying an exponential, the  letter $i$ denotes the imaginary unit, not an
index.} \begin{equation}\label{E:amount} \qquad
	l_j\rho_{i,j}(p)\?C^{2(m+1-j)}\text{\ \ for each $i\in [l_j],~k\in [n_j]$}.
\end{equation} Actually, if $n_j$ is odd, then one must introduce different
constants into the above formula for $k$ even and for $k$ odd to guarantee that
$\gen{\ga_s(1),ie^{i\vphi^p}}$ is constant for $s\in [j-1,j]$. Since these
factors do not affect the estimates below, they will be ignored.

It is an immediate consequence of \lref{L:regularity}\?(c) that
$(\ga^p_s,\vphi^p)\in \sr V_c$ for all $s\in [0,m]$ since this is true when
$s=0$.  Let $\de_j$ be as in the statement. We claim that for each $p\in K$ and $j\in [m]$:
\begin{enumerate} \item [(a)] If $s\in [0,j]$, then $\ga^p_s$ is
		$(\vphi^p,\eps_j^p)$-quasicritical of type $\sig_j$.  \item [(b)] If
			$s\in [j,m]$, then $\ga^p_s$ satisfies (i) and (ii) (with $s$ in
			place of $m$).  \end{enumerate} In particular, $(\ga^p_s,\vphi^p)\in
	\sr V_{(c,\sig_1,\dots,\sig_m)}$ for all $s\in [0,m]$ as claimed.

To establish (a), we prove by induction on $j'\in [j]$ that the intervals
$J_{j,k}(p)$ ($k\in [n_j]$) satisfy the conditions in \dref{D:quasicritical} for
the quadruple $(\ga^p_s,\vphi^p,\eps_j^p,\sig_j)$ for any $s\in [j'-1,j']$. By
hypothesis, this is true when $s=0$. By \lref{L:multiquasi}, $2\eps_{j'}^p<
\eps_{j}^p$ for all $p\in K$. Hence, by \lref{L:Js}\?(a), for any $i'\in
[l_{j'}]$ and $k'\in [n_{j'}]$, the interval $I_{i',j',k'}$ is contained in some
$J_{j,k}(p)$ whenever $p\in U_{i',j'}$. It follows immediately from \lref{L:regularity}\?(b) that
the $J_{j,k}(p)$ satisfy (i) and (ii) of \dref{D:quasicritical} for
$(\ga^p_s,\vphi^p,\eps_{j}^p,\sig_{j})$ and all $s\in [j'-1,j']$. If
$J_{j,k}(p)$ contains $I_{i',j',k'}$ for some $i'\in [l_{j'}]$ with $p\in
{U}_{i',j'}$, then condition (iii) of \dref{D:quasicritical} is satisfied by
$I_{i',j',k'}$ for all $s\in [j'-1,j']$ by \lref{L:regularity}\?(g). If not,
then $J_{j,k}(p)$ is disjoint from  $I_{i',j',k'}$ whenever $p\in U_{i',j'}$, so
that $\theta_{\ga^p_s}(t)=\theta_{\ga^p_{j'-1}}(t)$ for all $t\in J_{j,k}(p)$
and $s\in [j'-1,j']$.  In particular, if $I\subs J_{j,k}(p)$ satisfies (iii) for
$(\ga^p_s,\vphi^p,\eps_{j}^p,\sig_{j})$ when $s=j'-1$, then $I$ is not affected
by the stretching, hence it satisfies (iii) for this quadruple for all $s\in
[j'-1,j']$. This completes the proof of the induction step and of claim (a).

Now write $I_{i,j,k}=[c_{i,j,k},d_{i,j,k}]$. If $i\in [l_j]$ is such that
$\rho_{i,j}(p)\geq \frac{1}{l_j}$, then \begin{equation}\label{E:slopeestimate}
	\begin{aligned}
		\big\lan{\ga^p_{j}(d_{i,j,k})-\ga^p_{j}(c_{i,j,k})\,,\,\exp(i\vphi^p_{\sig_j(k)})}\big\ran&>
		C^{2(m+1-j)}-2R\text{, while} \\
		\big\vert\big\lan{\ga^p_{j}(d_{i,j,k})-\ga^p_{j}(c_{i,j,k})\,,\,\exp(i\vphi^p)}\big\ran\big\vert
		&< 2R.  \end{aligned} \end{equation} The first inequality is immediate
from \eqref{E:amount} and the hypothesis that the image of $\ga^p$ is contained
in the open disk $B_R(0)$. The second one comes from the fact that $ \lan
\ga_s^p(d_{i,j,k})-\ga_s^p(c_{i,j,k})\?,\?e^{i\vphi^p} \ran$ is actually
independent of $ s\in [0,j] $, as all stretchings are in the direction of $ \pm
ie^{i\vphi^p}$ and $ I_{i,j,k} $ is either disjoint or contains $ I_{i',j',k'} $
when $ \rho_{i',j'}(p)>0 $, by \lref{L:disjoint}\?(b) and the inequalities $
2\eps_j'<\eps_j $ $ (j'<j) $. By the definition of stretching,
$\ga^p_j|_{I_{i,j,k}}$ is a curve of the form $clc$. Using \eqref{E:C} we conclude
that $I_{i,j,k}\subs J_{j,k}(p)$ contains a subinterval $I$ such that
$\ga^p_j|_{I}$ is a line segment of length greater than
\begin{equation*}
C^{2(m+1-j)}-2R-2\pi\ka_0^{-1}>C^{2(m-j)+1}
\end{equation*} and slope greater in absolute value than
\begin{equation*}
	\frac{1}{2R}\big(C^{2(m+1-j)}-2R-2\pi\ka_0^{-1}\big)>
	\frac{1}{4R}C^{2(m+1-j)}> C^{2(m-j)+1}=\cot(\de_j).  \end{equation*} Hence
$\big\vert{\theta_{\ga^p_j}(t)-\vphi^p_{\sig_j(k)}}\big\vert<\de_j$ throughout
$I$, and by \rref{R:smallerandsmaller}, $\ga^p_j$ is
$(\vphi^p,\de_j)$-quasicritical of type $\sig_j$. This proves (b) when $s=j$.

We now establish (b) for all $s\in [j,m]$. Fix $p\in K$. Observe first that no
$t\in [0,1]$ can belong to two intervals $I_{i',j',k'}$ and $I_{i'',j'',k''}$
with $\rho_{i',j'}(p)>0$, $\rho_{i'',j''}(p)>0$ and
$\sig_{j'}(k')=-\sig_{j''}(k'')$. Moreover, if $j'$ is the smallest index such
that $t\in I_{i',j',k'}$ and $\rho_{i',j'}(p)>0$ for some $i'\in [l_{j'}]$ and
$k'\in [n_{j'}]$, then \begin{alignat*}{9}\label{E:wall}
	\big\vert{\tan\big(\theta_{\ga_s^p}(t)-\vphi^p_{\sig_{j'}(k')}\big)}\big\vert
	&\geq
	\frac{\vka_1}{mC^{2(m+1-j')}}>\frac{4}{C^{2(m+1-j')+1}}=4\tan(\de_{j'-1})\text{\
	\ for all $s\in [0,m]$}.  \end{alignat*} Here \eref{E:C} has been used; the
factor $m$ in the denominator of the second term  comes from the fact that $t$
belongs to at most $(m+1-j')\leq m$ such intervals. Since $4\tan x>\tan(2x)$ for
$x\in \big(0,\frac{\pi}{8}\big)$, \begin{equation}\label{E:wall2} \big\vert
	\theta_{\ga^p_s}(t)-\vphi^p_{\sig_{j'}(k')}\big\vert>2\de_{j'-1}\text{\ \
	for all $s\in [0,m]$}.	\end{equation} Now suppose that $t\in J_{j,k}(p)$
for some $k\in [n_j]$. There are three possibilities: \begin{enumerate} \item
			[\sbu]  If $t$ does not belong to any $I_{i',j',k'}$ with
			$\rho_{i',j'}(p)>0$, then
			$\theta_{\ga^p_s}(t)=\theta_{\ga^p_{0}}(t)$ for all $s\in [0,m]$ by
			construction, hence \begin{equation*}
			\big\vert{\theta_{\ga_s^p}(t)-\vphi^p_{-\sig_j(k)}}\big\vert>2\eps_j^p>2\de_j\
		\ \text{for all $s\in [0,m]$}.  \end{equation*} \item [\sbu] If $t\in
		I_{i',j',k'}$ with $\rho_{i',j'}(p)>0$ and $\sig_{j'}(k')=\sig_j(k)$,
		then \lref{L:regularity}\?(b) implies that \begin{equation*}
		\big\vert{\theta_{\ga_s^p}(t)-\vphi^p_{-\sig_j(k)}}\big\vert>\frac{\pi}{2}>2\de_j\
	\ \text{for all $s\in [0,m]$}.  \end{equation*} \item [\sbu]  If $t\in
	I_{i',j',k'}$ with $\rho_{i',j'}(p)>0$ and $\sig_{j'}(k')=-\sig_j(k)$, then
	$j'\geq j+1$, otherwise the inequality $2\eps_{j'}^p< \eps_j^p$ would
	immediately yield a contradiction. Hence, by \eqref{E:wall2},
	\begin{equation*}
		\big\vert\theta_{\ga_s^p}(t)-\vphi^p_{-\sig_j(k)}\big\vert>2\de_{j}\ \
		\text{for all $s\in [0,m]$}.  \end{equation*} \end{enumerate} Thus, in
		any case condition (i) of \dref{D:quasicritical} is satisfied by
		$(\ga^p_s,\vphi^p,\de_j,\sig_j)$ for all $s\in [0,m]$. Similarly, if
		$t\nin \Int\big(\bcup_{k=1}^{n_j}J_{j,k}(p)\big)$, then either $t$ does
		not belong to any $I_{i',j',k'}$ with $\rho_{i',j'}(p)>0$ or $t\in
		I_{i',j',k'}$ with $\rho_{i',j'}(p)>0$ for some $j'\geq j+1$. Then, by
		the same reason as in the first and third possibilities above,
		\begin{equation*}
			\big\vert\theta_{\ga_s^p}(t)-\vphi^p_{\pm}\big\vert>2\de_{j}\ \
			\text{for all $s\in [0,m]$}.  \end{equation*} This proves that
		condition (ii) of \dref{D:quasicritical} is also satisfied for all $s\in
		[0,m]$. Finally, we shall prove by induction on $j'$ ($j\leq j'\leq m$)
		that condition (iii) holds for all $s\in [j,j']$. For $j'=j$, this was
		established in the preceding paragraph; let $I\subs J_{j,k}(p)$ be as
		described there and assume that $j'>j$. By \lref{L:multiquasi},
		$\eps_{j'}^p>2\eps_j^p$, hence \lref{L:disjoint}\?(b) implies that if
		$\rho_{i',j'}(p)>0$ and $k'\in [n_{j'}]$, then either $I\subs
		[c_{i',j',k'},d_{i',j',k'}]$ or these two intervals are disjoint. If $I$
		is disjoint from any such interval, then
		$\theta_{\ga^p_s}(t)=\theta_{\ga^p_{j'-1}}(t)$ for all $t\in I$ and
		$s\in [j'-1,j']$. Hence $I\subs J_{j,k}(p)$ satisfies condition (iii) of
		\dref{D:quasicritical} for all such $s$, since by the induction
		hypothesis this is true when $s=j'-1$. Suppose then that $I \subs
		[c_{i',j',k'},d_{i',j',k'}]$ for some $i',\,k'$ with
		$\rho_{i',j'}(p)>0$. Using \lref{L:lowering}\,(e) and reducing $I$ if
		necessary, it can be assumed that $\ga_s^p|_I$ is a line segment for all
		$s\in [j'-1,j']$. Let $s_0\in [j'-1,j']$ correspond to the instant where
		the flattening deformation ends and the stretching begins. The same
		estimates as in \eqref{E:slopeestimate} show that the slope of
		$\ga_{s_0}^p|_I$ is greater than $\cot(\de_j)$. Since this is also true
		when $s=j'-1$ by the induction hypothesis, it follows from the
		monotonicity of $\theta_{\ga^p_s}(t)$ with respect to $s\in [j'-1,s_0]$
		(see Lemma 3.11 of \cite{SalZueh1}) that this holds for all $s\in
		[j'-1,s_0]$. For $s\in [s_0,j-1]$ the same conclusion holds by
		\cref{C:eight}. 
\end{proof}

\begin{lem}\label{L:pulleys} Let $ \varrho>0$ $g\colon [0,m]\times K\to \sr
	V_{(c,\sig_1,\dots,\sig_m)}$ be as in \lref{L:stretching} and
	$n=\abs{\sig_m}$. Then $g$ admits an extension to $[0,m+2n]\times K$,
	$g_{s}(p)=(\ga^p_s,\vphi^p)$, such that $\ga_{m+2n}^p$ is of the form
	\begin{equation*}
	\text{c}\underbrace{\text{lc\dots lc}}_{n}
	\end{equation*} and each line segment has length $ >8 $ and slope
	greater in absolute value than $ \varrho $, for all $p\in K$.  \end{lem}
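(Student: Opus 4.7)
The plan is to extend the homotopy by iteratively deforming $\ga_m^p$ via further stretchings and flattenings over $2n$ time units, processing one line segment of $\ga_m^p$ per two-unit interval. The structure of the final curve $\ga_{m+2n}^p$ is that of a ``belt'' wrapped around $n+1$ circular pulleys (arcs of circles of curvature $\pm\ka_0$) joined by $n$ nearly-vertical line segments. By \lref{L:stretching}\?(ii) and \lref{L:continuousJs}, I would first select, continuously in $p$, subintervals $\tilde I_k\supseteq I_k$ of $J_{m,k}(p)$ on which $\ga_m^p$ is stretchable with respect to $\sig_m(k)ie^{i\vphi^p}$; taking $\tilde I_k$ maximal, it coincides with the subinterval of $J_{m,k}(p)$ where $\theta_{\ga_m^p}$ lies strictly on the same side of $\vphi^p$ as $\vphi^p_{\sig_m(k)}$.

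For each $k\in[n]$, on the time interval $[m+2(k-1),m+2k-1]$ I would further stretch $\ga_s^p|_{\tilde I_k}$ in the direction $\sig_m(k)ie^{i\vphi^p}$ by a large amount $L_k$ via \dref{D:stretching}. By the description following that definition, this replaces $\ga_s^p|_{\tilde I_k}$ with a $clc$ curve whose central line segment has length $\geq L_k$, slope $>\varrho$ in the $\vphi^p$-frame (by \lref{L:regularity}\?(f)), and whose flanking arcs are arcs of circles of curvature $\pm\ka_0$. On the second half $[m+2k-1,m+2k]$ I would apply an additional flattening to the portion of $\ga_s^p$ lying between $\tilde I_k$ and $\tilde I_{k+1}$ (or to the initial/terminal portions for $k=1$ or $k=n$), via \dref{D:stretching} in the direction $e^{i\vphi^p}$ (permissible since on that portion $\theta_{\ga_s^p}\in(\vphi^p-\tfrac{\pi}{2},\vphi^p+\tfrac{\pi}{2})$ by condition (ii) of \dref{D:quasicritical}), so that together with the two flanking quarter-arcs from the adjacent stretchings it forms a single arc of circle of curvature $\pm\ka_0$ with the required turning angle. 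The amounts $L_k$ must satisfy the balance condition $\sum_k\sig_m(k)L_k=0$ so that the endpoint $q$ remains fixed (each stretching translates the tail by $L_k\sig_m(k)ie^{i\vphi^p}$); this is solvable since $\sig_m$ alternates in sign, and indeed admits solutions with all $L_k$ arbitrarily large, ensuring the length $>8$ constraint.

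The main obstacle is twofold. First, one must verify that $(\ga_s^p,\vphi^p)$ remains in $\sr V_{(c,\sig_1,\dots,\sig_m)}$ throughout $s\in[m,m+2n]$. Preservation of $(\vphi^p,\eps_j^p)$-quasicriticality of type $\sig_j$ for each $j\leq m$ follows by arguments parallel to the second half of the proof of \lref{L:stretching}: all stretchings are in the vertical directions $\pm ie^{i\vphi^p}$, far from the horizontal, so by \lref{L:regularity}\?(b)--(d) they do not destroy the quasicritical structure for smaller indices; the condensed property is preserved since $\sup|\theta_{\ga_s^p}|$ is decreasing by \lref{L:regularity}\?(b). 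Second, and more delicate, is ensuring that the combination of flattening with the flanking arcs at each ``turn'' produces a single arc of circle of curvature $\pm\ka_0$ rather than a $clc$ triple carrying an extra small line segment. This geometric matching requires the horizontal distance (in the $\vphi^p$-frame) between the exit of $\tilde I_k$ and the entry of $\tilde I_{k+1}$ to equal the horizontal span of the desired single arc; the $L_k$ are fine-tuned to achieve this, and any small residual mismatch is absorbed into the tilt of the final line segments, which are allowed to have large but \emph{finite} slope $>\varrho$ rather than being exactly vertical, which is precisely the latitude that the statement of the lemma affords.
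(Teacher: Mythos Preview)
Your plan diverges from the paper's in a way that creates a genuine gap. The paper performs \emph{no} additional vertical stretching here---that was already accomplished in \lref{L:stretching}, which guarantees each $J_{m,k}(p)$ already contains a long nearly-vertical line segment. Instead the paper (i) partitions $[0,1]$ into $2n$ subintervals $I_\nu(p)=[t_{\nu-1}(p),t_\nu(p)]$ using the midpoints \eqref{E:tnu} of the $J_k(p)$ and of the gaps between them, and flattens each $\ga_m^p|_{I_\nu(p)}$ with respect to its own average direction $\psi^p_\nu$; then (ii) iteratively flattens the enlarged intervals $A_\nu(p)=\big[t_\nu(p)-\pi/(\ka_0 L^p),\,t_{\nu+1}(p)\big]$ for $\nu=1,\dots,2n-1$. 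The key trick is that $A_\nu(p)$ swallows the trailing circular arc of the previous piece (which has arc-length at most $\pi/\ka_0$), so each such flattening merges a $cclc$ block into $clc$. Continuity in $p$ is automatic because the $t_\nu(p)$ are averages of the endpoints $a_k(p),b_k(p)$ already supplied by \lref{L:continuousJs}.

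Your construction has two concrete problems. First, the ``maximal'' $\tilde I_k$---the locus in $J_{m,k}(p)$ where $\theta_{\ga_m^p}$ lies strictly on the $\sig_m(k)$ side of $\vphi^p$---need not be a single interval, and even the component containing $I_k$ can have endpoints that jump as $p$ varies whenever $\theta_{\ga_m^p}$ grazes $\vphi^p$; nothing in \lref{L:stretching} rules this out. Second, your matching argument does not close: stretching by $L_k$ translates the tail only in the $\pm ie^{i\vphi^p}$ direction, so fine-tuning the $L_k$ cannot adjust the \emph{horizontal} span between the exit of $\tilde I_k$ and the entry of $\tilde I_{k+1}$, and flattening the between-region always produces a $clc$, not a single arc. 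The ``tilt'' you invoke is not a free parameter either---in the stretching construction the slope $\mu(b+s)$ is determined by $s$, and its product with the segment length stays bounded (cf.\ \lref{L:regularity}\?(e),(f)), so it cannot absorb an arbitrary horizontal mismatch. The paper's device of absorbing the previous arc into the next flattening domain is exactly what makes the reduction to $c(lc)^n$ go through, and it is the idea your plan is missing.
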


\begin{figure}[ht] \begin{center} \includegraphics[scale=.22]{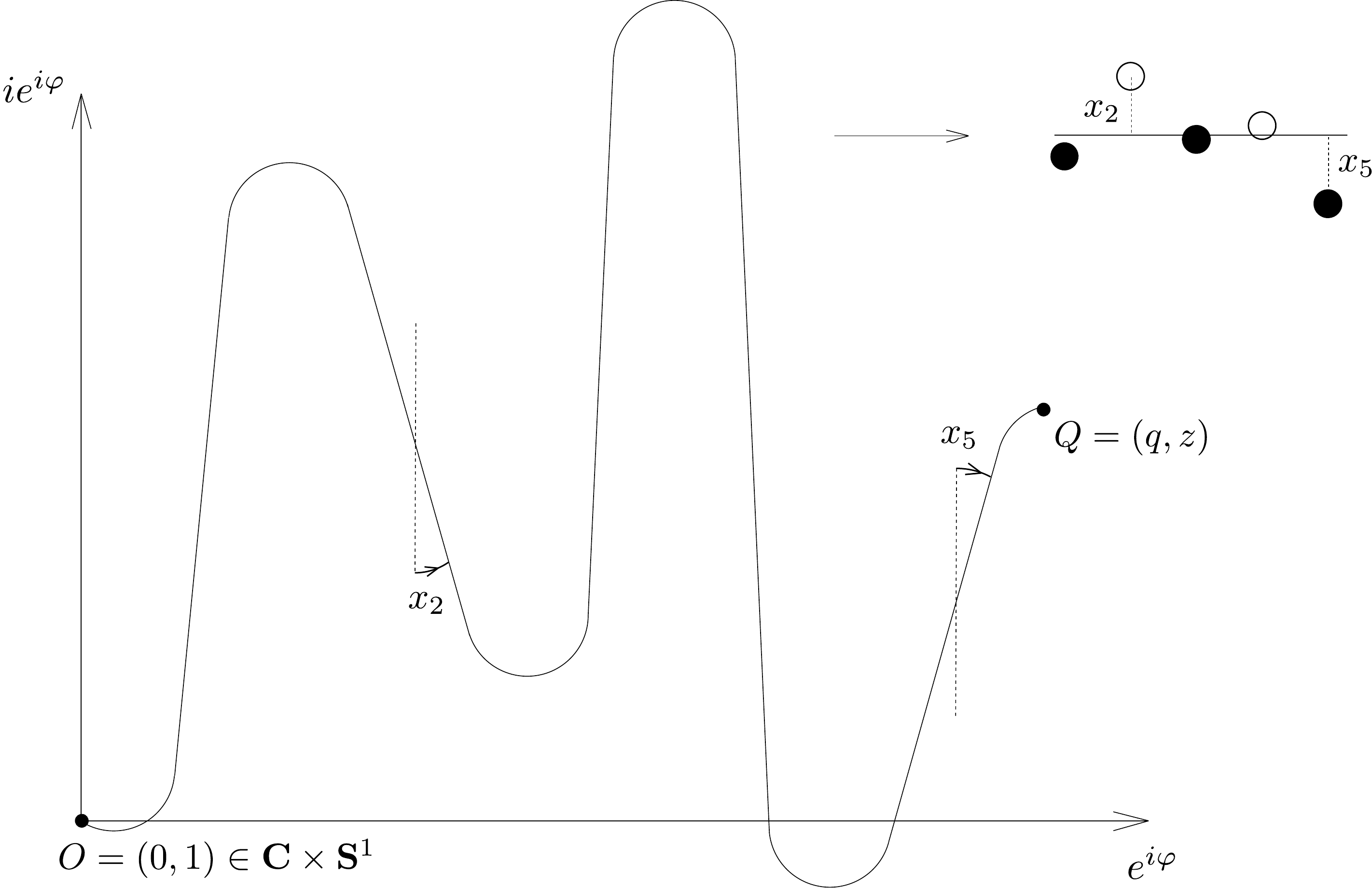}
		\caption{An illustration of a curve obtained by the homotopy in
			\lref{L:pulleys}.} \label{F:pulleys} \end{center} \end{figure}

\begin{proof} Again, we carry out the proof only for $ \sr
	V_{(c,\sig_1,\dots,\sig_m)} $, since the proof for $ \sr
	V_{(\sig_1,\dots,\sig_m)} $ is the same, except for a few omissions. 
	We retain the notation of the proof of \lref{L:stretching}. Let
	$J_k(p)=[a_k(p),b_k(p)]$ ($k\in [n]$) be intervals satisfying the conditions
	of \dref{L:stretching} for the quadruple
	$(\ga^p_0,\vphi^p,\eps_m^p,\sig_m)$, and hence the same conditions for
	$(\ga^p_m,\vphi^p,\de_m,\sig_m)$. Set $t_0(p)=0$, $t_{2n}(p)=1$,
	\begin{equation}\label{E:tnu} t_{2k-1}(p)=\frac{1}{2}[a_k(p)+b_k(p)] \ \
		(k\in [n])\text{\ \ and\ \ } t_{2k}(p)=\frac{1}{2}[b_k(p)+a_{k+1}(p)]\ \
		(k\in [n-1]).  \end{equation} Notice that $t_{2k-1}(p)\in J_k(p)$ for
	all $k\in [n]$ and $J_k(p)<t_{2k}(p)<J_{k+1}(p)$ for all $k\in [n-1]$. For
	each $\nu\in [2n]$, let $I_\nu(p)=[t_{\nu-1}(p),t_{\nu}(p)]$. Since
	$I_\nu(p)$ intersects exactly one $J_k(p)$ for all $\nu$, the amplitude
	\begin{equation*}
	\om\big(\ga_{m}^p|_{I_\nu(p)}\big)=\sup_{t\in
		I_\nu(p)}\big\{\theta_{\ga_{m}^p}(t)\big\}-\inf_{t\in
		I_\nu(p)}\big\{\theta_{\ga_{m}^p}(t)\big\} \end{equation*} is less than
	$\pi$ for all $\nu\in [2n]$. Thus, $\ga_{m}^p|_{I_\nu(p)}$ can be flattened;
	let \begin{equation*}
	\psi^p_\nu=\frac{1}{2}\Big(\sup_{t\in
		I_\nu(p)}\theta_{\ga_{m}^p}(t)+\inf_{t\in
		I_\nu(p)}\theta_{\ga_{m}^p}(t)\Big)\ \ (\nu\in [2n]).
	\end{equation*} Extend the homotopy of \lref{L:stretching} to $[0,m+1]\times
	K$ by letting $\ga_{m+s}^p|_{I_\nu(p)}$ be the flattening of
	$\ga_m^p|_{I_\nu(p)}$ in the direction of $e^{i\psi^p_\nu}$ $(s\in [0,1])$.
	It follows immediately from  \lref{L:regularity}\?(d) that
	$(\ga^p_{m+s},\vphi^p)\in \sr V_{c}$ for all $s\in [0,1]$. 

Again, it needs to be verified that $(\ga^p_{m+s},\vphi^p)\in \sr
V_{(\sig_1,\dots,\sig_m)}$. We claim that $\ga^p_{m+s}$ is
$(\vphi^p,\de_j)$-quasicritical of type $\sig_j$ for all $s\in [0,1]$ and $j\in
[m]$. Fix $p\in K$ and let \begin{equation*}
	J_{j,1}(p)<\dots<J_{j,n_j}(p)\quad (j\in [m]) \end{equation*} be intervals
as in \lref{L:stretching} for $\sig_j$. Using \lref{L:convenient}, it may be
assumed that any such interval has the form $J_{k_1}(p)\ast J_{k_2}(p)$ for some
$k_1,k_2\in [n]$. (It is however unnecessary to assume that the endpoints of
$J_{j,k}(p)$ depend continuously on $p$, since no constructions using them will
be carried out.) 

Let $\nu\in [2n]$. As $I_\nu(p)$ intersects exactly one $J_k(p)$, it intersects
at most one $J_{j,r}(p)$. Thus, if  $I_\nu(p)\cap J_{j,r}(p)\neq \emptyset$,
then $\big\vert{\theta_{\ga_{m+s}^p}(t)-\vphi^p_{-\sig_j(r)}}\big\vert>2\de_j$
for $s=0$ and all $t\in I_\nu(p)$. By \lref{L:regularity}\?(d), this inequality
holds for all $s\in [0,1]$. Since $\bcup_\nu I_\nu(p)=[0,1]$, we conclude that
\begin{equation}\label{E:larger}
	\big\vert{\theta_{\ga_{m+s}^p}(t)-\vphi^p_{-\sig_j(r)}}\big\vert>2\de_j\text{\
		\ for all $s\in [0,1]$,~$t\in J_{j,r}(p)$.}
		\end{equation}

Now let $I\subs J_{j,r}(p)$ be an interval as in (ii) of \lref{L:stretching}.
Let $\nu\in [2n]$ be such that $I\subs I_{\nu}(p)\cup I_{\nu+1}(p)$. Then the
restriction of $\ga^p_m$ to one of $I\cap I_\nu(p)$ or $I\cap I_{\nu+1}(p)$ has
length equal to at least half the length of $\ga^p_m|_{I}$. Suppose without loss
of generality that the former occurs. Let $R>1$ and $C$ be as in the proof of
\lref{L:stretching}. Then \begin{equation*}
	\begin{aligned}
		\abs{\big\lan{\ga^p_{m}(t_{\nu(p)})-\ga^p_{m}(t_{\nu-1}(p))\,,\,i\exp(i\vphi^p)}\big\ran}&>
		\frac{1}{2}C^{2(m+1-j)}-2R\text{, while} \\
		\abs{\big\lan{\ga^p_{m}(t_{\nu(p)})-\ga^p_{m}(t_{\nu-1}(p))\,,\,\exp(i\vphi^p)}\big\ran}
		&< 2R.  \end{aligned} \end{equation*} Recall that by the definition of
flattening, $\ga^p_{m+s}(t_{\nu}(p))=\ga^p_{m}(t_{\nu}(p))$ for all $s\in
[0,1]$, and similarly at $t_{\nu-1}(p)$. Moreover, $\ga^p_{m+1}|_{I_\nu(p)}$ is
of the form $clc$. Its subarc which is a line segment  must thus have slope greater
in absolute value than \begin{equation*}
	\frac{1}{2R}\Big(\frac{1}{2}C^{2(m+1-j)}-2R-2\pi\ka_0^{-1}\Big)>
	\frac{1}{8R}C^{2(m+1-j)}> C^{2(m-j)+1}=\cot(\de_j).  \end{equation*} Let
$I'\subs I\cap I_{\nu}(p)$ be an interval such that $\ga^p_{m+s}|_{I'}$ is a
line segment of length $>8$ for all $s\in [0,1]$, as guaranteed by
\lref{L:lowering}\,(e). Then $\ga^p_{m+s}|_{I'}$ is $\ka_0$-stretchable by
\lref{L:lowering}\,(f). Further, the above estimate implies that
$\big\vert{\theta_{\ga^p_{m+s}}(t)-\vphi^p_{\sig_j(k)}}\big\vert<\de_j$
throughout $I'$ when $s=1$, and this is also true when $s=0$ by
\lref{L:stretching}. By the monotonocity of $\theta_{\ga^p_{m+s}(t)}$ with
respect to $s$ (see Lemma 3.11 of \cite{SalZueh1}), this inequality holds for
all $s\in [0,1]$. Thus, condition (iii) of \dref{D:quasicritical} is satisfied.

To verify condition (ii), let $\nu_0,\nu_1$ be the greatest (resp.~smallest)
index satisfying $t_{\nu_0}(p)<J_{j,k}(p)<t_{\nu_1}(p)$. Since $t_{\nu_i}(p)\nin
\bcup_kJ_k(p)$ by definition, \begin{equation*}
	\abs{\theta_{\ga_{m+s}}(t_{\nu_i}(p))-\vphi^p}=\abs{\theta_{\ga_{m}}(t_{\nu_i}(p))-\vphi^p}<\frac{\pi}{2}-2\eps_m^p\text{\
	\ for $i=0,1$ and all $s\in [0,1]$.} \end{equation*} Therefore, it is
possible to enlarge $J_{j,r}(p)$ to a subinterval of
$\big(t_{\nu_0}(p),t_{\nu_1}(p)\big)$ so that \begin{equation*}
	\big\vert{\theta_{\ga^p_{m+s}}(t)-\vphi^p}\big\vert<\frac{\pi}{2}-2\de_j\ \
	\text{for all $t\in \big[t_{\nu_0}(p),t_{\nu_1}(p)\big]\ssm J_{j,r}(p)$ and
$s\in [0,1]$.} \end{equation*} If this enlargement is carried out for each $k\in
[n_j]$, then condition (ii) of \dref{D:quasicritical} will be satisfied by the
$J_{j,r}(p)$. It is easily verified that the validity of conditions (i) and
(iii) is not affected.

Now $\ga^p_{m+1}$ is of the form \begin{equation*}
	\underbrace{(clc)(clc)\dots(clc)}_{n}\text{\ for all $p\in K$.}
\end{equation*} To prove the lemma, it thus suffices to reduce subarcs of the
form $cclc$ to arcs of the form $clc$. Let $L^p$ denote the length of $\ga^p_{m+1}$; no
generality is lost in assuming that $\ga^p_{m+1}\colon [0,1]\to \C$ is
parametrized proportionally to arc-length for all $p$. Set
\begin{equation}\label{E:Anu}
	A_\nu(p):=\big[t_{\nu}(p)-\frac{\pi}{\ka_0L^p}\,,\,t_{\nu+1}(p)\big]
\end{equation} For each $\nu=1,\dots,2n-1$ in turn, let $\ga^p_{m+\nu+1}$ be
obtained from $\ga^p_{m+\nu}$ by flattening the arc $\ga^p_{m+\nu}|_{A_\nu(p)}$
in the direction of \begin{equation*}
\frac{1}{2}\Big(\sup_{t\in
	A_\nu(p)}\theta_{\ga_{m+\nu}^p}(t)+\inf_{t\in
	A_\nu(p)}\theta_{\ga_{m+\nu}^p}(t)\Big).  \end{equation*} Using
estimates similar to the preceding ones, it is not hard to check that
$\ga^p_s\in \sr V_{(c,\sig_1,\dots,\sig_m)}$ for all $s\in [m+1,m+2n]$.
Moreover, $\ga^p_{m+2n}$ has the desired form for all $p\in K$ by construction.
\end{proof}

The next objective is to prove a version of \lref{L:stretching} and
\lref{L:pulleys} for $\sr V_{(d,\sig_1,\dots,\sig_m)}$. The proof is a
repetition of the arguments used to establish these results, aside from some
preliminary deformations which are needed to guarantee that $\ga_s^p$ will
remain diffuse throughout the homotopy. We begin with a lemma which allows us to
deform a family $ K\to \sr V_{(\sig_1,\dots,\sig_m)} $ to have image contained
in $ \sr V_{(d,\sig_1,\dots,\sig_m)} $.

\begin{lem}\label{L:diffusing} Let $K\to \sr V$,
	$p\mapsto (\ga_0^p,\vphi^p)$ be a continuous map, where 
	$\sr V=\sr V_{(d,\sig_1,\dots,\sig_m)}$ or $ \sr V=\sr
	V_{(\sig_1,\dots,\sig_m)} $. 
 	Then there exists a homotopy $(s,p)\mapsto
	(\ga^p_s,\vphi^p)\in \sr V$ such that 
	$ [\vphi^p_-,\vphi^p_+]\subs \Int (\theta_{\ga_1^p}([0,1])) $ for all $
	p\in K $. 
	\end{lem}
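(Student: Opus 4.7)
My plan is to introduce two small ``S-bumps'' on the curve $\ga_0^p$, one on a stretchable subinterval where $\theta_{\ga_0^p}\approx\vphi^p_+$ and one where $\theta_{\ga_0^p}\approx\vphi^p_-$, pushing $\max\theta_{\ga_1^p}$ strictly above $\vphi^p_+$ and $\min\theta_{\ga_1^p}$ strictly below $\vphi^p_-$. Since every sign string has length at least two, $\sig_m$ contains both signs, so there exist $k_\pm\in[\abs{\sig_m}]$ with $\sig_m(k_\pm)=\pm$. Applying \lref{L:continuousJs} and \lref{L:disjoint} to the family $p\mapsto\ga_0^p$, I can fix an open cover $(U_i)_{i\in[l]}$ of $K$, a subordinate partition of unity $(\rho_i)$, and stretchable subintervals $I_{i,k_\pm}\subs J_{k_\pm}(p)$ (for $p\in\ol{U}_i$) on which $\vert\theta_{\ga_0^p}-\vphi^p_\pm\vert<\eps^p$ and $\ga_0^p|_{I_{i,k_\pm}}$ is stretchable with respect to $\vphi^p_\pm$.

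To insert each bump, I would first apply the stretching construction of \S\ref{S:stretching} to deform $\ga_0^p|_{I_{i,k_\pm}}$ so that in the middle of that interval it contains a long straight segment in the direction $\pm ie^{i\vphi^p}$. I then replace a short portion of this segment by an S-shaped arc consisting of two sub-arcs of curvatures $+\kappa_0$ and $-\kappa_0$ (for some $\kappa_0<1$) whose endpoint positions and tangent directions match those of the removed segment and whose net turning vanishes. By an appropriate choice of arc-lengths, the amplitude of the bump can be made to equal any fixed $\eta\in(0,\eps^p)$ independent of $p$. Linearly interpolating in $s\in[0,1]$ the length of the bumped arcs from zero to their final values, and patching the local modifications together using $(\rho_i)$, produces the desired homotopy $(s,p)\mapsto(\ga_s^p,\vphi^p)$.

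The main obstacle will be verifying that $(\ga_s^p,\vphi^p)$ remains in $\sr V$ for every $s\in[0,1]$. Since the modifications are confined to the interior of the stretchable intervals $I_{i,k_\pm}$, on which $\theta_{\ga_s^p}$ stays within $\eps^p$ of $\vphi^p_\pm$ provided $\eta$ is chosen small enough, the same intervals $J_k(p)$ continue to satisfy conditions (i) and (ii) of \dref{D:quasicritical} for $\sig_m$ throughout the homotopy; by \lref{L:convenient}, the analogous conditions for the coarser types $\sig_1,\dots,\sig_{m-1}$ are preserved as well. Condition (iii) holds because an unmodified portion of each $I_{i,k_\pm}$ lying outside the bump remains a line segment, hence stretchable by \lref{L:lowering}\?(f). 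For the $d$-variant, the bumps can only enlarge the range of $\theta_{\ga_s^p}$, so the amplitude stays above $\pi$ and the curve remains diffuse. Finally, by construction $\max\theta_{\ga_1^p}\geq\vphi^p_++\eta$ and $\min\theta_{\ga_1^p}\leq\vphi^p_--\eta$, which gives $[\vphi^p_-,\vphi^p_+]\subs\Int(\theta_{\ga_1^p}([0,1]))$ as required.
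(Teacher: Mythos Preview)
Your approach for $\sr V=\sr V_{(\sig_1,\dots,\sig_m)}$ is essentially the paper's: stretch inside a pair of $J$-intervals of $\sig_m$ carrying opposite signs to manufacture long nearly-vertical line segments, then deform these into bumps that push $\theta$ strictly past $\vphi^p_\pm$. The paper in fact says less than you do about why the homotopy stays in $\sr V$. One refinement: the bump amplitude $\eta$ must be taken below $\eps_1^p$, the smallest of the $\eps_j^p$, not just below ``$\eps^p$''. Otherwise condition~(i) of \dref{D:quasicritical} for $\sig_1$ may fail on the interval $J_{1,i}'$ containing the bump, and your appeal to \lref{L:convenient} does not by itself control this.

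For $\sr V=\sr V_{(d,\sig_1,\dots,\sig_m)}$ there is a genuine gap. Your justification that ``the bumps can only enlarge the range of $\theta_{\ga_s^p}$'' covers the bump phase, but the \emph{stretching} phase that precedes it can shrink the range of $\theta$ on the stretched subarc (see \lref{L:regularity}\?(b),(d)). If the global maximum or minimum of $\theta_{\ga_0^p}$ happens to be attained inside one of your intervals $I_{i,k_\pm}$---which is not excluded, since \rref{R:automaticineq} only gives $\theta_{\ga_0^p}([0,1])\subs(\vphi^p_--\eps_m^p,\vphi^p_++\eps_m^p)$---then the amplitude may drop below $\pi$ during that phase, and the homotopy leaves $\sr V_d$. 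The paper sidesteps this by treating the diffuse case separately: since $\ga_0^p$ is already diffuse, one can directly locate $t,t'$ with $\theta_{\ga_0^p}(t')=\theta_{\ga_0^p}(t)+\pi$ and $\theta_{\ga_0^p}(t)<\vphi^p_-+\eps_1^p$, graft straight segments at $\ga_0^p(t)$ and $\ga_0^p(t')$ (which leaves the range of $\theta$ untouched), and only then bump. No stretching is used in this case, so diffuseness is trivially preserved throughout.
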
 
	Thus, by deforming $\ga^p_0$ they can be made not only diffuse but ``diffuse
	with respect to $\vphi^p$''.

\begin{proof}	
	Let $ \eps_j\colon K\to \R^+ $ be such that $ \ga^p=\ga_0^p $ is $
	(\vphi^p,\eps^p) $-quasicritical of type $ \sig_j $ for each $ j\in [m] $
	and $ p\in K $. Assume first that $ \sr V=\sr V_{(d,\sig_1,\dots,\sig_m)} $
	and that $K$ consists of a single point $p$.
	Since $ \ga^p $ is diffuse, the image of $ \theta_{\ga^p} $ has diameter
	greater than $ \pi $, and it contains $
	[\vphi^p_-+\eps_1^p,\vphi^p_+-\eps^p_1] $ in its interior by condition (iii)
	of \dref{D:quasicritical}. Hence there exist $t,t'\in [0,1]$ such that
	\begin{equation*}
		\theta_{\ga^p}(t')=\pi+\theta_{\ga^p}(t)\text{\ \,and\ \,}
		\theta_{\ga^p}(t)<\vphi^p_-+\eps_1^p< 
		\vphi^p_+-\eps^p_1< \theta_{\ga^p}(t').
		\end{equation*}
	Define a homotopy
	$(s,p)\mapsto \ga_s^p$ ($s\in [0,\frac{1}{2}]$) by grafting straight line
	segments having directions $\ta_{\ga^p}(t)$, $\ta_{\ga^p}(t')$ and length
	greater than 4 at $\ga^p(t)$ and $\ga^p(t')$ (see \cite{SalZueh1}, Definition
	4.13). Note that $(\ga^p_s,\vphi^p)\in \sr V$ for all $s\in
	[0,\frac{1}{2}]$, since $\theta_{\ga^p_s}$ is essentially the same function
	as $\theta_{\ga^p_0}$. Extend the homotopy to all of $[0,1]$ by deforming
	each of these segments to create a ``bump'' 
	(see Figure 10 of \cite{SalZueh1}) so that 
	$ [\vphi^p_-,\vphi^p_+]\subs \Int (\theta_{\ga_1^p}([0,1])) $.
	This is possible because $\sr M(P)$ is connected if $P=(x,1)\in \R\times
	\Ss^1$ with $x> 4$, by Theorem 6.1 of \cite{SalZueh1};
	this also follows from \fref{F:generator} above. Moreover, $
	\ga_s^p\in \sr V_{(d,\sig_1,\dots,\sig_m)} $ for all $ s\in [0,1] $.
	For a general finite simplicial complex $K$, the same idea works if
	partitions of unity are used.  The details will be omitted since they are
	technical and an entirely  similar construction (for deforming segments into
	eight curves, instead of bumps) was already carried out in Lemmas 4.15 and
	4.16 of \cite{SalZueh1}.
	
	Now take $\sr V=\sr V_{(\sig_1,\dots,\sig_m)}$. By Corollary 1.11 of
	\cite{SalZueh1}, it may be assumed that each $\ga^p$ is smooth and that all
	of its derivatives depend continuously upon $p\in K$.  Choose $\ka_0\in
	(\frac{1}{2},1)$ such that $\ka_{\ga^p}([0,1])\subs (-\ka_0,+\ka_0)$ for
	every $p\in K$. Assume first that $K=\se{p}$. Let $J_k(p)$ be intervals
	satisfying \dref{D:quasicritical} for the sign string $\sig_m$ and some
	$\eps>0$, and choose stretchable intervals $I\subs J_k(p)$, $I'\subs
	J_{k'}(p)$ with $\sig_m(k)=\ty{+}$ and $\sig_m(k')=\ty{-}$. By choosing a
	larger $\ka_0\in (\frac{1}{2},1)$ if necessary, it may be assumed that the
	restriction of $\ga^p$ to each of $I,I'$ is $\ka_0$-stretchable with respect
	to $\vphi^p_{\sig_m(k)}$. Define a homotopy $(s,p)\mapsto \ga_s^p$ by
	stretching each of $\ga^p_0|_{I},\ga^p_0|_{I'}$ in the direction of $\pm
	ie^{i\vphi^p}$ by more than $4+2\pi$, linearly with $s\in [0,\frac{1}{2}]$.
	Extend this to $[0,1]$ by choosing straight line segments of length greater
	than $4$ within each of $\ga^p_{\frac{1}{2}}|_{I},\ga^p_{\frac{1}{2}}|_{I'}$
	and deforming them to create bumps as above so as to have 
	$ [\vphi^p_-,\vphi^p_+]\subs \Int (\theta_{\ga_1^p}([0,1])) $. For a general
	finite simplicial complex $K$, use partitions of unity,
	\lref{L:continuousJs} and \lref{L:disjoint}\?(a).
\end{proof}

\begin{lem}\label{L:compressing} Let $K\to \sr V_{(d,\sig_1,\dots,\sig_m)}$,
	$p\mapsto (\ga^p,\vphi^p)$ be a continuous map and assume that
	$ [\vphi^p_-,\vphi^p_+]\subs \Int (\theta_{\ga^p}([0,1])) $ for all $ p\in
	K$.  Then given $\de_0>0$, there exists a homotopy
	$(s,p)\mapsto (\ga^p_s,\vphi^p)\in \sr V_{(d,\sig_1,\dots,\sig_m)}$ such
	that $\ga_0^p=\ga^p$ and $ [\vphi^p_-,\vphi^p_+]\subs \Int
	(\theta_{\ga_1^p}([0,1]))\subs [\vphi^p_--\de_0,\vphi^p_++\de_0] $ for all $
	p\in K$. 
	Moreover, the homotopy is obtained by stretching
	subarcs of $\ga^p$ in the direction of $\pm ie^{i\vphi^p}$.  \end{lem}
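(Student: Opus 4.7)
For each $p\in K$, let $W^{+,p}_i$ denote the connected components of
$\set{t\in[0,1]}{\theta_{\ga^p}(t)>\vphi^p_+}$ and $W^{-,p}_j$ those of
$\set{t\in[0,1]}{\theta_{\ga^p}(t)<\vphi^p_-}$. By \rref{R:center}, both $0$
and $1$ satisfy $\theta_{\ga^p}\in(\vphi^p_-,\vphi^p_+)$, so each such
component lies strictly inside $(0,1)$ and $\theta_{\ga^p}=\vphi^p_\pm$ at the
endpoints of $\ol{W}^{\pm,p}_{\ast}$. Hence $\ta_{\ga^p}$ takes the value
$\pm ie^{i\vphi^p}$ at those endpoints, and by \rref{L:lowering}\?(a) the
restriction $\ga^p|_{\ol{W}^{\pm,p}}$ is stretchable with respect to
$\pm ie^{i\vphi^p}$. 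Using a partition of unity subordinate to a finite open
cover of $K$ and the continuous-selection techniques of \lref{L:continuousJs}
and \lref{L:disjoint}, choose these stretchable subintervals continuously in
$p$ and define the homotopy $(s,p)\mapsto\ga_s^p$ by simultaneously stretching
each $\ol{W}^{\pm,p}_{\ast}$ linearly in $s\in[0,1]$.

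As in the proof of \lref{L:stretching}, the stretching amounts on the upper
and lower subarcs must be balanced so that the endpoint $q$ is preserved,
i.e., so that the net shift in the direction $ie^{i\vphi^p}$ vanishes; this
is achieved by taking the total upper and total lower stretching amounts to
be equal. By \lref{L:regularity}\?(b), stretching with respect to $e^{i\psi}$
monotonically shrinks $\sup|\theta-\psi|$ on the stretched subarc, and by
\lref{L:regularity}\?(f) the decay rate is $O(1/s)$ in the stretching
parameter. A sufficiently large uniform stretching amount thus forces the
extremes of $\theta_{\ga_1^p}$ to lie within $\de_0$ of $\vphi^p_\pm$,
yielding $\theta_{\ga_1^p}([0,1])\subs[\vphi^p_--\de_0,\vphi^p_++\de_0]$.

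The main obstacle is verifying $\ga_s^p\in\sr V_{(d,\sig_1,\dots,\sig_m)}$
throughout. Since the stretching only alters $\theta_{\ga_s^p}$ on the
$\ol{W}^{\pm,p}_{\ast}$ and only brings those values closer to $\vphi^p_\pm$,
the supremum of $\theta_{\ga_s^p}$ remains strictly above $\vphi^p_+$ and
the infimum strictly below $\vphi^p_-$; so $\ga_s^p$ remains diffuse and
$[\vphi^p_-,\vphi^p_+]\subs\Int(\theta_{\ga_s^p}([0,1]))$ for all $s\in[0,1]$.
For $\sig_j$-quasicriticality, the same intervals $J_{j,k}(p)$ witnessing this
for $\ga_0^p$ continue to satisfy \dref{D:quasicritical} for $\ga_s^p$: each
$\ol{W}^{\pm,p}$ lies inside some $J_{m,k'}(p)$ with sign
$\sig_m(k')=\pm$, and by the nested structure provided by \lref{L:convenient}
inside a $J_{j,k}(p)$ with $\sig_j(k)=\pm$; since the stretching only pushes
$\theta$ closer to $\vphi^p_{\sig_j(k)}$ on these subarcs, conditions
(i)--(ii) persist, and condition (iii) is preserved because stretching
preserves stretchability (\lref{L:regularity}\?(g)) and the stretched arcs
contain segments where $|\theta_{\ga_s^p}-\vphi^p_{\sig_j(k)}|$ is arbitrarily
small.
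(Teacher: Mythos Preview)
Your plan is correct and follows essentially the same approach as the paper: identify the subarcs where $\abs{\theta_{\ga^p}-\vphi^p}>\tfrac{\pi}{2}$, stretch them in the direction of $\pm ie^{i\vphi^p}$ with balanced amounts, and use \lref{L:regularity}\?(f) to force the amplitude into $[\vphi^p_--\de_0,\vphi^p_++\de_0]$. The only point the paper handles more carefully is finiteness: rather than stretching every component $\ol{W}^{\pm,p}_\ast$ (of which there may be infinitely many), it covers the compact set $C_p=\set{t}{\abs{\theta_{\ga^p}(t)-\vphi^p}\geq \tfrac{\pi}{2}+\tfrac{\de_0}{2}}$ by finitely many stretchable intervals whose endpoints satisfy $\abs{\theta_{\ga^p}-\vphi^p}<\tfrac{\pi}{2}+\tfrac{\de_0}{2}$, then applies the \lref{L:disjoint}-style nesting argument locally on $K$; your reference to \lref{L:continuousJs} is slightly off (that lemma concerns the $J_k$-structure, not these intervals), but the \lref{L:disjoint} technique is exactly what is needed.
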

\begin{proof} By Corollary 1.11 of \cite{SalZueh1}, no generality is lost in
	assuming that $\ga^p$ is smooth for every $p\in K$, and that its derivatives
	depend continuously on $p$. In particular, there exists $\ka_0\in (0,1)$
	such that $\ka_{\ga^p}([0,1])\subs (-\ka_0,+\ka_0)$ for all $p\in K$. Fix
	$p$ and let \begin{equation*}
	W_p=\set{t\in [0,1]}{\big\vert
			\theta_{\ga^p}(t)-\vphi^p\big\vert>\tfrac{\pi}{2}},\ \ C_p=\set{t\in
			[0,1]}{\big\vert \theta_{\ga^p}(t)-\vphi^p\big\vert\geq
			\tfrac{\pi}{2}+\tfrac{\de_0}{2}}.  \end{equation*} By
		\rref{L:lowering}\?(a), the closure of any component of $W_p$ is a
		$\ka_0$-stretchable interval for $\ga^p$. Moreover, $C_p$ is compact,
		hence it intersects only finitely many of the components of $W_p$.
		Choose disjoint intervals $[c_k,d_k]$ ($k\in [n]$,~ $n=n(p)\in \N)$,
		such that: \begin{enumerate} \item [\sbu] $C_p\subs
				\bcup_{k=1}^n[c_k,d_k]$; \item [\sbu]  $\ga^p|_{[c_k,d_k]}$ is
				$\ka_0$-stretchable with respect to $\vphi^p_{\pm}$ for every
			$k\in [n]$; \item [\sbu]
				$\big\vert{\theta_{\ga^p}-\vphi^p}\big\vert>\frac{\pi}{2}$
				throughout $[c_k,d_k]$; \item [\sbu] $\big\vert
					\theta_{\ga^p}(c_k)-\vphi^p\big\vert<\frac{\pi}{2}+\frac{\de_0}{2}$
					and $\big\vert
					\theta_{\ga^p}(d_k)-\vphi^p\big\vert<\frac{\pi}{2}+\frac{\de_0}{2}$.
			\end{enumerate} Let $U_p\subs K$ be a neighborhood of $p$ such that
			these conditions still hold if $p$ is replaced by any $q\in
			\ol{U}_p$. Cover $K$ by finitely many such open sets $U_i$ ($i\in
			[l]$), with associated stretchable intervals $[c_k^i,d_k^i]\subs
			[0,1]$, $k\in [n(i)]$. By the argument used in the proof of
			\lref{L:disjoint}\?(a), it may be assumed that if $i<i'$ and
			$\ol{U}_i\cap \ol{U}_{i'}\neq \emptyset$, then for each $k\in
			[n(i)]$ and $k'\in [n(i')]$, either $[c_k^i,d_k^i]\subs
			[c_{k'}^{i'},d_{k'}^{i'}]$ or these two intervals are disjoint. Let
			$(\rho_i)_{i\in [l]}$, $\rho_i\colon K\to [0,1]$, be a partition of
			unity subordinate to $(U_i)_{i\in [l]}$. Let $m_{\pm}(i)$ denote the
			cardinality of \begin{equation*}
			S_{\pm}(i)=\set{k\in
				[n(i)]}{\pm \sign\big(\theta_{\ga^p}(t)-\vphi^p\big)>0\text{\
					for all $t\in [c_{k}^i,d_k^i]$}}.  \end{equation*} Observe
				that $m_+(i),\,m_-(i)\geq 1$ by hypothesis. Let
				$M>0$ and for each $i=1,\dots,l$ successively, let $\ga_s^p$
				($s\in [\tfrac{i-1}{l},\tfrac{i}{l}]$) be obtained by stretching
				\begin{equation}\label{E:diffamount}
					\ga_{\tfrac{i-1}{l}}^p|_{[c_k^i,d_k^i]}	\text{ by
					}\begin{cases} m_-(i)\rho_i(p)M & \text{ if $k\in S_+(i)$}
						\\ m_+(i)\rho_i(p)M & \text{ if $k\in S_-(i)$}
					\end{cases} \text{ for each $k\in [n(i)]$}.  \end{equation}
				The factors $m_{\pm}(i)$ are incorporated here to guarantee that
				$\ga_s(1)=q$ for all $s\in [0,1]$. By \lref{L:regularity}\?(b),
				(c) and (g), for each $p\in K$, the four conditions listed above remain
				valid for $\ga^p_s$ ($s\in [0,1]$), so that this deformation is
				well-defined. Further, by \lref{L:regularity}\?(f), if $M$ is
				large enough, then  the resulting curves $\ga_1^p$ will satisfy 
				the required property for all $p\in K$.  \end{proof}

\begin{lem}\label{L:stretchingd} Let $g\colon K\to \sr
	V_{(d,\sig_1,\dots,\sig_m)}$, $g(p)\mapsto (\ga^p,\vphi^p)$,  be a
	continuous map. Then there exists
	a homotopy $g\colon [0,5]\times K\to \sr V_{(d,\sig_1,\dots,\sig_m)}$,
	$g_{s}(p)=(\ga^p_s,\vphi^p)$, such that $\ga_0^p=\ga^p$ and $\ga_{5}^p$ is
	of the form \begin{equation*}
	\text{c}\underbrace{\text{lc\dots
		lc}}_{n} \end{equation*} and each of its straight arcs has length
	greater than 8,  for all $p\in K$.  \end{lem}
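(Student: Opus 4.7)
The plan is to execute the proofs of \lref{L:stretching} and \lref{L:pulleys} again for the diffuse family, prefaced by the preparatory lemmas \lref{L:diffusing} and \lref{L:compressing}. The purpose of these preparations is to arrive at a canonical starting configuration in which the ``diffuse excess'' --- the subarcs where $\theta_{\ga^p}$ strictly exits $[\vphi^p_-,\vphi^p_+]$ --- is located and sized in a controlled way, so that it can be preserved throughout the subsequent stretching and flattening.

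I will divide $[0,5]$ into five stages. For $s\in [0,1]$, apply \lref{L:diffusing} to deform $\ga^p$ within $\sr V_{(d,\sig_1,\dots,\sig_m)}$ to a curve $\ga_1^p$ with $[\vphi^p_-,\vphi^p_+]\subs \Int\theta_{\ga_1^p}([0,1])$. For $s\in [1,2]$, apply \lref{L:compressing} with $\de_0$ small (e.g., $\de_0<\tfrac{1}{2}\eps_1^p$ for all $p\in K$, where $\eps_1$ is provided by \lref{L:continuouseps}), obtaining $\ga_2^p$ with $\theta_{\ga_2^p}([0,1])\subs [\vphi^p_--\de_0,\vphi^p_++\de_0]$ while still $[\vphi^p_-,\vphi^p_+]\subs \Int\theta_{\ga_2^p}([0,1])$. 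Since \lref{L:compressing} is realized via stretchings in direction $\pm ie^{i\vphi^p}$, the estimates used in the proof of \lref{L:stretching} show that each intermediate curve is $(\vphi^p,\eps_j^p)$-quasicritical of each $\sig_j$, so the homotopy stays in $\sr V_{(d,\sig_1,\dots,\sig_m)}$.

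For $s\in [2,3]$ run the stretching procedure of \lref{L:stretching}, and for $s\in [3,5]$ the pulleys construction of \lref{L:pulleys} (including the final reduction of $cclc$ to $clc$ substrings), both starting from $\ga_2^p$. The quasicritical estimates are verbatim from those proofs. The new task is to maintain $\om(\ga_s^p)>\pi$. For this, use the freedom afforded by \lref{L:continuousJs} and \lref{L:disjoint} to pick the stretchable subintervals $I_{i,j,k}$ disjoint from a continuously varying family of ``excess arcs'' along which $\theta_{\ga_2^p}$ strictly exceeds $\vphi^p_+$ or falls below $\vphi^p_-$. Such arcs exist after Stage~1, and by condition (ii) of \dref{D:quasicritical} (together with $\de_0<\eps_j^p/2$) they lie inside some $J_k(p)$ with $\sig_j(k)=\pm$, yet they need not overlap with the specific stretchable subintervals chosen. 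The stretching of \lref{L:stretching} therefore leaves the excess arcs intact, and in the flattening phase of \lref{L:pulleys} the sample points $t_\nu(p)$ are adjusted to bracket each excess arc within a single $I_\nu(p)$; because flattening preserves endpoint tangents (\lref{L:regularity}\?(d)), the extremal tangent values at those sample points persist, and hence so does the diffuse excess in $\ga_5^p$.

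The chief difficulty is making all these choices depend continuously on $p\in K$ while simultaneously satisfying the requirements of \dref{D:quasicritical} for all $\sig_1,\dots,\sig_m$. This is handled by the same partition-of-unity machinery already developed in \lref{L:continuousJs}, \lref{L:disjoint} and \lref{L:convenient}, now extended to track the excess arcs as an additional component alongside the stretchable subintervals. No genuinely new combinatorial obstacle arises, since the excess arcs are constrained to lie inside specific $J_k(p)$, exactly the structure already controlled in the condensed case.
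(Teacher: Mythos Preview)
There is a genuine gap in how you preserve diffuseness through the flattening stages of \lref{L:pulleys}. Your plan is to track ``excess arcs'' (where $\theta_{\ga^p}$ strictly exits $[\vphi^p_-,\vphi^p_+]$) and keep them isolated inside a single $I_\nu(p)$. But the flattening of $\ga|_{I_\nu(p)}$ strictly \emph{decreases} $\sup\theta$ and \emph{increases} $\inf\theta$ on that interval (\lref{L:regularity}\,(d)); the endpoint tangents at $t_{\nu-1}(p),t_\nu(p)$ are indeed preserved, but those endpoints are by your own description outside the excess arc, so they do not witness diffuseness. Thus the excess can simply be flattened away. Placing $t_\nu(p)$ \emph{at} an extremal point instead would conflict with the structural requirements of \lref{L:pulleys} (the $t_{2k}$ must lie between the $J_k$, where $|\theta-\vphi|<\tfrac{\pi}{2}-2\eps_m$), and even then the second round of flattening on the intervals $A_\nu(p)$ would swallow such a point into its interior.

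The paper solves this by inserting an extra step that you omit. After \lref{L:diffusing} but \emph{before} \lref{L:compressing}, it chooses $u_1,u_2$ with $\theta^p(u_1)<\vphi^p_-<\vphi^p_+<\theta^p(u_2)$ and grafts long line segments at $t=0,u_1,u_2$ so that the curve ``retrocedes'' in the $e^{i\vphi^p}$-direction by more than $2(R+2\pi)$. This produces the positional invariant
\[
\big\langle \ga^p_s(t_{2k_j}(p))-\ga^p_s(t_{2k_j-2}(p)),\,e^{i\vphi^p}\big\rangle<-4\pi,
\]
which forces the existence of some $t'$ with $|\theta^p_s(t')-\vphi^p|>\tfrac{\pi}{2}$. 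Unlike the tangent-angle excess, this inner product is invariant under stretching in the direction $\pm ie^{i\vphi^p}$ (orthogonality) and under flattening of subarcs (endpoints fixed); the $-4\pi$ margin absorbs the $\leq 2\pi$ perturbation from the final $A_\nu$-flattening. This conversion of a fragile angular condition into a robust positional one is the missing idea.
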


\begin{proof} Let the notation be as in the first paragraph of
	\lref{L:stretching} and let $\theta^p:=\theta_{\ga^p}$ and
	$\theta^p_s:=\theta_{\ga^p_s}$ (where $\ga_s^p$ is to be defined below).
	Since $0\in R(Q)$ by definition (see \dref{D:Vs}), $\cos
	\vphi^p=\gen{1,e^{i\vphi^p}}>0$ for all $p\in K$. By \lref{L:diffusing}, it
	may be assumed that 
	$ [\vphi^p_-,\vphi^p_+]\subs \Int (\theta_{\ga_1^p}([0,1])) $
	for all $ p\in K $. Given $p$, choose $u_j\in
	[0,1]$ ($j=1,2$) such that 
	\begin{equation*}
		\theta^p(u_1)<\vphi^p-\tfrac{\pi}{2}<\vphi^p+\tfrac{\pi}{2}<\theta^p(u_2)
	\end{equation*} and
	the origin $0\in \C$ lies in the interior of the triangle whose vertices are
	$1$, $e^{i\theta^p(u_{1})}$ and $e^{i\theta^p(u_{2})}$. These conditions are
	still satisfied throughout a neighborhood $U_p$ of $p$. Let $(U_i)_{i\in
	[l_0]}$ be a subcover of the resulting cover of $K$ and $u_{i,j}\in [0,1]$
	be the corresponding numbers.  Then we can write
	\begin{equation*}
		0=a_{i,0}(p)+a_{i,1}(p)e^{i\theta^p(u_{i,1})}+a_{i,2}(p)e^{i\theta^p(u_{i,2})}\
		\ \text{for some $a_{i,j}(p)>0$ and all $p\in U_i$.} \end{equation*} Moreover,
	$a_{i,j}\colon U_i\to \R^+$ can be chosen to depend continuously on $p$ and
	as large as desired for each $j=0,1,2$. Let $\rho_i\colon K\to [0,1]$ be a
	partition of unity subordinate to $(U_i)_{i\in [l_0]}$. Set $\ga_0^p:=\ga^p$
	and define a homotopy $[0,1]\times K\to \sr V_{(d,\sig_1,\dots,\sig_m)}$,
	$(s,p)\mapsto (\ga^p_s,\vphi^p)$, by grafting straight segments linearly
	with $s$ onto $\ga^p$ at $t=0,\,u_{i,1}(p)$ and $u_{i,2}(p)$ of lengths
	$L_{i,j}(p)=\rho_{i}(p)a_{i,j}(p)$ ($j=0,1,2$, respectively) for all $i\in
	[l_0]$ and $p\in K$. As before, let $R>0$ be such that the image of $\ga_0^p$
	is contained in $B_R(0)$ for all $p\in K$. By taking the $a_{i,j}$ to be
	sufficiently large, it can be guaranteed that for each $p\in K$ there exists
	$i\in [l_0]$ such that \begin{equation*}
		\big\lan{L_{i,j}(p)e^{i\theta_1^p(u_{i,j})},e^{i\vphi^p}}\big\ran<-2(R+2\pi)\text{\
		for $j=1,2$.} \end{equation*} In words, $\ga^p_1$ ``retrocedes'' by at
	least $2(R+2\pi)$ at $t=u_{i,1}$ and $t=u_{i,2}$, with respect to the axis
	$e^{i\vphi^p}$. Thus if $k_j\in [n_m]$ is such that $u_{i,j}\in J_{k_j}(p)$
	($j=1,2$ and $J_{k}(p)$ as at the beginning of the proof of \lref{L:pulleys}), then
	\begin{equation}\label{E:preliminary} \big\lan
		\ga^p_1\big(t_{2k_j}(p)\big)-\ga^p_1\big(t_{2k_j-2}(p)\big)\,,\,e^{i\vphi^p}\big\ran<-4\pi.
	\end{equation} Here $t_\nu(p)$ is as in \eqref{E:tnu}; note that
	$t_{2k_j-2}(p)<J_{k_j}(p)<t_{2k_j}(p)$. The crucial observation here is that
	\eqref{E:preliminary} implies the existence of $t'\in
	[t_{2k_j-2}(p),t_{2k_j}(p)]$ such that
	$(-1)^j\big(\theta_1^p(t')-\vphi^p\big)>\frac{\pi}{2}$.
	
	Let $\de_0$ be given by as in \lref{L:stretching}\?(i) 
	and apply \lref{L:compressing} to
	$\ga^p_1$, extending the homotopy to $[0,2]\times K$. (This deformation is
	necessary to be able to apply \rref{R:smallerandsmaller} as in the proof of
	\lref{L:stretching}.) Because the subarcs of $\ga^p_1$ which are stretched
	in this homotopy all lie in the interior of some $J_k(p)$, and they are
	stretched in the direction of $i\pm e^{i\vphi^p}$,  the coordinate
	$\gen{\ga^p_s(t),e^{i\vphi^p}}$ is the same for all $s\in [1,2]$ provided
	that $t\nin \bcup_k J_k(p)$. Hence, \eqref{E:preliminary} is valid with $
	\ga_s $ in place of $ \ga_1 $ $ (s\in [1,2]) $. 
	Now take $R'>0$ such that the image of $\ga^p_2$ is contained in
	the open disk $B_{R'}(0)$ for all $p\in K$, and take $C$ as in
	\eqref{E:C}, but replacing $R$ by $R'$. Finally, extend the homotopy to
	$[0,5]\times K$ by repeating the proofs of \lref{L:stretching} and
	\lref{L:pulleys}, with $R'$ in place of $R$. We claim that
	\eqref{E:preliminary} is sufficient to guarantee that $\ga^p_s$ remains
	diffuse when the constructions in \lref{L:stretching} and \lref{L:pulleys}
	are carried out for $s\in [2,5]$. There are three constructions to consider,
	which will be assumed to take place for $s\in [2,3]$, $[3,4]$ and $[4,5]$,
	respectively. The first one, in the proof of \lref{L:stretching}, involves
	stretching subarcs of $\ga^p_2$ in the direction of $\pm e^{i\vphi^p}$; as
	above, this does not affect the validity of \eqref{E:preliminary} since
	$t_{\nu}(p)\nin \bcup_k J_k(p)$ for all even $\nu$. The second, at the
	beginning of the proof of \lref{L:pulleys}, involves flattening each of the
	subarcs $\ga^p_3|_{[t_{\nu-1}(p),t_{\nu}(p)]}$; clearly, this also does not
	affect \eqref{E:preliminary}, because by the definition of flattening,
	$\ga^p_s(t)$ remains constant at the endpoints $t=t_{\nu-1}(p)$ and
	$t_\nu(p)$, as well as outside of $[t_{\nu-1}(p),t_{\nu}(p)]$. The last
	step, near the end of the proof of \lref{L:pulleys}, is to flatten the
	restriction of $\ga^p_4$ to the intervals \eqref{E:Anu}. This may affect
	\eqref{E:preliminary}, but it can still be guaranteed that
	\begin{equation*}
	\big\lan
		\ga^p_s\big(t_{2k_j}(p)\big)-\ga^p_s\big(t_{2k_j-2}(p)\big)\,,\,e^{i\vphi^p}\big\ran<0\text{
		for all $s\in [4,5]$, $j=1,2$,} \end{equation*} because the restriction
	of $\ga^p_4$ to $A_{\nu}(p)\ssm [t_{\nu}(p),t_{\nu+1}(p)]$ has length
	$\frac{\pi}{\ka_0}<2\pi$. Thus, for each $p\in K$ and $s\in [0,5]$, there
	exist $v_1,\,v_2\in [0,1]$ satisfying $
	\theta_s^p(v_1)<\vphi^p_-<\vphi^p_+<\theta_s^p(v_2) $, so that $ \ga_s $ is
	diffuse for all $ s $.  \end{proof}

Given any family $(\ga^p,\vphi^p)\in \sr V_{(c,\sig_1,\dots,\sig_m)}$, $ \sr
V_{(\sig_1,\dots,\sig_m)} $ or $\sr
V_{(d,\sig_1,\dots,\sig_m)}$ indexed by a finite simplicial complex, we have
shown that $\ga^p$ can be continuously deformed to look like a curve $\eta^p$ as
in Figure \ref{F:pulleys}. To finish the proof of \pref{P:good}, it thus
suffices to show that any such family is contractible. This is true because any
$\eta$ as in the figure is essentially determined by $p(\eta,\vphi)=(x,\vphi)$,
where $x=(x_1,\dots,x_n)$ is obtained as indicated there and $n=\abs{\sig_m}$.
To make this more precise, we begin by describing a construction which implies
that each fiber of $p$ is contractible. It will then be shown that $p$ is a
quasifibration.

\begin{cons}\label{C:locconvex} Let $\ga_0,\ga_1\colon [0,1]\to \C$ be two
	regular curves parametrized proportionally to arc-length and
	$\theta_{\ga_j}\colon [0,1]\to \R$ be continuous functions satisfying
	$\exp\big(i\theta_{\ga_j}\big)=\ta_{\ga_j}$ $(j=0,1)$. Let
	$\vtheta_0,\vtheta_1\in \R$ and $\ka_1\in (0,1)$. Suppose that
	$\ga=\ga_0,\ga_1$ satisfies: \begin{enumerate} \item [(i)]
			$\theta_{\ga}(0)=\vtheta_0$ and $\theta_{\ga}(1)=\vtheta_1$; \item
				[(ii)] $\ka_{\ga}\colon [0,1]\to [0,\ka_1]$ is a step function.
		\end{enumerate} Recall that $\dot\theta_\eta=\abs{\dot\eta}\ka_\eta$ for
		any piecewise $C^2$ curve (except at finitely many points). Condition
		(ii) thus implies that $\theta_\ga$ is an increasing, piecewise linear
		function. We shall describe a homotopy $s\mapsto \ga_s$ ($s\in [0,1]$)
		joining $\ga_0$ to $\ga_1$ through regular curves satisfying (i) and
		(ii). The idea is to parametrize both $\ga_j$ by the argument $\theta\in
		[\vtheta_0,\vtheta_1]$ and use convex combinations; this works only if
		both $\theta_{\ga_j}$ are strictly increasing, but an easy adaptation
		also covers the general case. See Figure \ref{F:convex}.
	
\begin{figure}[ht] \begin{center} \includegraphics[scale=.42]{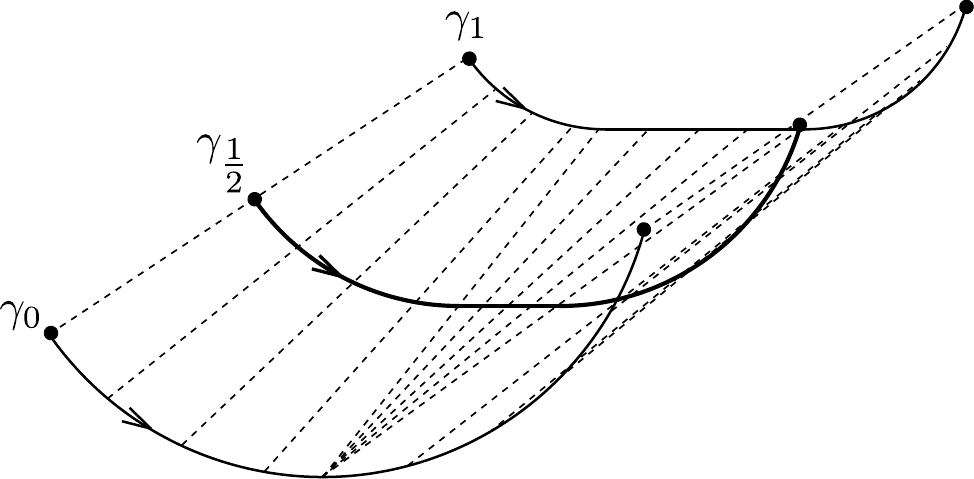}
		\caption{An illustration of \cref{C:locconvex}.} \label{F:convex}
	\end{center} \end{figure}
	
	Let $\se{\al_1<\dots<\al_n}\subs [\vtheta_0,\vtheta_1]$ be the union of the
	set of critical values of $\theta_{\ga_0}$ and $\theta_{\ga_1}$. For each
	$k\in [n]$, let $[a^k_j,b^k_j]\subs [0,1]$ denote the interval
	$\theta_{\ga_j}^{-1}(\se{\al_k})$. Define a reparametrization $\eta_j\colon
	[\vtheta_0,\vtheta_1+n]\to \C$ of $\ga_j$ as follows: The restriction of
	$\eta_j$ to an interval of the form \begin{equation*}\label{E:type1}
		\big[\al_{k-1}+(k-1)\,,\, \al_k+(k-1)\big]\quad (k\in
		[n+1],~\al_0:=\vtheta_0,~ \al_{n+1}:=\vtheta_1) \end{equation*} is the
	reparametrization of $\ga_j|_{[b_j^{k-1},a_j^{k}]}$ by the argument
	$\theta\in [\al_{k-1},\al_k]$, where $b^j_0:=0$ and $a_j^{n+1}:=1$. The
	restriction of $\eta_j$ to an interval of the form
	\begin{equation*}\label{E:type2} [\al_k+(k-1)\,,\, \al_k+k]\quad (k\in [n])
	\end{equation*} is the reparametrization of $\ga_j|_{[a_j^k,b_j^k]}$
	proportional to arc-length. Let $\eta_s\colon [\vtheta_0,\vtheta_1+n]\to \C$
	be given by \begin{equation*}
		\eta_s(t)=(1-s)\eta_0(t)+s\eta_1(t)\quad (s\in [0,1]).  \end{equation*} 

A straightforward computation shows that the radius of curvature
$\rho_s=\frac{1}{\ka_{\ga_s}}$  satisfies \begin{equation*}
	\rho_s=(1-s)\rho_0+s\rho_1\in \big[\tfrac{1}{\ka_1},+\infty\big)\quad (s\in
		[0,1]) \end{equation*} in the interior of intervals of the first type.
	The restriction of $\eta_s$ to an interval of the second type is a
	parametrization of a (possibly degenerate) line segment parallel to
	$e^{i\al_k}$. Thus $\eta_s$ satisfies (i) and (ii). The desired homotopy
	$s\mapsto \ga_s$ is obtained by reparametrizing $\eta_s$  proportionally to
	arc-length. Furthermore: \begin{enumerate} \item [(iii)] If
			$\ga_0(0)=p=\ga_1(0)$, then $\ga_s(0)=p$ for all $s\in [0,1]$;
		similarly at $t=1$.  \item [(iv)]  Let
			$I_s=\theta_{\ga_s}^{-1}(\se{\vtheta_0})$. If $\ga_j|_{I_j}$ is a
			line segment of length $>L$ $(j=0,1)$ and slope $ \varrho $, 
			then $\ga_s|_{I_s}$ is also a
			line segment of length $>L$ and slope $ \varrho $ for all $s\in [0,1]$; similarly for
			$\vartheta_1$.\qed \end{enumerate} \end{cons}

\begin{defn}\label{D:Hd} Let $\sig_1\prec\dots\prec\sig_m$ be sign strings,
	$n=\abs{\sig_m}$ and $\de_j>0$ ($j\in [n]$) satisfy $\de_{j+1}>2\de_{j}$ for
	all $j\in [m-1]$. Define $H_d\subs \R^n$ to consist of all
	$x=(x_1,\dots,x_n)\in \R^n$ such that: \begin{enumerate} \item [(i)] There
			exist $k_1,k_2\in [n]$ such that $\sig_m(k_2)=-\sig_m(k_1)$ and
		$\sig_m(k_i)x_{k_i}>0$ ($i=1,2$).		\item [(ii)] For each $j\in
			[m]$, if $k_1<\dots<k_l$ are all the indices in $[n]$ such that
			$\abs{x_k}< \de_j$ (resp.~$\abs{x_k}\leq 2\de_j$), then $\sig_j$ is
			the reduced string of $\tau\colon [l]\to \se{\pm}$,
			$\tau(i)=\sig_m(k_i)$.

	\end{enumerate} \end{defn} This space is weakly contractible for any choice
	of $\sig_j$, $\de_j$ because it is weakly homotopy equivalent to the space
	\begin{equation*}
	X_{(d,\sig_1,\sig_1,\dots,\sig_m,\sig_m)}
	\end{equation*} described in \dref{D:dnested}; see \pref{P:dnested} and
	\rref{R:strict}. (Here each $ \sig_j $ appears twice because it is involved 
	in two inequalities in (ii), viz., one for $ \de_j $ and the other for  
	$ 2\de_j $.)

\begin{defn}\label{D:Ed} Let $\ka_0\in (\frac{1}{2},1)$, $\sig_1\prec \dots\prec
	\sig_m$ be sign strings, $n=\abs{\sig_m}$ and $\de_j>0$ ($j\in [m]$) satisfy
	$\de_{j+1}>2\de_{j}$ for all $j\in [m-1]$. Define $E_d$ to be the subspace
	of $\sr M(Q)\times R(Q)$ consisting of all $(\ga,\vphi)$ for which there
	exist $0=t_0<\dots<t_{2n+1}=1$ and $(x_1,\dots,x_n)\in H_d$ such that:
\begin{enumerate} \item [(i)] $\ga|_{[t_{2(k-1)},t_{2k-1}]}$ ($k\in [n+1]$) is
		an arc of circle of radius $\geq \frac{1}{\ka_0}$ and amplitude $<\pi$;
	\item [(ii)] $\ga|_{[t_{2k-1},t_{2k}]}$ ($k\in [n]$) is a straight
		line segment of length greater than 8 and \begin{equation*}
			\theta_{\ga}\big([t_{2k-1},t_{2k}]\big)=\se{\vphi_{\sig_m(k)}+x_k}.
	\end{equation*} \end{enumerate} \end{defn} The arcs in condition (i) are
	allowed to be degenerate. Observe that if $(\ga,\vphi)\in E_d$, then $\ga$
	is diffuse and $(\vphi,\de_j)$-quasicritical of type $\sig_j$ for each $j\in
	[m]$ (for $\sig_j$ and $\de_j$ as above). Here $R(Q)$ is the open interval
	described in \dref{D:Vs}.

\begin{lem}\label{L:Ed} The space $E_d$ defined above is weakly contractible.
\end{lem}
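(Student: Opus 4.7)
The strategy is to realize $E_d$ as the total space of a quasifibration over a weakly contractible base with convex fibers, mirroring the proof of Proposition~\ref{P:nested}. Define the projection $p \colon E_d \to H_d \times R(Q)$ by $p(\gamma, \varphi) = (x, \varphi)$, where $x = (x_1,\dots,x_n) \in H_d$ is the angular data from Definition~\ref{D:Ed}. The interval $R(Q)$ is contractible, and $H_d$ is weakly contractible by the remark following Definition~\ref{D:Hd}, which identifies $H_d$ (up to the rescaling permitted by Remark~\ref{R:strict}) with the weakly contractible space $X_{(d,\sigma_1,\sigma_1,\dots,\sigma_m,\sigma_m)}$ of Proposition~\ref{P:dnested}. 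Hence the base $H_d \times R(Q)$ is weakly contractible.

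For each $(x,\varphi)$ in the image of $p$, a curve $\gamma \in p^{-1}(x,\varphi)$ is completely determined by its shape parameters, namely the segment lengths $L_1,\dots,L_n > 8$ and the arc radii $r_1,\dots,r_{n+1} \geq 1/\kappa_0$. Indeed, the tangent directions at the two endpoints of each arc are pinned down by $(x,\varphi)$ together with the boundary data of $Q$, so the $k$-th arc's displacement has the form $r_k v_k$ for a unit vector $v_k = v_k(x,\varphi) \in \C$, while the $k$-th segment contributes $L_k e^{i(\varphi_{\sigma_m(k)}+x_k)}$. The closing condition $\gamma(1)=q$ is then equivalent to two real affine equations in the $2n+1$ shape parameters, so the fiber $p^{-1}(x,\varphi)$ is the intersection of an affine subspace of codimension $2$ with the convex product region $(8,\infty)^n \times [1/\kappa_0,\infty)^{n+1}$. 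This intersection is a convex subset of a finite-dimensional Euclidean space, and therefore contractible whenever non-empty.

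To conclude, we apply Proposition~\ref{P:DolTho} to show that $p$ is a quasifibration. Cover $H_d \times R(Q)$ by open product neighborhoods $U$ so small that the vectors $v_k(x,\varphi)$ and the segment directions vary continuously over $\ol{U}$; on each such $U$, $p^{-1}(U)$ strongly deformation retracts onto any chosen fiber by straight-line interpolation of the shape parameters, together with an affine correction to maintain the endpoint constraint. Hence each $p|_{p^{-1}(U)}$ is a quasifibration with contractible fibers, and the long exact sequence \eqref{E:longexact} yields $\pi_k(E_d) \cong \pi_k(H_d \times R(Q)) = 0$ for all $k$, proving that $E_d$ is weakly contractible. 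The main technical hurdle is verifying surjectivity of $p$: one must show that the affine endpoint system admits solutions with $L_k > 8$ and $r_k \geq 1/\kappa_0$ for every $(x,\varphi) \in H_d \times R(Q)$. This uses the structural hypotheses built into $H_d$ (alternation of $\sigma_m$-signs at critical indices, giving transversality of successive segment directions), together with the freedom to take the $L_k$ arbitrarily large and then adjust the $r_k$ to close the curve.
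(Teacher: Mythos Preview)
Your overall strategy coincides with the paper's: project $E_d\to H_d\times R(Q)$ via $p(\ga,\vphi)=(x,\vphi)$, show the fibers are contractible, verify that $p$ is a quasifibration, and conclude from the long exact sequence. Your description of the fiber as a convex subset of the parameter space $(L_1,\dots,L_n,r_1,\dots,r_{n+1})$ is a clean alternative to the paper's use of Construction~\ref{C:locconvex} (which produces essentially the same straight-line homotopy), and it is correct, modulo the small slip that $v_k$ is not a unit vector but rather has magnitude $2\sin(\abs{\Delta\theta_k}/2)$.

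The genuine gap is in surjectivity, which you correctly identify as the ``main technical hurdle'' but then dispatch with a claim that does not work as stated. You propose to ``take the $L_k$ arbitrarily large and then adjust the $r_k$ to close the curve.'' But for the interior arcs, the chord directions $v_j$ all lie close to $+e^{i\vphi}$ (the tangent rotates from near $\vphi_+$ to near $\vphi_-$ or back, so the chord bisector is near $\vphi$), while the first and last arc chords are also in the half-plane $\gen{\,\cdot\,,e^{i\vphi}}>0$. Thus the $r_j$ alone only push the endpoint \emph{forward}; they cannot correct an arbitrary error. Equally, the segment directions $w_k=\exp\!\big(i(\vphi_{\sig_m(k)}+x_k)\big)$ are all near $\pm ie^{i\vphi}$, so the $L_k$ alone cannot correct an error in the $e^{i\vphi}$-direction. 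What actually makes the system solvable is precisely condition~(i) of Definition~\ref{D:Hd}: it guarantees indices $k_1,k_2$ with $\sig_m(k_1)=\ty{+}$, $x_{k_1}>0$ and $\sig_m(k_2)=\ty{-}$, $x_{k_2}<0$, so that $w_{k_1}$ and $w_{k_2}$ lie strictly in the \emph{backward} half-plane $\gen{\,\cdot\,,e^{i\vphi}}<0$, on opposite sides of $-e^{i\vphi}$. Combined with any forward direction (an interior $v_j$, or---as the paper does---a graft in the direction $e^{i\vphi}$ at a point where $\ta_\eta=e^{i\vphi}$), these three vectors contain $0$ in the interior of their convex hull, so any $q$ is reachable with all coefficients as large as desired. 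Your phrase ``alternation of $\sig_m$-signs at critical indices'' gestures at this but does not make it precise; the mere fact that $\sig_m$ alternates is not enough, and the diffuse condition~(i) is doing the real work. The paper carries out this closing argument explicitly and uses it to build continuous local sections over small balls, which then yields the quasifibration property; your ``straight-line interpolation with an affine correction'' is too vague to substitute for this.
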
 \begin{proof} Let $p\colon E_d\to H_d\times R(Q)$ be given by
	$p(\ga,\vphi)=(x,\vphi)$, where $x=(x_1,\dots,x_n)$ is as in condition (ii)
	of \dref{D:Ed}. Fix $(x,\vphi)$ and $(\ga_0,\vphi)\in p^{-1}(x,\vphi)$; let
	$t_0'=0<\dots<t_{2n+1}'=1$ be as in \dref{D:Ed} for $(\ga_0,\vphi)$.
	Given  $\ga=\ga_1\in p^{-1}(x,\vphi)$ and $t_0<\dots<t_{2n+1}$ as above,
	apply \cref{C:locconvex} to the restrictions $\ga|_{[t_{\nu-1},t_\nu]}$ and
	$\ga|_{[t'_{\nu-1},t'_\nu]}$ for each $\nu\in [2n+1]$ to obtain a homotopy
	$s\mapsto \ga_s$ joining $\ga_0$ to $\ga_1$. The validity of (iii) and (iv)
	of \cref{C:locconvex} guarantees that $(\ga_s,\vphi)\in E_d$ for all $s\in
	[0,1]$.  Therefore, the fiber $p^{-1}(x,\vphi)$ is either contractible or
	empty, for any $(x,\vphi)\in H_d\times R(Q)$.
	
	For $x=(x_1,\dots,x_n)\in H_d$, let \begin{alignat*}{9}
		\eps_0(x)&=\min\set{\abs{x_k}}{\sig_m(k)x_k>0,~k\in [n]}, \\
		\eps_j(x)&=\min\set{\de_j-\abs{x_k}}{\abs{x_k}<\de_j,~k\in [n]}\quad
		(j\in [m]), \\ \eps(x)&=\min\se{\eps_0(x),\dots,\eps_m(x)}.
	\end{alignat*} Then the open ball $B_\eps(x)$ is convex and $B_\eps(x)\subs
	H_d$ for any $\eps\in (0,\eps(x))$. We claim that $p$ has a section over
	$B_\eps(x)\times R(Q)\subs H_d\times R(Q)$ for any $x\in H_d$ and $\eps\in
	(0,\eps(x))$. In particular, $p$ is surjective. Together with
	contractibility of the fibers and \pref{P:DolTho}, this will imply that $p$
	is a quasifibration, and hence that $E_d$ is weakly contractible.
	
	Let $x\in H_d$ and $\eps\in (0,\eps(x))$. For each $y=(y_1,\dots,y_n)\in
	B_\eps(x)$, consider the (unique) curve $\eta^y\colon [0,1]\to \C$ of the
	form  
	\begin{equation*}
	c\underbrace{lc\dots lc}_{n} \end{equation*}
	such that each arc of circle has radius $ \frac{1}{\ka_0} $,
	$\Phi_{\eta^y}(0)=(0,1)\in \C\times \Ss^1$, $\ta_{\eta^y}(1)=z$ and
	$\theta_{\eta^y}= \vphi_{\sig_m(k)}+y_k$ over the $k$-th line segment, which
	we set to be of length 10 for all $k\in [n]$. Then $(\eta^y,\vphi)$
	satisfies all of the conditions required of elements of $E_d$, except that
	$\eta^y(1)$ may not agree with $q\in \C$ as it should. 
	
	To correct this, choose $k_1,\,k_2\in [n]$ such that
	\begin{equation*}
	\sig_m(k_1)=\ty{+},\
		x_{k_1}>0,~\sig_m(k_2)=\ty{-},\ x_{k_2}<0; \end{equation*} such indices
	exist by condition (i) in the definition of $H_d$. Moreover, by the choice
	of $\eps$, $y_{k_1}>0$ and $y_{k_2}<0$ for any $y\in B_\eps(x)$. Let $t
	\colon B_\eps(x)\to [0,1]$ be a continuous function such that
	$\ta_{\eta^y}(t(y))=e^{i\vphi}$. Then a section $(y,\vphi)\mapsto
	(\ga^y,\vphi)$ of $p$ over
	$B_\eps(x)\times R(Q)$ can be obtained by increasing the length of the $k$-th line
	segment to $l_{k}\geq 10$ for $k=k_1,k_2$ and grafting a straight line
	segment of length $l_0\geq 0$ at $t(y)$. More precisely, 
	the origin $0\in \C$ lies in the interior of the triangle whose vertices are
	$e^{i\vphi}$, $ie^{i(\vphi+y_{k_1})}$ and $-ie^{i(\vphi+y_{k_2})}$.
	Therefore,  any complex number can be written as
	\begin{equation*}
		a_0e^{i\vphi}+a_1ie^{i(\vphi+y_{k_1})}-a_2ie^{i(\vphi+y_{k_2})}\ \
		\text{for some $a_0,a_1,a_2>10$.} \end{equation*} Consequently the
	lengths $l_0,l_{k_1},l_{k_2}$ can be (continuously) chosen to achieve that
	$\ga^y(1)=q$.  \end{proof}

Next we establish a version of \lref{L:Ed} for condensed curves, beginning with
the following.

\begin{lem}\label{L:condensedaxes} Suppose that $(\ga,\vphi)\in \sr
	V_{(c,\sig)}\subs \sr N(Q)$ for some sign string $\sig$. Then there exists a
	critical curve $\eta\in \sr M(Q)$ of type $\sig$ for which
	$\bar\vphi^\eta=\vphi$ \tup(with $\bar\vphi^{\eta}$ as defined in
	\eqref{E:average}\tup).  \end{lem}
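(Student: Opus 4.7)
The plan is to reduce to a concrete finite-dimensional family of model curves and then open up the arcs to their extreme amplitudes. First I would apply the stretching procedure of Lemmas~\lref{L:stretching} and~\lref{L:pulleys} to the one-point family $\{\ast\}\to \sr V_{(c,\sig)}$, $\ast\mapsto(\ga,\vphi)$, obtaining a deformation inside $\sr V_{(c,\sig)}$ (with the $\vphi$-coordinate held fixed) that ends at a curve $\eta_0\in\sr M(Q)$ of the form $c\underbrace{lc\cdots lc}_{n}$, where $n=|\sig|$. Every arc has radius $\frac{1}{\ka_0}$ for some $\ka_0\in(\frac{1}{2},1)$, every line segment has length greater than $8$, and the $k$-th segment has direction $e^{i(\vphi_{\sig(k)}+x_k)}$. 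Since $\eta_0$ is condensed, $\theta_{\eta_0}$ stays inside $(\vphi_-,\vphi_+)$, which forces $\sig(k)x_k<0$ for every $k\in[n]$.

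Next, I would deform $\eta_0$ through $\sr M(Q)$ by continuously pushing each $x_k$ to $0$. Consider the parameter space of curves of the shape $c(lc)^n$ with the same curvature sign pattern as $\eta_0$, given by slopes $\psi\in\R^n$, arc radii $r\in[\frac{1}{\ka_0},\infty)^{n+1}$, and segment lengths $l\in\R_{>0}^n$, the arcs being the ones smoothly connecting successive slopes. The total turning is automatically $\theta_1$, so the only constraint for the resulting curve to lie in $\sr M(Q)$ is the two-component endpoint equation $\eta(1)=q$; for $n\geq 2$ this cuts out a smooth submanifold of codimension $2$ in $\R^{2n+1}$. Setting $\psi_k(s)=\vphi_{\sig(k)}+(1-s)x_k$ for $s\in[0,1]$ and starting at the point corresponding to $\eta_0$, the implicit function theorem produces a continuous selection $(l(s),r(s))$ for which the associated curve $\eta_s$ lies in $\sr M(Q)$. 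At $s=1$ the $k$-th segment has direction exactly $e^{i\vphi_{\sig(k)}}$, the intermediate arcs become semicircles of amplitude $\pi$, and $\theta_{\eta_1}$ attains $\vphi_+$ and $\vphi_-$ alternately along the $n$ segments in the order prescribed by $\sig$, while staying in $[\vphi_-,\vphi_+]$ along the arcs. Hence $\eta_1$ has amplitude exactly $\pi$ and is critical of type $\sig$ with $\bar\vphi^{\eta_1}=\vphi$, which is the required conclusion.

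The main obstacle is showing that the implicit function theorem applies \emph{globally}, so the path $(l(s),r(s))$ actually extends all the way to $s=1$ and not merely locally. This reduces to verifying that the Jacobian $\partial\eta(1)/\partial(l,r)$ has full rank $2$ at every $s\in[0,1]$. The columns $\partial\eta(1)/\partial l_k=e^{i\psi_k}$ alone degenerate in the limit to the two antiparallel directions $\pm ie^{i\vphi}$, so one must use that the columns $\partial\eta(1)/\partial r_i$ contribute directions transverse to $ie^{i\vphi}$; a direct computation from the displacement formula for a circular arc shows that, for example, $\partial\eta(1)/\partial r_0$ always has a nonzero component along $e^{i\vphi}$, so the combined Jacobian has rank $2$ throughout $[0,1]$, closing the argument.
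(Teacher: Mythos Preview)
Your approach is far more elaborate than the paper's. The paper dispatches the lemma in one line: since each $\ga|_{I_k}$ is stretchable with respect to $\vphi_{\sig(k)}$, one simply replaces these arcs (keeping their endpoints and end-tangents fixed) by curves whose argument actually attains $\vphi_{\sig(k)}$ while staying inside $[\vphi_-,\vphi_+]$; outside the $I_k$ nothing is touched, so the resulting $\eta$ has $\theta_\eta([0,1])\subs[\vphi_-,\vphi_+]$, hits $\vphi_{\sig(k)}$ exactly once per $J_k$, and is therefore critical of type $\sig$ with $\bar\vphi^\eta=\vphi$. No normal form, no finite-dimensional parameter space, no implicit function theorem --- the stretchability hypothesis in \dref{D:quasicritical}(iii) is precisely what makes this local modification possible (cf.~\rref{R:tangent}).

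Your route also has a genuine gap. Full rank of $\partial\eta(1)/\partial(l,r)$ at every $s\in[0,1]$ only tells you that the constraint set $\{\eta(1)=q\}$ is a smooth submanifold and that the projection to the $\psi$-slice is a submersion; it does \emph{not} guarantee that a local lift $(l(s),r(s))$ extends all the way to $s=1$. A submersion need not be a fibration, and the lifted path could leave the admissible region --- some $l_k(s)\to 0$ or $\to\infty$, or some $r_i(s)$ hitting $\frac{1}{\ka_0}$ or diverging --- at some $s_0<1$. To close the argument you would need a priori bounds keeping $(l(s),r(s))$ in a compact subset of the open parameter domain, and nothing in your Jacobian computation supplies them. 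This is not merely a technicality: as $s\to 1$ the segment directions collapse to $\pm ie^{i\vphi}$, so their contribution to $\lan\eta(1),e^{i\vphi}\ran$ vanishes and all of that component must be absorbed by the arcs, which could force some radii to grow without bound.
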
 \begin{proof} Let $n=\abs{\sig}$,
	$J_1<\dots<J_n$ and $I_k\subs J_k$ be as in \dref{D:quasicritical}. Deform
	each $\ga|_{I_k}$ to obtain a curve $\eta$ such that for each $k\in [n]$,
	$\theta_{\eta}(t_k)=\vphi_{\sig(k)}$ for at least one $t_k\in I_k$, but
	$\theta_\eta([0,1])\subs [\vphi_-,\vphi_+]$ still holds.
\end{proof}

\begin{defn}\label{D:Hc} Let $\sig_1\prec\dots\prec\sig_m$ be sign strings,
	$n=\abs{\sig_m}$ and $\de_j>0$ ($j\in [m]$) satisfy $\de_{j+1}>2\de_{j}$ for
	all $j\in [m-1]$. Define $H_c\subs \R^n$ to consist of all
	$x=(x_1,\dots,x_n)\in \R^n$ such that: \begin{enumerate} \item [(i)]
			$\sig_m(k)x_k< 0$ for each $k\in [n]$; \item [(ii)] For each $j\in
				[m]$, if $k_1<\dots<k_l$ are all the indices in $[n]$ such that
				$\abs{x_k}< \de_j$ (resp.~$\abs{x_k}\leq 2\de_j$), then $\sig_j$
				is the reduced string of $\tau\colon [l]\to \se{\pm}$,
				$\tau(i)=\sig_m(k_i)$.  \end{enumerate} \end{defn} Again, $H_c$
is weakly contractible for any choice of $\sig_j$, $\de_j$ by \pref{P:nested}
and \rref{R:strict}, since it has the same weak homotopy type as
$ X_{(\sig_1,\sig_1,\dots,\sig_m,\sig_m)}$.

\begin{defn}\label{D:Ec} Let $\sig_1\prec\dots\prec\sig_m$ be sign strings,
	$n=\abs{\sig_m}$ and $\de_j>0$ ($j\in [m]$) satisfy $\de_{j+1}>2\de_{j}$ for
	all $j\in [m-1]$. Let $J(Q)$ denote the open interval consisting of all
	$\vphi\in \R$ such that $\sr M(Q)$ contains critical curves $\eta$ of type
	$\sig_m$ with $\bar\vphi^\eta=\vphi$ (cf.~\cite{SalZueh1}, Corollary 5.7).
	For $S$ a closed subinterval of $J(Q)$, define  $E_c\subs \sr M(Q)\times S$
	as in \dref{D:Ed}, replacing $R(Q)$ by $S$ and $H_d$ by $H_c$.  \end{defn}

\begin{lem}\label{L:Ec} Let $S$ be a closed subinterval of $J(Q)$. Then for all
	sufficiently small $\de_m>0$, the space $E_c$ defined above is weakly
	contractible.  \end{lem}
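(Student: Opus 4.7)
The plan is to mirror the proof of \lref{L:Ed} using the projection $p\colon E_c\to H_c\times S$, $p(\ga,\vphi)=(x,\vphi)$, with $x$ as in \dref{D:Ed}. The factor $H_c$ is a strict-inequality variant of $X_{(\sig_1,\sig_1,\dots,\sig_m,\sig_m)}$ and hence weakly contractible by \pref{P:nested} and \rref{R:strict}, so the base $H_c\times S$ is weakly contractible. The fibers of $p$ are contractible by the same argument as in \lref{L:Ed}: two curves in a common fiber share their $n$ line segments, and \cref{C:locconvex} interpolates the $n+1$ intervening arcs. In view of \pref{P:DolTho}, it remains only to exhibit a continuous local section of $p$ over a neighborhood of each $(x_0,\vphi_0)\in H_c\times S$.

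Since $S\subs J(Q)$, Corollary 5.7 of \cite{SalZueh1} yields a continuous family $\vphi\mapsto \eta^\vphi$ of critical curves of type $\sig_m$ in $\sr M(Q)$ with $\bar\vphi^{\eta^\vphi}=\vphi$. An argument analogous to that of \lref{L:multiple}, combined with the flattening of \S\ref{S:stretching}, deforms each $\eta^\vphi$ into a curve $\bar\ga^\vphi$ of the form $c\,lc\cdots lc$ whose $k$-th line segment has tangent direction $e^{i\vphi_{\sig_m(k)}}$. For $\de_m$ sufficiently small and $(x,\vphi)$ near $(x_0,\vphi_0)$, a localized modification rounding off each critical corner by a small amount\,---\,permissible since $\sig_m(k)x_k<0$ corresponds to a small retreat from the extreme direction\,---\,produces a candidate curve $\eta^{(x,\vphi)}$ of the form $c\,lc\cdots lc$ whose $k$-th segment has tangent $e^{i(\vphi_{\sig_m(k)}+x_k)}$, with continuous dependence on $(x,\vphi)$.

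Generically $\eta^{(x,\vphi)}(1)\neq q$, and the endpoint must be compensated. Pick $k_1,k_2\in[n]$ with $\sig_m(k_1)=\ty{+}$, $\sig_m(k_2)=\ty{-}$ (possible since $\abs{\sig_m}\geq 2$), and an arc index $i_0$ between them. Variations of the two segment lengths $l_{k_1},l_{k_2}$ displace the endpoint in directions close to $\pm ie^{i\vphi}$, which are nearly antipodal and so do not span $\C$; however, varying additionally the radius $r_{i_0}$ of the intervening arc (of amplitude close to $\pi$) contributes a displacement in a direction close to $-e^{i\vphi}$, transverse to the segment directions. The three endpoint-displacement vectors therefore span $\C$ and positively surround the origin, so the implicit function theorem lets us solve continuously for two of the parameters $(l_{k_1},l_{k_2},r_{i_0})$ in terms of $(x,\vphi)$ so that $\ga^{(x,\vphi)}(1)=q$, with all resulting lengths positive, $r_{i_0}\geq \frac{1}{\ka_0}$, and arc amplitudes still $<\pi$ when $\de_m$ is small enough. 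Compactness of $S$ yields a uniform such $\de_m$. Applying \pref{P:DolTho} to the open cover of $H_c\times S$ by domains of local sections shows $p$ is a quasifibration, and the long exact sequence \eqref{E:longexact} together with weak contractibility of base and fibers delivers the weak contractibility of $E_c$.

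The main obstacle is the precise implementation of the critical-corner rounding so that $\eta^{(x,\vphi)}$ depends continuously on $(x,\vphi)$ and stays of the required form, together with the verification that the three compensating displacement vectors span $\C$ and keep the parameters admissible uniformly on a neighborhood of $(x_0,\vphi_0)$. Both become delicate when some $x_{0,k}$ is near zero, since then the corresponding corner is nearly critical; this regime is exactly what forces $\de_m$ to be chosen small, explaining the smallness hypothesis in the statement.
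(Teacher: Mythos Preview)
Your overall framework matches the paper's: the quasifibration $p\colon E_c \to H_c \times S$, contractibility of fibers via \cref{C:locconvex}, weak contractibility of $H_c$ via \pref{P:nested} and \rref{R:strict}, and local sections combined with \pref{P:DolTho}. The divergence, and the gap, is in how the local section is built.

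The paper's construction is far more direct than yours. It starts not from deformed critical curves but from the same explicit model curves $\eta^y$ used in \lref{L:Ed}: all arcs of radius $\tfrac{1}{\ka_0}$, all segments of length $10$. The endpoint correction then uses only \emph{two} segment lengths $l_{k_1},l_{k_2}$, with $\sig_m(k_1)=\ty{-}$, $\sig_m(k_2)=\ty{+}$ and $\abs{y_{k_i}}<\de_1$ (such indices exist by condition (ii) of \dref{D:Hc} for $j=1$). The point you miss is that the condensed constraint $\sig_m(k)y_k<0$ tilts \emph{both} of these segment directions toward $e^{i\vphi}$, so their positive cone opens in the $e^{i\vphi}$ direction and is nondegenerate. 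The hypothesis $\vphi\in S\subs J(Q)$ enters exactly here: by Proposition~5.3 of \cite{SalZueh1}, $\vphi\in J(Q)$ means that $q$ lies to the right of a certain line $T_\vphi$ of direction $ie^{i\vphi}$, and compactness of $S$ gives a uniform positive distance. For $\de_m$ small enough this places $q-\eta^y(1)$ inside the positive cone of the two segment directions (and outside a small triangle near its apex), so $q$ is reached simply by \emph{increasing} $l_{k_1}$ and $l_{k_2}$.

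Your assertion that the two segment directions ``are nearly antipodal and so do not span $\C$'' is misleading: they always span unless exactly antipodal, and in the condensed case the tilt is precisely such that positive increments suffice. Your compensating third parameter $r_{i_0}$ is therefore unnecessary and brings its own problem: the constraint $r_{i_0}\geq\tfrac{1}{\ka_0}$ is one-sided, and your candidate curves (obtained by flattening actual critical curves) carry no guaranteed slack in their arc radii, so you have not verified that the implicit-function solution stays admissible. The detour through Corollary~5.7 of \cite{SalZueh1} and a \lref{L:multiple}-style deformation is also needless and makes continuous dependence on $(x,\vphi)$ harder to establish. In short, the architecture of your proof is correct, but the section construction is overcomplicated and contains a genuine admissibility gap; the paper's two-length argument, driven by the geometric meaning of $J(Q)$, closes it cleanly and is where the smallness of $\de_m$ really does its work.
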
 \begin{proof} It was established in the proof of
	Proposition 5.3 of \cite{SalZueh1} that there exists a critical curve
	$\eta\in \sr M(Q)$ of type $\sig_m$ with $\bar\vphi^\eta=\vphi$ if and only
	if $\vphi\in \ol{R(Q)}$ and $q$ lies in the open region to the right of the
	tangent $T_\vphi$ of direction $ie^{i\vphi}$ to a certain circle. The set of
	all such $\vphi$ is the open interval $J(Q)$, and if $S\subs J(Q)$ is a
	closed interval, then there exists a positive lower bound for the distance from $q$
	to $T_\vphi$ for $\vphi\in S$.
	
	The proof of the present lemma is analogous to that of \lref{L:Ed} except
	for the last paragraph. Retaining the notation used there, choose
	$k_1,k_2\in [n]$ such that $\sig_m(k_1)=\ty{-}$, $\sig_m(k_2)=\ty{+}$ and
	$\abs{y_{k_i}}<\de_1$ ($i=1,2$) for all $y\in B_\eps(x)$, where
	$\eps<\eps(x)=\min\se{\eps_1(x),\dots,\eps_m(x)}$ (and $ \eps_0(x) $ is now 
	undefined). By the preceding
	observations,
	if $\de_m>0$ is sufficiently small, then $q$ lies to the right of  the line
	through $\eta^y(1)$ of direction $ie^{i\vphi}$. By further reducing
	$\de_m>0$ if necessary, it can be guaranteed that $q$ lies in the cone with
	vertex at $\eta^y(1)$ and sides parallel to \begin{equation*}
		i\exp\big(i(\vphi+y_{k_1})\big)\text{\ \ and\ \
		}-i\exp\big(i(\vphi+y_{k_2})\big), \end{equation*} but does not lie in
	the triangle with vertices \begin{equation*}
	\eta^y(1),\ \
		\eta^y(1)+10i\exp\big(i(\vphi+y_{k_1})\big)\text{\ \ and\ \
		}\eta^y(1)-10i\exp\big(i(\vphi+y_{k_2})\big) \end{equation*} for any
	$\vphi\in S$,~$y\in B_\eps(x)$. This implies that $q$ can be written as
	\begin{equation*}
		\eta^y(1)+a_1ie^{i(\vphi+y_{k_1})}-a_2ie^{(\vphi+y_{k_2})}\ \ \text{for some
		$a_1,a_2>10$.} \end{equation*} A section $(y,\vphi)\mapsto (\ga^y,\vphi)$
	for $p$ over $B_\eps(x)\times S$ can thus be obtained by increasing the lengths
	$l_{k_1},l_{k_2}$ of the line segments of $\eta^y$ to ensure that
	$\ga^y(1)=q$.  \end{proof}

The proof of \pref{P:good} is obtained by assembling the results of this
section.

\begin{proof}[Proof of \pref{P:good}] It suffices to show that each of $\sr
	V_{(\sig_1,\dots,\sig_m)},~\sr V_{(c,\sig_1,\dots,\sig_m)}$ and $\sr
	V_{(d,\sig_1,\dots,\sig_m)}$ is weakly contractible. By \lref{L:diffusing},
	the case of $\sr V_{(\sig_1,\dots,\sig_m)}$ can be reduced to that of $\sr
	V_{(d,\sig_1,\dots,\sig_m)}$. Let $k\geq 0$ and $g\colon \Ss^k\to \sr V$,
	$g(p)=(\ga^p,\vphi^p)$, be a continuous map, where $\sr V=\sr
	V_{(c,\sig_1,\dots,\sig_m)}$ or $\sr V=\sr V_{(d,\sig_1,\dots,\sig_m)}$. 
	
	In the former case, let $S=\set{\vphi^p\in \R}{p\in \Ss^k}$. By
	\lref{L:condensedaxes}, $S$ is a closed subinterval of $J(Q)$. By
	\lref{L:stretching} and \lref{L:pulleys}, $g$ can be deformed within $\sr
	V_{(c,\sig_1,\dots,\sig_m)}$ to have image contained in $E_c$, with
	$\de_m>0$ as small as desired. Hence $g$ is nullhomotopic by \lref{L:Ec}.
	
	In the latter case, \lref{L:stretchingd} and
	\lref{L:Ed} immediately imply that $g$ is nullhomotopic.  \end{proof}

\begin{cor}\label{C:main} Let $\tau$ be a top sign string for $\sr M(Q)$ and
	$n=\abs{\tau}$. If there exist critical curves of type $-\tau$ in $\sr
	M(Q)$, then $\sr N(Q)\home \E\times \Ss^{n-1}$. Otherwise $\sr N(Q)\home
	\E$, for $\E$ the separable Hilbert space.  \end{cor}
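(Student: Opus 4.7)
The plan is to assemble the two main technical results already established, namely \pref{P:combinatorics} and \pref{P:good}, via the homotopy-theoretic corollaries \cref{C:goodcovers} and \cref{C:goodcovers1}. First I would observe that $\sr N(Q)$ is an open subset of $\sr M(Q)\times \R$, and since $\sr M(Q)$ is a separable Hilbert manifold (being a closed submanifold of $\E$), so is $\sr N(Q)$. Thus the hypothesis of Hilbert manifold in the corollaries is fulfilled.

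Next, by \pref{P:good}, every finite nonempty intersection of sets in the cover $\fr V=\{\sr V_c,\sr V_d,\sr V_\sig\}$ of $\sr N(Q)$ is contractible, so $\fr V$ is a good cover. By \pref{P:combinatorics}, the bijection \eqref{E:combinatorics} provides a combinatorial equivalence between $\fr V$ and either (a) the cover $\fr U$ of $\Ss^{n-1}$ by the $2n$ open hemispheres $U_{\pm k}$, if $\sr M(Q)$ contains critical curves of type $-\tau$, or (b) the cover $\fr U\ssm\{U_{-n}\}$ of the punctured sphere $\Ss^{n-1}\ssm\{(0,\dots,0,-1)\}$, if it does not.

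In case (a), one checks routinely that $\fr U$ is a good cover: any $k$-fold intersection of distinct $U_{\pm k_i}$ is nonempty iff no pair of opposite hemispheres appears, and in that case the intersection is the intersection of $\Ss^{n-1}$ with an open orthant in some coordinate subspace, hence star-shaped with respect to a suitable point and therefore contractible. Thus \cref{C:goodcovers1} applied to the finite-dimensional manifold $N=\Ss^{n-1}$ yields $\sr N(Q)\home \E\times \Ss^{n-1}$.

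In case (b), the same verification shows that $\fr U\ssm\{U_{-n}\}$ is a good cover of $\Ss^{n-1}\ssm\{(0,\dots,0,-1)\}\home \R^{n-1}$; applying \cref{C:goodcovers1} with $N=\R^{n-1}$ gives $\sr N(Q)\home \E\times \R^{n-1}\home \E$ (the last homeomorphism coming from $\E\iso \E\oplus \R^{n-1}$ as separable Hilbert spaces). This completes the proof. No step is a serious obstacle---the entire content is packaged in the preceding two propositions, which have already been proved; the only things to check here are the triviality of the good-cover verification for $\fr U$ and the identification of $\Ss^{n-1}\ssm\{\text{pt}\}$ with $\R^{n-1}$.
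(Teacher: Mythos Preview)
Your proof is correct and follows exactly the paper's own approach: the paper's proof consists of the single sentence ``Immediate from \cref{C:goodcovers1}, \pref{P:combinatorics} and \pref{P:good},'' and you have simply unpacked that sentence in detail. The only minor remark is that \pref{P:good} as stated addresses intersections involving at least one $\sr V_\sig$; the contractibility of $\sr V_c$ and $\sr V_d$ themselves follows separately (e.g.\ $\sr V_d=\sr U_d\times R(Q)$ with both factors contractible, and $\sr V_c$ fibers over $\sr U_c$ with interval fibers), but this is routine.
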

Observe that nothing is being asserted yet about the topology of $\sr M(Q)$.
\begin{proof} Immediate from \cref{C:goodcovers1}, \pref{P:combinatorics} and
	\pref{P:good}.  \end{proof}


\section{Homotopy equivalence of $\sr M(Q)$ and a
sphere}\label{S:generator}

\begin{lem}\label{L:homotopyequiv} Suppose that $\pm \tau$ are both top sign
	strings for $\sr M(Q)$, where $\abs{\tau}=n$. If $f\colon \Ss^{n-1}\to \sr
	M(Q)$ and $g\colon \sr M(Q)\to \Ss^{n-1}$ satisfy $\deg(gf)=1$, then $f$ and
	$g$ are homotopy equivalences. In particular, $\sr M(Q)$ is homeomorphic to
	$\E\times \Ss^{n-1}$ and $f$ represents a generator of $\pi_{n-1}\sr M(Q)$.
\end{lem}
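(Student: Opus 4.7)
The plan is to combine \cref{C:main} with Lemmas \lref{L:N=M} and \lref{L:Moore} to pin down the homotopy type of $\sr M(Q)$, and then use the degree-one hypothesis to rule out the trivial case.

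Since both $\pm\tau$ are top sign strings with $\abs{\tau}=n$, the space $\sr M(Q)$ in particular contains critical curves of type $-\tau$, so \cref{C:main} gives $\sr N(Q)\home \E\times \Ss^{n-1}$, which is homotopy equivalent to $\Ss^{n-1}$. \lref{L:N=M} says that $\pr\colon \sr N(Q)\to \sr M(Q)$ lifts every map from a compact space, so \lref{L:Moore} applies: $\sr M(Q)$ is either weakly contractible or a homology $(n-1)$-sphere. The hypothesis $\deg(gf)=1$ excludes weak contractibility, because on $H_{n-1}$ one would then have $0 = g_\ast f_\ast = (gf)_\ast = \mathrm{id}_{\Z}$. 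Hence $H_{n-1}(\sr M(Q))\iso \Z$.

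Next I would upgrade this to a genuine homotopy equivalence $\sr M(Q)\iso \Ss^{n-1}$ by exploiting the fact (used in \cref{C:contractible}) that \lref{L:N=M} also forces $\pr_\ast\colon \pi_k(\sr N(Q))\to \pi_k(\sr M(Q))$ to be surjective for every $k\geq 0$. For $n\geq 3$ this gives $\pi_1(\sr M(Q))=0$; since $\sr M(Q)$ is a Hilbert manifold, it has the homotopy type of a CW complex, and Hurewicz together with Whitehead then imply $\sr M(Q)\iso \Ss^{n-1}$. For $n=2$, $\pi_1(\sr M(Q))$ is a quotient of $\pi_1(\sr N(Q))=\Z$ whose abelianization is $H_1(\sr M(Q))\iso \Z$, so $\pi_1(\sr M(Q))=\Z$; the same surjection kills $\pi_i(\sr M(Q))$ for $i\geq 2$, whence $\sr M(Q)$ is a $K(\Z,1)$ and $\sr M(Q)\iso \Ss^1$.

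Since $\sr M(Q)$ and $\E\times \Ss^{n-1}$ are homotopy-equivalent Hilbert manifolds, Lemma~1.7(b) of \cite{SalZueh1} upgrades this to the homeomorphism $\sr M(Q)\home \E\times \Ss^{n-1}$. To finish, I would choose any homotopy equivalence $h\colon \sr M(Q)\to \Ss^{n-1}$ and note that $hf$ and $gh^{-1}$ are self-maps of $\Ss^{n-1}$ whose degrees multiply to $\deg(gf)=1$; each therefore has degree $\pm 1$ and is a self-homotopy equivalence of $\Ss^{n-1}$. Hence $f=h^{-1}\circ (hf)$ and $g=(gh^{-1})\circ h$ are homotopy equivalences, and $f_\ast$ sends a generator of $\pi_{n-1}\Ss^{n-1}$ to $[f]$, which therefore generates $\pi_{n-1}\sr M(Q)\iso \Z$. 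The main delicate point I foresee is the $n=2$ case, where the Moore-space conclusion of \lref{L:Moore} must be carefully combined with the $\pi_1$-surjection to rule out fundamental-group torsion; everything else is a direct assembly of the results already established.
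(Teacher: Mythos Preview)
Your argument is correct and follows essentially the same route as the paper: invoke \cref{C:main}, \lref{L:N=M} and \lref{L:Moore} to reduce to ``weakly contractible or homology $(n-1)$-sphere'', use $\deg(gf)=1$ to exclude the first alternative, split into $n=2$ and $n\geq 3$, and then upgrade via Lemma~1.7 of \cite{SalZueh1}. The only minor differences are organizational. For $n\geq 3$ the paper passes to mapping cylinders and applies relative Hurewicz to $f$ and $g$ directly, whereas you first establish $\sr M(Q)\iso \Ss^{n-1}$ abstractly and then run a degree argument with an auxiliary $h$; both arguments need $\pi_1\sr M(Q)=0$, and your explicit justification via the $\pi_1$-surjectivity coming from \lref{L:N=M} is exactly what the paper is tacitly using. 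For $n=2$ the paper likewise uses \lref{L:N=M} to kill the higher homotopy groups, while your observation that $\pi_1\sr M(Q)$ is a quotient of $\Z$ with abelianization $\Z$ makes the identification $\pi_1\sr M(Q)=\Z$ more explicit than in the paper.
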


\begin{proof} According to \lref{L:N=M}, \lref{L:Moore} and \cref{C:main}, under
	the present hypothesis $\sr M(Q)$ is either weakly contractible or a
	homology sphere of dimension $n-1$. The fact that $\deg(gf)=1$ implies that
	the latter must hold, and that $f$ and $g$ induce isomorphisms on all
	(co)homology groups. 

When $n=2$,  it follows directly from \lref{L:N=M} that all higher homotopy
groups of $\sr M(Q)$ are trivial, so that $f$ and $g$ are weak homotopy
equivalences. 

When $n>2$, $\sr M(Q)$ and $\Ss^{n-1}$ are simply-connected. Passing to mapping
cylinders and applying the relative version of the Hurewicz theorem, we again
conclude that $f$ and $g$ induce isomorphisms on all homotopy groups.

Thus $ \sr M(Q) $ is weakly homotopy equivalent to $ \Ss^{n-1} \iso
\E\times \Ss^{n-1} $. Since a weak homotopy equivalence between Hilbert manifolds is 
homotopic to a homeomorphism (\cite{SalZueh1}, Lemma 1.7), $ \sr M(Q) $ is
actually homeomorphic to $ \E\times \Ss^{n-1} $.  \end{proof}

Our next objective is to show that (under the hypothesis of the lemma) such $f$
and $g$ always exist. In fact, they can be constructed explicitly. 

Briefly, the map $g$ defined below measures the extent to which curves in $\sr
M(Q)$ fail to be critical of type $\tau$. Its definition is a slight variation
of that of the map $h$ in \dref{D:h}; cf.~also Figure \ref{F:pulleys}. 

\begin{cons}\label{C:g} Let $\sr U_\tau\subs \sr M(Q)$ consist of all curves
	which are $(\bar{\vphi}^\ga,\eps)$-quasicritical of type $\tau$ for some
	$\eps\in(0,\frac{\pi}{4})$, where $\bar\vphi^\ga$ is given by
	\eqref{E:average}. Then $\sr U_\tau$ is an open subset of $\sr M(Q)$
	containing the set $\sr C_\tau$ of all critical curves of type $\tau$ by
	\lref{L:criticalisquasi}.
	Moreover, $\sr C_\tau$ is closed in $\sr M(Q)$; here the hypothesis that
	$\tau$ is a \tsl{top} sign string for $\sr M(Q)$ is essential.
	
	Given $\ga\in \sr U_\tau$ and intervals $J_1<\dots<J_n$ satisfying the
	conditions of \dref{D:quasicritical} for the quadruple
	$(\ga,\bar\vphi^\ga,\eps,\tau)$, define \begin{alignat*}{9}
		\al_k(\ga)&=\begin{cases} \sup_{t\in
			J_k}\theta_\ga(t)-\frac{\pi}{2} & \text{ if $\tau(k)=\ty{+}$};
			\\ \inf_{t\in J_k}\theta_\ga(t)+\frac{\pi}{2} & \text{ if
				$\tau(k)=\ty{-}$}; \\ \end{cases}\ (k\in [n])\text{\ \ and\ \
			}\\ \al(\ga)&=\frac{1}{n}\big[\al_1(\ga)+\dots+\al_n(\ga)\big].
		\end{alignat*}
It follows from \lref{L:Js}\?(a) that the maps $\al_k\colon \sr U_\tau\to \R$
are well-defined (i.e., they do not depend on the choice of $\eps$ and the
$J_k$) and continuous; compare \rref{R:welldefined}. Let
\begin{equation*}
\Sig=\set{(x_1,\dots,x_n)\in
	\R^n}{\textstyle{\sum_kx_k=0}}\home \R^{n-1}	\end{equation*} and define
\begin{equation*}
A\colon \sr U_\tau\to
	\Sig,~A(\ga)=\big(\al_1(\ga)-\al(\ga),\dots,\al_n(\ga)-\al(\ga)\big).
\end{equation*} 
\begin{lem}\label{L:}
	Let $ \ga\in \sr U_\tau $. Then $ A(\ga)=0 $ if and only if $ \ga\in \sr
	C_\tau $.
\end{lem}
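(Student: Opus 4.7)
The plan is to prove both directions separately, using only the definitions and conditions (i)--(iii) of \dref{D:quasicritical} applied with $\vphi=\bar{\vphi}^\ga$. Throughout, fix $\eps\in(0,\frac{\pi}{4})$ and intervals $J_1<\dots<J_n$ as in that definition.

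For the implication $\ga\in \sr C_\tau \Rar A(\ga)=0$, the crucial step will be to match the critical points of $\ga$ with the intervals $J_k$. Since the amplitude of a critical curve equals $\pi$, one has $\sup\theta_\ga=\bar{\vphi}^\ga_+$ and $\inf\theta_\ga=\bar{\vphi}^\ga_-$, and \dref{D:basic} supplies $t_1<\dots<t_n$ at which $\theta_\ga$ attains its supremum or infimum according as $\tau(k)=\ty{+}$ or $\ty{-}$. I will show each $t_k$ lies in some $J_{f(k)}$: condition (ii) gives $\abs{\theta_\ga(t)-\bar{\vphi}^\ga}<\frac{\pi}{2}-2\eps$ for $t\nin \Int\big(\bcup_j J_j\big)$, ruling out $t_k$ there; and condition (i) confines $\theta_\ga(J_j)$ to $(\bar{\vphi}^\ga_-+2\eps,\bar{\vphi}^\ga_++\eps)$ or $(\bar{\vphi}^\ga_--\eps,\bar{\vphi}^\ga_+-2\eps)$ according to the sign of $\tau(j)$, forcing $\tau(f(k))=\tau(k)$. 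The assignment $k\mapsto f(k)$ is order-preserving (since the $t_k$ and $J_k$ are ordered) and satisfies $f(k)\neq f(k+1)$ (since $\tau$ alternates), hence $f$ is the identity. Consequently $\sup_{J_k}\theta_\ga=\bar{\vphi}^\ga_+$ whenever $\tau(k)=\ty{+}$, and $\inf_{J_k}\theta_\ga=\bar{\vphi}^\ga_-$ whenever $\tau(k)=\ty{-}$, giving $\al_k(\ga)=\bar{\vphi}^\ga$ for every $k\in[n]$, whence $A(\ga)=0$.

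For the converse, I plan to first establish the identities
\begin{equation*}
\sup\theta_\ga=\max_{k\,:\,\tau(k)=\ty{+}}\sup_{J_k}\theta_\ga,\qquad \inf\theta_\ga=\min_{k\,:\,\tau(k)=\ty{-}}\inf_{J_k}\theta_\ga,
\end{equation*}
valid for every $\ga\in \sr U_\tau$. Indeed, (i) and (ii) together push $\theta_\ga$ strictly below $\bar{\vphi}^\ga_+-2\eps$ outside the union of the $J_k$ with $\tau(k)=\ty{+}$, while (iii) guarantees $\sup_{J_k}\theta_\ga>\bar{\vphi}^\ga_+-\eps$ on each such $J_k$; the inf identity is symmetric. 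Assuming now $A(\ga)=0$, all $\al_k(\ga)$ coincide with $\al(\ga)$, so the two identities yield $\sup\theta_\ga-\inf\theta_\ga=\pi$; hence $\ga$ is critical with $\bar{\vphi}^\ga=\al(\ga)$. Selecting $t_k\in J_k$ attaining the appropriate extremum provides the increasing sequence required by \dref{D:basic}, and the nonextendability clause is automatic since $\tau$ is a top sign string for $\sr M(Q)$: any alternating sequence $s_1<\dots<s_{n+1}$ of antipodal tangents would force $\ta_\ga(s_i)=\pm ie^{i\bar{\vphi}^\ga}$ (because the range of $\theta_\ga$ is an arc of length exactly $\pi$), yielding a critical curve of type $\sig$ with $\abs{\sig}\geq n+1$, contradicting maximality of $\abs{\tau}$.

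The one subtle step is the matching argument showing $f=\mathrm{id}$ in the first direction; it is short but relies essentially on the strict inequalities built into (i) and (ii) together with the alternation of $\tau$. Everything else amounts to substituting the two $\sup$/$\inf$ identities above into the definitions of $\al_k$ and $\al$.
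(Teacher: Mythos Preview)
Your proof is correct and follows essentially the same approach as the paper's, only with considerably more detail. The paper dispatches the forward direction as ``clear'' and for the converse merely observes that the global supremum and infimum of $\theta_\ga$ are attained in some $J_k$, $J_l$ (implicitly with $\tau(k)=\ty{+}$, $\tau(l)=\ty{-}$), so equality of $\al_k$ and $\al_l$ forces $\om(\ga)=\pi$; your explicit matching argument for the forward direction and your verification that $\ga$ is in fact of type $\tau$ (using that $\tau$ is top) fill in steps the paper leaves to the reader.
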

\begin{proof}
	It is clear that $A(\ga)=0$ if $\ga\in \sr C_\tau$. Conversely, if $
A(\ga)=0 $ then $ \al_1(\ga)=\dots=\al_n(\ga) $. Since there exist $ k,l\in [n] $
such that 
\begin{equation*}
	\sup_{t\in J_k}\theta_\ga(t)=\sup_{t\in [0,1]}\theta_\ga(t)
	\text{\ \,and\ \,} 
	\inf_{t\in J_l}\theta_\ga(t)=\inf_{t\in [0,1]}\theta_\ga(t),
\end{equation*}
the equality of $ \al_k(\ga) $ and $\al_l(\ga) $ implies that $ \om(\ga)=\pi $,
that is, $ \ga $ is critical (of type $ \tau $). 
\end{proof}

Let $\sr W\subs \sr M(Q)$ be an open set such that 
\begin{equation*}
\sr C_\tau\subs \sr W\subs \ol{\sr W}\subs \sr U_\tau 
\end{equation*} and let
$\la\colon \sr M(Q)\to [0,1]$ be a continuous function such that
$\la^{-1}(1)=\sr M(Q)\ssm \sr W$ and $\la^{-1}(0)$ is a neighborhood of $\sr
C_\tau$. Let $r\colon \Sig \to \Ss^{n-1}$ denote the map which collapses the
complement of $B_1(0)\cap \Sig$ to a single point, which will be identified with
the south pole $-N\in \Ss^{n-1}$, with $0$ mapping to the north pole $N$.
Finally, define \begin{equation}\label{E:g} g\colon \sr M(Q)\to
	\Ss^{n-1},~g(\ga)=\begin{cases}
		r\Big((1-\la(\ga))A(\ga)+\la(\ga)\frac{A(\ga)}{\abs{A(\ga)}}\Big) &
		\text{ if $\ga\in \ol{\sr W}$;} \\ -N & \text{ if $\ga\nin \sr W$.}
	\end{cases} \end{equation} Observe that $g^{-1}(N)=\sr C_\tau$.\qed
	\end{cons}

We shall now construct a generator $[f]$ for $\pi_{n-1}\sr M(Q)$.

\begin{cons}\label{C:f} Let $C$ denote the cube
	$\big[-\frac{\pi}{2},\frac{\pi}{2}\big]^n\subs \R^n$, $\bd C$ its boundary
	and \begin{equation*}
	S=\set{(x_1,\dots,x_n)\in
		C}{\text{$x_{k_1}=\textstyle{\frac{\pi}{2}}$ and
		$x_{k_2}=-\textstyle{\frac{\pi}{2}}$ for some $k_1,\,k_2\in [n]$}}.
	\end{equation*} Note that $S\subs \bd C$ is the complement of the union of
	the open stars of the opposite vertices of $C$ whose coordinates are given
	by $x_k=\frac{\pi}{2}$ and $x_k=-\frac{\pi}{2}$ for each $k\in [n]$,
	respectively. (These vertices are labeled $\ty{+++}$ and $\ty{---}$ in
	Figure \ref{F:spheres}\?(b).) We shall identify $\bd C$ with $\Ss^{n-1}$ and
	$S$ with its equator $\Ss^{n-2}$ when convenient.
	
	\begin{figure}[ht] \begin{center} \includegraphics[scale=.35]{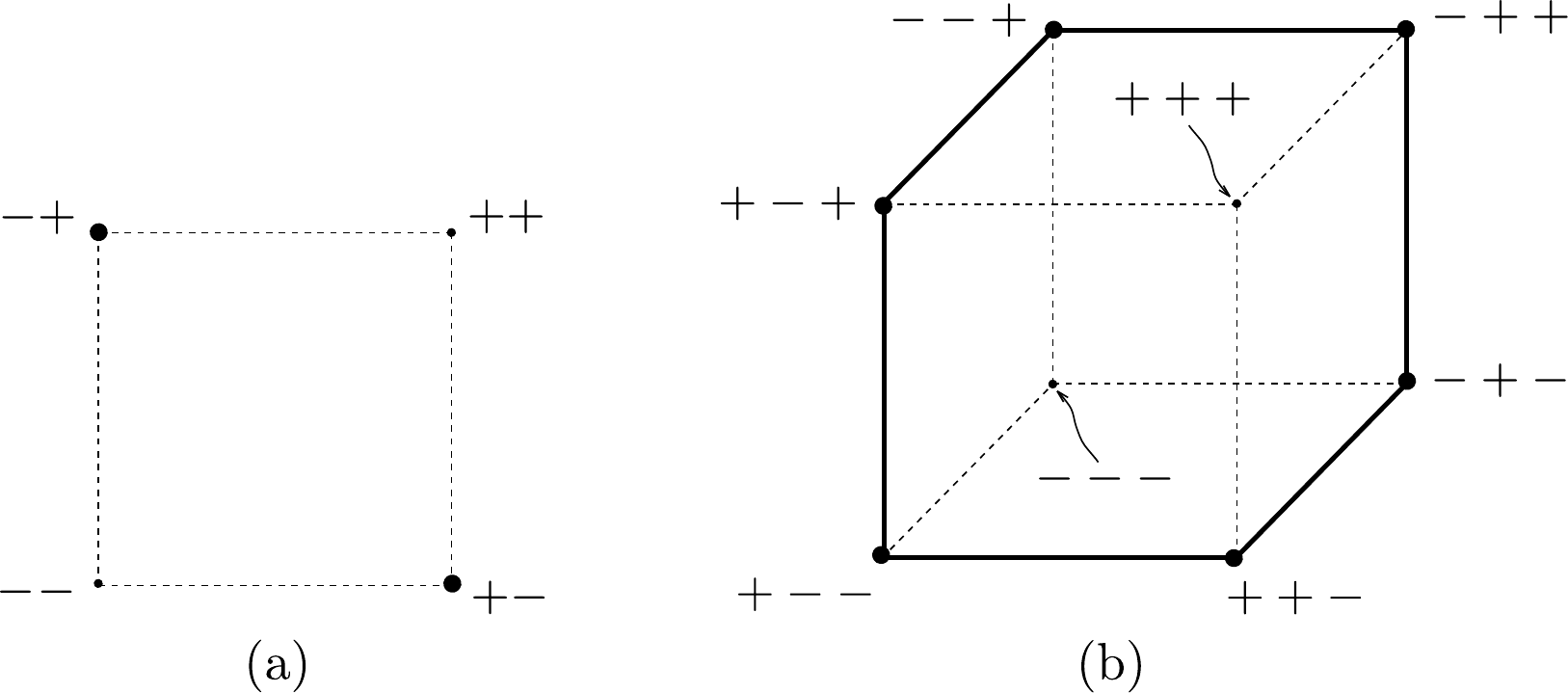}
			\caption{The subset $S\home \Ss^{n-2}$ of $C$ (in thick) for $n=2$
		and 3.} \label{F:spheres} \end{center} \end{figure}
	
	 To simplify the explanation, let us assume first that there exists
	 $\vphi\in \R$ such that it is possible to find critical curves
	 $\ga_1,\ga_2\in \sr M(Q)$ of types $\tau$ and $-\tau$ such that
	 $\bar\vphi^{\ga_1}=\vphi=\bar\vphi^{\ga_2}$. (It is not hard to show that
	 this is always the case if $n$ is even, but this fact will not be used.)
	 This implies that it is possible to find a critical curve $\ga\in \sr M(Q)$
	 of type $\sig$ with $\bar\vphi^\ga=\vphi$ for any $\sig$ with
	 $\abs{\sig}\leq n$. Let $\ka_0,\,\de\in (0,1)$. Let $T=S\times [-\de,\de]$
	 be identified with a tubular neighborhood of $\Ss^{n-2}\equiv S$ in
	 $\Ss^{n-1}\equiv \bd C$, with a point $(x,s)\in T$ lying in the hemisphere
	 $H_{\sign(s)}$ at distance $\abs{s}$ from $\Ss^{n-2}$ and $x\in \Ss^{n-2}$
	 realizing this distance (here $H_{\pm}$ are the two hemispheres bounded by
	 $\Ss^{n-2}$). 
	 
	 For each $(x,s)\in T$, let $\eta^{(x,s)}$ denote the unique curve of the
	 form
	 \begin{equation*}
	 \underbrace{c\dots c}_{n+1} \end{equation*}
	 such that $\Phi_{\eta^{(x,s)}}(0)=(0,1)\in \C\times \Ss^1$,
	 $\ta_{\eta^{(x,s)}}(1)=z$ and \begin{equation}\label{E:f}
		 \theta_{\eta^{(x,s)}}= \vphi+(1+s)x_k \end{equation} at the point where
	 the $k$-th circle is concatenated with the $(k+1)$-th circle, for all $k\in
	 [n]$, where each of the circles has radius $\frac{1}{\ka_0}$. Observe that
	 for all $x\in S$, $\eta^{(x,s)}$ is critical, condensed or diffuse
	 according as $s=0$, $s<0$ or $s>0$, respectively. 

	The curves $\eta^{(x,s)}$ do not in general satisfy $\eta^{(x,s)}=q$, but
	this can be corrected as follows. Because of the hypothesis on $\vphi$, if
	$\ka_0\in (0,1)$ is sufficiently close to 1 and $\de\in (0,1)$ sufficiently
	close to $0$, then \begin{equation*}
		\big\lan\eta^{(x,s)}-q\,,\,e^{i\vphi}\big\ran<0\text{\ \ for all $x\in
		S,\,s\in [-\de,\de]$.} \end{equation*} For fixed $x\in S$, choose
	$t_0,t_1,t_2\in [0,1]$ such that
	$\theta_{\eta^{(x,0)}}(t_i)=\vphi,\,\vphi+\frac{\pi}{2}$ and
	$\vphi-\frac{\pi}{2}$ for $i=0,1,2$, respectively. By grafting line segments
	at $\eta^{(x,0)}(t_i)$ ($i=0,1,2$), a curve $\ga^{(x,0)}$ with
	$\ga^{(x,0)}(1)=q$ as desired is obtained. Clearly, the same procedure will
	work in a neighborhood of $(x,0)$, for the same choices of $t_i$. Using a
	partition of unity and reducing $\de>0$ further if necessary, this yields a
	family $\ga^{(x,s)}\in \sr M(Q)$ ($x\in S$, $s\in [-\de,\de]$). The chosen
	open sets, the corresponding $t_i$ and the lengths of the segments do not
	change the homotopy class of $f$ and are irrelevant for the calculation of
	$\deg(gf)$.
	
	The correspondence $(x,s)\mapsto \ga^{(x,s)}\in \sr M(Q)$ can be extended to
	a map $f\colon \Ss^{n-1}\to \sr M(Q)$ through nullhomotopies of the families
	$\ga^{(x,\de)}$ and $\ga^{(x,-\de)}$ $(x\in S)$ within $\sr U_d$ and $\sr
	U_c$, respectively. The latter two sets are contractible by Theorems 3.3 and
	4.19 of \cite{SalZueh1}. This completes the construction of $f$ under the
	initial assumption on $\vphi$. 
	
	In the general case, let $\ga_{\pm \tau}\in \sr M(Q)$ be arbitrary critical
	curves of type $\pm \tau$, and set $\vphi_{\pm \tau}=\bar{\vphi}^{\ga_{\pm
	\tau}}\in \R$. Let $U_{\pm\tau}$ denote the open star in $S$ of the vertices
	$p=\frac{\pi}{2}(\tau(1),\dots,\tau(n))$ and $-p$, respectively. Since
	$\ol{U}_\tau\cap \ol{U}_{-\tau}=\emptyset$, we can find a continuous
	function $S\to \R$, $x\mapsto \vphi^x$, taking values in the closed interval
	with endpoints $\vphi_{\pm\tau}$, such that $\vphi^x=\vphi_{\pm \tau}$ if
	$x\in U_{\pm \tau}$. By Proposition 5.3 in \cite{SalZueh1}, if
	$\abs{\sig}<n$, then there exist critical curves $\ga$ of type $\sig$ with
	$\bar{\vphi}^\ga=\psi$ for all $\psi$ in this interval. Hence, the preceding
	definition of $\ga^{(x,s)}$ works for every $x\in S$ if $\vphi$ is replaced
	by $\vphi^x$ in \eqref{E:f}.\qed \end{cons}

\begin{lem}\label{L:degree} Suppose that $\pm \tau$ are both top sign strings
	for $\sr M(Q)$, where $\abs{\tau}=n$. Let $g\colon \sr M(Q)\to \Ss^{n-1}$
	and $f\colon \Ss^{n-1}\to \sr M(Q)$ be the maps described in Constructions
	\ref{C:g} and \ref{C:f}. Then $\deg(gf)=\pm 1$.  \end{lem}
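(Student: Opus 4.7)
The plan is to compute $\deg(gf)$ by identifying the preimage of the north pole $N \in \Ss^{n-1}$ under $gf$ and evaluating the local degree there. Since $g^{-1}(N) = \sr C_\tau$ by Construction~\ref{C:g}, we have $(gf)^{-1}(N) = f^{-1}(\sr C_\tau)$. First I would show that this preimage is the single point $(p, 0)$, where $p := \tfrac{\pi}{2}(\tau(1), \dots, \tau(n))$: on the open hemispheres of $\Ss^{n-1}$, $f$ takes values in $\sr U_d \cup \sr U_c$, which contain no critical curves; on $S \times [-\de, \de]$, the curve $\eta^{(x, s)}$ has amplitude $(1+s)\pi$, so is critical only when $s = 0$; and for $s = 0$ the critical type of $\eta^{(x, 0)}$ is the reduced sign string of $(\sign x_k)_{k : |x_k| = \pi/2}$, which equals $\tau$ iff $x_k = \tau(k)\pi/2$ for every $k$, i.e., $x = p$. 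Since the grafting used to produce $\ga^{(x,0)}$ preserves the critical type, $(gf)^{-1}(N) = \{(p, 0)\}$.

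Next I would compute the local degree at $(p, 0)$. Near this point $\ga^{(x, s)}$ lies in the open set $\sr U_\tau$, and the straight-line homotopy built into~\eqref{E:g} makes $g$ homotopic to $r \circ A$ on this neighborhood, with $r$ a local homeomorphism sending $0 \in \Sig$ to $N$; hence the local degree of $gf$ at $(p, 0)$ coincides with that of $A \circ f$, viewed as a map from a neighborhood of $(p, 0) \in \bd C = \Ss^{n-1}$ to $\Sig \cong \R^{n-1}$. From Construction~\ref{C:g} together with the junction formula $\theta_{\eta^{(x, s)}} = \vphi + (1+s)x_k$, a direct calculation gives
\begin{equation*}
A(\ga^{(x, s)}) \;=\; (1+s)\,\pi_\Sig(x - p) + s\,w, \qquad w := \pi_\Sig\!\big(\tfrac{\pi}{2}(\tau(1), \dots, \tau(n))\big) \in \Sig \setminus \{0\},
\end{equation*}
where $\pi_\Sig : \R^n \to \Sig$ is the orthogonal projection.

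To evaluate this local degree I would parametrize $\bd C$ near $p$ by the topological link $L$ of $p$ in $S$, namely the subcomplex of $\partial \Delta^{n-1}$ consisting of those $u$ whose zero set meets both $\tau^{-1}(+)$ and $\tau^{-1}(-)$. This subcomplex is the join $\partial\Delta^{|\tau^{-1}(+)|-1} * \partial\Delta^{|\tau^{-1}(-)|-1}$, a topological $(n-3)$-sphere. Writing $x - p = -r(\tau(1)u_1, \dots, \tau(n)u_n)$ for $(u, r) \in L \times [0, \eps)$ and combining with $s \in (-\de, \de)$, the small sphere $\{r^2 + s^2 = \eps^2,\ r \geq 0\}$ around $(p, 0)$ is identified with the topological suspension of $L$, an $(n-2)$-sphere, on which $A \circ f$ restricts to $-r(1+s)\pi_\Sig(\tau(1)u_1, \dots, \tau(n)u_n) + sw$. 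This takes the values $\pm \eps w$ at the two suspension poles and $-\eps\,\pi_\Sig(\tau(1)u_1, \dots, \tau(n)u_n)$ along the equator, so the local degree is (up to sign) the degree of the induced link map $L \to \Sig/\R w \cong \R^{n-2}$, $u \mapsto -\pi_\Sig(\tau(1)u_1, \dots, \tau(n)u_n) \bmod \R w$.

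The main obstacle is verifying that this link map has degree $\pm 1$ as a map of $(n-3)$-spheres. The simplicial structure of $L$ is dictated by the alternation of $\tau$, and the map above is a piecewise-linear parametrization of the equatorial $(n-3)$-sphere in $\Sig$ perpendicular to $w$. For $n = 2$, $L$ is empty and the suspension collapses to $\Ss^0 = \{\pm \eps w\}$, giving degree $\pm 1$ immediately; for $n = 3$ and $n = 4$ the claim can be checked by direct computation (e.g.\ for $n = 4$, $\tau = \ty{+-+-}$, $L$ is a $4$-cycle whose image winds exactly once around $\R w$ in $\Sig$). In general the conclusion follows either by induction on $n$ or by exhibiting an explicit piecewise-linear homeomorphism $L \to \Ss^{n-3}$ using the cyclic-alternating structure of $\tau$; once established, the local degree at $(p, 0)$ is $\pm 1$, whence $\deg(gf) = \pm 1$.
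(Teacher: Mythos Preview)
Your identification of $(gf)^{-1}(N)=\{p\}$ (with $p=\tfrac{\pi}{2}(\tau(1),\dots,\tau(n))\in S$) and your formula $A(\ga^{(x,s)})=(1+s)\,\pi_\Sig(x-p)+s\,w$ are correct and are exactly the ingredients the paper uses. From that point on, however, you make the problem much harder than it is, and the argument ends with a real gap: the claim that the link map $L\to \Sig/\R w$ has degree $\pm 1$ is neither proved nor obvious, and the appeal to an unspecified induction on $n$ or to an unexhibited PL homeomorphism does not close it. (You yourself flag this as ``the main obstacle''.)

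The paper sidesteps the whole link/suspension analysis by invoking Brouwer's invariance of domain: since $(gf)^{-1}(N)=\{p\}$, it suffices that $gf$ be \emph{injective} on a neighborhood of $p$ in $\bd C$, for then it is automatically a local homeomorphism and the local degree is $\pm 1$. Near $p$ one has $\la\equiv 0$ and $|A|<1$, so $g=r\circ A$ there and the question reduces to injectivity of $(x,s)\mapsto (1+s)\pi_\Sig(x)-\pi_\Sig(p)$. On the slice $s=0$ your own formula gives $\al_k(\ga^x)-\al_k(\ga^{\bar x})=x_k-\bar x_k$, so $A(\ga^x)=A(\ga^{\bar x})$ forces $x-\bar x\in\R(1,\dots,1)$; but every point of $S$ near $p$ has at least one coordinate equal to $+\tfrac{\pi}{2}$ and at least one equal to $-\tfrac{\pi}{2}$, which rules out any nonzero such multiple. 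The same observation, applied to the coordinates of $(1+s)x$ and $(1+s')x'$, handles the tubular parameter $s$. This short injectivity check replaces the entire suspension computation.
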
 \begin{proof} Let
	$N$ denote the north pole of $\Ss^{n-1}$ and
	$p=\frac{\pi}{2}(\tau(1),\dots,\tau(n))\in S\subs \bd C\equiv \Ss^{n-1}$.
	Then $(gf)^{-1}(N)=\se{p}$,
	hence the result will follow if $gf$ is a homeomorphism near $p$. By
	Brouwer's invariance of domain, it suffices to show that $gf$ is injective
	on a neighborhood of $p$ in $\bd C$. Finally, by the definition of $f|_T$,
	it actually suffices to show that $gf$ is injective on a neighborhood of $p$
	in $S$. Let $U\subs S$ be an open set containing $p$ such that
	$\la(U)=\se{0}$ and $A(U)\subs B_1(0)$, where $A$ and $\la$ are as in
	\eqref{C:g}. For $x,\,\bar x\in U$, \begin{equation*}
		\al_k(\ga^x)-\al_k(\ga^{\bar x})=x_k-\bar x_k\text{\ $(k\in [n])$\ \
		and\ \ }\al(\ga^x)-\al(\ga^{\bar x})=\frac{1}{n}\sum_{\nu=1}^n(x_{\nu}-\bar
		x_{\nu}).  \end{equation*} Therefore, $A(\ga^x)=A(\ga^{\bar x})$ if and only
	if $(x-\bar x)$ is a multiple of $(1,1,\dots,1)$. In a small neighborhood of
	$p$ in $S$, this occurs if and only if $x=\bar{x}$. Thus $gf|_S$ is
	injective near $p$, and $\deg(gf)=\pm 1$.  \end{proof}

Obviously, it can be achieved that $\deg(gf)=+1$ by composing $g$ with a reflection
if necessary. The next result is a corollary of \lref{L:homotopyequiv} and
\lref{L:degree}.

\begin{cor}\label{C:N=M2} Let $\pr \colon \sr N(Q)\to \sr M(Q)$ be the
	restriction of the canonical projection of $\sr M(Q)\times \R$ onto $\sr M(Q)$. Then $\pr$ is
	a homotopy equivalence and $\sr M(Q)$ is homeomorphic to $\sr N(Q)$.
\end{cor}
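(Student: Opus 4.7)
My plan is to combine the results already established, splitting into cases according to the nature of $\sr M(Q)$. If $\sr M(Q)$ contains no critical curves, the statement follows from Theorem 6.1 of \cite{SalZueh1} together with an elementary inspection of $\sr N(Q)$ (it has the same components as $\sr M(Q)$, each being a product with an interval modulo contractible fibers). So I may assume $\sr T\neq \emptyset$ and let $\tau$ be a top sign string with $n=\abs{\tau}\geq 2$.

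First case: $\sr M(Q)$ contains no critical curves of type $-\tau$. Then \cref{C:main} gives $\sr N(Q)\home \E$, so $\sr N(Q)$ is contractible. By \lref{L:N=M}, $\pr_{\ast}\colon\pi_k\sr N(Q)\to\pi_k\sr M(Q)$ is surjective for every $k$, so all homotopy groups of $\sr M(Q)$ vanish. Since $\sr M(Q)$ is a Hilbert manifold, \cref{C:contractible} (or directly Lemma 1.7 of \cite{SalZueh1}) yields $\sr M(Q)\home \E$, and $\pr$, being a map between contractible Hilbert manifolds, is a weak homotopy equivalence and hence homotopic to a homeomorphism.

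Second case: both $\pm\tau$ are top sign strings. Then \cref{C:main} gives $\sr N(Q)\home \E\times\Ss^{n-1}$, while \cref{L:homotopyequiv} together with \cref{L:degree} (applied to the explicit maps $f,g$ of Constructions \ref{C:g} and \ref{C:f}) gives $\sr M(Q)\home \E\times\Ss^{n-1}$. So both spaces have the weak homotopy type of $\Ss^{n-1}$, and in particular all their homotopy groups are finitely generated abelian (for $n\geq 3$ they are simply connected; for $n=2$ we have $\pi_1\iso\Z$ and higher groups trivial). By \lref{L:N=M}, $\pr_{\ast}$ is surjective on every $\pi_k$; but a surjective endomorphism of a finitely generated abelian group is an isomorphism, so $\pr_{\ast}$ is an isomorphism on all homotopy groups. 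Whitehead's theorem then implies $\pr$ is a weak homotopy equivalence, and since $\sr N(Q)$ and $\sr M(Q)$ are Hilbert manifolds, Lemma 1.7(b) of \cite{SalZueh1} upgrades $\pr$ to a homeomorphism up to homotopy, giving $\sr M(Q)\home \sr N(Q)$.

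The only mildly delicate point is the injectivity of $\pr_{\ast}$: we do not produce a direct homotopy inverse to $\pr$, but rather deduce injectivity from the abstract fact that the target groups are the same finitely generated abelian groups as the source, combined with surjectivity from \lref{L:N=M}. Everything else is pure bookkeeping of cases and invocation of the preceding results.
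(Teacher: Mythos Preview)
Your proof is correct and follows essentially the same route as the paper's. The paper is terser: it observes that $\sr M(Q)$ and $\sr N(Q)$ are simultaneously contractible or simultaneously homotopy equivalent to $\Ss^{n-1}$ (by \cref{C:main}, \cref{C:contractible}, and \lref{L:homotopyequiv} combined with \lref{L:degree}), uses \lref{L:N=M} to get surjectivity of $\pr_\ast$ on \emph{homology} rather than homotopy, and then invokes the argument of \lref{L:homotopyequiv} (Hurewicz plus simple connectivity for $n\geq 3$, direct check for $n=2$) to upgrade to a weak equivalence. Your version via homotopy groups and the Hopfian property of finitely generated abelian groups is equally valid. One small wording issue: $\pr_\ast\colon \pi_k\sr N(Q)\to \pi_k\sr M(Q)$ is not literally an \emph{endomorphism}, but a surjection between abstractly isomorphic finitely generated abelian groups; the conclusion (injectivity) still follows from the Hopfian property, so the argument stands.
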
 \begin{proof} By \lref{L:N=M}, the induced map $\pr_{\ast}\colon
	H_\ast(\sr N(Q))\to H_\ast (\sr M(Q))$ is surjective. Since $\sr M(Q)$ and
	$\sr N(Q)$ are either simultaneously contractible or simultaneously homotopy
	equivalent to a sphere, $\pr_\ast$ must actually be an isomorphism. We
	conclude that $\pr$ is a homotopy equivalence using the same argument as in
	the proof of \lref{L:homotopyequiv}.  \end{proof}

The proof of the main theorem (stated in the introduction) is now
straightforward.  
\begin{thm}\label{T:main} 
	Let $Q=(q,z)\in \C\times \Ss^1$, $z\neq -1$. Then
	$\sr M(Q)\home \E\times \Ss^{2k} \text{ or }\E\times \Ss^{2k+1}\ (k\geq 0)$
	for $q$ in the open region intersecting the ray from 0 through $1+z$ and
	bounded by the three circles \begin{equation*} \begin{cases}
			C_{4k+4}(iz-i)\text{ \,and \,}C_{4k+2}(\pm (i+iz)),\text{\ \ or}\\
			C_{4k+4}(i-iz)\text{ \,and \,}C_{4k+6}(\pm(i+iz)),\text{\ \
			respectively} \end{cases} (\text{see \fref{F:shadesofgrey}}).
	\end{equation*} If $q$ does not lie in
	the closure of any of these regions, then $\sr M(Q)\home \E$. If $q$ lies on
	the boundary of one of them, then $\sr M(Q)\home \sr
	M\big((q-\de(1+z),z)\big)$ for all sufficiently small $\de>0$.  
\end{thm}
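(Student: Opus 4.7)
The plan is to combine the results already established in this paper with the geometric characterization of critical curves from \cite{SalZueh1}. Specifically, by \cref{C:N=M2} we have $\sr M(Q)\home \sr N(Q)$, and by \cref{C:main}: if $\sr M(Q)$ admits a top sign string $\tau$ of length $n$ and also contains critical curves of type $-\tau$, then $\sr M(Q)\home \E\times\Ss^{n-1}$; otherwise $\sr M(Q)\home \E$. Thus the remaining task is purely geometric: for each $q\in \C$, determine the maximal length of a sign string $\tau$ such that $\sr M((q,z))$ contains a critical curve of type $\tau$, and determine whether it also contains one of type $-\tau$.

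For this I would invoke Proposition 5.3 of \cite{SalZueh1}, which describes, for each sign string $\sig$, the subset $R_\sig\subs\C$ of values $q$ for which $\sr M((q,z))$ contains a critical curve of type $\sig$, as an open region whose boundary is a union of arcs of circles of radius 1 (corresponding to $\ka_1=-1,~\ka_2=+1$). A critical curve of type $\sig^{2k+1}$ (or $-\sig^{2k+1}$) can be understood schematically as a concatenation of $2k+1$ arcs of unit circles of alternating curvatures, together with adjustments that shift the endpoint; analogously for $\sig^{2k+2}$. The centers $\pm(i+iz)$ and $\pm(i-iz)$ correspond to the centers of the extremal unit circles tangent to the initial frame $1\in\Ss^1$ at the origin and to the final frame $z\in\Ss^1$ at $q$, whereas the radii $4k+2$, $4k+4$, $4k+6$ count the cumulative diameters traversed by a near-critical curve making that many excursions.

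Next I would match the regions predicted by \cref{C:main} with those in the statement. By \lref{L:substrings}, if $\sr M(Q)$ contains a critical curve of type $\sig^n$ (or $-\sig^n$), then it contains critical curves of every shorter alternating type. Thus the relevant data is simply: (a) the maximal $n$ for which either $\sig^n$ or $-\sig^n$ is representable, and (b) whether both are representable at this maximal length. Using the description of $R_\sig$ from Proposition 5.3 of \cite{SalZueh1}, one checks that the region where the top length equals $2k+1$ (resp.~$2k+2$) and both $\pm\tau$ are representable is precisely the open region described in the theorem, bounded by the three indicated circles and containing the ray from $0$ through $1+z$. The unshaded region corresponds either to the case with no critical curves (lying inside the innermost circle) or to the case where a top sign string exists but its opposite does not (on the far side of the region associated with the top length).

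The main obstacle will be the explicit computation in the second and third paragraphs: one must verify algebraically that the circles $C_{4k+2}(\pm(i+iz))$, $C_{4k+4}(\pm(iz-i))$ and $C_{4k+6}(\pm(i+iz))$ produced by the geometric construction in Proposition 5.3 of \cite{SalZueh1} really delimit regions fitting together as depicted in Figure \ref{F:shadesofgrey}, with no gaps or overlaps. For the last sentence of the theorem, I would observe that the rightmost ``side'' of each shaded region lies on one of the circles centered at $\pm(i+iz)$ or $\pm(iz-i)$, and that moving $q$ in the direction $-(1+z)$ crosses this arc inward; thus for small $\de>0$, $q-\de(1+z)$ lies in the interior of the same region, yielding $\sr M(Q)\home \sr M((q-\de(1+z),z))$ by the already established cases.
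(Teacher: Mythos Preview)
Your approach is essentially the same as the paper's: invoke \cref{C:N=M2} to identify $\sr M(Q)$ with $\sr N(Q)$, apply \cref{C:main} for the dichotomy, and defer the explicit geometric description of the regions to Proposition~5.3 of \cite{SalZueh1}. The paper's proof is even terser than yours---it does not attempt the explicit verification you flag as ``the main obstacle,'' since that computation is already carried out in \cite{SalZueh1} and need not be reproduced here. One small omission: for the case where $\sr M(Q)$ contains no critical curves at all (so no top sign string exists and \cref{C:main} does not apply), the paper explicitly invokes Theorem~6.1 of \cite{SalZueh1} to conclude $\sr M(Q)\home\E$ or $\E\times\Ss^0$; you allude to this case but should cite the relevant result.
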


\begin{proof} Proposition 5.3 of \cite{SalZueh1} describes precisely when $\sr
	M(Q)$ contains critical curves of any given type. If $\sr M(Q)$ does not
	contain any critical curves (or, equivalently, if it does not admit a top
	sign string), then $\sr M(Q)\home \E$ or $\E\times \Ss^0$ according as $\sr
	U_c$ is empty or not, as described in Theorem 6.1 of \cite{SalZueh1}. If it
	does admit a top sign string, then we conclude from \cref{C:main} and
	Proposition 5.3 of \cite{SalZueh1} that the theorem holds if $\sr M(Q)$ is
	replaced by $\sr N(Q)$ in the statement. But $\sr M(Q)\home \sr N(Q)$ by
	\cref{C:N=M2}.  \end{proof}

\subsection*{Acknowledgements} The first author is partially supported by grants
from \tsc{capes}, \tsc{cnpq} and \tsc{faperj} (Brazil). The second author
gratefully acknowledges the hospitality of PUC--Rio, IME--USP and UnB
as well as the financial support of \tsc{cnpq} and \tsc{fapesp}
in the form of post-doctoral fellowships.

\vfill\eject
\providecommand{\bysame}{\leavevmode\hbox to3em{\hrulefill}\thinspace}
\providecommand{\MR}{\relax\ifhmode\unskip\space\fi MR }
\providecommand{\MRhref}[2]{%
  \href{http://www.ams.org/mathscinet-getitem?mr=#1}{#2}
}
\providecommand{\href}[2]{#2}

\bigskip

\vspace{12pt} \noindent{\small \tsc{Departamento de Matem\'atica, Pontif\'icia
	Universidade Cat\'olica do Rio de Janeiro (PUC-Rio)\\
Rua Marqu\^es de S\~ao Vicente 225, G\'avea,
Rio de Janeiro, RJ 22451-040, Brazil}}\\
\noindent{\ttt{saldanha@puc-rio.br}}

\vspace{12pt} \noindent{\small \tsc{Universidade Federal de Santa Maria, \\
Av. Roraima n 1000, Cidade Universit\'aria, Bairro Camobi,
Santa Maria,  RS 97105-900, Brazil}\\
\noindent{\ttt{pzuehlke@protonmail.com}}

\vspace{2pt}
\end{document}